\numberwithin{equation}{section}
\newtheorem{theorem}[equation]{Theorem}
\newtheorem{lemma}[equation]{Lemma}
\newtheorem{proposition}[equation]{Proposition}
\newtheorem{corollary}[equation]{Corollary}
\newtheorem{maintheorem}{Theorem}
\newtheorem{maincorollary}[maintheorem]{Corollary}
\theoremstyle{remark}
\newtheorem{example}[equation]{Example}
\newtheorem{definition}[equation]{Definition}
\newtheorem*{notation}{Notation}
\newtheorem{remark}[equation]{Remark}
\newcommand{\dual}{\sharp}
\newcommand{\cat}[1]{\mathsf{#1}}
\newcommand{\D}{\cat{D}}
\newcommand{\InfD}{\D^\infty}
\newcommand{\fm}{\mathfrak{m}}
\newcommand{\BB}{\mathcal{B}}
\newcommand{\ALOWB}{\Aus_{\BB}}
\newcommand{\BLOWB}{\Bass_{\BB}}
\newcommand{\AUPB}{\Aus^{\BB}}
\newcommand{\BUPB}{\Bass^{\BB}}
\newcommand{\Aus}{\mathbb{A}}
\newcommand{\Bass}{\mathbb{B}}
\newcommand{\Z}{\mathbb{Z}}
\newcommand{\CC}{\mathcal{C}}
\renewcommand{\CC}{\cat{C}}
\newcommand{\DD}{\mathcal{D}}
\renewcommand{\DD}{\cat{D}}
\newcommand{\baraug}[1]{B^{\infty}_{\aug}#1}
\newcommand{\barinfty}[1]{B^{\infty}#1}
\newcommand{\koszul}[1]{E(#1)}
\newcommand{\triang}[2]{\Triang_{#1} (#2)}
\newcommand{\thick}[2]{\Thick_{#1} (#2)}
\newcommand{\trinf}[2]{\Triang_{#1}^{\infty} (#2)}
\newcommand{\thinf}[2]{\Thick_{#1}^{\infty} (#2)}
\newcommand{\loc}[2]{\Loc_{#1}(#2)}
\newcommand{\locinf}[2]{\Loc_{#1}^{\infty}(#2)}
\newcommand{\env}[1]{U\!#1}
\newcommand{\ltensor}[1]{\underset{#1}{\overset{L}{\otimes}}}
\newcommand{\hominfty}[1]{\overset{\infty}{\mathsf{Hom}}{_{#1}^{\bullet}}}
\newcommand{\tensorinfty}[1]{\underset{#1}{\overset{\infty}{\otimes}}}
\newcommand{\Mod}{\cat{Mod}}
\newcommand{\Ho}{\cat{Ho}}
\newcommand{\Hocat}{\Ho \Aug}
\newcommand{\Triang}{\cat{triang}}
\newcommand{\Thick}{\cat{thick}}
\newcommand{\Loc}{\cat{loc}}
\newcommand{\Alg}{\cat{Alg}}
\newcommand{\Aug}{\ainfty_{\aug}}
\newcommand{\ainfty}{\Alg^{\infty}}
\newcommand{\DGC}{\cat{DGC}}
\newcommand{\DGA}{\cat{DGA}}
\newcommand{\id}{\mathbf{1}}
\DeclareMathOperator{\injdim}{injdim}
\DeclareMathOperator{\Ext}{Ext}
\DeclareMathOperator{\End}{End}
\DeclareMathOperator{\RHom}{RHom}
\DeclareMathOperator{\proj}{proj}
\DeclareMathOperator{\Proj}{Proj}
\DeclareMathOperator{\Hom}{Hom}
\DeclareMathOperator{\aug}{aug}
\DeclareMathOperator{\coaug}{coaug}
\DeclareMathOperator{\cok}{cok}
\DeclareMathOperator{\op}{op}
\DeclareMathOperator{\per}{per}
\DeclareMathOperator{\fg}{fg}
\DeclareMathOperator{\fd}{fd}
\DeclareMathOperator{\sm}{sm}
\begin{document}

\begin{center} {\Large \bf Koszul Equivalences in
$A_\infty$-Algebras\\}

\bigskip\bigskip
\renewcommand{\thefootnote}{{}}

 D.-M. Lu $^a$,
 J. H. Palmieri $^b$, Q.-S. Wu $^c$ and J. J. Zhang $^b$ \footnote{{\it E-mail addresses:} dmlu@zju.edu.cn (D.-M. Lu),
 palmieri@math.washington.edu (J. H. Palmieri), qswu@fudan.edu.cn (Q.-S. Wu), zhang@math.washington.edu (J. J. Zhang).}\\

\bigskip {\footnotesize \it $^a$ Department of Mathematics, Zhejiang
University, Hangzhou 310027, China \\ $^b$ Department of
Mathematics, Box 354350, University of Washington, Seattle, WA
98195, USA\\
$^c$ Institute of Mathematics, Fudan University, Shanghai, 200433,
China}
\end{center}

\bigskip

\begin{center}
\begin{minipage}{100mm}
\bigskip

{\textbf{Abstract}}
\medskip

\footnotesize {We prove a version of Koszul duality and the induced
derived equivalence for Adams connected $A_\infty$-algebras that
generalizes the classical Beilinson-Ginzburg-Soergel Koszul duality.
As an immediate consequence, we give a version of the
Bern\v{s}te{\u\i}n-Gel'fand-Gel'fand correspondence for Adams
connected $A_\infty$-algebras.

We give various applications. For example, a connected graded
algebra $A$ is Artin-Schelter regular if and only if its Ext-algebra
$\Ext^\ast_A(k,k)$ is Frobenius. This generalizes a result of Smith
in the Koszul case. If $A$ is Koszul and if both $A$ and its Koszul
dual $A^!$ are noetherian satisfying a polynomial identity, then $A$
is Gorenstein if and only if $A^!$ is. The last statement implies
that a certain Calabi-Yau property is preserved under Koszul
duality.

\bigskip

\noindent\textit{MSC}: 16A03,16A62,16E65

\bigskip

\noindent\textit{Keywords.} {$A_\infty$-algebra, graded algebra,
Artin-Schelter regular algebra, Koszul duality, derived equivalence,
Gorenstein property}}

\end{minipage}
\end{center}

\bigskip

\section*{Introduction}

Koszul duality is an incredibly powerful tool used in many
areas of mathematics. One aim of this paper is to unify
some generalizations by using $A_\infty$-algebras.
Our version is comprehensive enough to recover the original
version of Koszul duality and the induced derived equivalences
due to Beilinson-Ginzburg-Soergel \cite{BGSo} and most of the
generalizations in ring theory and algebraic geometry.
Although we will restrict ourselves to Adams connected
$A_\infty$-algebras (a natural extension of a connected graded
algebras -- see Definition~\ref{xxdefn2.1}), we have set up
a framework that will work for other classes of algebras
arising from representation theory and algebraic geometry.

We fix a commutative field $k$ and work throughout with vector
spaces over $k$.  We define $A_\infty$-algebras over $k$ in
Definition~\ref{xxdefn1.1}.

Similar to \cite[Section 11]{LP04} we define the Koszul dual of
an $A_\infty$-algebra $A$ to be the vector space dual of the bar
construction of $A$ -- see Section \ref{xxsec2} for details. This idea
is not new and dates back at least
to Beilinson-Ginsburg-Schechtman \cite{BGSc} for graded algebras.
Keller also took this approach in \cite{Ke94} for differential
graded algebras. Our first result is a generalization of
\cite[Theorem 2.10.2]{BGSo}.

\begin{maintheorem}
\label{xxthmA} Let $A$ be an augmented $A_\infty$-algebra. Suppose
that the Koszul dual of $A$ is locally finite. Then the
double Koszul dual of $A$ is $A_\infty$-isomorphic to $A$.
\end{maintheorem}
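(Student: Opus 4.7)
The plan is to unwind the double Koszul dual $\koszul{\koszul{A}}$ via the bar construction and linear duality, and to identify the result with the cobar--bar object $\Omega B^{\infty}\! A$, which is already known to be $A_\infty$-equivalent to $A$.

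First, I would unpack definitions. By construction $\koszul{A} = (B^{\infty}\! A)^{\dual}$, where $B^{\infty}\! A$ is the bar construction of the augmented $A_\infty$-algebra $A$ (a coaugmented DG coalgebra whose differential encodes the whole tower $\{m_n\}$) and $\dual$ denotes graded linear dual. The hypothesis that $\koszul{A}$ is locally finite, combined with the fact that $\dual$ preserves local finiteness of Adams-graded vector spaces, forces $B^{\infty}\! A$ itself to be locally finite; consequently, the canonical map $B^{\infty}\! A \to (B^{\infty}\! A)^{\dual\dual}$ is an isomorphism of DG coalgebras.

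The heart of the argument is the natural isomorphism of DG coalgebras
\[
B^{\infty}(C^{\dual}) \;\cong\; (\Omega C)^{\dual}
\]
valid for any locally finite coaugmented DG coalgebra $C$, where $\Omega$ denotes the cobar functor. Underneath this lies the standard linear identification $T^c((V[1])^{\dual}) \cong (T(V[-1]))^{\dual}$ for locally finite $V$, in which linear duality interchanges the tensor algebra and the tensor coalgebra; on top of this one must check that the bar differential on the left corresponds to the transpose of the cobar differential on the right. Applying this identity with $C = B^{\infty}\! A$ and dualizing once more gives
\[
\koszul{\koszul{A}} \;=\; \bigl(B^{\infty}\!\koszul{A}\bigr)^{\dual} \;\cong\; \bigl((\Omega B^{\infty}\! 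A)^{\dual}\bigr)^{\dual} \;\cong\; \Omega B^{\infty}\! A,
\]
where the last step reuses the double-dual isomorphism for locally finite objects.

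It then remains to invoke the $A_\infty$ cobar--bar equivalence: there is a canonical $A_\infty$-quasi-isomorphism $\Omega B^{\infty}\! A \to A$, and every $A_\infty$-quasi-isomorphism is invertible up to $A_\infty$-homotopy, hence is an $A_\infty$-isomorphism in the sense of the paper. Composing with the identification above produces the desired $A_\infty$-isomorphism $\koszul{\koszul{A}} \simeq A$. I expect the main obstacle to be the duality identity in the middle step: because the bar differential of an $A_\infty$-algebra incorporates infinitely many higher multiplications, verifying term-by-term that it dualizes to the cobar differential of a DG coalgebra demands careful bookkeeping with shifts, signs, and completed tensor products. The local finiteness assumption is used precisely to push linear duality through the (co)tensor functors in each internal bidegree, so that $(V \otimes W)^{\dual} \cong V^{\dual} \otimes W^{\dual}$ holds summand by summand.
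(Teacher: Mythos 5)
Your overall strategy is the same as the paper's: identify $\koszul{\koszul{A}}$ with the cobar--bar object $\Omega(\baraug{A})=\env{A}$ by pushing linear duality through the bar and cobar constructions, and then invoke the natural quasi-isomorphism $A\to\env{A}$ of Proposition~\ref{xxprop1.14}. The difference lies in how you arrange the dualization, and that arrangement creates a genuine gap. The paper applies Lemma~\ref{xxlem1.15} to the \emph{DG algebra} $R=\koszul{A}$, which is locally finite by hypothesis, obtaining $\koszul{\koszul{A}}=(\baraug{R})^{\dual}\cong\Omega(R^{\dual})\cong\Omega(\baraug{A})$; the only double dual ever taken is of $\baraug{A}$, which is locally finite because its dual $\koszul{A}$ is. You instead apply the mirror identity $\baraug{(C^{\dual})}\cong(\Omega C)^{\dual}$ to the \emph{coalgebra} $C=\baraug{A}$ and then must dualize once more, which forces you through the isomorphism $((\Omega\baraug{A})^{\dual})^{\dual}\cong\Omega\baraug{A}$. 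That step requires $\Omega(\baraug{A})$ to be locally finite, and this does \emph{not} follow from the hypothesis that $\koszul{A}$ is locally finite. For instance, if the augmentation ideal $I$ of $A$ is finite-dimensional and concentrated in bidegree $(0,0)$, then $\baraug{A}$ and $\koszul{A}$ are locally finite, but $S^{-1}(SI)$ sits in bidegree $(0,0)$, so every tensor power of the desuspended coaugmentation coideal contributes to bidegree $(0,0)$ and $\Omega(\baraug{A})$ is infinite-dimensional there; the double dual then strictly enlarges it.

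The same example shows that your ``heart of the argument'' identity is overstated: for a general locally finite coaugmented DG coalgebra $C$, the bidegree $(i,j)$ piece of $(\Omega C)^{\dual}$ is a \emph{product} over word-lengths of duals of tensor powers, whereas $\baraug{(C^{\dual})}$ is the corresponding \emph{direct sum}; these agree only when, in each bidegree, finitely many word-lengths contribute to $\Omega C$. (The paper's Lemma~\ref{xxlem1.15} carries the analogous caveat, but there the relevant finiteness is supplied by the hypotheses in the intended applications, and in the proof of Theorem~\ref{xxthm2.4} the lemma is applied on the side where the hypothesis ``$\koszul{A}$ locally finite'' does the work.) The fix is simply to flip your dualization to match the paper: prove $(\baraug{R})^{\dual}\cong\Omega(R^{\dual})$ for the locally finite DG algebra $R=\koszul{A}$, and use the double-dual isomorphism only for $\baraug{A}$, never for $\Omega(\baraug{A})$. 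Your final step --- that the bar--cobar unit $A\to\Omega\baraug{A}$ is an $A_\infty$-quasi-isomorphism, hence invertible up to homotopy --- is correct and is exactly how the paper concludes.
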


This is proved as Theorem \ref{xxthm2.4}.
A special case of the above theorem was proved in
\cite[Theorem 11.2]{LP04}.

As in \cite{BGSo} we prove several versions of equivalences of
derived categories induced by the Koszul duality. Let $\InfD(A)$
be the derived category of right $A_\infty$-modules over $A$.
Let  $\InfD_{\per}(A)$ (respectively, $\InfD_{\fd}(A)$) denote the
full triangulated subcategory of $\InfD(A)$ generated by all
perfect complexes (respectively, all right $A_\infty$-modules whose
homology is finite-dimensional) over $A$. The next result
is a generalization of \cite[Theorem 2.12.6]{BGSo}.

\begin{maintheorem}
\label{xxthmB} Let $A$ be an Adams connected $A_\infty$-algebra
and $E$ its Koszul dual. If $HE$ is finite-dimensional,
then there is an equivalence of triangulated categories
$\InfD_{\per}(A)\cong \InfD_{\fd}(E)$.
\end{maintheorem}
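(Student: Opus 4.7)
The plan is to realize Koszul duality as an explicit $A_\infty$ functor $F\colon \InfD(A)\to\InfD(E)$ and then show it restricts to the claimed equivalence on the perfect/finite-dimensional subcategories. Since $E$ is (up to conventions) the vector space dual of the bar construction of $A$, the bar construction promotes $k$ to an $A_\infty$-bimodule over $(A,E)$: the augmentation $A\to k$ supplies one action, and the identification of $E$ with $(BA)^{\#}$ supplies the other by dualizing the comodule structure on $BA$. Using this bimodule, define
\[
F := -\ltensor{A} k\colon \InfD(A)\longrightarrow \InfD(E).
\]
The decisive initial feature is that $F(A)\simeq k$.

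Next I verify that $F$ is fully faithful on $\InfD_{\per}(A)$. Since $\InfD_{\per}(A)$ is by construction the thick closure of $A$, it suffices to check that $F$ induces an isomorphism on graded endomorphism algebras of the generator $A$. On the source side $\RHom_A(A,A)\simeq A$; on the target side $\RHom_E(F(A),F(A)) = \RHom_E(k,k)$, which is precisely the double Koszul dual of $A$ and is therefore $A_\infty$-isomorphic to $A$ by Theorem~A (applicable because $HE$ is finite-dimensional and thus locally finite). One then checks that the comparison map is in fact the one induced by $F$; a standard triangulated dévissage — using that $F$ preserves shifts, distinguished triangles, and direct summands — propagates full faithfulness from the generator $A$ to all of $\InfD_{\per}(A)$.

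Since $F(A)\simeq k$ has finite-dimensional homology, $F$ sends every perfect $A$-module into $\InfD_{\fd}(E)$. To finish, it remains to show that $\InfD_{\fd}(E)$ is the thick closure of $k$, which supplies essential surjectivity. Here one uses that $A$ is Adams connected, so $E$ inherits an Adams grading in which its augmentation ideal sits in strictly positive (or strictly negative) Adams degree. Because $HE$ is finite-dimensional this augmentation ideal is nilpotent, and hence any right $E$-module with finite-dimensional homology admits a finite filtration whose subquotients are Adams shifts of the trivial module $k$, placing it in the thick closure of $k$.

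I expect the main obstacle to be precisely this last generation statement. In the strict graded or dg setting it is a direct consequence of the nilpotence of the augmentation ideal together with the existence of composition series for a connected graded local algebra, but in the $A_\infty$ context one must be careful with the higher operations $m_n$ ($n\geq 3$), which a priori can obstruct naive splittings of the filtration. I would handle this by first passing to a minimal model of $E$, so that the underlying vector space is $HE$ itself, and then running the Adams-degree truncation argument while observing that each higher operation strictly raises (or lowers) Adams degree and therefore cannot disrupt the truncation. Combined with the fully-faithful step above, this yields the equivalence $\InfD_{\per}(A)\cong\InfD_{\fd}(E)$.
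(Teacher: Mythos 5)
Your proposal is correct and follows essentially the same route as the paper: the equivalence is induced by the bar-construction bimodule (so the functor sends $A$ to $k_E$), full faithfulness ultimately rests on double Koszul duality $\RHom_E(k,k)\simeq \koszul{\koszul{A}}\simeq A$ (Theorem \ref{xxthm2.4}), and essential surjectivity rests on the identification $\InfD_{\fd}(E)=\thinf{E}{k}$. The differences are in packaging. Where you verify full faithfulness on the generator and propagate by d\'evissage, the paper feeds the adjoint pair $(-\ltensor{}k,\ \RHom(k,-))$ into the Auslander/Bass-class machinery of Proposition \ref{xxprop4.10}, and your step ``one then checks that the comparison map is the one induced by $F$'' is exactly the assertion that $B(E;E)$ is a \emph{left balanced} bimodule (Definition \ref{xxdefn4.6}, Lemma \ref{xxlem5.3}), which the paper verifies via the quasi-isomorphism $\koszul{E}\to\End_E(B(E;E))$. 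For the step you rightly flag as the main obstacle, the paper does not pass to a minimal model of $E$: it replaces $E$ by the quasi-isomorphic DG algebra $\env{E}=\koszul{\koszul{E}}$, killing all higher operations, and then proves Lemma \ref{xxlem7.1}(a) in two stages --- an induction on dimension showing every finite-dimensional DG module lies in $\thick{}{k}$, followed by the minimal semifree resolution of Theorem \ref{xxthm6.1} and an Adams-degree truncation to replace a module with finite-dimensional \emph{homology} by a genuinely finite-dimensional one. That last reduction is the point your ``finite filtration'' sentence glosses over: the nilpotence argument applies to the module, not to its homology, and the module itself need not be finite-dimensional. One small correction: Theorem \ref{xxthmA} applies here because $A$ Adams connected forces $\koszul{A}$ to be locally finite (Lemma \ref{xxlem2.2}), not because $HE$ is finite-dimensional.
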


This is proved as Corollary \ref{xxcor7.2}(b).

Other equivalences of triangulated categories can be found in Sections
\ref{xxsec4} and \ref{xxsec5}. If $A$ is either Artin-Schelter regular
(Definitions \ref{xxdefn9.1}(c) and \ref{xxdefn9.2}(c)) or right
noetherian with finite global dimension, then $HE$ is
finite-dimensional and hence Theorem \ref{xxthmB} applies.

Koszul duality has many applications in ring theory, representation
theory, algebraic geometry, and other areas.  The next result is a
generalization of the Bern\v{s}te{\u\i}n-Gel'fand-Gel'fand
correspondence that follows from Theorem \ref{xxthmB}.
Let $\underline{\InfD_{\fg}}(A)$ be the stable derived category of
$A_\infty$-modules over $A$ whose homology is finitely generated
over $HA$, and let $\InfD(\proj A)$ be the derived category of the
projective scheme of $A$.  These categories are defined in Section
\ref{xxsec10}, and the following theorem is part of Theorem
\ref{xxthm10.2}.

\begin{maintheorem}
\label{xxthmC} Let $A$ be an Adams connected $A_\infty$-algebra
that is noetherian Artin-Schelter regular. Let $E$ be the Koszul dual
of $A$. Then $HE$ is finite-dimensional and there is an
equivalence of triangulated categories
$\InfD(\proj A)\cong \underline{\InfD_{\fg}}(E)$.
\end{maintheorem}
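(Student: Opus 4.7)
The plan is to realize both sides of the desired equivalence as Verdier quotients and match them via (an extension of) Theorem~\ref{xxthmB}. Concretely, I would aim to establish
\[
\InfD(\proj A)\;\cong\;\InfD_{\fg}(A)/\InfD_{\fd}(A), \qquad \underline{\InfD_{\fg}}(E)\;\cong\;\InfD_{\fg}(E)/\InfD_{\per}(E),
\]
and then produce a triangulated equivalence between these two quotients using Koszul duality.

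First I would check that $HE$ is finite-dimensional. Since $HE$ is isomorphic to $\Ext^{*}_A(k,k)$ as a graded algebra, and since AS-regularity of $A$ forces this Ext-algebra to be Frobenius (the application of Theorem~\ref{xxthmA} advertised in the abstract), it is in particular finite-dimensional. Hence the hypotheses of Theorem~\ref{xxthmB} are met, giving the ``small'' equivalence $\InfD_{\per}(A)\cong\InfD_{\fd}(E)$ that will form the engine of the argument.

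Next I would extend this equivalence to the larger categories $\InfD_{\fg}$. Under the noetherian and AS-regular hypotheses, the Koszul functor (a suitable $\RHom$ against $k$, or equivalently bar-cobar) should take $\InfD_{\fg}(A)$ into $\InfD_{\fg}(E)$, restrict on one side to Theorem~\ref{xxthmB}, and on the other side induce a cross-correspondence $\InfD_{\fd}(A)\leftrightarrow\InfD_{\per}(E)$ (AS-regularity provides the finite Koszul-type resolution of $k$ that sends finite-dimensional $A$-modules to perfect $E$-modules). Passing to Verdier quotients then yields the middle equivalence
\[
\InfD_{\fg}(A)/\InfD_{\fd}(A)\;\cong\;\InfD_{\fg}(E)/\InfD_{\per}(E).
\]
The left quotient is $\InfD(\proj A)$ via a derived noncommutative-projective-scheme construction after Artin-Zhang, lifted to the $A_\infty$-setting; the right quotient is, by definition (or via a Buchweitz-type theorem for the Frobenius algebra $HE$), the stable derived category $\underline{\InfD_{\fg}}(E)$.

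The main obstacle will be the extension of Theorem~\ref{xxthmB} to the $\fg$-subcategories together with the swap of perfect and finite-dimensional subcategories: verifying that the Koszul functor is well defined and fully faithful on objects that are not compact requires controlling infinite totalizations of bar constructions and exploiting the noetherian hypothesis on $A$ carefully. A secondary difficulty is the rigorous identification of $\InfD_{\fg}(A)/\InfD_{\fd}(A)$ with $\InfD(\proj A)$ in the $A_\infty$-setting, which needs a derived Artin-Zhang-type result compatible with Adams grading and with $A_\infty$-structure.
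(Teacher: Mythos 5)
Your global strategy---realize both sides as Verdier quotients, deduce that $HE$ is finite-dimensional from the Frobenius property of $\Ext^*_A(k,k)$ (Corollary~\ref{xxcorD}), and match numerators and denominators via Koszul duality---is indeed the paper's strategy (Theorem~\ref{xxthm10.2}). Two of the difficulties you flag are non-issues, however. First, $\InfD(\proj A)$ is \emph{defined} in Section~\ref{xxsec10} as the quotient $\InfD_{\fg}(A)/\thinf{A}{k}$, and $\underline{\InfD_{\fg}}(E)$ as $\InfD_{\fg}(E)/\InfD_{\per}(E)$, so no derived Artin--Zhang theorem is needed. Second, no extension to non-compact objects is required: ``noetherian'' for an AS-regular $A_\infty$-algebra means precisely $\InfD_{\fg}(A)=\InfD_{\per}(A)$, which equals $\thinf{A}{k,A}$ since $k_A$ is small, while $HE$ finite-dimensional gives $\InfD_{\fg}(E)=\InfD_{\fd}(E)=\thinf{E}{k}$ by Lemma~\ref{xxlem7.1}(a); everything takes place in categories generated by $k$ and the algebra.

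The genuine gap is in your ``cross-correspondence.'' No single covariant Koszul duality functor restricts simultaneously to $\InfD_{\per}(A)\cong\InfD_{\fd}(E)$ and $\InfD_{\fd}(A)\cong\InfD_{\per}(E)$. The covariant functor $\RHom_A(k,-)$ does give $\thinf{A}{k}\cong\InfD_{\per}(E)$ (Theorem~\ref{xxthm5.7}), but it is not fully faithful on $\InfD_{\per}(A)$: by the Artin--Schelter condition $\RHom_A(k,A)\simeq S^l\Sigma^d k_E$, and applying the left adjoint $-\ltensor{E}B(A;A)$ produces a complex with homology $S^l\Sigma^d\,\Tor^E(k,k)\cong S^l\Sigma^d (HA)^{\dual}$, so the counit to $A$ cannot be a quasi-isomorphism unless $HA$ is Frobenius---false for AS-regular algebras of positive dimension such as $k[x]$. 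Thus $A$ lies outside the relevant Bass class and the covariant approach collapses. The paper instead uses the \emph{contravariant} duality $M\mapsto\RHom_A(M,k)$ (Proposition~\ref{xxprop4.11} and Theorem~\ref{xxthm5.8}(b)), which genuinely swaps the generators, yielding $\thinf{A}{k,A}^{\op}\cong\thinf{E^{\op}}{E,k}$ and hence $\InfD_{\sm}(\proj A)^{\op}\cong\underline{\InfD_{\sm}}(E^{\op})$; but this lands in \emph{left} $E$-modules and an \emph{opposite} category, and removing the ``op'' requires a further argument (Lemma~\ref{xxlem10.1}) that invokes the Artin--Schelter condition a second time, via the bimodule $A$, to prove $\underline{\InfD_{\sm}}(E^{\op})\cong\underline{\InfD_{\sm}}(E)^{\op}$. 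Your proposal is silent on the variance of the duality and on this op-bookkeeping, which is the actual crux of the proof, so the step ``passing to Verdier quotients then yields the middle equivalence'' does not go through as written.
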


Applications of Koszul duality in ring theory are surprising and
useful. We will mention a few results that are related to the
Gorenstein property. In the rest of this introduction we let $R$ be a
connected graded associative algebra over a base field $k$.

\begin{maincorollary}
\label{xxcorD}
Let $R$ be a connected graded algebra. Then $R$
is Artin-Schelter regular if and only if the {\rm Ext}-algebra
$\bigoplus_{i\in {\mathbb Z}}\Ext^i_R(k_R,k_R)$ is Frobenius.
\end{maincorollary}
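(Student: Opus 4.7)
Let $E$ denote the Koszul dual $A_\infty$-algebra of $R$, so that $HE = \Ext^*_R(k_R, k_R)$ as Adams-bigraded algebras. The plan is to translate both implications through the derived equivalence of Theorem~\ref{xxthmB}, after first matching the two finite-dimensionality conditions. Frobenius forces $HE$ to be finite-dimensional; AS-regularity forces $R$ to have finite global dimension, hence (via the identification $\Tor^R_*(k_R,k_R) \cong (HE)^*$) also forces $HE$ to be finite-dimensional. Conversely, finiteness of $HE$ forces the minimal free resolution of $k_R$ to terminate. So on both sides I may pass at once to the case $\dim_k HE < \infty$.

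Under that assumption, Theorem~\ref{xxthmB} supplies an equivalence $F\colon \InfD_{\per}(R) \xrightarrow{\sim} \InfD_{\fd}(E)$. Under $F$ the generator $k_R$ corresponds to the rank-one free module $E$, and $R$ corresponds to $\RHom_R(k_R, R) = \Ext^*_R(k_R, R)$, viewed as an $E$-module by Yoneda. The AS--Gorenstein condition $\Ext^i_R(k_R, R) = \delta_{i,d}\, k(l)$ is then precisely the assertion that $F(R)$ is an Adams--cohomological shift of the trivial $E$-module $k$. Transporting this back along the inverse equivalence yields $\RHom_E(k, E) \cong k(l)[-d]$, which for the finite-dimensional graded algebra $HE$ is equivalent to $HE \cong HE^*(l)[-d]$ as graded right $HE$-modules --- that is, to $HE$ being Frobenius. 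The converse runs the same chain in reverse, using Theorem~\ref{xxthmA} to recover $R$ as the Koszul dual of $E$ before reading off the AS--Gorenstein condition from the Frobenius pairing.

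The main obstacle I anticipate is the bookkeeping of Adams and cohomological shifts, of left versus right $E$-module structures, and of the gap between $HE$ as a graded algebra and $E$ as an $A_\infty$-algebra, so that the self-duality coming out of the derived equivalence genuinely records a Frobenius pairing on the ordinary graded algebra $HE$ rather than merely an $A_\infty$-level duality on $E$. Once this bookkeeping is in place, both directions of the corollary fall out of the single translation above.
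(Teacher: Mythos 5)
Your overall architecture --- reduce to the case $\dim_k HE<\infty$, transport the Artin--Schelter condition across Koszul duality, then identify the AS condition on $E$ with Frobeniusness of $HE$ --- is the paper's, and your opening reduction matches its use of Corollaries \ref{xxcor6.2} and \ref{xxcor7.2}(a) together with the fact that AS regularity is equivalent to ``$k_R$ small plus the right AS condition.'' The gap is in the transport step. The functor that sends $k_R\mapsto E$ and $R\mapsto\RHom_R(k_R,R)$ is $\RHom_R(k_R,-)=G_{\BB}$ with $\BB=B(R;R)$; but this realizes the equivalence $\thinf{R}{k}\cong\InfD_{\per}(E)$ of Theorem \ref{xxthm5.7}(b), \emph{not} the equivalence of Theorem \ref{xxthmB}, and its domain of equivalence does not contain $R$ when $R$ is infinite-dimensional: one computes that $F_{\BB}G_{\BB}(R)$ has homology a shift of $R^{\dual}$ rather than $R$, so $R$ is not in the relevant Bass class. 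The equivalence of Theorem \ref{xxthmB}, $\InfD_{\per}(R)\cong\InfD_{\fd}(E)$, goes the other way (it is $-\ltensor{R}B(E;E)$, with inverse $\RHom_E(k_E,-)$) and sends $R\mapsto k_E$; for \emph{this} functor, the assertion that $k_R$ corresponds to $E_E$ is precisely equivalent to the right AS condition on $E$ (apply the inverse to $E_E$), so assuming it in the forward direction is circular. There is no covariant equivalence in the paper's toolkit defined on a subcategory containing both $R$ and $k_R$ with the two assignments you need. The paper's substitute is the \emph{contravariant} duality $\RHom_R(-,k)\colon\thinf{R}{k,R}^{\op}\to\thinf{E^{\op}}{E,k}$ (Proposition \ref{xxprop4.11} and Theorem \ref{xxthm5.8}), which does handle the pair simultaneously, sending $k_R\mapsto{_EE}$ and $R\mapsto{_Ek}$; this yields Corollary \ref{xxcor5.10} and Proposition \ref{xxprop9.3}, namely that the right AS condition on $R$ is equivalent to the \emph{left} AS condition on $E$ --- note the side flip, which your computation of $\RHom_E(k,E)$ does not produce.

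The step you defer as bookkeeping --- that the derived-level statement $\RHom_{E^{\op}}(k,E)\simeq S^{-d}\Sigma^{-l}(k)$ is equivalent to the associative graded algebra $HE$ being Frobenius --- is Theorem \ref{xxthm9.11} (via Theorem \ref{xxthm9.8}) in the paper. Its nontrivial direction builds the quasi-isomorphism $S^l\Sigma^d(E)\to E^{\dual}$ out of a minimal semifree resolution (Theorem \ref{xxthm6.1}), and it also supplies the left/right symmetry that reconciles the side flip above; it is a genuine theorem rather than a grading exercise. So your plan is the right one, but the covariant equivalence you chose cannot carry it: replace it by the contravariant duality of Theorem \ref{xxthm5.8} (equivalently, cite Proposition \ref{xxprop9.3}) and invoke Theorem \ref{xxthm9.11} for the last step, and you recover the paper's proof.
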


This result generalizes a theorem of Smith \cite[Theorem 4.3 and
Proposition 5.10]{Sm} that was proved for Koszul algebras.  It is
proved in Section \ref{xxsec9.3}.  Corollary \ref{xxcorD} is a
fundamental result and the project \cite{LP07} was based on it.

The Gorenstein property plays an important role in commutative
algebra and algebraic geometry. We prove that the Gorenstein
property is preserved under Koszul duality; see Section
\ref{xxsec9.4} for details.

\begin{maincorollary}
\label{xxcorE} Let $R$ be a Koszul algebra and let $R^!$ be the
Koszul dual of $R$ in the sense of Beilinson-Ginzburg-Soergel
\cite{BGSo}. If $R$ and $R^!$ are both noetherian having balanced
dualizing complexes, then $R$ is Gorenstein if and only if
$R^!$ is.
\end{maincorollary}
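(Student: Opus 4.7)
The plan is to exploit the symmetry of Koszul duality in the Koszul case, combined with a characterization of the Gorenstein property in terms of balanced dualizing complexes. Since $R$ is Koszul, its $A_\infty$-Koszul dual in the sense of Section~\ref{xxsec2} coincides with $R^!$ regarded as an $A_\infty$-algebra with trivial higher multiplications, and by Theorem~\ref{xxthmA} this relationship is symmetric. Hence it suffices to prove only one direction.

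First I would reformulate the Gorenstein condition so that it can be transported across the derived equivalence. For a noetherian connected graded algebra $R$ admitting a balanced dualizing complex $\Omega_R$, being Gorenstein is equivalent to $\Omega_R$ being isomorphic, up to a shift and an automorphism twist, to $R$ itself. By Van den Bergh's theorem this in turn is equivalent to the local cohomology $R\Gamma_\fm(R)$ being concentrated in a single cohomological degree with one-dimensional graded pieces after taking the $k$-dual, that is, to $\RHom_R(k, R)$ being a shift of $k$.

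Next I would translate this condition across Koszul duality. The derived equivalences developed in Sections~\ref{xxsec4} and~\ref{xxsec5} supply a functor matching $k_R$ with $R^!$ and $R_R$ with a distinguished object $T$ in the $A_\infty$-derived category of $R^!$. Tracking the bar/Koszul construction identifies $T$, up to shift and twist, with the graded $k$-dual of $R\Gamma_{\fm'}(R^!)$, where $\fm'$ is the augmentation ideal of $R^!$. Under this identification, $\RHom_R(k,R)$ being a one-dimensional shift of $k$ corresponds precisely to $\RHom_{R^!}(k, R^!)$ being a one-dimensional shift of $k$, i.e.\ to $R^!$ being Gorenstein. The reverse implication is obtained by symmetry from Theorem~\ref{xxthmA}.

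The main obstacle is that when $R$ is Gorenstein but not Artin-Schelter regular, $HR^! = R^!$ is typically infinite-dimensional, so Theorem~\ref{xxthmB} does not apply directly and one must work with the broader derived equivalences of Sections~\ref{xxsec4} and~\ref{xxsec5}. The key technical step is to verify that the Koszul duality functor intertwines the local cohomology functors $R\Gamma_\fm$ on the $R$-side with $R\Gamma_{\fm'}$ on the $R^!$-side, so that the balanced dualizing complex of $R$ is sent to the balanced dualizing complex of $R^!$. It is exactly here that the hypothesis that both $R$ and $R^!$ admit balanced dualizing complexes is used: it provides the finiteness and duality needed to recognize the transported object $T$ as the dualizing object for $R^!$, after which the Gorenstein conditions on the two sides become manifestly equivalent.
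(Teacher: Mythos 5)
Your overall strategy coincides with the paper's: reduce the Gorenstein property, in the presence of a balanced dualizing complex, to the Artin--Schelter condition that $\RHom_R(k,R)$ is a shift of $k$ (this is Lemma \ref{xxlem9.12}), and then show that this condition is symmetric under Koszul duality (Proposition \ref{xxprop9.3}). The first reduction is essentially right, though note that the balanced dualizing complex hypothesis is consumed entirely there (both directions of Lemma \ref{xxlem9.12} use it, the implication ``Gorenstein $\Rightarrow$ Artin--Schelter'' via the $\chi$-condition), and not in the transfer step where you place it.

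The genuine gap is in your mechanism for the transfer step. You propose to identify the image $T$ of $R_R$ under the Koszul duality functor with the graded $k$-dual of $R\Gamma_{\fm'}(R^!)$, and to prove that the functor intertwines $R\Gamma_{\fm}$ with $R\Gamma_{\fm'}$, carrying the balanced dualizing complex of $R$ to that of $R^!$. Neither claim survives the most basic test case $R=k[x_1,\dots,x_n]$, $R^!=\Lambda(y_1,\dots,y_n)$: there $R\Gamma_{\fm'}(R^!)^{\dual}=(R^!)^{\dual}$ is a shift of $R^!$ itself (as $R^!$ is finite-dimensional Frobenius), whereas the image of $R_R$ under either of the paper's duality functors is a shift of the one-dimensional trivial module $k$; these are not isomorphic even up to shift and twist. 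So the identification you rely on is false, and the intertwining claim is neither proved nor needed. The correct, and far simpler, mechanism is Corollary \ref{xxcor5.10}: the contravariant functor $\RHom_R(-,k)$ of Theorem \ref{xxthm5.8} restricts to an equivalence $\thinf{R}{k,R}^{\op}\cong \thinf{(R^!)^{\op}}{R^!,k}$ which interchanges $k_R\leftrightarrow R^!$ and $R_R\leftrightarrow k$, giving directly $H^{i}_{j}(\RHom_R(k,R))\cong H^{-i}_{-j}(\RHom_{(R^!)^{\op}}(k,R^!))$; hence the Artin--Schelter condition passes from $R$ to $R^!$ (and back by Theorem \ref{xxthmA}) with no mention of local cohomology on the $R^!$ side. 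Replacing your identification of $T$ by this statement turns your outline into the paper's proof.
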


The technical hypothesis about the existence of balanced
dualizing complexes can be checked when the rings are close to
being commutative. For example, Corollary \ref{xxcorE} holds
when $R$ and $R^!$ are noetherian and satisfy a polynomial
identity. This technical hypothesis is presented because we
do not understand noncommutative rings well enough. We do not
know any example in which the technical hypothesis is
necessary; however, Corollary \ref{xxcorE} does fail for
non-noetherian rings -- for example, the free algebra
$R=k\langle x,y \rangle$ is Gorenstein, but $R^!\cong k \langle x,y
\rangle/(x^2,xy,yx,y^2)$ is not.

Note that Koszul duality preserves the Artin-Schelter
condition (Proposition \ref{xxprop9.3}). Under the
technical hypothesis of Corollary \ref{xxcorE}
the Artin-Schelter condition is equivalent to the Gorenstein
property. Therefore Corollary \ref{xxcorE} follows.
We can restate Corollary \ref{xxcorE} for $A_\infty$-algebras
in a way that may be useful for studying the Calabi-Yau property of
the derived category $\InfD(A)$ (see the discussion in Section
\ref{xxsec9.4}).

The following is proved in Section \ref{xxsec9.4}.

\begin{maincorollary}
\label{xxcorF}
Let $A$ be an Adams connected commutative differential graded
algebra such that $\RHom_A(k,A)$ is not quasi-isomorphic to
zero. If the Ext-algebra $\bigoplus_{i\in {\mathbb Z}}
\Ext^i_A(k_A,k_A)$ is noetherian, then $A$ satisfies the
Artin-Schelter condition.
\end{maincorollary}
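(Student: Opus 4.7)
The approach is to run Koszul duality in the direction opposite to Corollary~\ref{xxcorE}, transferring an Artin-Schelter property from the Ext-algebra back to $A$. Let $E$ denote the Koszul dual $A_\infty$-algebra of $A$, so that $HE=\bigoplus_i\Ext^i_A(k,k)$ is the given Ext-algebra. By Theorem~\ref{xxthmA}, $A$ is $A_\infty$-isomorphic to the double Koszul dual $E^!$, with the local finiteness required for that theorem coming from the Adams-connected hypothesis on $A$.

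The crucial extra input is the commutativity of $A$. Since the multiplication $A\otimes_k A\to A$ is a morphism of DGAs, dualizing through the bar construction equips $E$ with a compatible comultiplication; in particular, $HE$ inherits the structure of a connected graded Hopf algebra, not merely an associative algebra. By the structure theory of noetherian connected graded Hopf algebras (in the spirit of Brown-Goodearl and Wu-Zhang), a noetherian connected graded Hopf algebra is Artin-Schelter Gorenstein, so $\RHom_{HE}(k,HE)$ is concentrated in a single cohomological degree and one-dimensional there.

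It then remains to push this AS condition on $E$ back across Koszul duality to $A$. Under the derived equivalences between suitable subcategories of $\InfD(A)$ and $\InfD(E)$ set up in Sections~\ref{xxsec4} and~\ref{xxsec5}, the trivial module $k_A$ corresponds to $k_E$, while the regular module $A_A$ is identified, up to a shift, with $\RHom_E(k,E)$. The AS condition on $E$ therefore translates into $\RHom_A(k,A)$ being one-dimensional after a shift, which is precisely the Artin-Schelter condition for $A$. The standing hypothesis $\RHom_A(k,A)\not\simeq 0$ rules out the degenerate case and pins down the AS dimension.

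I expect the hardest step to be the Hopf-algebra transfer: making precise the $A_\infty$-Hopf structure on $E$ coming from commutativity of $A$, and showing that it descends to a genuine graded Hopf structure on $HE$ that is strong enough to invoke the noetherian-Hopf-implies-AS-Gorenstein results, which were originally formulated for ordinary connected graded Hopf algebras. Modulo these ingredients, the proof is an assembly of Theorem~\ref{xxthmA} with the derived equivalences of earlier sections, in the style of the proof of Corollary~\ref{xxcorE}.
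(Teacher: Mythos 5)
Your overall architecture matches the paper's: use commutativity of $A$ to put a graded cocommutative Hopf structure on $H=\Ext_A^*(k,k)=H\koszul{A}$, deduce an Artin-Schelter property for $H$ from noetherianness, and transport it back to $A$ through Koszul duality. The gap is in the middle step. The statement you invoke --- that a noetherian connected graded Hopf algebra is Artin-Schelter Gorenstein --- is not an available theorem: the Brown--Goodearl/Wu--Zhang results you allude to concern affine noetherian PI Hopf algebras, and the general noetherian case is only a conjecture. What the paper actually uses is the F\'elix--Halperin--Thomas theory of elliptic Hopf algebras, whose key input (their Theorem C) is that a graded cocommutative Hopf algebra which is noetherian \emph{and has finite depth}, i.e.\ $\RHom_{H}(k,H)\neq 0$, is elliptic; elliptic Hopf algebras are classified and those in the classification are AS Gorenstein. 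Noetherianness alone does not get you there.

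This is where your reading of the hypothesis $\RHom_A(k,A)\not\simeq 0$ goes wrong: you relegate it to ``ruling out a degenerate case and pinning down the AS dimension,'' but it is the essential input to the Hopf-algebra step. The paper transports it across Koszul duality via Corollary~\ref{xxcor5.10} to obtain $\RHom_{E^{\op}}(k,E)\neq 0$ for $E=\koszul{A}$, and then up the Eilenberg--Moore spectral sequence (Theorem~\ref{xxthm9.4}) to obtain $\RHom_{H}(k,H)\neq 0$, which is exactly the finite-depth hypothesis needed for the ellipticity theorem. Without this chain your argument has no way to conclude that $H$ is AS Gorenstein. The remaining transfer steps you sketch are essentially correct and are what the paper does: $H$ satisfying the AS condition implies $E$ does (Corollary~\ref{xxcor9.5}, again by Eilenberg--Moore), and then $E$ satisfying it implies $A$ does (Proposition~\ref{xxprop9.3}).
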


The hypothesis on $\RHom_A(k,A)$ is a version of finite
depth condition which is very mild in commutative ring theory
and can be checked under some finiteness conditions. This is
automatic if $A$ is a finitely generated associative commutative
algebra. As said before, the Artin-Schelter condition is equivalent
to the Gorenstein property under appropriate hypotheses. Hence
Corollary \ref{xxcorF} relates the Gorenstein property of $R$
with the noetherian property of $R^!$ and partially explains
why Corollary \ref{xxcorE} holds, and at the
same time it suggests that Corollary \ref{xxcorE} should be
a consequence of a more basic statement that relates the
noetherian property of $R$ with the Gorenstein property of
$R^!$. On the other hand, we believe that there should be a
version of Corollary \ref{xxcorE} without using the
noetherian property. As we commented for Corollary \ref{xxcorE},
Corollary \ref{xxcorF} should hold in a class of noncommutative
rings $R$. Corollary \ref{xxcorF} is also a variation of a result
of B{\o}gvad and Halperin about commutative complete intersection
rings \cite{BH}.

This paper is part four of our $A_\infty$-algebra project
and is a sequel to \cite{LP04, LP07, LP08}. Some results were
announced in \cite{LP04}. For example, Theorem \ref{xxthmB}
and Corollary \ref{xxcorD} were stated in \cite{LP04} without proof.
We also give a proof of \cite[Theorem 11.4]{LP04} in Section \ref{xxsec5}.

The paper is divided into three parts: Koszul duality for algebras,
Koszul duality for modules, and applications in ring theory.

Part I consists of Sections \ref{xxsec1} and \ref{xxsec2}.  Section
\ref{xxsec1} gives background material on $A_\infty$-algebras and
their morphisms.  The reader may wish to skim it to see the
conventions and notation used throughout the paper.  Theorem
\ref{xxthmA} is proved in Section \ref{xxsec2}, and we use it to
recover the classical Koszul duality of Beilinson, Ginzburg,
and Soergel.  We also discuss a few examples.

Part II consists of Sections \ref{xxsec3}--\ref{xxsec8}.  Section
\ref{xxsec3} gives background material on $A_\infty$-modules; most of
this is standard, but the results on opposites is new.  Section
\ref{xxsec4} sets up a framework for proving equivalences of various
derived categories of DG modules over a DG algebra.  Section
\ref{xxsec5} uses this framework to prove DG and $A_\infty$ versions
of the results of Beilinson, Ginzburg, and Soergel which establish
equivalences between certain derived categories of modules over a
Koszul algebra and over its Koszul dual.  The point of Section
\ref{xxsec6} is a technical theorem which allows us, in Section
\ref{xxsec7}, to rederive the classical results from the
$A_\infty$-algebra results.  Theorem \ref{xxthmB} is proved in Section
\ref{xxsec7}, also.  We discuss a couple of examples in Section
\ref{xxsec8}.

Part III consists of Sections \ref{xxsec9}--\ref{xxsec10}.  Section
\ref{xxsec9} discusses Artin-Schelter regular algebras and Frobenius
algebras, from the $A_\infty$-algebra point of view, and includes
proofs of Corollaries \ref{xxcorD}, \ref{xxcorE}, and \ref{xxcorF}.
Section \ref{xxsec10} gives an $A_\infty$-version of the BGG
correspondence; Theorem \ref{xxthmC} is proved there.

\tableofcontents

\part{Koszul duality for algebras}

\section{Background on $A_\infty$-algebras}
\label{xxsec1}

In this section, we describe background material necessary for the
rest of the paper.  There are several subsections: grading conventions
and related issues; $A_\infty$-algebras and morphisms between them;
the bar construction; and homotopy for morphisms of
$A_\infty$-algebras.

\subsection{Conventions}
\label{xxsec1.1}

Throughout we fix a commutative base field $k$. Unless otherwise
stated, every chain complex, vector space, or algebra will be over
$k$.  The unadorned tensor product $\otimes$ is over $k$ also.

Vector spaces (and the like) under consideration in this paper are
bigraded, and for any bihomogeneous element $a$, we write $\deg a =
(\deg_{1}(a), \deg_{2}(a))\in {\mathbb Z}\times G$ for some abelian
group $G$. The second grading is called the \emph{Adams grading}. In
the classical setting $G$ is trivial, but in this paper we have
$G={\mathbb Z}$; many of the abstract assertions in this paper hold
for any abelian group $G$.  If $V$ is a bigraded vector space, then
the degree $(i,j)$ component of $V$ is denoted by $V^i_j$. Usually we
work with bihomogeneous elements, with the possibility of ignoring the
second grading. All chain complexes will have a differential of degree
$(1,0)$. The Koszul sign convention is in force throughout the paper,
but one should ignore the second grading when using it: when
interchanging elements of degree $(i,s)$ and $(j,t)$, multiply by
$(-1)^{ij}$.

Given a bigraded vector space $V$, we write $V^{\dual}$ for its
graded dual.  Its \emph{suspension} $SV$ is the bigraded space with
$(SV)^{i}_{j} = V^{i+1}_{j}$: suspension shifts the first grading down
by one, and ignores the second grading.  Write $s: V \rightarrow SV$
for the obvious map of degree $-1$.  If $V$ has a differential $d_{V}$,
then define a differential $d_{SV}$ on $SV$ by $d_{SV}(sv) =
-sd_{V}(v)$. The \emph{Adams shift} of $V$ is $\Sigma V$ with
$(\Sigma V)^{i}_{j}=V^{i}_{j+1}$.  If $V$ has a differential, then
there is no sign in the differential for $\Sigma V$: $d_{\Sigma V}$ is
just the shift of $d_{V}$.

If $(M,d)$ and $(N,d)$ are complexes, so are $\Hom_{k}(M,N)$ and
$M\otimes N(=M \otimes_{k} N)$, with differentials given by
\begin{gather*}
d(f) = df - (-1)^{\deg_{1} (f)}fd, \quad \forall\; f\in \Hom_k(M,N);\\
d(m \otimes n) = dm \otimes n + (-1)^{\deg_{1} (m)} m \otimes dn,
\quad \forall\;  m\otimes n\in M\otimes N,
\end{gather*}
respectively.

If $\CC$ is a category, we write $\CC (X,Y)$ for morphisms in $\CC$
from $X$ to $Y$.  We reserve $\Hom$ to denote the chain complex with
differential as in the previous paragraph.

\subsection{$A_\infty$-algebras and morphisms}
\label{xxsec1.2}

In this paper, we will frequently work in the category of augmented
$A_\infty$-algebras; in this subsection and the next, we define the
objects and morphisms of this category.  Keller's paper \cite{Ke01}
provides a nice introduction to $A_\infty$-algebras; it also has
references for many of the results which we cite here and in later
subsections.  Lef\`evre-Hasegawa's thesis \cite{Le} provides more
details for a lot of this; although it has not been published, it is
available on-line.  Another reference is \cite{LP04} which contains
some easy examples coming from ring theory. The following definition
is originally due to Stasheff \cite{St}.

\begin{definition}
\label{xxdefn1.1}
An \emph{$A_\infty$-algebra} over $k$ is a $\Z \times \Z$-graded
vector space $A$ endowed with a family of graded $k$-linear maps
\[
m_n: A^{\otimes n} \to A, \quad n\geq 1,
\]
of degree $(2-n,0)$ satisfying the following \emph{Stasheff identities}:
for all $n\geq 1$,
\begin{equation*}
\tag*{\textbf{SI(n)}}
\sum (-1)^{r+st} m_u(\id^{\otimes r}\otimes m_s \otimes \id^{\otimes t})=0,
\end{equation*}
where the sum runs over all decompositions $n=r+s+t$, with $r$,
$t\geq 0$ and $s\geq 1$, and where $u=r+1+t$. Here $\id$ denotes
the identity map of $A$. Note that when these formulas are applied
to elements, additional signs appear due to the Koszul sign rule.

A \emph{DG} (\emph{differential graded}) \emph{algebra} is an
$A_\infty$-algebra with $m_n=0$ for all $n\geq 3$.
\end{definition}

The reader should perhaps be warned that there are several different
sign conventions in the $A_\infty$-algebra literature.  We have
chosen to follow Keller \cite{Ke01}, who is following Getzler and
Jones \cite{GJ}.  Stasheff \cite{St} and Lef\`evre-Hasegawa
\cite{Le} use different signs: they have the sign $(-1)^{rs+t}$ in
\textbf{SI(n)}, and this requires sign changes in other formulas (such
as \textbf{MI(n)} below).

As remarked above, we work with bigraded spaces throughout, and this
requires a (very mild) modification of the standard definitions:
ordinarily, an $A_\infty$-algebra is singly graded and $\deg m_{n} =
2-n$; in our bigraded case, we have put $\deg m_{n} = (2-n,0)$.  Thus
if one wants to work in the singly graded setting, one can just work
with objects concentrated in degrees $(*,0)=\Z\times \{0\}$.

\begin{definition}
\label{xxdefn1.2}
An $A_\infty$-algebra $A$ is \emph{strictly unital} if $A$ contains
an element $1$ which acts as a two-sided identity with respect to
$m_{2}$, and for $n \neq 2$, $m_n(a_1 \otimes \cdots \otimes a_n)=0$
if $a_{i}=1$ for some $i$.
\end{definition}

In this paper we assume that $A_\infty$-algebras (including
DG algebras) are strictly unital.

\begin{definition}
\label{xxdefn1.3}
A \emph{morphism} of $A_\infty$-algebras $f: A \rightarrow B$ is a
family of $k$-linear graded maps
\[
f_n: A^{\otimes n}\to B, \quad n\geq 1,
\]
of degree $(1-n,0)$ satisfying the following \emph{Stasheff morphism
identities}: for all $n\geq 1$,
\begin{equation*}
\tag*{\textbf{MI(n)}}
\sum (-1)^{r+st} f_u(\id^{\otimes r}\otimes m_s\otimes
\id^{\otimes t})=\sum (-1)^{w} m_q(f_{i_1}\otimes f_{i_2}
\otimes \cdots \otimes f_{i_q}),
\end{equation*}
where the first sum runs over all decompositions $n=r+s+t$ with
$r$, $t\geq 0$, $s\geq 1$, and where we put $u=r+1+t$; and the
second sum runs over all $1\leq q\leq n$ and all decompositions
$n=i_1+\cdots + i_{q}$ with all $i_s\geq 1$.  The sign on the
right-hand side is given by
\[
w=(q-1)(i_1 -1) + (q-2) (i_2-1)+ \cdots + 2(i_{q-2}-1)+ (i_{q-1}-1).
\]

When $A_\infty$-algebras have a strict unit (as we usually assume), an
$A_\infty$-morphism between them is also required to be \emph{strictly
unital}, which means that it must satisfy these \emph{unital morphism
conditions}: $f_1(1_A)=1_B$ where $1_A$ and $1_B$ are strict units of
$A$ and $B$ respectively, and $f_n(a_1 \otimes \cdots \otimes a_n)= 0$
if some $a_i=1_A$ and $n\geq 2$ (see \cite[3.5]{Ke01}, \cite[Section
4]{LP04}).
\end{definition}

As with $A_\infty$-algebras, we have modified the grading on
morphisms: we have changed the usual grading of $\deg f_{n} = 1-n$ to
$\deg f_{n} = (1-n,0)$.  The composite of two morphisms is given by a
formula similar to the morphism identities $\mathbf{MI(n)}$; see
\cite{Ke01} or \cite{Le} for details.

\begin{definition}
\label{xxdefn1.4}
A morphism $f: A \rightarrow B$ of $A_\infty$-algebras is
\emph{strict} if $f_{n}=0$ for $n \neq 1$.  The \emph{identity
morphism} is the strict morphism $f$ with $f_{1}=\id$.  A morphism
$f$ is a \emph{quasi-isomorphism} or an \emph{$A_\infty$-isomorphism}
if $f_{1}$ is a quasi-isomorphism of chain complexes.
\end{definition}

Note that quasi-isomorphisms of $A_\infty$-algebras have inverses:
a morphism is a quasi-isomorphism if and only if it is a homotopy
equivalence -- see Theorem~\ref{xxthm1.16} below.

We write $\Alg$ for the category of associative $\Z\times \Z$-graded
algebras
with morphisms being the usual graded algebra morphisms,
and we write $\ainfty$ for the category of $A_\infty$-algebras
with $A_\infty$-morphisms.

Let $A$ and $B$ be associative algebras, and view them as
$A_\infty$-algebras with $m_{n}=0$ when $n \neq 2$.  We point out
that there may be non-strict $A_\infty$-algebra morphisms between
them.  That is, the function
\[
\Alg (A,B) \rightarrow \ainfty (A,B),
\]
sending an algebra map to the corresponding strict
$A_\infty$-morphism, need not be a bijection.  See
Example~\ref{xxex2.8} for an illustration of this.

The following theorem is important and useful.

\begin{theorem}\cite{Ka80}
\label{xxthm1.5}
Let $A$ be an $A_\infty$-algebra and let $HA$ be its cohomology ring.
There is an $A_\infty$-algebra structure on $HA$ with $m_1=0$ and
$m_{2}$ equal to its usual associative product, and with the higher
multiplications constructed from the $A_\infty$-structure of $A$, such
that there is a quasi-isomorphism of $A_\infty$-algebras $HA\to A$
lifting the identity of $HA$.
\end{theorem}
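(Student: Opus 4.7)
The plan is to construct the $A_\infty$-structure on $HA$ and the quasi-isomorphism $f\colon HA\to A$ simultaneously by induction on arity, following Kadeishvili's homological perturbation method. First I fix chain-level splitting data. Viewing $HA$ as a complex with zero differential, I pick a $k$-linear cycle-choosing map $i\colon HA\to A$, a retraction $p\colon A\to HA$ with $pi=\id_{HA}$, and a degree $(-1,0)$ homotopy $h\colon A\to A$ satisfying
\[
\id_A - ip \;=\; m_1 h + h m_1.
\]
By the standard trick of replacing $h$ by $(\id-ip)\,h\,(\id-ip)$ and then by $h m_1 h$, one may assume the side conditions $ph=0$, $hi=0$, $h^2=0$. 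Set $m_1^{HA}=0$ and $f_1=i$, and define $m_2^{HA}=p\circ m_2\circ(i\otimes i)$ (the usual induced product) and $f_2 = h\circ m_2\circ(i\otimes i)$. The identities SI(1), SI(2), MI(1), MI(2) then follow directly from $m_1 i = 0$ and the homotopy relation.

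For the inductive step, suppose $n\geq 3$ and that $m_k^{HA}$ and $f_k$ have been constructed for all $k<n$ so that SI(k) and MI(k) hold. Let $U_n\colon HA^{\otimes n}\to A$ be the difference between the two sides of MI(n) with all terms involving the undefined $m_n^{HA}$ or $f_n$ removed, i.e.\
\[
U_n \;=\; \sum_{\substack{i_1+\cdots+i_q=n\\ q\geq 2}} (-1)^w\, m_q\bigl(f_{i_1}\otimes\cdots\otimes f_{i_q}\bigr) \;-\; \sum_{\substack{r+s+t=n\\ 2\leq s\leq n-1}} (-1)^{r+st}\, f_u\bigl(\id^{\otimes r}\otimes m_s^{HA}\otimes \id^{\otimes t}\bigr),
\]
with $u=r+1+t$ and the signs $w$ as in the statement of MI(n). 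Then set
\[
m_n^{HA} \;=\; p\circ U_n, \qquad f_n \;=\; h\circ U_n.
\]
Applying $\id_A - ip = m_1 h + h m_1$ to $U_n$ shows that MI(n) holds on the nose, while composing with $p$ (using $pi=\id$ and $ph=0$) gives SI(n). This propagates the induction.

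The main obstacle is the verification that at each stage $U_n$ is an $m_1$-cocycle, $m_1\circ U_n=0$ in $\Hom_k(HA^{\otimes n},A)$: this is the combinatorial heart of the argument. One expands $m_1\circ U_n$ using SI(s) for $A$ to rewrite the composites $m_1\,m_q(f_{i_1}\otimes\cdots\otimes f_{i_q})$, uses MI(k) for $k<n$ to rewrite each $m_1\circ f_k$ in terms of lower-order data, and invokes SI(k) for the already-constructed $HA$-structure to recombine the $m_j^{HA}\circ m_s^{HA}$-type terms; a careful accounting of signs — including the $(-1)^w$ from MI(n) and the Koszul signs generated when $m_1$ passes through the $f_{i_j}$'s — shows that everything cancels. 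Once this cycle property is in hand, all the $A_\infty$-identities propagate formally. Strict unitality of the output is arranged by further requiring $i(1_{HA})=1_A$, $p(1_A)=1_{HA}$, $h(1_A)=0$, and $h$ to annihilate any tensor in which $1_A$ appears; a parallel induction shows each $m_n^{HA}$ (for $n\geq 2$, $n\neq 2$) and each $f_n$ (for $n\geq 2$) vanishes on tensors containing units. Finally, $f_1=i$ is a quasi-isomorphism inducing $\id_{HA}$ on cohomology, so $f$ is an $A_\infty$-quasi-isomorphism lifting the identity of $HA$, as required.
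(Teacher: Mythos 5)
The paper does not actually prove this theorem: it quotes it from Kadeishvili \cite{Ka80} and merely remarks that the $\Z$-graded argument carries over to the $\Z\times\Z$-graded setting. Your proposal is precisely that argument --- the homological-perturbation construction of the minimal model --- so the approach is the right one, and it has the added virtue of making the paper's remark about bigradings transparent, since nothing in the induction depends on the grading group beyond the Koszul sign attached to the first degree.

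There is, however, one genuine logical wrinkle at exactly the point you call the combinatorial heart. When you expand $m_1\circ U_n$ using \textbf{SI} for $A$, \textbf{MI}$(k)$ for $k<n$, and \textbf{SI}$(k)$ for $k<n$ on $HA$, not everything cancels: the uncancelled remainder is precisely $\pm\, i\circ\Theta_n$, where $\Theta_n$ denotes the left-hand side of \textbf{SI}$(n)$ for $(HA,m^{HA}_\bullet)$. (Because $m_1^{HA}=0$, \textbf{SI}$(n)$ involves only $m_2^{HA},\dots,m_{n-1}^{HA}$, so $\Theta_n$ is built entirely from already-constructed data; the terms $i\,m_u^{HA}(\id^{\otimes r}\otimes m_s^{HA}\otimes\id^{\otimes t})$ appear when you rewrite $m_1\circ f_u$ via \textbf{MI}$(u)$ and keep its $f_1\circ m_u^{HA}$ summand.) So the identity the lower-order relations actually give is $m_1\circ U_n=-\,i\circ\Theta_n$. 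Asserting $m_1\circ U_n=0$ outright and then claiming that \textbf{SI}$(n)$ follows ``by composing with $p$'' is circular as written --- composing \textbf{MI}$(n)$ or the homotopy relation with $p$ only reproduces the definition $m_n^{HA}=p\circ U_n$. The repair is short and standard: apply $p$ to $m_1\circ U_n=-\,i\circ\Theta_n$ and use $p\,m_1=0$ (which follows from $m_1 i=0$, the homotopy relation, and injectivity of $i$) together with $p\,i=\id$ to conclude $\Theta_n=0$, i.e.\ \textbf{SI}$(n)$; injectivity of $i$ then gives $m_1\circ U_n=0$, and your $\id-ip=m_1h+hm_1$ computation yields \textbf{MI}$(n)$. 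With that reordering (and the usual care over whether $f_n$ is $h\circ U_n$ or $-h\circ U_n$ under the Getzler--Jones sign conventions used in this paper), the induction closes; your side conditions $ph=0$, $hi=0$, $h^2=0$ are indeed exactly what the strict-unitality bookkeeping at the end requires.
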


This theorem was originally proved for $\Z$-graded
$A_\infty$-algebras, and holds true in our $\Z\times \Z$-setting.

\subsection{Augmented $A_\infty$-algebras}
\label{xxsec1.3}

A strictly unital $A_\infty$-algebra $A$ comes equipped with a
strict, strictly unital morphism $\eta : k \rightarrow A$.

\begin{definition}
\label{xxdefn1.6}
\begin{enumerate}
\item
A strictly unital $A_\infty$-algebra $A$ is \emph{augmented} if
there is a strictly unital $A_\infty$-algebra morphism $\varepsilon
: A \rightarrow k$ so that $\varepsilon \circ \eta = \id_{k}$.
\item
If $A$ is an augmented $A_\infty$-algebra with augmentation
$\varepsilon : A \rightarrow k$, its \emph{augmentation ideal} is
defined to be $\ker (\varepsilon_{1})$.
\item
A \emph{morphism} of augmented $A_\infty$-algebras
$f: A \rightarrow B$ must be strictly unital and must respect the
augmentations: $\varepsilon_A = \varepsilon_{B} \circ f$.
We write $\Aug$ for the resulting category of augmented
$A_\infty$-algebras.
\end{enumerate}
\end{definition}

\begin{proposition}[Section 3.5 in \cite{Ke01}]
\label{xxprop1.7}
The functor $\Aug \rightarrow \ainfty$ sending an augmented
$A_\infty$-algebra to its augmentation ideal is an equivalence of
categories.  The quasi-inverse sends an $A_\infty$-algebra $A$ to $k
\oplus A$ with the apparent augmentation.
\end{proposition}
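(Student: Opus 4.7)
The plan is to construct the candidate quasi-inverse $G : \ainfty \to \Aug$ explicitly, then verify that the augmentation-ideal functor $F$ is inverse to $G$ up to natural isomorphism.

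Define $G(B) = k \oplus B$ with $1 \in k$ the strict unit, the projection $\varepsilon = \mathrm{pr}_k : k \oplus B \to k$ as the strict augmentation (so $\varepsilon_1 = \mathrm{pr}_k$ and $\varepsilon_n = 0$ for $n \geq 2$), and operations $m_n^{G(B)}$ extending $m_n^B$ on $B^{\otimes n}$ by the strict-unit rule: $m_2^{G(B)}(1,x) = m_2^{G(B)}(x,1) = x$, and $m_n^{G(B)}$ vanishes whenever some input is $1 \in k$ (for $n \neq 2$). A direct bookkeeping check shows that $\mathbf{SI(n)}$ for $G(B)$ reduces to $\mathbf{SI(n)}$ for $B$, as every term containing the unit either vanishes outright or telescopes with another by strict unitality. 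Functoriality of $G$ is immediate.

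For $F : \Aug \to \ainfty$, the equality $\varepsilon \circ \eta = \id_k$ at level one gives $\varepsilon_1 \eta_1 = \id_k$, so $A$ splits canonically as $k \cdot 1_A \oplus \bar{A}$ with $\bar{A} := \ker \varepsilon_1$. The key step is to show $\bar{A}$ is stable under all $m_n^A$. I would first reduce to the case of a \emph{strict} augmentation ($\varepsilon_n = 0$ for $n \geq 2$) by replacing $(A, \varepsilon)$ by an isomorphic object in $\Aug$; such strictification is available via standard perturbation arguments for strictly unital $A_\infty$-morphisms into $k$ (cf.\ \cite{Ke01}). Once $\varepsilon$ is strict, the Stasheff morphism identities $\mathbf{MI(n)}$ together with $m_q^k = 0$ for $q \neq 2$ collapse to $\varepsilon_1 \circ m_n^A = 0$ for $n \neq 2$ and $\varepsilon_1 \circ m_2^A = m_2^k \circ (\varepsilon_1 \otimes \varepsilon_1)$; each of these forces $m_n^A(\bar{A}^{\otimes n}) \subseteq \bar{A}$. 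The restrictions then endow $F(A) := \bar{A}$ with an $A_\infty$-structure, and the analogous reduction applied to morphisms shows $F$ is functorial.

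The natural isomorphisms are then explicit. On the one hand, $FG(B) = \ker(\mathrm{pr}_k) = B$ with inherited operations equal to the original $m_n^B$ by construction. On the other hand, for $(A, \varepsilon)$ strictly augmented, the map $a \mapsto (\varepsilon_1 a,\; a - \eta_1 \varepsilon_1 a)$ is a strict $A_\infty$-isomorphism $A \xrightarrow{\sim} k \oplus \bar{A} = GF(A)$ in $\Aug$, assembling into a natural isomorphism $\id \cong GF$. The main obstacle is the reduction to the strict-augmentation case; once that is in hand, every remaining verification is an elementary unwinding of the Stasheff and morphism identities using strict unitality.
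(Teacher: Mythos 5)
The paper does not actually prove this proposition; it is imported from \cite[Section 3.5]{Ke01}, so there is no in-text argument to compare yours against. That said, your architecture is the right one, and you have correctly isolated the only real difficulty: under Definition~\ref{xxdefn1.6} the augmentation $\varepsilon\colon A\to k$ is only required to be a strictly unital $A_\infty$-morphism, so it may have nonzero higher components $\varepsilon_n$ for $n\geq 2$, and then $\ker\varepsilon_1$ is genuinely \emph{not} closed under the $m_n$: the identity \textbf{MI(n)} for $\varepsilon$ expresses $\varepsilon_1\circ m_n$ on $(\ker\varepsilon_1)^{\otimes n}$ in terms of the higher components $\varepsilon_u$, and those terms need not vanish. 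The construction of $G(B)=k\oplus B$, the collapse of \textbf{SI(n)} and \textbf{MI(n)} in the strictly augmented case, and the natural isomorphisms $FG\cong\id$ and $\id\cong GF$ are all fine as sketched.

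The gap is exactly the step you yourself flag as the main obstacle. ``Standard perturbation arguments'' is not the right tool (the perturbation lemma transfers structure along homotopy equivalences; nothing is being perturbed here), and an object-by-object replacement by an isomorphic strictly augmented object does not define the functor $F$: a functor must assign a definite $A_\infty$-structure to $\ker\varepsilon_1$ for \emph{every} object of $\Aug$, compatibly with morphisms. What is needed is an explicit, canonical strictification: take $\phi$ with $\phi_1=\id_A$ and $\phi_n=\pm\,\eta_1\circ\varepsilon_n$ for $n\geq 2$ (strictly unital because $\varepsilon$ is), let $m'$ be the transported structure making $\phi\colon(A,m)\to(A,m')$ an isomorphism, and check (i) that $m'$ is still strictly unital with the same unit, and (ii) that $\varepsilon':=\varepsilon\circ\phi^{-1}$ is strict. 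Point (ii) is a short induction from the composition formula: $(\varepsilon'\circ\phi)_n$ equals $\varepsilon'_1\phi_n$ plus $\varepsilon'_n$ plus terms involving $\varepsilon'_q$ with $2\le q<n$, so once $\varepsilon'_2=\dots=\varepsilon'_{n-1}=0$ the equation $(\varepsilon'\circ\phi)_n=\varepsilon_n$ forces $\varepsilon'_n=0$. Because this recipe is canonical in $(A,\varepsilon)$, it also delivers the functoriality and naturality you defer to ``the analogous reduction.'' With that supplied your proof closes up; without it, the central assertion of the proposition --- that the augmentation ideal carries a natural $A_\infty$-structure at all --- is not yet established.
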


Using this equivalence, one can translate results and constructions
for $A_\infty$-algebras to the augmented case.  The bar construction
is an application of this.

\subsection{The bar construction}
\label{xxsec1.4}

The bar construction $B(-)$ is of central importance in this paper,
since we define the Koszul dual of $A$ to be the vector
space dual of its bar construction.  In this subsection, we
describe it.  We also discuss the cobar construction $\Omega (-)$,
the composite $\Omega (B(-))$, and other related issues.

The following definition is a slight variant on that in
\cite[Section 3.6]{Ke01}.

\begin{definition}
\label{xxdefn1.8}
Let $A$ be an
augmented $A_\infty$-algebra and let $I$ be
its augmentation ideal.  The \emph{bar construction} $\baraug{A}$
on $A$ is a coaugmented differential graded (DG) coalgebra defined
as follows: as a coaugmented coalgebra, it is the tensor coalgebra
$T(SI)$ on $SI$:
\[
T (SI) = k \oplus SI \oplus (SI)^{\otimes 2} \oplus
(SI)^{\otimes 3} \oplus \dotsb.
\]
As is standard, we use bars rather than tensors, and we also conceal
the suspension $s$, writing $[a_{1} | \dotsb | a_{m}]$ for the element
$sa_{1} \otimes \dotsb \otimes sa_{m}$, where $a_{i} \in I$ for each
$i$.  The degree of this element is
\[
\deg [a_{1} | \dotsb | a_{m}] = \left(\sum (-1+\deg_{1} a_{i}),
\sum \deg_{2} a_{i}\right).
\]

The differential $b$ on $\baraug{A}$ is the degree $(1,0)$ map given
as follows: its component $b_{m}: (SI)^{\otimes m} \rightarrow T
(SI)$ is given by
\begin{equation}
\label{1.9} b_m([a_1|\cdots | a_m])=\sum_{j,n} (-1)^{w_{j,n}}
[a_1|\cdots| a_j| \overline{m}_n(a_{j+1}, \cdots, a_{j+n})|
a_{j+n+1}|\cdots| a_m],
\end{equation}
where $\overline{m}_n=(-1)^n m_n$ and
\[
w_{j,n} = \sum_{1\leq s\leq j}(-1+\deg_{1} a_s)+\sum_{1\leq t < n}
(n-t)(-1+\deg_{1} a_{j+t}).
\]
That is, its component mapping $(SI)^{\otimes m}$ to
$(SI)^{\otimes u}$ is
\[
\sum 1^{\otimes j} \otimes (s \circ m_{n} \circ (s^{-1})^{\otimes n})
\otimes 1^{\otimes m-j-n},
\]
where the sum is over pairs $(j,n)$ with $m \geq j+n$, and where
$u=m-n+1$.

If $A$ is an augmented DG algebra, then the above bar construction
is the original bar construction and it is also denoted by $BA$.
\end{definition}

Note that, with this definition, the bar construction of a bigraded
algebra is again bigraded.

\begin{remark}
\label{xxrem1.10}
In \cite[3.6]{Ke01}, Keller describes the bar construction in the
non-augmented situation.  Aside from grading issues, the relation
between his version and ours is as follows: if we write $\barinfty{}$ for
Keller's version, then $\baraug{}$ is the composite
\[
\Aug \rightarrow \ainfty \xrightarrow{\barinfty{}} \DGC \rightarrow
\DGC_{\coaug},
\]
where the first arrow is the equivalence from
Proposition~\ref{xxprop1.7}, and the last arrow takes a coalgebra $C$
to $k\oplus C$, with the apparent coaugmentation.
\end{remark}

The coderivation $b$ encodes all of the higher multiplications of $A$
into a single operation.  Keller
\cite[3.6]{Ke01} notes that if $A$ and $A'$
are augmented $A_\infty$-algebras, then there is a bijection between
Hom sets
\begin{equation}\label{xxeq1.11}
\Aug(A,A') \longleftrightarrow \DGC_{\coaug}
(\baraug{A}, \baraug{A'}).
\end{equation}
(Again, he is working with non-augmented $A_\infty$-algebras,
but Proposition~\ref{xxprop1.7} allows us to translate his
result to this setting.)

We briefly mention the cobar construction.  In full generality, this
would probably take a coaugmented $A_\infty$-coalgebra as input, and
produce an augmented DG algebra.  We have no
interest in working with $A_\infty$-coalgebras, though, and we do
not need this generality.

\begin{definition}
\label{xxdefn1.12}
Given a coaugmented DG coalgebra $C$ with coproduct $\Delta$ and
differential $b_{C}$, the \emph{cobar construction} $\Omega C$ on
$C$ is the augmented DG algebra which as an augmented algebra is the
tensor algebra $T(S^{-1}J)$ on the desuspension of the
coaugmentation coideal $J= \cok (k \rightarrow C)$.
It is graded by putting
\[
\deg [x_{1} | \dotsb | x_{m}] = \left(\sum (1+\deg_{1} x_{i}), \sum
\deg_{2} x_{i}\right).
\]
Its differential is the sum $d=d_{0} + d_{1}$ of the differentials
\[
d_{0} [ x_{1} | \dotsb | x_{m}] = - \sum_{i=1}^{m} (-1)^{n_{i}} [x_{1}
| \dotsb | b_{C}(x_{i}) | \dotsb | x_{m} ],
\]
and
\[
d_{1} [ x_{1} | \dotsb |  x_{m}] =
\sum_{i=1}^{m} \sum_{j=1}^{k_{i}} (-1)^{n_{i} + \deg_{1} a_{ij}}
[ x_{1} | \dotsb | x_{i-1} |  a_{ij} | b_{ij} | \dotsb | x_{m}]
\]
where $n_{i} = \sum_{j<i} (1+\deg_{1} x_{j})$ and
$\sum_{j=1}^{k_{i}} a_{ij} \otimes b_{ij}=
\overline{\Delta}(x_i)$. Here $\overline{\Delta}$ is the induced
coproduct on $J$.
\end{definition}

\begin{definition}
\cite[Section 2.3.4]{Le}
\label{xxdefn1.13}
If $A$ is an augmented $A_\infty$-algebra, then its \emph{enveloping
algebra} $\env{A}$ is defined to be the DG algebra $\env{A} :=
\Omega (\baraug{A})$.
\end{definition}

Thus the enveloping algebra of an augmented $A_\infty$-algebra is an
augmented DG algebra.

\begin{proposition}
\cite[1.3.3.6 and 2.3.4.3]{Le}
\label{xxprop1.14}
There is a natural quasi-isomorphism of $A_\infty$-algebras $A
\rightarrow \env{A}$.
\end{proposition}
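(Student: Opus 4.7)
The plan is to construct the morphism $A \to \env A$ as the image, under the bijection~\eqref{xxeq1.11}, of the unit of the classical bar--cobar adjunction for DG coalgebras, and then to verify the quasi-isomorphism condition by a spectral sequence argument at the level of $f_1$. First I would construct $f$: by~\eqref{xxeq1.11}, augmented $A_\infty$-morphisms $A \to \env A$ correspond to coaugmented DG coalgebra morphisms $\baraug A \to \baraug{\env A}$, and since $\env A = \Omega(\baraug A)$ is an augmented DG algebra, Remark~\ref{xxrem1.10} identifies $\baraug{\env A}$ with the classical composite $B\Omega(\baraug A)$ applied to the DG coalgebra $C := \baraug A$. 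The classical adjunction $\Omega \dashv B$ between augmented DG algebras and coaugmented DG coalgebras supplies a natural unit $\eta_C : C \to B\Omega C$; I would define $f : A \to \env A$ to be the morphism corresponding under~\eqref{xxeq1.11} to $\eta_{\baraug A}$. Naturality in $A$ is inherited from naturality of $\eta$.

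Next I would identify $f_1$. Cofreeness of the tensor coalgebra $\baraug{\env A}$ means that $f_1$ is governed by the length-$1$-to-length-$1$ component of $\eta_{\baraug A}$, and a short unwinding gives $f_1(1_A) = 1$ and $f_1(a) = [s^{-1}[sa]]$ for $a$ in the augmentation ideal $I$ of $A$. In other words, $f_1$ is the natural inclusion of $A$ into the tensor algebra $k \oplus T^+(S^{-1}J)$ underlying $\env A$.

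The crux is to show $f_1$ induces an isomorphism on homology. I would filter $\env A = T(S^{-1}J)$ by tensor length: Definition~\ref{xxdefn1.12} splits the differential as $d_0 + d_1$ with $d_0$ preserving tensor length and $d_1$ strictly decreasing it, so the $E_0$-page of the associated spectral sequence is a direct sum of tensor powers of the reduced complex $S^{-1}\overline{\baraug A}$. An extra degeneracy on this reduced complex (arising from the tensor-coalgebra structure of $\baraug A$) makes its tensor powers contractible, with the only surviving cohomology coming from the length-$1$ column and contributing exactly $H(A)$. The $E_1$-page thus collapses onto a single column isomorphic to $H(A)$, and $f_1$ realizes this identification by construction. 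Convergence is guaranteed by the Adams grading, which bounds the filtration in each bidegree.

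The main obstacle will be this spectral-sequence analysis: one must carefully exhibit the contracting homotopy on the reduced bar complex in the presence of the higher $A_\infty$-operations and then control the extension of that contraction to arbitrary tensor powers. For a DG algebra this is the classical acyclicity of $\Omega BA$; the $A_\infty$-case is worked out explicitly in Lef\`evre-Hasegawa~\cite[\S1.3.3]{Le}, and the argument transfers to our $\Z \times \Z$-graded setting because the Adams grading plays no role in the differentials.
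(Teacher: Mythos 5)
Your construction of the morphism is exactly the one the paper gives: the paper does not actually prove this proposition but cites \cite[1.3.3.6 and 2.3.4.3]{Le}, and then explains that the map $A \to \env{A}$ is obtained by applying the bijection \eqref{xxeq1.11} to the unit $\baraug{A} \to B(\Omega(\baraug{A})) = \baraug{(\env{A})}$ of the bar--cobar adjunction. So the first half of your proposal, and your closing appeal to \cite{Le} for the quasi-isomorphism, coincide with what the paper does.

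The part you add on your own --- the spectral sequence for the filtration of $\env{A}=T(S^{-1}J)$ by cobar tensor length --- is where there is a genuine gap. A minor point first: by Definition \ref{xxdefn1.12}, $d_1$ \emph{increases} the cobar word length by one (it replaces $x_i$ by $a_{ij}|b_{ij}$), it does not decrease it. The serious issue is your identification of the $E_1$-page. The $E_0$-differential is the tensor extension of the bar differential on $J=\overline{\baraug{A}}$, so $E_1 \cong T\bigl(S^{-1}H(\overline{\baraug{A}})\bigr)$, and $H(\overline{\baraug{A}})$ is the reduced $\Tor^{A}(k,k)$, not $\overline{HA}$. The reduced \emph{one-sided} bar construction is not contractible: the extra-degeneracy contraction exists for the two-sided bar construction $B(A;A)$, not for $\overline{B(A)}$. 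For example, for $A=k[x]$ one has $H(\overline{BA})=k\cdot[x]$ and $E_1\cong k[z]$ spread over all cobar lengths, so the $E_1$-page neither collapses onto the length-one column nor equals $H(A)$ in a single column; the identification with $H(A)$ has to be made globally, and with this filtration there are also real convergence problems, since the proposition is stated for arbitrary augmented $A_\infty$-algebras and no Adams-connectedness is available to bound the filtration. A filtration that does work is by the total number of bar letters in a cobar monomial: $d_1$ and the $m_1$-part of $d_0$ preserve it, the $m_n$-parts with $n\geq 2$ strictly decrease it, and the associated graded is $\Omega B$ of the complex $(I,m_1)$ with all products zero, which can be checked by hand. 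Alternatively one argues, as in \cite[Section 19]{FHT01} and \cite{Le}, via the acyclicity of the twisted tensor product $B(A;A)$. As written, your $E_1$ analysis would fail.
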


The map $A \rightarrow \env{A}$ arises as follows: between the
categories of DG coalgebras and DG algebras, the bar $B$ and cobar
$\Omega$ constructions are adjoint, with $\Omega$ the left adjoint,
and thus for any DG coalgebra $C$, there is a map $C \rightarrow B
(\Omega C)$.  In the case where $C=\baraug{A}$, we get a map
\[
\baraug{A} \rightarrow B (\Omega (\baraug{A})) =
\baraug{(\Omega (\baraug{A}))}.
\]
(One can view an augmented DG algebra $R$ as an $A_\infty$-algebra
with all higher multiplications equal to zero.  In this situation, the
$A_\infty$-bar construction $\baraug{R}$ reduces to the standard DG
algebra bar construction $B(R)$.)  The bijection~\eqref{xxeq1.11} says
that this corresponds to a map
\[
A \rightarrow \Omega (\baraug{A}).
\]
This is the map in Proposition~\ref{xxprop1.14}.  This proposition
says that every augmented $A_\infty$-algebra is quasi-isomorphic to
an augmented DG algebra.  A similar result is also true in the non-augmented
case, although we will not need this.  The quasi-isomorphism between
$A$ and $\Omega (B A)$ is also a standard result in the case when $A$
itself is an augmented DG algebra, although the natural map goes the
other way in that setting; indeed, there is a chain homotopy
equivalence $\Omega B(A) \rightarrow A$ which is a map of DG algebras,
but its inverse need not be an algebra map. See \cite[Section 19]{FHT01},
for example.  One application of Proposition~\ref{xxprop1.14} is that
in the DG case, there is a quasi-inverse in the category $\Aug$.

Also from \cite[Section 19]{FHT01}, we have the following result.

\begin{lemma}
\cite[Section 19]{FHT01}
\label{xxlem1.15}
Let $R$ be an augmented DG algebra.  Assume that $R$ is locally
finite.  Then there is a natural isomorphism $\Omega (R^{\dual}) \cong
(\baraug{R})^{\dual} = B^{\dual}R$.
\end{lemma}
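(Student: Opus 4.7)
The plan is to construct the isomorphism in stages: first as bigraded vector spaces, then as graded algebras, and finally as DG algebras, after which naturality is automatic. Write $I$ for the augmentation ideal of $R$, so $R = k \oplus I$ and $R^{\dual} = k \oplus I^{\dual}$ with coaugmentation coideal $J = I^{\dual}$. By Definitions~\ref{xxdefn1.8} and \ref{xxdefn1.12}, $\baraug{R} = T(SI) = \bigoplus_{m \geq 0} (SI)^{\otimes m}$ and $\Omega(R^{\dual}) = T(S^{-1} I^{\dual}) = \bigoplus_{m \geq 0} (S^{-1} I^{\dual})^{\otimes m}$. Local finiteness of $R$ passes to $SI$ and to each tensor power, so the standard identification $(SV)^{\dual} \cong S^{-1}(V^{\dual})$ and the canonical map $((SI)^{\dual})^{\otimes m} \to ((SI)^{\otimes m})^{\dual}$ give natural isomorphisms $(S^{-1}I^{\dual})^{\otimes m} \cong ((SI)^{\otimes m})^{\dual}$ in each tensor degree. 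Summing over $m$, and using that in each fixed bidegree only finitely many tensor powers contribute (a consequence of the grading conventions in effect together with local finiteness), I obtain a natural isomorphism of bigraded vector spaces $\Omega(R^{\dual}) \cong (\baraug{R})^{\dual}$.

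Next I would check that this map intertwines the algebra structures. The tensor coalgebra $\baraug{R}$ carries the deconcatenation coproduct, so under duality it produces the concatenation product on $(\baraug{R})^{\dual}$; this is exactly the tensor algebra product on $\Omega(R^{\dual})$. Compatibility of the augmentation with the coaugmentation is immediate from the splittings $R = k \oplus I$ and $R^{\dual} = k \oplus I^{\dual}$.

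The heart of the argument is checking that the bar differential dualizes to the cobar differential. Since $R$ is a DG algebra, only $m_1 = d_R$ and $m_2 = \mu_R$ are nonzero, so the bar differential in \eqref{1.9} splits into two kinds of terms: one applying $d_R$ within a single bar slot, and one applying $\mu_R$ to two adjacent slots. On the cobar side the DG coalgebra $R^{\dual}$ has codifferential $b_C = d_R^{\dual}$ and reduced coproduct $\overline{\Delta} = \mu_R^{\dual}$, and the cobar differential $d_0 + d_1$ of Definition~\ref{xxdefn1.12} splits correspondingly: $d_0$ applies $b_C$ within a single slot, while $d_1$ splits one slot into two via $\overline{\Delta}$. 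Under the vector-space identification above, merging two adjacent tensor factors via $\mu_R$ is the transpose of splitting one factor via $\mu_R^{\dual}$, and likewise for $d_R$ versus $d_R^{\dual}$, so the two differentials coincide up to sign.

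The main obstacle is the sign bookkeeping: the weights $w_{j,n}$ appearing in \eqref{1.9} and the weights $n_i$ appearing in Definition~\ref{xxdefn1.12} both encode Koszul signs for moving suspension (resp.\ desuspension) symbols past bihomogeneous elements, and their equality under the isomorphism $(SI)^{\dual} \cong S^{-1}(I^{\dual})$ must be verified by direct (but routine) computation. Once the signs are matched, the isomorphism $\Omega(R^{\dual}) \cong (\baraug{R})^{\dual}$ is an isomorphism of augmented DG algebras, and naturality in $R$ follows since every ingredient---suspension/desuspension, linear dualization, tensor products, and passage to the augmentation ideal---is functorial.
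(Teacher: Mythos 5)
The paper offers no proof of this lemma---it is quoted directly from \cite[Section 19]{FHT01}---so there is nothing internal to compare against; your strategy (identify both sides as tensor (co)algebras on $SI$ and $S^{-1}I^{\dual}$, match the products, then match the differentials) is the standard one. However, there is a genuine gap at the single step that carries all the weight: the assertion that ``in each fixed bidegree only finitely many tensor powers contribute,'' which you claim follows from the grading conventions together with local finiteness of $R$. It does not. Local finiteness of $R$ gives $((SI)^{\otimes m})^{\dual}\cong((SI)^{\dual})^{\otimes m}$ for each fixed $m$, and hence a natural injection
\[
\Omega(R^{\dual})=\bigoplus_m (S^{-1}I^{\dual})^{\otimes m}\hookrightarrow \Bigl(\bigoplus_m (SI)^{\otimes m}\Bigr)^{\dual}=(\baraug{R})^{\dual},
\]
but this injection is surjective only when the graded dual of the direct sum agrees with the direct sum of the duals, i.e., only when $\baraug{R}$ is itself locally finite---a strictly stronger condition than local finiteness of $R$. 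Example~\ref{xxex2.10} of the paper is an explicit counterexample to your claim: $R=\Lambda(x)$ with $\deg x=(1,0)$ is locally finite (indeed finite-dimensional), yet every $(SI)^{\otimes m}$ sits in bidegree $(0,0)$, so $(\baraug{R})^{\dual}\cong k[[y]]$ while $\Omega(R^{\dual})\cong k[y]$, and these are not isomorphic.

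To repair the argument you must add (or derive from a connectivity hypothesis) the condition that $\baraug{R}$ is locally finite, equivalently that $\koszul{R}$ is locally finite. That is in effect how the lemma is used in the proof of Theorem~\ref{xxthm2.4}, where it is applied to $R=\koszul{A}$ under a hypothesis of exactly this kind, and it is the reason the paper introduces the strongly locally finite and Adams connected conditions of Definition~\ref{xxdefn2.1} (see Lemma~\ref{xxlem2.2}); in \cite{FHT01} the corresponding finiteness is guaranteed by a simple-connectivity assumption. The rest of your outline---deconcatenation dualizing to concatenation, and the two summands of the bar differential ($m_1$ within a slot, $m_2$ merging adjacent slots) dualizing to the $d_0$ and $d_1$ of Definition~\ref{xxdefn1.12}, with the sign check deferred---is correct and unproblematic.
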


In light of the lemma and Remark~\ref{xxrem1.10}, we point out that
if $R$ is a locally finite augmented associative algebra, then the
homology of the dual of its bar construction is isomorphic to
$\Ext_{R}^{*}(k,k):=\bigoplus_{i\in {\mathbb Z}}\Ext_{R}^{i}(k,k)$.
This is also true when $R$ is an $A_\infty$-algebra;
see \cite[Lemma 11.1]{LP04} and its proof.  Thus by
Theorem~\ref{xxthm1.5}, there is a
quasi-isomorphism of $A_\infty$-algebras $\Ext_{R}^{*}(k,k)
\rightarrow B^{\dual} R$. The $A_\infty$-structure on
$\Ext_{R}^{*}(k,k)$ is studied in \cite{LP08}.

\subsection{Homotopy}
\label{xxsec1.5}

Earlier, we said that we work in the category of augmented
$A_\infty$-algebras.  We also need the homotopy category of such
algebras, and so we need to discuss the notion of homotopy between
$A_\infty$-algebra morphisms.  See
\cite[3.7]{Ke01} and
\cite[1.2.1.7]{Le} for the following.

Let $A$ and $A'$ be augmented $A_\infty$-algebras, and suppose that
$f, g: A \rightarrow A'$ are morphisms of augmented
$A_\infty$-algebras.  Let $F$ and $G$ denote the corresponding maps
$\baraug{A} \rightarrow \baraug{A'}$.  Write $b$ and $b'$ for the
differentials on $\baraug{A}$ and $\baraug{A'}$, respectively.  Then $f$ and
$g$ are \emph{homotopic}, written $f \simeq g$, if there is a map $H:
\baraug{A} \rightarrow \baraug{A'}$ of degree $-1$ such that
\[
\Delta H = (F \otimes H + H \otimes G) \Delta \quad \text{and} \quad
F-G = b' \circ H + H \circ b.
\]
One can also express this in terms of a sequence of maps $h_{n} :
A^{\otimes n} \rightarrow A'$ satisfying some identities, but we will
not need this formulation.  See \cite[1.2.1.7]{Le} for
details (but note that he uses different sign conventions).

Two $A_\infty$-algebras $A$ and $A'$ are \emph{homotopy equivalent}
if there are morphisms $f: A \rightarrow A'$ and $g: A' \rightarrow A$
such that $f \circ g \simeq \id_{A'}$ and $g \circ f \simeq \id_A$.

We will use the following theorem.

\begin{theorem}
\cite{Ka87}, \cite{Pr}, \cite[1.3.1.3]{Le}
\label{xxthm1.16}
\begin{enumerate}
\item Homotopy is an equivalence relation on the set of morphisms of
$A_\infty$-algebras $A \rightarrow A'$.
\item An $A_\infty$-algebra morphism is a quasi-isomorphism
if and only if it is a homotopy equivalence.
\end{enumerate}
\end{theorem}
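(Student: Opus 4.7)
The plan is to work via the bar construction correspondence~\eqref{xxeq1.11}, which identifies $\Aug(A, A')$ with $\DGC_{\coaug}(\baraug{A}, \baraug{A'})$ and translates an $A_\infty$-homotopy between $f, g : A \to A'$ into a degree $-1$ map $H : \baraug{A} \to \baraug{A'}$ that is simultaneously an $(F,G)$-coderivation and a chain homotopy between the corresponding coalgebra maps $F, G$. This reduces both statements to questions about DG coalgebras, where the relevant toolkit is standard and already recorded in~\cite[1.3.1.3]{Le}.

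For part~(a), reflexivity is immediate ($H = 0$). Symmetry and transitivity are more delicate because the naive candidates ($-H$ for symmetry and $H_1 + H_2$ for transitivity) fail the $(F,G)$-coderivation condition, whose tensor factors $F \otimes H + H \otimes G$ are sensitive to which coalgebra map sits on which side. The cleanest resolution is a path-object argument in DG coalgebras: introduce a coaugmented DG coalgebra $P$ modelling the interval, equipped with two projections to $k$, and show that $f \simeq g$ is equivalent to the existence of a DG coalgebra map $\baraug{A} \to \baraug{A'} \otimes P$ restricting to $F$ and $G$ at the two endpoints. Symmetry then follows from the involution of $P$ that swaps the endpoints, and transitivity from a concatenation morphism $P \to P \otimes_k P$ gluing two intervals end-to-end.

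For part~(b), the ``if'' direction is immediate: if $f$ has an inverse up to homotopy, then $f_1$ is a chain-homotopy equivalence and hence a quasi-isomorphism. For ``only if'', use Theorem~\ref{xxthm1.5} to replace $A$ and $A'$ by their minimal models, so that we may assume $A$ and $A'$ are minimal (i.e.\ $m_1 = 0$) and $f : A \to A'$ is an $A_\infty$-quasi-isomorphism. Minimality makes $f_1$ not merely a quasi-isomorphism but an actual isomorphism of bigraded vector spaces. I would then construct an inverse $g : A' \to A$ inductively: set $g_1 = f_1^{-1}$, and determine $g_n$ by solving $\mathbf{MI(n)}$ for the composite $g \circ f$ (or $f \circ g$). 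At each step the unknown $g_n$ enters the identity only through the leading terms $m_2(f_1 g_n,-)$ and $m_2(-, f_1 g_n)$, so the equation is affine with leading operator governed by the invertible $f_1$ and can be solved uniquely from the previously constructed data. A parallel induction, using part~(a) to assemble the pieces, produces explicit homotopies $f \circ g \simeq \id_{A'}$ and $g \circ f \simeq \id_A$.

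The main obstacle is the inductive step in part~(b): at each stage one must check that the right-hand side of the equation defining $g_n$ is consistent with $g_1,\dots,g_{n-1}$ (so that the Stasheff morphism identities of lower order are not violated) and then construct the witnessing homotopies compatibly in $n$. This is precisely Kadeishvili's argument from~\cite{Ka87} and~\cite{Pr}, and I would import the detailed computation from~\cite[1.3.1.3]{Le} rather than reproduce it.
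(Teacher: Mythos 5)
The paper offers no proof of Theorem~\ref{xxthm1.16} to compare against: the result is imported wholesale from \cite{Ka87}, \cite{Pr} and \cite[1.3.1.3]{Le}. Your outline is a fair reconstruction of the standard argument in those sources --- passing through the bijection~\eqref{xxeq1.11} to turn homotopies into $(F,G)$-coderivation chain homotopies of DG coalgebra maps for part~(a), and reducing part~(b) to minimal models via Theorem~\ref{xxthm1.5} --- and the easy directions (reflexivity, and ``homotopy equivalence $\Rightarrow$ quasi-isomorphism'', which uses that the component of the coderivation identity on the cogenerators in tensor length one is exactly $f_1-g_1=m_1'h_1+h_1m_1$) are fine as stated. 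But two steps need repair. First, in the transitivity argument, $P\otimes_k P$ is the wrong object: the tensor product of two intervals is a square, i.e.\ a path object of a path object, not a concatenation. What is needed is the gluing of two copies of $\baraug{A'}\otimes P$ along the common endpoint evaluation to $\baraug{A'}$ (a pullback), together with the verification that this glued object is again a path object for $\baraug{A'}$; this is where the fibrancy properties of bar constructions in Lef\`evre-Hasegawa's framework actually enter, and it cannot be replaced by a map $P\to P\otimes_k P$.

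Second, and more seriously, the reduction to minimal models in part~(b) is circular as written. To transport a homotopy inverse constructed between $HA$ and $HA'$ back to one between $A$ and $A'$, you must already know that the comparison quasi-isomorphisms $HA\to A$ and $HA'\to A'$ of Theorem~\ref{xxthm1.5} are homotopy equivalences --- which is an instance of the very statement being proved. The standard repair is to observe that Kadeishvili's construction yields not only $i:HA\to A$ but also an explicit $p:A\to HA$ with $p\circ i=\id_{HA}$ and an explicit homotopy $i\circ p\simeq \id_A$, so the minimal model arrives already equipped with a two-sided homotopy inverse; only then does your induction on $\mathbf{MI(n)}$ for minimal algebras (where $f_1$ is a genuine isomorphism and one can in fact solve for a strict two-sided inverse $g$, not merely one up to homotopy) finish the proof. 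With those two points made explicit, your proposal coincides with the proofs in the cited references.
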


By part (a), we can define the homotopy category $\Hocat$ to be the
category of augmented $A_\infty$-algebras in which the morphisms are
homotopy classes of maps: that is,
\[
\Hocat (A,A') := \left(\Aug(A,A')/\simeq\right).
\]
By part (b), in this homotopy category, quasi-isomorphisms are
isomorphisms.

\section{The Koszul dual of an $A_\infty$-algebra}
\label{xxsec2}

Let $A$ be an augmented $A_\infty$-algebra.  Its
\emph{$A_\infty$-Koszul dual}, or \emph{Koszul dual} for short, is
defined to be $\koszul{A} := (\baraug{A})^{\dual}$.
By \cite[Section 11 and Lemma 11.1]{LP04}, $\koszul{A}$
is a DG algebra model of the $A_\infty$-$\Ext$-algebra
$\bigoplus_{i\in {\mathbb Z}}\Ext^i_A(k_A,k_A)$ where $k_A$ is
the trivial right $A_\infty$-module over $A$, and where,
by definition, $\Ext^i_A(M,N)=\bigoplus_{j\in {\mathbb Z}}
\InfD(A)(M, S^i \Sigma^j(N))$ for any
right $A_\infty$-modules $M$ and $N$.

In this section, we study some of the basic properties of
$A_\infty$-Koszul duality, we connect it to ``classical'' Koszul
duality, and we discuss a few simple examples.  The main result is
Theorem \ref{xxthmA}, restated as Theorem \ref{xxthm2.4}.

\subsection{Finiteness and connectedness conditions}
\label{xxsec2.1}

In this subsection, we introduce some technical conditions related to
finite-dimensionality and connectivity of bigraded objects.

\begin{definition}
\label{xxdefn2.1}
Let $A$ be an augmented $A_\infty$-algebra and let $I$ be its
augmentation ideal.  We write $I^i_*$ for the direct sum
$I^i_*=\bigoplus_{j} I^i_j$, and similarly $I^*_j =
\bigoplus_{i} I^i_j$.  We say that $A$ is \emph{locally finite} if
each bihomogeneous piece $A^i_j$ of $A$ is finite-dimensional.
We say that $A$ is \emph{strongly locally finite}
if $I$ satisfies the following:
\begin{enumerate}
\item [(1)] each bihomogeneous piece $I^i_j$ of $I$ is
finite-dimensional (i.e., $A$ is locally finite);
\item [(2)] either for all $j \leq 0$, $I^*_j=0$; or for all $j \geq 0$,
$I^*_j=0$; and
\item [(3)] either for all $j$, there exists an $m=m(j)$ so that for all $i
> m(j)$, $I^i_j=0$; or for all $j$, there exists an $m'=m'(j)$ so
that for all $i < m'(j)$, $I^i_j=0$.
\end{enumerate}
We say that $A$ is \emph{Adams connected} if, with respect to the
Adams grading, $A$ is (either positively or negatively) connected
graded and locally finite.  That is,
\begin{itemize}
\item $I^*_j$ is finite-dimensional for all $j$; and
\item either for all $j \leq 0$, $I^*_j=0$, or for all $j \geq 0$,
$I^*_j = 0$.
\end{itemize}
We say that a DG algebra $A$ is \emph{weakly Adams connected} if
\begin{itemize}
\item
the DG bar construction $B(A;A) \cong B(A) \otimes A$ is locally
finite,
\item
the only simple DG $A$-modules are $k$ and its shifts, and
\item
$A$ is an inverse limit of a family of
finite-dimensional left DG $A$-bimodules.
\end{itemize}
\end{definition}

\begin{lemma}
\label{xxlem2.2}
We have the following implications.
\begin{enumerate}
\item Adams connected $\Rightarrow$ strongly locally finite
$\Rightarrow$ weakly Adams connected.
\item $A$ weakly Adams connected $\Rightarrow$ $\koszul{A}$ locally
finite.
\item $A$ strongly locally finite $\Rightarrow$ $\koszul{A}$ strongly
locally finite, and hence every iterated Koszul dual of $A$ is
strongly locally finite.
\item $A$ Adams connected $\Rightarrow$ $\koszul{A}$ Adams connected,
and hence every iterated Koszul dual of $A$ is Adams connected.
\end{enumerate}
\end{lemma}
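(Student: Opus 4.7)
The plan is to unpack all four implications from the explicit definitions; most of the work is bookkeeping with the two gradings on the bar construction $B_{\aug}^{\infty}A = T(SI)$ and its dual.

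First I would dispatch (a). For the first implication, observe that if $A$ is Adams connected then $I^{*}_{j}$ is finite-dimensional for every $j$; but $I^{*}_{j}=\bigoplus_{i}I^{i}_{j}$, so finite-dimensionality automatically gives both that each $I^{i}_{j}$ is finite-dimensional (condition (1)) and that $I^{i}_{j}=0$ for all but finitely many $i$ (condition (3), in a strong form). Condition (2) is literally part of the Adams connected definition. For the second implication, assume $A$ is a strongly locally finite DG algebra. Local finiteness of $B(A;A)\cong B(A)\otimes A$ is proved by showing that the bidegree $(i,j)$ piece of $(SI)^{\otimes n}\otimes A$ is, for each $(i,j)$, nonzero for only finitely many $n$ and finite-dimensional for each such $n$ — this is where conditions (2) and (3) of strongly locally finite are used together: condition (2) forces $|j|\geq n$ (or the analogous lower bound when $A$ is negatively Adams graded), bounding $n$, and then (3) bounds the possible first degrees. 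For simple DG modules, filter by the (bigraded) augmentation ideal $I$ and use that any simple must be annihilated by $I$, hence is a shift of $k$. For the inverse limit statement, take $A$ as the inverse limit of its quotients by the finite-dimensional subbimodules $A_{>n}:=\bigoplus_{|j|>n}A^{*}_{j}$, which are ideals by connectivity and which have finite-dimensional quotients by local finiteness.

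For (b), if $A$ is weakly Adams connected then $B(A)\otimes A$ is locally finite. Since the unit $k\hookrightarrow A$ splits $B(A)$ off as a direct summand of $B(A)\otimes A$ (in each bidegree), $B(A)$ itself is locally finite, and hence its graded $k$-dual $\koszul{A}=(B_{\aug}^{\infty}A)^{\dual}$ is locally finite. (In the general $A_{\infty}$ case one invokes Proposition~\ref{xxprop1.14} and Lemma~\ref{xxlem1.15} to reduce to this DG computation.)

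The meat is (c) and (d); I would do them in parallel. Let $I$ be the augmentation ideal of $A$ and let $J$ be the augmentation ideal of $\koszul{A}$. Then $J$ is spanned in degree $(i,j)$ by duals of elements $[a_{1}|\cdots|a_{n}]$ with $n\geq 1$, $a_{s}\in I$, and
\[
i=-\sum_{s}(-1+\deg_{1}a_{s}),\qquad j=-\sum_{s}\deg_{2}a_{s}.
\]
Assume without loss of generality that $A$ is positively Adams graded, so $\deg_{2}a_{s}\geq 1$ for all $s$. Then $j\leq -n\leq -1$, verifying condition (2) of strongly locally finite (and the one-sided connectivity needed in (d)). For fixed $j$, we get $n\leq -j$, so only finitely many lengths $n$ contribute. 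For each such $n$, every $\deg_{2}a_{s}$ lies in the finite set $\{1,\dots,-j\}$, so by condition (3) for $A$ each $\deg_{1}a_{s}$ is bounded both above and below; this bounds $i$ both above and below, giving condition (3) for $\koszul{A}$. Finally, finite-dimensionality of each bidegree (condition (1)) follows since there are only finitely many tuples $(a_{1},\dots,a_{n})$ of bihomogeneous basis elements in the prescribed finite-dimensional bidegrees. This proves (c). For (d), note the same argument under the stronger Adams-connected hypothesis gives the extra conclusion that $J^{*}_{j}$ is finite-dimensional for each $j$ (bound $n$, then sum finitely many finite-dimensional contributions), so $\koszul{A}$ is Adams connected. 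Iterating (c) and (d) is then immediate. The main obstacle I anticipate is the careful bookkeeping in (c), in particular checking that the bar-length bound from condition (2) meshes with the row-by-row finiteness from condition (3) in both sign conventions.
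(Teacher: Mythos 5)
Most of your argument tracks the paper's proof closely, but there is one genuine gap, in the inverse-limit clause of the second implication of (a). Your proposed ideals $A_{>n}=\bigoplus_{|j|>n}A^{*}_{j}$ have quotients $\bigoplus_{|j|\leq n}A^{*}_{j}$, and these are finite-dimensional only when each $A^{*}_{j}$ is finite-dimensional --- that is, only when $A$ is Adams connected, which is exactly the hypothesis you are not allowed to assume at this point. Strong local finiteness permits $I^{*}_{j}$ to be infinite-dimensional: take $A=k\langle x_{1},x_{2},\dots\rangle$ with $\deg x_{i}=(i,1)$; then each $I^{i}_{j}$ is finite-dimensional, condition (2) holds, and condition (3) holds with $m'(j)=j$, yet $I^{*}_{1}$ is already infinite-dimensional, so none of your quotients is finite-dimensional. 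The paper's ideals $J_{n}$ repair this by truncating in \emph{both} gradings at once: $J_{n}$ contains all of $I^{s}_{*}$ for $s>n$ and, in each first degree $s\leq n$, the components of sufficiently large Adams degree, with cutoffs built from the constants $m'$ of condition (3) so that $J_{n}$ is still a two-sided ideal. The quotient $A/J_{n}$ then meets only finitely many bidegrees and is finite-dimensional by condition (1). Some construction of this kind is unavoidable; a purely Adams-graded truncation does not suffice.

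Apart from this, your treatment of (b), (c), (d) and the rest of (a) is essentially the paper's own argument (degree bookkeeping on $T(SI)$ and its dual), with one overstatement in (c): condition (3) of strong local finiteness bounds $\deg_{1}a_{s}$ on only \emph{one} side for each Adams degree, not ``both above and below'' as you assert. This is harmless, because condition (3) for $\koszul{A}$ likewise only demands a one-sided bound on $i$ for each fixed $j$, and your inequalities go through with the one-sided version; for condition (1) of $\koszul{A}$ one then uses the constraint $\sum_{s}\deg_{1}a_{s}=n-i$ to convert the one-sided bounds into finiteness of the set of contributing bidegrees. You should state this carefully, since the two-sided bound is precisely what distinguishes the Adams connected case (d) from the strongly locally finite case (c).
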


\begin{proof}
The first implication in part (a) is clear.

For the second implication, if $A$ is strongly locally finite, then
for connectivity reasons, $k$ and its shifts will be the only simple
modules.  We defer the proof that $B(A;A)$ is locally finite until
after the proof of part (c).

For the inverse limit condition, we assume that $I^*_j=0$ when $j \leq
0$, and that for each $j$, there is an $m'(j)$ such that $I^i_j=0$
when $i < m'(j)$.  The other cases are similar.  We will describe a
sequence of two-sided ideals $J_{n}$ in $A$, $n \geq 1$, so that
$A/J_{n}$ is finite-dimensional, and $A = \varprojlim A/J_{n}$.
Define $J_{n}$ to be
\begin{align*}
J_{n} = I^1_{\geq n} &\oplus I^2_{\geq n+m'(1)} \oplus I^3_{\geq n+\min
(2m'(1), m'(2))} \oplus \cdots \\
&\oplus I^s_{\geq n + \min_{\sigma \vdash n-1}(\sum_{a \in
\sigma}m'(a))} \oplus \cdots \oplus I^n_{\geq
n+\min(\cdots)} \\
& \oplus I^{n+1}_{*} \oplus I^{n+2}_{*} \oplus \cdots.
\end{align*}
The notation ``$\sigma \vdash n-1$'' means that $\sigma$ partitions
$n-1$.  The idea here is that, for example, if $J$ contains all of the
elements in bidegrees $(1,\geq n)$, and since $A$ has elements in
bidegrees $(1, \geq m'(1))$, then for $J$ to be an ideal, it should
contain all of the elements in bidegrees $(2,\geq n+m'(1))$.

Part (b) is clear: as graded vector spaces, $B(A;A)$ and $B(A)\otimes_k
A$ are isomorphic, so if $B(A;A)$ is locally finite, so is $B(A)$.

(c) Since $\koszul{A}$ is dual to the tensor coalgebra $T(SI)$, we
focus on $T(SI)$.  If we suppose that $I^*_j=0$ when $j \leq 0$, then
for all $j<n$, we have $(I^{\otimes n})^*_j = 0$.  Shifting $I$ by $S$
does not change this: $((SI)^{\otimes n})^*_j$ will be zero if $j<n$.
Therefore, $T(SI)$ satisfies condition (2) of Definition
\ref{xxdefn2.1}.  Dualizing, we see that $\koszul{A}$ satisfies the
other version of condition (2): if $J$ is its augmentation ideal, then
$J^*_{j}=0$ when $j \geq 0$.
Similarly, if $I^*_j=0$ when $j \geq 0$, then $J^*_{j}=0$ when $j \leq
0$.

So if $I$ satisfies condition (2), then so does $J$.

Now suppose that $I$ satisfies (1) and this version of condition (3):
for each $j$, there is an $m'(j)$ such that $I^i_j=0$ when $i <
m'(j)$.  Then for fixed $n$ and $j$,
\begin{itemize}
\item $(I^{\otimes n})^i_j$ is zero if $i$ is small enough, and
\item $(I^{\otimes n})^i_j$ is finite-dimensional for all $i$.
\end{itemize}
Therefore $T(SI)$ satisfies condition (3).  Furthermore, since for
fixed $j$, $(I^{\otimes n})^*_j$ is zero for all but finitely many
values of $n$, we see that $T(SI)$ satisfies condition (1).
Dualizing, we see that the augmentation ideal $J$ of $\koszul{A}$
satisfies (1) and (3), also (although $J$ satisfies the ``other
version'' of (3)).  This completes the proof of part (c).

We return to part (a): if $A$ is strongly locally finite, then by part
(c), so is the bar construction $B(A)$; more precisely, $A$ and $B(A)$
will satisfy the same version of condition (3).  Hence it is easy to
verify that their tensor product will be locally finite.  This
completes the proof of (a).

(d) $A$ being Adams connected is equivalent to $I$ satisfying (1),
(2), and both versions of (3): for each $j$, there are numbers $m(j)$
and $m'(j)$ so that $I^i_j=0$ unless $m'(j) \leq i \leq m(j)$.  By the
proof of (c), this implies that $\koszul{A}$ satisfies the same
conditions.
\end{proof}

\begin{remark}
\label{xxrem2.3}
One may interchange the roles of $i$ and $j$ in the definition of
strong local finiteness, but the presence of the shift $S$ in
$\koszul{A}=T(SI)$ makes the situation asymmetric.  Suppose
that $I$ satisfies the following:
\begin{enumerate}
\item [(1')] each bihomogeneous piece $I^i_j$ of $I$ of $A$ is
finite-dimensional;
\item [(2')] either for all $i$ there exists an $m=m(i)$ so that for all $j
\geq m(i)$, $I^i_j=0$; or for all $i$ there exists an $m'=m'(i)$ so
that for all $j \leq m'$, $I^i_j=0$; and
\item [(3')] either for all $i \geq 1$, $I^i_*=0$; or for all $i \leq 1$,
$I^i_*=0$.
\end{enumerate}
Then by imitating the proof of part (c) of the lemma, one can show
that $\koszul{A}$ is locally finite; however, it may not satisfy
(2').

Suppose that $I$ satisfies (1'), (2'), and the following:
\begin{enumerate}
\item [(3'')] either for all $i \geq 0$, $I^i_*=0$; or for all $i \leq 1$,
$I^i_*=0$.
\end{enumerate}
Then the same proof shows that the augmentation ideal of $\koszul{A}$
satisfies (1'), (2'), and (3'') as well, and hence the same holds for
every iterated Koszul dual of $A$.
\end{remark}

\subsection{$A_\infty$-Koszul duality}
\label{xxsec2.2}

Here is the main theorem of this section, which is a slight
generalization of \cite[Theorem 11.2]{LP04}.  This is
Theorem~\ref{xxthmA} from the introduction.

\begin{theorem}
\label{xxthm2.4}
Suppose that $A$ is an augmented $A_\infty$-algebra with
$\koszul{A}$ locally finite.  Then there is a natural
quasi-isomorphism of $A_\infty$-algebras $A \cong
\koszul{\koszul{A}}$.
\end{theorem}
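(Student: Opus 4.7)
The plan is to reduce the statement to Proposition~\ref{xxprop1.14} by identifying $\koszul{\koszul{A}}$ with the enveloping algebra $\env{A} = \Omega(\baraug{A})$. The key tool is Lemma~\ref{xxlem1.15}, which for any locally finite augmented DG algebra $R$ supplies a natural isomorphism $\Omega(R^{\dual}) \cong (\baraug{R})^{\dual}$.

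First, I apply Lemma~\ref{xxlem1.15} to the augmented DG algebra $R = \koszul{A}$, which is locally finite by hypothesis. (Note that $\koszul{A}$ is a genuine DG algebra, not merely an $A_\infty$-algebra: the tensor coalgebra structure on $\baraug{A}$ dualizes to the tensor algebra structure on $\koszul{A}$, and the coderivation $b$ dualizes to a derivation.) This yields a natural DG algebra isomorphism
\[
\Omega\bigl((\koszul{A})^{\dual}\bigr) \cong \bigl(\baraug{\koszul{A}}\bigr)^{\dual} = \koszul{\koszul{A}}.
\]
Next, since $\koszul{A} = (\baraug{A})^{\dual}$ is locally finite, so is $\baraug{A}$ (a bigraded vector space and its graded dual have the same bihomogeneous dimensions). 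The canonical biduality map therefore induces a natural DG coalgebra isomorphism $\baraug{A} \cong ((\baraug{A})^{\dual})^{\dual} = (\koszul{A})^{\dual}$, and applying the cobar functor produces a natural DG algebra isomorphism
\[
\env{A} = \Omega(\baraug{A}) \cong \Omega\bigl((\koszul{A})^{\dual}\bigr) \cong \koszul{\koszul{A}}.
\]

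Finally, Proposition~\ref{xxprop1.14} provides a natural $A_\infty$-quasi-isomorphism $A \to \env{A}$; composing with the isomorphism just constructed yields the desired natural $A_\infty$-quasi-isomorphism $A \to \koszul{\koszul{A}}$.

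The main obstacle is bookkeeping: one must verify that the DG coalgebra/algebra dualities, the biduality isomorphism, the identifications implicit in Remark~\ref{xxrem1.10}, and the cobar functor all interact coherently with the differentials, the coproducts, the augmentations, and the bigrading (including the suspension $S$ appearing in both $B$ and $\Omega$, and the sign conventions of Definitions~\ref{xxdefn1.8} and~\ref{xxdefn1.12}). Naturality is essentially automatic, since every constituent map is natural; the local finiteness hypothesis on $\koszul{A}$ is needed precisely to ensure that the biduality map $\baraug{A} \to ((\baraug{A})^{\dual})^{\dual}$ is an isomorphism and that Lemma~\ref{xxlem1.15} applies to $R = \koszul{A}$.
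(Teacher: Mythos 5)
Your proposal is correct and follows essentially the same route as the paper: both identify $\koszul{\koszul{A}}$ with $\env{A}=\Omega(\baraug{A})$ by applying Lemma~\ref{xxlem1.15} to the locally finite DG algebra $\koszul{A}$ together with the biduality isomorphism $((\baraug{A})^{\dual})^{\dual}\cong \baraug{A}$, and then invoke Proposition~\ref{xxprop1.14}. Your write-up is, if anything, slightly more explicit than the paper's about why $\baraug{A}$ is locally finite and where the naturality comes from.
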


If $A$ is weakly Adams connected, then essentially by definition (or
see Lemma \ref{xxlem2.2}), $\koszul{A}$ is locally finite.  Hence
by Lemma \ref{xxlem2.2}, $\koszul{A}$ is locally finite if $A$ is
Adams connected or strongly locally finite.  The summary of the
theorem's proof is that the double Koszul dual is the enveloping
algebra $\env{A}$ of $A$ (see Definition~\ref{xxdefn1.13} and
Proposition~\ref{xxprop1.14}).

\begin{proof} By definition, the double Koszul dual
$\koszul{\koszul{A}}$ is
$(\baraug{((\baraug{A})^{\dual})})^{\dual}$.  Apply
Lemma~\ref{xxlem1.15} to the DG algebra
$\koszul{A}=(\baraug{A})^{\dual}$,
which is locally finite.  Then there are natural
DG algebra isomorphisms
\[
\koszul{\koszul{A}}= (\baraug{((\baraug{A})^{\dual})})^{\dual}
\cong \Omega (((\baraug{A})^{\dual})^{\dual}) \cong
\Omega (\baraug{A}).
\]
Proposition~\ref{xxprop1.14} gives a natural
$A_\infty$-isomorphism $A \xrightarrow{\cong} \Omega
(\baraug{A})$.
\end{proof}

Koszul duality $\koszul{-}$ is a contravariant functor from
$A_\infty$-algebras to DG algebras, and since one can view a
DG algebra as being an $A_\infty$-algebra, there are functions
\[
\Aug (A,A') \xrightarrow{E(-)}
\DGA_{\aug} (\koszul{A'},\koszul{A}) \xrightarrow{}
\Aug (\koszul{A'}, \koszul{A}),
\]
for augmented $A_\infty$-algebras $A$ and $A'$.

\begin{corollary}
\label{xxcor2.5}
Suppose that $A$ and $A'$ are augmented $A_\infty$-algebras with
$\koszul{A}$, $\koszul{\koszul{A}}$, $\koszul{A'}$, and
$\koszul{\koszul{A'}}$ locally finite.
\begin{enumerate}
\item Then $\koszul{-}$ gives a bijection
\[
\Aug (A,A') \xrightarrow{\sim} \DGA_{\aug}(\koszul{A'}, \koszul{A}).
\]
\item The composite
\[
\Aug (A,A') \xrightarrow{\sim} \DGA_{\aug}(\koszul{A'}, \koszul{A})
\rightarrow \Aug (\koszul{A'}, \koszul{A})
\]
induces a bijection
\[
\Hocat (A,A') \xrightarrow{\sim} \Hocat (\koszul{A'}, \koszul{A}).
\]
\item Hence every $A_\infty$-algebra map $f: \koszul{A'} \rightarrow
\koszul{A}$ is homotopic to a DG algebra map.
\end{enumerate}
\end{corollary}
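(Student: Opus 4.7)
The plan is to prove (a) first, then derive (b) by iterating (a) and invoking Theorem~\ref{xxthm2.4}, and obtain (c) immediately from (b).

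For part (a), the starting point is Keller's bijection \eqref{xxeq1.11},
\[
\Aug(A,A') \longleftrightarrow \DGC_{\coaug}(\baraug{A}, \baraug{A'}).
\]
Local finiteness of $\koszul{A}=(\baraug{A})^{\dual}$ forces $\baraug{A}$ itself to be locally finite, since a locally finite bigraded vector space is reflexive, and similarly for $A'$. On the full subcategory of locally finite objects, the graded linear dual is a contravariant equivalence between coaugmented DG coalgebras and augmented DG algebras, sending $\baraug{A}$ to $\koszul{A}$. Combining these ingredients gives
\[
\Aug(A,A') \xrightarrow{\sim} \DGA_{\aug}(\koszul{A'}, \koszul{A}),
\]
and unwinding the definitions identifies this bijection with $f \mapsto \koszul{f}$.

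For part (b), apply (a) to the pair $(\koszul{A'}, \koszul{A})$, whose Koszul duals $\koszul{\koszul{A}}$ and $\koszul{\koszul{A'}}$ are locally finite by hypothesis, to obtain a second bijection $\Aug(\koszul{A'}, \koszul{A}) \xrightarrow{\sim} \DGA_{\aug}(\koszul{\koszul{A}}, \koszul{\koszul{A'}})$. The composite of these two applications of $\koszul{-}$ sends $f$ to $\koszul{\koszul{f}}$, and by Theorem~\ref{xxthm2.4} together with Proposition~\ref{xxprop1.14}, the natural map $A \to \env{A} \cong \koszul{\koszul{A}}$ is a quasi-isomorphism and hence an isomorphism in $\Hocat$. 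Thus $\koszul{\koszul{-}}$ is naturally isomorphic to the identity functor on $\Hocat$, so the induced composite $\Hocat(A,A') \to \Hocat(\koszul{A'}, \koszul{A}) \to \Hocat(\koszul{\koszul{A}}, \koszul{\koszul{A'}})$ is a bijection. The same argument, applied to $(\koszul{A'}, \koszul{A})$, shows that $\Hocat(\koszul{A'}, \koszul{A}) \to \Hocat(\koszul{\koszul{A}}, \koszul{\koszul{A'}}) \to \Hocat(\koszul{\koszul{\koszul{A'}}}, \koszul{\koszul{\koszul{A}}})$ is also a bijection. The common arrow $\Hocat(\koszul{A'}, \koszul{A}) \to \Hocat(\koszul{\koszul{A}}, \koszul{\koszul{A'}})$ is therefore both surjective (as the second arrow of the first bijection) and injective (as the first arrow of the second), hence bijective; combined with the first bijection, this forces $\Hocat(A,A') \to \Hocat(\koszul{A'}, \koszul{A})$ to be bijective, proving (b). Part (c) is then immediate: surjectivity provides $g \in \Aug(A,A')$ with $\koszul{g} \simeq f$ in $\Aug$, and $\koszul{g}$ is a DG algebra map by construction.

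The main obstacle in this plan is verifying that the composite of part (b) descends to homotopy classes: an $A_\infty$-homotopy between $f, g \in \Aug(A, A')$, witnessed by a degree $-1$ map $H : \baraug{A} \to \baraug{A'}$ satisfying the coderivation-type condition of Section~\ref{xxsec1.5}, must produce an $A_\infty$-homotopy between the resulting maps $\koszul{A'} \to \koszul{A}$, witnessed by a degree $-1$ map $\baraug{\koszul{A'}} \to \baraug{\koszul{A}}$. Lemma~\ref{xxlem1.15} identifies $\baraug{\koszul{A}} \cong (\env{A})^{\dual}$, and dualizing $H$ through this identification should supply the required bar-level homotopy; the heart of the matter is verifying that differentials, coproducts, and signs all line up correctly.
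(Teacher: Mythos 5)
Your proof is correct and follows the paper's own argument essentially verbatim: part (a) via Keller's bijection \eqref{xxeq1.11} together with the fact that graded duality is a bijection on morphism sets between locally finite objects, and parts (b) and (c) via the naturality of the quasi-isomorphism $A \cong \koszul{\koszul{A}}$ of Theorem~\ref{xxthm2.4} combined with the two-out-of-six argument on the chain of induced maps of homotopy classes (which you spell out more explicitly than the paper does). The descent-to-homotopy-classes point you flag as the main obstacle is also left implicit in the paper; the direct verification is that the bar-level homotopy $H$ dualizes to a derivation homotopy between the DG algebra maps $\koszul{f}$ and $\koszul{g}$, which then yields the required bar-level homotopy $\baraug{\koszul{A'}} \to \baraug{\koszul{A}}$ by the standard formula, with no further local finiteness needed for that step.
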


\begin{proof}
(a) From \eqref{xxeq1.11} we have a bijection
\[
\Aug(A,A') \xrightarrow{\sim} \DGC_{\coaug} (\baraug{A},
\baraug{A'}).
\]
We are assuming that $\baraug{A}$ and $\baraug{A'}$ are locally
finite, so the vector space duality maps
\begin{align*}
\DGC_{\coaug} (\baraug{A}, \baraug{A'}) &\rightarrow \DGA_{\aug}
((\baraug{A'})^{\dual}, (\baraug{A})^{\dual}) \\
 & \rightarrow
\DGC_{\coaug} (((\baraug{A})^{\dual})^{\dual}, ((\baraug{A'})^{\dual})^{\dual})
\end{align*}
are bijections.  Therefore so is
\[
\Aug(A,A') \xrightarrow{\sim} \DGA_{\aug} (\koszul{A'}, \koszul{A}).
\]

(b,c) The naturality of the quasi-isomorphism (= homotopy equivalence) in
Theorem \ref{xxthm2.4} says that the function
\begin{align*}
\Hocat (A, A') & \xrightarrow{\koszul{\koszul{-}}} \Hocat
(\koszul{\koszul{A}}, \koszul{\koszul{A'}}) \\
f &\longmapsto \koszul{\koszul{f}}
\end{align*}
is a bijection.  That is, the composite
\[
\Hocat (A, A') \rightarrow \Hocat (\koszul{A'}, \koszul{A})
\rightarrow \Hocat (\koszul{\koszul{A}}, \koszul{\koszul{A'}})
\]
is a bijection.  The first map here is induced by
\[
\Aug (A,A') \xrightarrow{\sim} \DGA_{\aug}(\koszul{A'}, \koszul{A})
\xrightarrow{i} \Aug (\koszul{A'}, \koszul{A}),
\]
and the second by
\[
\begin{split}
\Aug (\koszul{A'}, \koszul{A}) & \xrightarrow{\sim} \DGA_{\aug}
(\koszul{\koszul{A}}, \koszul{\koszul{A'}}) \\
 & \xrightarrow{j} \Aug (\koszul{\koszul{A}}, \koszul{\koszul{A'}}).
\end{split}
\]
Since both of the functions $i$ and $j$ are inclusions, the functions
$\Ho \,i$ and $\Ho \,j$ must be bijections.  This proves (b) and (c).
\end{proof}

\subsection{Classical Koszul duality}
\label{xxsec2.3}

Classically, a \emph{Koszul algebra} is a connected graded associative
algebra $R$ which is locally finite, is generated in degree 1, has
quadratic relations, and has its $i$th graded Ext-group
$\Ext_{R}^{i}(k,k)$ concentrated in degree $-i$ for each $i$; see
\cite[Theorem 2.10.1]{BGSo} or \cite[Theorem 5.9(6)]{Sm}, for example.
(In those papers, $\Ext_{R}^{i}(k,k)$ is actually required to be
concentrated in degree $i$, but that is the result of different
grading conventions.)  Its (classical) Koszul dual, also denoted
by $R^{!}$, is $\Ext_{R}^{*}(k,k)$.
One can show that if $R$ is a Koszul
algebra, then so is $R^{!}$ -- see \cite[2.9.1]{BGSo}, for example.

A standard example is an exterior algebra $R=\Lambda (x_{1}, \dots,
x_{n})$ on generators $x_{i}$ each in degree $1$; then its Koszul dual
$R^{!}$ is the polynomial algebra $k[y_{1}, \dots, y_{n}]$, with each
$y_{i}$ in degree $(1,-1)$.

We want all of our algebras to be bigraded, though, and we want the
double Koszul dual to be isomorphic, as a bigraded algebra, to the
original algebra.  Thus we might grade $\Lambda (x_{1}, \dots, x_{n})$
by putting each $x_{i}$ in degree $(0,1)$, in which case
$R^{!}=k[y_{1}, \dots, y_{n}]$.  The grading for $R^{!}$ is given as
follows: $y_{i}$ is represented in the dual of the bar construction
for $R$ by the dual of $[x_{i}]$, and since $\deg [x_{i}] = (-1,1)$,
$y_{i}$ has degree $(1,-1)$.  The double Koszul dual is exterior on
classes dual to $[y_{i}]$ in the bar construction on $R^{!}$, each of
which therefore has degree $(0,1)$.

Note that these are graded in such a way that there are no possible
nonzero higher multiplications $m_{n}$ on them.  This absence of
higher multiplications is typical for a Koszul algebra, as Keller
\cite[3.3]{Ke01} and the authors
\cite[Section 11]{LP04} point out.  Conversely,
if we grade our algebras in such a way that there are no possible
higher multiplications, we can recover classical Koszul duality.

\begin{definition}
\label{xxdefn2.6}
Fix a pair of integers $(a,b)$ with $b \neq 0$.  A bigraded
associative algebra $A$ is an \emph{$(a,b)$-generated Koszul algebra} if
it satisfies these conditions:
\begin{enumerate}
\item $A_{0,0} = k$,
\item $A$ is locally finite,
\item $A$ is generated in bidegree $(a,b)$,
\item the relations in $A$ are generated in bidegree $(2a,2b)$,
\item for each $i$, the graded vector space $\Ext_A^{i}(k,k)$ is
concentrated in degree $(i(a-1), -ib)$.
\end{enumerate}
\end{definition}

In fact, conditions (c) and (d) should follow from condition (e): one
should be able to imitate the proofs of \cite[2.3.1 and 2.3.2]{BGSo}.

If $A$ is a bigraded associative algebra, the classical Koszul
dual of $A$, denoted by $A^{!}$, is defined to be
$H\koszul{A}$ -- the \emph{homology} of the $A_\infty$-Koszul dual
$\koszul{A}=(\baraug{A})^{\dual}$. Forgetting grading issues,
$A^{!}$ is isomorphic to $\Ext_A^{*}(k,k)$. In classical
ring theory, we often consider the classical Koszul dual as
an associative algebra -- an $A_\infty$-algebra with $m_{n}=0$ if
$n\neq 2$.

\begin{corollary}
\label{xxcor2.7}
Fix a pair of integers $(a,b)$ with $b \neq 0$.  If $A$ is an
$(a,b)$-generated Koszul algebra, then $\koszul{A}$ and
$\koszul{\koszul{A}}$
are quasi-isomorphic to associative algebras $A^{!}$
and  $(A^{!})^{!}$, respectively, and there is an
isomorphism of bigraded algebras $A \cong (A^{!})^{!}$.
\end{corollary}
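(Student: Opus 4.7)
The proof rests on Kadeishvili's transfer theorem, a bidegree argument forced by the Koszul concentration condition, and the double-duality of Theorem \ref{xxthm2.4}. The strategy is: first show $E(A)\simeq A^!$ as $A_\infty$-algebras by proving that all higher operations in the Kadeishvili-transferred structure on $HE(A)$ vanish; then use the contravariant functoriality of $E(-)$ together with Theorem \ref{xxthm2.4} to deduce $A\simeq E(A^!)$ as $A_\infty$-algebras; finally, pass to homology to extract the bigraded algebra isomorphism $A\cong(A^!)^!$.

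For the first part, apply Theorem \ref{xxthm1.5} to the DG algebra $E(A)$ to obtain a transferred $A_\infty$-structure on $HE(A)=\Ext^{*}_{A}(k,k)=A^!$ with $m_1=0$, $m_2$ the usual Yoneda product, and an $A_\infty$-quasi-isomorphism $A^!\to E(A)$. To show $m_n=0$ for $n\geq 3$, pick homogeneous elements $x_{k}\in\Ext^{i_k}_{A}(k,k)$, which by Definition \ref{xxdefn2.6}(e) live in bidegree $(i_k(a-1),-i_k b)$. Since $m_n$ has bidegree $(2-n,0)$, the output $m_n(x_1\otimes\cdots\otimes x_n)$ lies in bidegree
\[
\Bigl((a-1)\sum_{k=1}^{n} i_k+(2-n),\ -b\sum_{k=1}^{n} i_k\Bigr).
\]
For this to land in some $\Ext^{j}_{A}(k,k)$ of bidegree $(j(a-1),-jb)$, the Adams coordinate (using $b\neq 0$) forces $j=\sum_k i_k$, and then the first coordinate forces $n=2$. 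Hence $m_n=0$ for $n\geq 3$, so $A^!$ with its transferred structure is simply the associative algebra $\Ext^{*}_{A}(k,k)$, and $E(A)\simeq A^!$ as $A_\infty$-algebras.

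Since $A$ is Adams connected (Definition \ref{xxdefn2.6}(a)--(c) together with $b\neq 0$), Lemma \ref{xxlem2.2}(d) guarantees that $A$ and all its iterated Koszul duals are Adams connected and locally finite, so the finiteness hypotheses of Theorem \ref{xxthm2.4} and Corollary \ref{xxcor2.5} hold. Theorem \ref{xxthm2.4} gives a natural $A_\infty$-quasi-isomorphism $A\xrightarrow{\sim}E(E(A))$; by the functoriality of $E$ encoded in Corollary \ref{xxcor2.5} (combined with Theorem \ref{xxthm1.16}, which identifies quasi-isomorphisms with isomorphisms in $\Hocat$), the quasi-isomorphism $A^!\to E(A)$ is sent to a quasi-isomorphism $E(E(A))\to E(A^!)$. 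Composing, $A\simeq E(A^!)$ as $A_\infty$-algebras. Passing to homology and using that $A$ has zero differential, $A=HA\cong HE(A^!)=(A^!)^!$ as bigraded vector spaces, and the first component $f_1$ of the $A_\infty$-morphism is already an algebra map (the identity \textbf{MI(2)} with $m_1=0$ on both sides reduces to $f_1\circ m_2 = m_2\circ(f_1\otimes f_1)$). This gives the isomorphism of bigraded algebras $A\cong(A^!)^!$, and moreover $E(E(A))\simeq A\cong(A^!)^!$ exhibits $E(E(A))$ as quasi-isomorphic to the associative algebra $(A^!)^!$. The main obstacle is the bidegree computation in the second paragraph; all other ingredients are machinery already established earlier in the paper.
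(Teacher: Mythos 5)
Your proof is correct, and its core step — the bidegree computation showing that all transferred higher products on $H\koszul{A}=\Ext_A^*(k,k)$ vanish because each $\Ext^i_A(k,k)$ is concentrated on the line $(i(a-1),-ib)$ while $m_n$ has bidegree $(2-n,0)$ — is exactly the argument the paper uses. Where you diverge is in the second half. The paper's sketch proves that $A^{!}$ is itself a $(1-a,-b)$-generated Koszul algebra (carrying over the proof of \cite[2.9.1]{BGSo}), reapplies the first part to conclude that $(A^{!})^{!}$ carries no higher multiplications, and then argues that any $A_\infty$-morphism $A\to (A^{!})^{!}$ is forced to be strict for the same degree reasons, so the $A_\infty$-isomorphism coming from Theorem~\ref{xxthm2.4} is an honest isomorphism of associative algebras. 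You instead stay entirely inside the machinery of Sections 1--2: you feed the quasi-isomorphism $A^{!}\to\koszul{A}$ through the functoriality of $\koszul{-}$ to obtain $\koszul{\koszul{A}}\simeq\koszul{A^{!}}$, compose with Theorem~\ref{xxthm2.4}, and read off the algebra isomorphism on homology. This buys you something: you never need to know that $A^{!}$ is Koszul, nor do you need the strictness argument; the paper's route has the side benefit of recording that $A^{!}$ is $(1-a,-b)$-generated Koszul, which is of independent interest. One small correction: your justification that $f_1$ is an algebra map, namely ``\textbf{MI(2)} with $m_1=0$ on both sides,'' does not apply literally, since the target $\koszul{A^{!}}$ is a DG algebra with nonzero differential. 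The conclusion survives anyway: \textbf{MI(2)} says in general that $f_1\circ m_2 - m_2\circ(f_1\otimes f_1)$ is a boundary term built from $m_1$ and $f_2$, so $H(f_1):HA\to H\koszul{A^{!}}$ is always a morphism of associative algebras; with $HA=A$ this yields the bigraded algebra isomorphism $A\cong (A^{!})^{!}$ as you claim.
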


This result is known \cite{BGSo} so we only give a sketch of proof.

\begin{proof}[Sketch of proof of Corollary \ref{xxcor2.7}]
By Theorem~\ref{xxthm1.5}, the $A_\infty$-Koszul dual $\koszul{A}$
is quasi-isomorphic to $A^{!}$ with some possible higher
multiplications. We need to show, among other things, that in
this case, the higher multiplications on $A^{!}$ are zero.

Since $b \neq 0$, both $\koszul{A}$ and $\koszul{\koszul{A}}$ will be
locally finite.

For any nonzero non-unit element $x \in \Ext_A^{*}(k,k)$,
its bidegree $(\deg_{1} x,
\deg_{2} x)$ satisfies
\begin{equation*}
\tag{$*$}
\frac{\deg_{1} x}{\deg_{2} x} = - \frac{a-1}{b},
\end{equation*}
and this fraction makes sense since $b \neq 0$.  The same is true for
any tensor product of such elements.  Since the higher multiplication
$m_{n}$ has degree $(2-n,0)$, one can see that if $n\neq 2$, the
bidegree of $m_{n}(x_{1} \otimes \dotsb \otimes x_{n})$ will not
satisfy $(*)$, and so will be zero.  Thus there is no
nonzero higher multiplications on $A^{!}$ which
is compatible with the bigrading. This implies that
the $A_\infty$-algebra $\koszul{A}$ is quasi-isomorphic to
the associative algebra $A^{!}$.

Now we claim that $A^{!}$ is $(1-a,-b)$-generated Koszul.  There is an obvious
equivalence of categories between $\Z$-graded algebras and $\Z \times
\Z$-graded algebras concentrated in degrees $(na,nb)$ for $n \in \Z$;
under that equivalence, $(a,b)$-generated Koszul algebras correspond to Koszul
algebras in the sense of \cite{BGSo}.  Koszul
duality takes $\Z$-graded algebras generated in degree 1 to $\Z
\times \Z$-graded algebras generated in degree $(1,-1)$.  It takes
$\Z \times \Z$-graded algebras generated in degree $(a,b)$ to $\Z
\times \Z$-graded algebras generated in degree $(1-a,-b)$.  The proof
of \cite[2.9.1]{BGSo} carries over to show that
since $A$ is $(a,b)$-generated Koszul, its dual $A^{!}$ is
$(1-a,-b)$-generated Koszul.

Since $A^{!}$ is Koszul, its Koszul dual $(A^{!})^{!}$
is associative (or there is no nonzero higher multiplications on
$(A^{!})^{!}$, by the first part of the proof).  A similar
grading argument shows that any morphism $f: A \rightarrow
(A^{!})^{!}$ of $A_\infty$-algebras must be strict; thus the
isomorphism of $A_\infty$-algebras $A \rightarrow
(A^{!})^{!}$ is just an isomorphism of associative algebras.
\end{proof}

\subsection{Examples: exterior and polynomial algebras}
\label{xxsec2.4}

In this subsection, we consider some simple examples involving exterior
algebras and polynomial algebras.  The first example shows that in the
classical setting, it is crucial that a Koszul algebra be generated in
a single degree.

\begin{example}
\label{xxex2.8}
Assume that the ground field $k$ has characteristic 2, and
consider the exterior algebra $\Lambda = \Lambda (x_{1}, x_{2})$ with
$\deg x_{i} = (0,i)$: this is \emph{not} a classical Koszul algebra,
nor is it an $(a,b)$-generated Koszul algebra, since there are generators in
multiple degrees.  The same goes for $\Lambda \otimes \Lambda$.  The
Ext-algebra for $\Lambda$ is the polynomial algebra
$\Lambda^{!}=k[y_{1}, y_{2}]$ with $\deg y_{i} = (1,-i)$.  Similarly,
the Ext-algebra for $\Lambda \otimes \Lambda$ is isomorphic to
$\Lambda^{!} \otimes \Lambda^{!}$.  Although it is true that
$(\Lambda^{!})^{!} \cong \Lambda$, there are naturality problems.  In
particular, the map
\[
\Alg (\Lambda, \Lambda \otimes \Lambda) \rightarrow \Alg (\Lambda^{!}
\otimes \Lambda^{!}, \Lambda^{!})
\]
is not injective: one can show that the following two maps induce the
same map on Ext:
\begin{align*}
f: x_{1} & \mapsto x_{1} \otimes 1 + 1 \otimes x_{1}, &
g: x_{1} & \mapsto x_{1} \otimes 1 + 1 \otimes x_{1}, \\
f: x_{2} & \mapsto x_{2} \otimes 1 + 1 \otimes x_{2}, &
g: x_{2} & \mapsto x_{2} \otimes 1 + 1 \otimes x_{2} + x_{1} \otimes x_{1}.
\end{align*}
(Indeed, any algebra map $\Lambda \rightarrow \Lambda \otimes \Lambda$
gives a coproduct on $\Lambda$, and any coproduct on $\Lambda$ induces
the Yoneda product on $\Ext_{\Lambda}^{*}(k,k)$.)
This shows the importance of the requirement that Koszul algebras
be generated in a single degree.

Now, Theorem~\ref{xxthm2.4} and Corollary~\ref{xxcor2.5}
apply here.  The $A_\infty$-version of the Koszul dual of $\Lambda$
is quasi-isomorphic to $\Lambda^{!}$: $\koszul{\Lambda} \cong
\Lambda^{!} = k[y_{1}, y_{2}]$,
where the $A_\infty$-structure on $\Lambda^{!}$ is given by
$m_{n}=0$ when $n \neq 2$.  Corollary~\ref{xxcor2.5} says that
there is a bijection
\[
\Hocat (\Lambda, \Lambda \otimes \Lambda)
\xrightarrow{\sim} \Hocat (\Lambda^{!} \otimes \Lambda^{!},
\Lambda^{!}).
\]
This fixes the flaw above; the two (strict) maps $f$ and $g$
correspond to $A_\infty$-algebra morphisms $\koszul{f}$ and
$\koszul{g}$, and while $\koszul{f}_{1} = \koszul{g}_{1}$, the
morphisms must differ in some higher component.  (Even though the
algebras involved here have $A_\infty$-structures with zero higher
multiplications, there are non-strict $A_\infty$-algebra morphisms
between them.  Also, $k[y_{1},y_{2}]$ is quasi-isomorphic, not equal,
to the $A_\infty$-Koszul dual of $\Lambda$, so
Corollary~\ref{xxcor2.5}(c), which says that every non-strict map on
Koszul duals is homotopic to a strict one, does not apply here.)
\end{example}

\begin{example}
\label{xxex2.9}
Let $A=\Lambda (x)$ with $\deg x = (a,0)$; Corollary~\ref{xxcor2.7}
does not apply in this case.  The Koszul dual is $\koszul{A}=k[y]$
with $\deg y = (1-a,0)$ and with $m_n=0$ for $n \neq 2$.  Assume that
$a \neq 0,1$; then $\koszul{A}$ is locally finite, as is
$\koszul{\koszul{A}}$ by Remark~\ref{xxrem2.3}.  Thus
Corollary~\ref{xxcor2.5} says that there is a bijection
\[
\Hocat (\Lambda (x), \Lambda (x)) \xrightarrow{\sim}
\Hocat (k[y], k[y]).
\]
For degree reasons, every map $\Lambda (x) \rightarrow \Lambda (x)$
must be strict, so each map is given by the image of $x$: $x
\longmapsto cx$ for any scalar $c \in k$.

On the other hand, degree reasons do not rule out non-strict maps
$k[y] \rightarrow k[y]$.  Strict maps will correspond to those from
$\Lambda (x)$ to itself, with the map given by the scalar $c$
corresponding to the map $y \mapsto cy$.  Thus, as pointed out in
Corollary~\ref{xxcor2.5}(c), if there are any non-strict maps, then they are
homotopic to strict ones.  In particular, one can see that if two
$A_\infty$-algebra maps $f, g: k[y] \rightarrow k[y]$ are homotopic,
then $f_{1}=g_{1}$.  So if $f=(f_{1}, f_{2}, \dots)$ is such a map,
then it will be homotopic to the strict map $(f_{1}, 0, 0, \dots)$.
None of this is immediately clear from the morphism and homotopy
identities, so Koszul duality, in the form of
Corollary~\ref{xxcor2.5}, gives some insight into $A_\infty$-maps
from $k[y]$ to itself.
\end{example}

\begin{example}
\label{xxex2.10}
Now consider $A=\Lambda (x)$ with $\deg x = (1,0)$.  As in the
previous example, every map $\Lambda (x)\rightarrow \Lambda (x)$ must
be strict.  In this case, $\baraug{A}$ is the vector space spanned by
the classes $[x|\dotsb |x]$, all of which are in bidegree $(0,0)$.
Since $\baraug{A}$ is not locally finite, Theorem~\ref{xxthm2.4} does
not apply, and the Koszul dual $\koszul{A}$ ends up being the power
series ring $k[[y]]$ instead of the polynomial ring $k[y]$.  Consider
the composite
\[
\Aug (A,A) \xrightarrow{\sim} \DGC_{\coaug} (\baraug{A},
\baraug{A}) \rightarrow \DGA^{\aug} (\koszul{A}, \koszul{A}).
\]
The first map is a bijection by \eqref{xxeq1.11}, but the second map
is not, essentially since the map is given by vector space duality and
the vector spaces involved are not finite-dimensional.  Since strict
$A_\infty$-maps are homotopic if and only if they are equal, we get a
proper inclusion
\[
\Hocat (A,A) \overset{\neq}{\hookrightarrow} \Hocat (\koszul{A},
\koszul{A}).
\]
Thus Corollary~\ref{xxcor2.5} fails here.
\end{example}

\begin{example}
\label{xxex2.11}
We also mention the case when $A=\Lambda (x)$ with $\deg x = (a,b)$
with $b \neq 0$.  In this case, $\koszul{A} = k[y]$ with $\deg y =
(1-a,-b)$.  Also, $A_\infty$-morphisms $\Lambda (x) \rightarrow \Lambda
(x)$ and $k[y] \rightarrow k[y]$ must be strict, and it is easy to
show that the strict maps are in bijection, as
Corollary~\ref{xxcor2.5} says they should be.  Indeed in this
case, classical Koszul duality (Corollary~\ref{xxcor2.7})
applies, since $\Lambda (x)$ is $(a,b)$-generated Koszul with $b \neq 0$.
\end{example}

\part{Koszul duality for modules}

\section{Background on $A_\infty$-modules}
\label{xxsec3}

Koszul duality relates not just to algebras, but also to modules over
them.  In this section, we briefly review the relevant categories of
modules over an $A_\infty$-algebra.  See Keller \cite[4.2]{Ke01} for
a few more details, keeping in mind that since he is not working in
the augmented setting, a little translation is required, especially in
regards to the bar construction.  The paper \cite[Section 6]{LP04}
also has some relevant information, as does Lef\`evre-Hasegawa's
thesis \cite{Le}.

There are several subsections here: the definition of
$A_\infty$-module; the bar construction; derived categories;
and several sections about ``opposites.''

\subsection{$A_\infty$-modules}

Let $A$ be an $A_\infty$-algebra.  A bigraded vector space $M$ is a
\emph{right $A_\infty$-module over $A$} if there are graded maps
\[
m_{n} : M \otimes A^{\otimes n-1} \rightarrow M, \quad n \geq 1,
\]
of degree $(2-n,0)$ satisfying the Stasheff identities $\mathbf{SI(n)}$,
interpreted appropriately. Similarly, a bigraded vector space $N$ is a
\emph{left $A_\infty$-module over $A$} if there are graded maps
\[
m_{n} : A^{\otimes n-1} \otimes N \rightarrow N, \quad n \geq 1,
\]
of degree $(2-n,0)$ satisfying the Stasheff identities $\mathbf{SI(n)}$,
interpreted appropriately.

Morphisms of right $A_\infty$-modules are defined in a similar way:
a \emph{morphism $f: M \rightarrow M'$ of right $A_\infty$-modules
over $A$} is a sequence of graded maps
\[
f_{n} : M \otimes A^{\otimes n-1} \rightarrow M', \quad n \geq 1,
\]
of degree $(1-n,0)$ satisfying the Stasheff morphism identities
$\mathbf{MI(n)}$.
Morphisms of left $A_\infty$-modules are defined analogously, and so
are homotopies in both the right and left module settings.

Now suppose that $A$ is an augmented $A_\infty$-algebra.  A right
$A_\infty$-module $M$ over $A$ is \emph{strictly unital} if for all
$x \in M$ and for all $a_i\in A$, $m_{2}(x \otimes 1) = x$ and
\[
m_{n}(x \otimes a_{2} \otimes \dotsb \otimes a_{n}) = 0
\]
if $a_{i}=1$ for some $i$.  A morphism $f$ of such is \emph{strictly
unital} if for all $n \geq 2$, we have $f_{n}(x \otimes a_{2} \otimes
\dotsb \otimes a_{n})=0$ if $a_{i}=1$ for some $i$.

Given an augmented $A_\infty$-algebra $A$, let $\Mod^{\infty} (A)$
denote the category of strictly unital right $A_\infty$-modules with
strictly unital morphisms over $A$.

Suppose $A$ is an augmented $A_\infty$-algebra.  The morphism
$\varepsilon : A \rightarrow k$ makes the vector space $k$ into a left
$A_\infty$-module over $A$.  It is called the \emph{trivial left
$A_\infty$-module over $A$} and is denoted by $_Ak$.  The
\emph{trivial right $A_{\infty}$-module over $A$} is defined
similarly, and is denoted by $k_A$.

\subsection{The bar construction for modules}\label{xxsec3.2}

The bar construction is as useful for $A_\infty$-modules as it is
for $A_\infty$-algebras: recall that the bar construction on $A$ is
$\baraug{A} = T(SI)$.  A strictly unital right $A_\infty$-module
structure on a bigraded vector space $M$ gives a comodule differential
on the right $\baraug{A}$-comodule
\[
\baraug{(M;A)} := SM \otimes T(SI),
\]
as in Definition~\ref{xxdefn1.8}.  Also, morphisms of right
modules $M \rightarrow M'$ are in bijection with morphisms of right DG
comodules $\baraug{(M;A)} \rightarrow \baraug{(M';A)}$ as in
\eqref{xxeq1.11}, and the notion of homotopy translates as well.

Similarly, one has a bar construction for left $A_\infty$-modules,
defined by
\[
\baraug{(A;N)} := T(SI) \otimes SN,
\]
with the same formula for the differential.

\subsection{Derived categories}

Let $\InfD(A)$ be the derived category associated to the module
category $\Mod^{\infty}(A)$.  According to
\cite[4.2]{Ke01} (see also
\cite[2.4.2]{Le}), this derived category is the same as
the homotopy category of $\Mod^{\infty}(A)$: in the homotopy category
for $A_\infty$-modules, quasi-isomorphisms have already been
inverted.  This statement is the module version of
Theorem~\ref{xxthm1.16}.

Given a DG algebra $R$, write $\Mod \,R$ for the category of unital
DG right $R$-modules, and write $\D (R)$ for its derived category.
A good reference for the derived category $\D(R)$ is \cite{Ke94};
also see \cite{Ke07, KM}. See \cite[2.4.3 and 4.1.3]{Le} and
\cite[7.2 and 7.3]{LP04} for the following.

\begin{proposition}
\label{xxprop3.1}
\begin{enumerate}
\item Suppose that $A$ and $B$ are augmented $A_\infty$-algebras.
If $f: A \rightarrow B$ is an $A_\infty$-isomorphism, then the
induced functor $f^{*}: \InfD(B) \rightarrow \InfD(A)$ is a
triangulated equivalence.
\item Suppose that $R$ is an augmented DG algebra.  Then the inclusion
$\Mod \,R \hookrightarrow \Mod^{\infty} (R)$ induces a triangulated
equivalence $\D (R) \rightarrow \InfD (R)$.
\item Hence if $A$ is an augmented $A_\infty$-algebra and $R$ is an
augmented DG algebra with an $A_\infty$-isomorphism $A \rightarrow R$,
there is a triangulated equivalence $F: \D (R) \rightarrow \InfD(A)$.
Under this equivalence, $F(k_R) \cong k_A$ and $F(R) \cong A$.
\end{enumerate}
\end{proposition}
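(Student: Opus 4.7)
The plan is to prove the three parts in sequence, reducing each to standard bar-construction manipulations and to the module-level analogue of Theorem~\ref{xxthm1.16}. For (a), I would first check that any strictly unital morphism $f:A\to B$ of augmented $A_\infty$-algebras induces a pull-back functor $f^*:\Mod^\infty(B)\to \Mod^\infty(A)$. Concretely, a right $A_\infty$-$B$-module $M$ corresponds to a right DG $\baraug{B}$-comodule structure on $SM\otimes T(SI_B)$, and composing with the DG coalgebra map $F:\baraug{A}\to \baraug{B}$ given by \eqref{xxeq1.11} yields a right DG $\baraug{A}$-comodule structure on $SM\otimes T(SI_A)$; this construction is functorial, respects strict units, homotopies, shifts, and mapping cones, so it descends to a triangulated functor $\InfD(B)\to\InfD(A)$. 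When $f$ is an $A_\infty$-isomorphism I would invoke the module analogue of Theorem~\ref{xxthm1.16} (every $A_\infty$-quasi-isomorphism is a homotopy equivalence) together with the observation that $F$, being a weak equivalence of DG coalgebras, induces an equivalence on the homotopy categories of DG comodules.

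For (b), the inclusion $\Mod R\hookrightarrow \Mod^\infty(R)$ plainly sends DG-homotopies to $A_\infty$-homotopies and quasi-isomorphisms to quasi-isomorphisms, hence descends to a triangulated functor $\D(R)\to \InfD(R)$. I would verify that it is an equivalence in two stages. Fully faithfulness reduces to the assertion that, for DG modules $M,N$ over $R$, every strictly unital $A_\infty$-morphism $M\to N$ is $A_\infty$-homotopic to a DG morphism, and two DG morphisms are $A_\infty$-homotopic if and only if they are DG-homotopic; this is exactly \cite[4.1.3]{Le}. For essential surjectivity I would rectify an arbitrary strictly unital right $A_\infty$-$R$-module $M$ using the bar--cobar adjunction: the counit $\Omega B(M;R)\otimes_{\Omega BR} R \to M$ is an $A_\infty$-quasi-isomorphism whose source, after composition along the natural DG algebra map $\Omega BR\to R$, is a genuine DG $R$-module.

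Part (c) is then a formal consequence. Given an $A_\infty$-isomorphism $g:A\to R$, set $F$ equal to the composite $\D(R)\xrightarrow{\sim}\InfD(R)\xrightarrow{g^*}\InfD(A)$, each factor being a triangulated equivalence by (b) and (a) respectively. The identification $F(k_R)\cong k_A$ follows because $g$ is a morphism of \emph{augmented} $A_\infty$-algebras and so intertwines the trivial module structures, while $F(R)\cong A$ follows because the pull-back $g^*R$ is $R$ regarded as a right $A_\infty$-$A$-module via $g$, and the map of right $A_\infty$-$A$-modules $A\to g^*R$ induced by $g$ itself is an $A_\infty$-quasi-isomorphism.

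The main obstacle is essential surjectivity in (b); everything else is bookkeeping once the bar construction for modules has been set up as in Section~\ref{xxsec3.2}. Showing that $\Omega B(M;R)\otimes_{\Omega BR} R$ (i) is a DG $R$-module, (ii) is $A_\infty$-quasi-isomorphic to $M$, and (iii) assembles into a triangulated quasi-inverse to the inclusion, is the module version of Proposition~\ref{xxprop1.14} and the only genuinely non-formal step. I would carry it out by dualising Lef\`evre-Hasegawa's comodule-side argument \cite[2.4.3]{Le}, after which parts (a) and (c) follow with little further work.
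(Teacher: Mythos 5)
First, on the comparison: the paper does not actually prove Proposition~\ref{xxprop3.1}; it is quoted from \cite[2.4.3 and 4.1.3]{Le} and \cite[7.2 and 7.3]{LP04}, so your proposal is being measured against those sources rather than against an argument in the text. Your parts (a) and (c) are essentially the standard arguments. (For (a) there is an even shorter route: by Theorem~\ref{xxthm1.16} the quasi-isomorphism $f$ has a homotopy inverse $g$, and restriction along homotopic morphisms gives isomorphic functors on $\InfD$, so $g^{*}$ is quasi-inverse to $f^{*}$. Your comodule formulation is fine provided ``weak equivalence of DG coalgebras'' is taken in Lef\`evre-Hasegawa's sense for conilpotent coalgebras and not to mean mere quasi-isomorphism of underlying complexes, since quasi-isomorphisms of DG coalgebras do not in general induce equivalences of comodule categories.)

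The genuine gap is in your full-faithfulness step for (b). You reduce it to the claims that every strictly unital $A_\infty$-morphism $M \to N$ between DG $R$-modules is $A_\infty$-homotopic to a DG morphism, and that two DG morphisms are $A_\infty$-homotopic if and only if they are DG-homotopic. If both held for arbitrary $M$ and $N$, you would conclude that $\InfD(R)(M,N)$ agrees with the na\"{\i}ve homotopy category $K(R)(M,N)$ of DG modules, whereas the correct answer is $\D(R)(M,N)=H^{0}\RHom_{R}(M,N)$; these differ unless $M$ is $K$-projective. Concretely, take $R=k[x]/(x^{2})$ with zero differential and $M=N=k$: the only DG morphism $k\to S^{n}k$ is zero for $n\neq 0$, yet $\InfD(R)(k,S^{n}k)\cong \Ext^{n}_{R}(k,k)\neq 0$, the nonzero classes being carried by the higher components $f_{m}\colon k\otimes R^{\otimes m-1}\to S^{n}k$ of genuinely non-strict $A_\infty$-morphisms, which are not homotopic to any DG morphism. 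The correct mechanism is to show that homotopy classes of $A_\infty$-morphisms $M\to N$ compute $H^{0}\RHom_{R}(M,N)$ --- for instance by identifying them with comodule morphisms $\baraug{(M;R)}\to \baraug{(N;R)}$ and hence with homotopy classes of DG morphisms out of the semifree two-sided bar resolution $B(M;R;R)$ --- and that the same group is computed in $\D(R)$. Your rectification construction already produces exactly this semifree replacement, so the repair uses tools you have set up, but the reduction as you state it is false and full faithfulness does not follow from it.
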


Hence in the category $\InfD(A)$ one can perform many of the
usual constructions, by first working in the derived category $\D
(\env{A})$ of its enveloping algebra and then applying the equivalence
of categories $\D (\env{A}) \rightarrow \InfD(A)$.  See
Section~\ref{xxsec5} for an application of this idea.
We note that the Adams shift is an automorphism of
$\InfD(A)$.

\subsection{The opposite of a DG algebra}
\label{xxsec3.4}

If $A=(A,m_{1},m_{2})$ is a DG algebra with differential $m_{1}$ and
multiplication $m_{2}$, we define the \emph{opposite algebra} of $A$
to be $(A^{\op}, m_{1}^{\op}, m_{2}^{\op})$, where $A^{\op}=A$,
$m_{1}^{\op}=-m_{1}$, and
\[
m_{2}^{\op}(a \otimes b) = (-1)^{(\deg a) (\deg b)} m_{2}(b \otimes a).
\]
That is, $m_{2}^{\op}=m_{2} \circ \tau$, where $\tau$ is the twist
function, which interchanges tensor factors at the expense of the
appropriate Koszul sign.  One can verify that this is a DG algebra:
$m_{1}^{\op}$ and $m_{2}^{\op}$ satisfy a Leibniz
formula.  (Choosing $m_{1}^{\op}=m_{1}$ also works, but is not
compatible with the bar construction: see below.)

If $f: A \rightarrow A'$ is a map of DG algebras, define $f^{\op} :
A^{\op} \rightarrow (A')^{\op}$ by $f^{\op}=f$.  Then $f^{\op}$ is
also a DG algebra map, so $\op$ defines an automorphism of the
category of DG algebras, and it is clearly its own inverse.

Dually, given a DG coalgebra $C=(C,d,\Delta)$, we define its
\emph{opposite coalgebra} to be $(C^{\op}, d^{\op}, \Delta^{\op})$,
where $C^{\op}=C$, $d^{\op}=-d$, and $\Delta^{\op}= \tau \circ
\Delta$.  With these definitions, for any DG algebra $A$ there is an
isomorphism
\begin{align*}
B(A^{\op}) & \xrightarrow{\ \Phi\ } B(A)^{\op}, \\
[a_{1} | \dots | a_{m}] &\longmapsto (-1)^{\sum_{i<j}(-1+\deg
a_{i})(-1+\deg a_{j})} [a_{m} | \dots | a_{1}],
\end{align*}
of DG coalgebras.  Note that, had we defined the differential
$m_{1}^{\op}$ in $A^{\op}$ by $m_{1}^{\op}=m_{1}$, this map $\Phi$
would not be compatible with the differentials, and so would not be a
map of DG coalgebras.

If $F: (C,d,\Delta) \rightarrow (C', d', \Delta')$ is a morphism of DG
coalgebras, then define $F^{\op}$ to be equal to $F$.  One can check
that $F^{\op}:C^{\op} \rightarrow (C')^{\op}$ is also a map of DG
coalgebras, making $\op$ into a functor $\op : \DGC \rightarrow \DGC$.
As above, it is an automorphism which is its own inverse.

The definition of homotopy in Subsection~\ref{xxsec1.5} works in
general for DG coalgebra morphisms: if $F$ and $G$ are DG
coalgebra morphisms $C \rightarrow C'$, then a \emph{homotopy from
$F$ to $G$} is a map $H: C \rightarrow C'$ of degree $-1$ such that
\[
\Delta H = (F \otimes H + H \otimes G) \Delta, \quad
F-G= d' \circ H + H \circ d.
\]
We write $H=H(F\rightarrow G)$ to indicate the ``direction'' of the
homotopy.  One can check that, in this situation, the map $H$ also
defines a homotopy from $G^{\op}$ to $F^{\op}$, as maps $C^{\op}
\rightarrow (C')^{\op}$.  Therefore, we may define $H^{\op}(G^{\op}
\rightarrow F^{\op})$ to be $H(F\rightarrow G)$.  As a consequence,
$\op$ induces an automorphism on the homotopy category of DG
coalgebras.

\subsection{The opposite of an $A_\infty$-algebra}

Now we define a functor
\[
\op : \ainfty \rightarrow \ainfty
\]
which generalizes the opposite functor on the category of DG algebras.
Given an $A_\infty$-algebra $(A, m_{1}, m_{2}, m_{3}, \dotsc)$,
define $(A^{\op}, m_{1}^{\op}, m_{2}^{\op}, m_{3}^{\op}, \dotsc)$ as
follows: as a bigraded vector space, $A^{\op}$ is the same as $A$.  The
map $m_{n}^{\op}: (A^{\op})^{\otimes n} \rightarrow A^{\op}$ is
defined by $m_{n}^{\op}=(-1)^{\varepsilon (n)} m_{n} \circ
\text{(twist)}$, where ``twist'' is the map which reverses the factors
in a tensor product, with the appropriate Koszul sign, and
\[
\varepsilon (n) = \begin{cases}
1, & \text{if $n \equiv 0,1 \pmod{4}$,}\\
0, & \text{if $n \equiv 2,3 \pmod{4}$.}
\end{cases}
\]
Equivalently, since only the parity of $\varepsilon (n)$ is
important, $\varepsilon (n)=\binom{n+2}{2}$, or $\varepsilon (n) =
\binom{n}{2}+1$.  Thus when applied to elements,
\[
m_{n}^{\op}(a_{1} \otimes \dotsb \otimes a_{n}) =
(-1)^{\varepsilon (n) + \sum_{i<j} (\deg a_{i}) (\deg a_{j})}
m_{n}(a_{n} \otimes \dotsb \otimes a_{1}).
\]

\begin{lemma}\label{xxlem3.2}
The function $\varepsilon$ satisfies the following additivity
formula: for any $q \geq 1$ and any $i_{s} \geq 1$, $s=1,
2, \dots, q$,
\[
\sum_{1 \leq s \leq q} \varepsilon (i_{s}) + \varepsilon \left(\sum_{1 \leq
s \leq q} i_{s} - q + 1\right) + \sum_{1 \leq s < t \leq q} (i_{s} -
1)(i_{t} - 1) \equiv q+1 \pmod{2}.
\]
\end{lemma}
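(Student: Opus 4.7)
The plan is to reduce the identity to a statement purely about binomial coefficients mod $2$, and then run an induction on $q$ whose core is a simple Pascal-type identity.

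First I would use the alternate description $\varepsilon(n) \equiv \binom{n}{2} + 1 \pmod 2$ given after the definition of $\varepsilon$. Substituting this into the left-hand side and writing $N := \sum_{s=1}^{q} i_s - q + 1$, the claim becomes
\[
\sum_{s=1}^{q} \binom{i_s}{2} + q + \binom{N}{2} + 1 + \sum_{s<t} (i_s-1)(i_t-1) \equiv q+1 \pmod 2,
\]
which, after cancelling $q+1$ on both sides, is equivalent to the simpler assertion
\[
\sum_{s=1}^{q} \binom{i_s}{2} + \binom{N}{2} + \sum_{s<t} (i_s-1)(i_t-1) \equiv 0 \pmod 2. \tag{$\ast$}
\]
So the whole lemma becomes a mod-$2$ statement about binomials, with $\varepsilon$ eliminated.

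Next I would prove $(\ast)$ by induction on $q$. The base case $q=1$ is immediate, since then $N=i_1$, the cross sum is empty, and $(\ast)$ reads $2\binom{i_1}{2}\equiv 0$. The key computational input for the inductive step is the Pascal-type identity
\[
\binom{m+n-1}{2} = \binom{m}{2} + \binom{n}{2} + (m-1)(n-1),
\]
which is verified by expanding $\binom{m+n}{2}=\binom{m}{2}+\binom{n}{2}+mn$ and subtracting $m+n-1$. Setting $m=N_q := \sum_{s\le q} i_s - q + 1$ and $n=i_{q+1}$, and observing that $\sum_{s \le q}(i_s-1) = N_q - 1$, one finds
\[
\binom{N_{q+1}}{2} = \binom{N_q}{2} + \binom{i_{q+1}}{2} + (N_q-1)(i_{q+1}-1),
\]
and $(N_q-1)(i_{q+1}-1) = (i_{q+1}-1)\sum_{s\le q}(i_s-1) = \sum_{s \le q}(i_s-1)(i_{q+1}-1)$. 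Substituting these into the $q+1$ version of $(\ast)$ reduces it to the $q$ version, completing the induction.

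The main potential obstacle is purely bookkeeping: making sure that the shift by $-q+1$ inside the argument of $\varepsilon$ and the use of $\binom{n}{2}+1$ versus $\binom{n+2}{2}$ match up correctly mod $2$. Once the identity is rewritten as $(\ast)$, there is no conceptual difficulty, and the induction is driven entirely by the single binomial identity above. No deeper structural input (from the $A_\infty$ setting or from the bar construction) is needed; this is a self-contained combinatorial lemma used downstream to check that the signs defining $m_n^{\op}$ assemble into an actual $A_\infty$-structure.
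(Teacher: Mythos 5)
Your proof is correct, and it follows the same overall skeleton the paper indicates (induction on $q$, with the two-term case as the engine), but you execute the key step differently and, in my view, more cleanly. The paper's proof is only a sketch: it declares the $q=1$ case trivial, suggests verifying the $q=2$ case by checking the congruence classes of $i_1$ mod $4$, and leaves the induction for larger $q$ to the reader. You instead eliminate $\varepsilon$ entirely via $\varepsilon(n)\equiv\binom{n}{2}+1\pmod 2$, reduce the lemma to the statement $(\ast)$, and drive the induction with the exact integer identity $\binom{m+n-1}{2}=\binom{m}{2}+\binom{n}{2}+(m-1)(n-1)$, applied with $m=N_q$ and $n=i_{q+1}$ together with the observation $N_q-1=\sum_{s\le q}(i_s-1)$. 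This replaces the paper's finite case analysis with a uniform algebraic identity, makes the inductive step a one-line cancellation of two even terms, and yields a complete self-contained argument where the paper gives only a recipe. All the bookkeeping checks out: the constants $q+1$ cancel correctly after substituting $\binom{n}{2}+1$ for $\varepsilon(n)$, the base case reduces to $2\binom{i_1}{2}\equiv 0$, and the new cross terms in $\sum_{s<t\le q+1}$ are exactly absorbed by $(N_q-1)(i_{q+1}-1)$.
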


\begin{proof}
The $q=1$ case is trivial, the $q=2$ case may be established by (for
example) considering the different congruence classes of $i_{1}$ mod
4, and for larger $q$, one can use a simple induction argument.
\end{proof}

\begin{lemma}\label{xxlem3.3}
$(A^{\op}, m_{1}^{\op}, m_{2}^{\op}, m_{3}^{\op}, \dotsc)$
is an $A_\infty$-algebra.
\end{lemma}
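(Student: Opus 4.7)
The plan is to verify the Stasheff identities $\mathbf{SI(n)}$ for the sequence $(m_n^{\op})_{n \ge 1}$ directly, reducing each identity to the corresponding identity for $(m_n)$ applied to the reversed input. Fix $n \ge 1$. For each decomposition $n = r+s+t$ with $u = r+1+t$, I would unfold both opposite operations using $m_k^{\op} = (-1)^{\varepsilon(k)} m_k \circ \tau_k$, where $\tau_k$ denotes the Koszul-signed reversal on $k$ tensor factors, to obtain
\[
m_u^{\op}(\id^{\otimes r} \otimes m_s^{\op} \otimes \id^{\otimes t}) = (-1)^{\varepsilon(u)+\varepsilon(s)}\, m_u \circ \tau_u \circ (\id^{\otimes r} \otimes m_s \otimes \id^{\otimes t}) \circ (\id^{\otimes r} \otimes \tau_s \otimes \id^{\otimes t}).
\]
Commuting $\tau_u$ past the middle factor and evaluating on $a_1 \otimes \cdots \otimes a_n$, one sees that each summand equals, up to Koszul signs depending only on the $\deg a_i$, the $(t,s,r)$-summand of $\mathbf{SI(n)}$ for $(m_n)$ evaluated on $a_n \otimes \cdots \otimes a_1$. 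Re-indexing the sum via $(r,s,t) \leftrightarrow (t,s,r)$ then turns the entire opposite-Stasheff sum into a global scalar times $\mathbf{SI(n)}$ for $(m_n)$ applied to the reversed input, which vanishes by hypothesis.

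The main obstacle is showing that the accumulated sign really is a common factor that pulls out of the sum. Four contributions must be combined: (i) the operator-level sign $(-1)^{r+st}$ from the opposite identity; (ii) the sign $(-1)^{\varepsilon(u)+\varepsilon(s)}$ from the two $\op$-operations; (iii) the Koszul signs produced when $\tau_u$ commutes past $\id^{\otimes r} \otimes m_s \otimes \id^{\otimes t}$, taking into account that $m_s$ itself has degree $(2-s,0)$; and (iv) the sign $(-1)^{t+sr}$ carried by the reversed decomposition $(t,s,r)$ in $\mathbf{SI(n)}$. The crucial algebraic input that makes (i)--(iv) collapse into a decomposition-independent global sign is Lemma~\ref{xxlem3.2} applied with $q = 2$, $i_1 = u$, $i_2 = s$; since $u + s - 1 = n$, it yields
\[
\varepsilon(u) + \varepsilon(s) + \varepsilon(n) + (u-1)(s-1) \equiv 1 \pmod{2},
\]
which is precisely the congruence needed to match the $(r,s,t)$-sign on the opposite side with the $(t,s,r)$-sign on the original side.

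A cleaner conceptual alternative is to work on the bar-construction side: a strictly unital $A_\infty$-structure on $A$ corresponds to a degree $(1,0)$ coderivation $b$ on the tensor coalgebra $T(SI)$ with $b^2 = 0$, and one can check that the operations $m_n^{\op}$ defined above are exactly those corresponding to the pullback of $b$ along the DG-coalgebra reversal isomorphism of Subsection~\ref{xxsec3.4} (the $A_\infty$-analogue of the map $\Phi$). Since conjugation by a coalgebra isomorphism preserves the condition $b^2 = 0$, the opposite Stasheff identities follow at once, with the signs $\varepsilon(n)$ emerging naturally as the Koszul signs of reversal on $(SI)^{\otimes n}$. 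Either way, Lemma~\ref{xxlem3.2} is the combinatorial engine of the proof.
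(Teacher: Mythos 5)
Your proposal is correct and follows essentially the same route as the paper: the paper's proof is simply the remark that the Stasheff identities for $(m_n^{\op})$ can be verified by a tedious but straightforward computation in which the $q=2$ case of Lemma~\ref{xxlem3.2} is the key sign input, and your sketch fills in exactly that computation, correctly identifying the congruence $\varepsilon(u)+\varepsilon(s)+\varepsilon(n)+(u-1)(s-1)\equiv 1 \pmod 2$ with $u+s-1=n$ as the reason the accumulated signs collapse to a global factor. Your bar-construction alternative is also consistent with the paper, which records the compatibility $B(A^{\op})\cong B(A)^{\op}$ in Lemma~\ref{xxlem3.5} and notes it gives a second proof in the analogous statement for morphisms.
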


\begin{proof}
We need to check that $(A^{\op}, m_{1}^{\op}, m_{2}^{\op},
m_{3}^{\op}, \dotsc)$ satisfies the Stasheff identities.
This is a tedious, but straightforward, verification, which we leave
to the reader.  The $q=2$ case of Lemma~\ref{xxlem3.2} is useful.
\end{proof}

We also need to specify what happens to morphisms.  Given a morphism
$f: A \rightarrow B$, we define $f^{\op}: A^{\op} \rightarrow B^{\op}$
by defining
\[
f^{\op}_{n}: (A^{\op})^{\otimes n} \rightarrow B^{\op}
\]
to be $f^{\op}_{n}= (-1)^{1+\varepsilon (n)} f_{n} \circ
\text{(twist)}$; that is,
\[
f^{\op}_{n} (a_{1} \otimes \dots \otimes a_{n}) =
(-1)^{1+\varepsilon (n) + \sum_{i<j} \deg a_{i} \deg a_{j}}
f_{n} (a_{n} \otimes \dots \otimes a_{1}).
\]

\begin{lemma}\label{xxlem3.4}
The family $f^{\op} = (f_{n}^{\op})$ is a morphism of
$A_\infty$-algebras.
\end{lemma}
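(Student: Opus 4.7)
The plan is to derive the Stasheff morphism identities $\mathbf{MI(n)}$ for $f^{\op}$ from those for $f$, by precomposing with the order-reversing Koszul twist $\tau_n : V^{\otimes n} \to V^{\otimes n}$, $a_1 \otimes \cdots \otimes a_n \mapsto (-1)^{\sum_{i<j}\deg a_i \deg a_j}\, a_n \otimes \cdots \otimes a_1$. Two elementary ``compatibility'' identities drive the whole computation: for any decomposition $n = r+s+t$,
\[
\bigl(\id^{\otimes r} \otimes m_s \otimes \id^{\otimes t}\bigr) \circ \tau_n
 \;=\; (-1)^{\varepsilon(s) + \alpha(r,s,t)}\; \tau_u \circ \bigl(\id^{\otimes t} \otimes m_s^{\op} \otimes \id^{\otimes r}\bigr),
\]
with $u = r+1+t$ and $\alpha(r,s,t)$ an easily computed Koszul sign; and for any decomposition $n = i_1 + \cdots + i_q$,
\[
\bigl(f_{i_1} \otimes \cdots \otimes f_{i_q}\bigr) \circ \tau_n
 \;=\; (-1)^{\beta}\; \tau_q \circ \bigl(f_{i_q}^{\op} \otimes \cdots \otimes f_{i_1}^{\op}\bigr),
\]
with $\beta$ again a Koszul sign determined by the $i_s$ and the first degrees of $f_{i_s}$. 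Composing further with $m_q$ and using $m_q \circ \tau_q = (-1)^{\varepsilon(q)} m_q^{\op}$ converts the right-hand side of $\mathbf{MI(n)}$ for $f$ into a candidate right-hand side for $\mathbf{MI(n)}$ for $f^{\op}$.

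The strategy is then: start from $\mathbf{MI(n)}$ for $f$, precompose both sides with $\tau_n$, and repackage each sum via the involution $(r,s,t) \mapsto (t,s,r)$ on the left and $(i_1,\dots,i_q) \mapsto (i_q,\dots,i_1)$ on the right. Under these involutions the combinatorial signs $(-1)^{r+st}$ and $(-1)^w$ appearing in $\mathbf{MI(n)}$ get mapped to the corresponding signs in the reversed decompositions; the remaining task is to check that the total sign produced by the $\varepsilon$-corrections in the definitions of $m^{\op}_s$, $m^{\op}_q$, $f^{\op}_u$, $f^{\op}_{i_s}$, together with the Koszul signs $\alpha$ and $\beta$ introduced by $\tau_n$, agree on the two sides.

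The main obstacle, as always in this circle of identities, is the sign bookkeeping on the right-hand side, where we must match
\[
(-1)^{1+\varepsilon(u)+w+\text{Koszul}} \qquad \text{with} \qquad (-1)^{\varepsilon(q) + \sum_s (1+\varepsilon(i_s)) + w^{\op} + \text{Koszul}},
\]
for $w^{\op}$ the weight attached to the reversed partition $(i_q,\dots,i_1)$. After cancelling the Koszul contributions (which match automatically since $\tau_n$ is an involution) and using the elementary identity $w + w^{\op} \equiv \sum_{s<t}(i_s-1)(i_t-1) \pmod 2$ (coming from the reversal of the partition), the required congruence is exactly
\[
\sum_{s=1}^{q} \varepsilon(i_s) + \varepsilon(u) + \sum_{1\le s<t\le q}(i_s-1)(i_t-1) \equiv q+1 \pmod 2,
\]
which is precisely Lemma \ref{xxlem3.2}. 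Granting this congruence, the two sides of $\mathbf{MI(n)}$ for $f^{\op}$ match term by term.

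The computation is tedious but entirely formal, and I would, following the authors' treatment of Lemma \ref{xxlem3.3}, leave the explicit verification of the two compatibility identities to the reader, highlighting only the role of Lemma \ref{xxlem3.2} as the combinatorial heart of the sign matching. A sanity check in low arity (say $n \leq 3$, already covering the DG-algebra case where $\varepsilon(1)=1$, $\varepsilon(2)=0$, $\varepsilon(3)=0$) confirms consistency with the opposite-DG-algebra definitions of Subsection \ref{xxsec3.4}.
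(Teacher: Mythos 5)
Your overall strategy --- precompose \textbf{MI(n)} for $f$ with the order-reversing twist, reindex by $(r,s,t)\mapsto(t,s,r)$ and $(i_1,\dots,i_q)\mapsto(i_q,\dots,i_1)$, and reduce the sign matching to Lemma~\ref{xxlem3.2} --- is exactly what the paper intends: its entire proof reads ``another tedious verification; Lemma~\ref{xxlem3.2} is used here,'' and your identification of that lemma as the combinatorial heart is correct. (The paper also records a second, cleaner route via Lemma~\ref{xxlem3.5}: check on bar constructions that $\Phi\circ B(f^{\op})\circ\Phi^{-1}=B(f)^{\op}$; you do not mention this.)

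There is, however, a genuine error in the one sign identity you state explicitly. With $w=\sum_{s}(q-s)(i_s-1)$ and $w^{\op}$ the weight of the reversed partition, one has $w+w^{\op}=(q-1)\sum_s(i_s-1)=(q-1)(n-q)$, which is \emph{not} congruent to $\sum_{s<t}(i_s-1)(i_t-1)$ mod $2$: for $(i_1,i_2)=(1,2)$ the two sides are $1$ and $0$, and for $(2,2)$ they are $0$ and $1$. The term $\sum_{s<t}(i_s-1)(i_t-1)$ in Lemma~\ref{xxlem3.2} does not arise from $w+w^{\op}$; it is precisely your Koszul sign $\beta$, coming from commuting the maps $f_{i_s}$ (of degree $1-i_s$) past one another when $\tau_q$ reverses the blocks, so it does not ``cancel automatically.'' The correct bookkeeping on the right-hand side is that the term indexed by $(i_1,\dots,i_q)$ differs from the term of $\mathrm{(RHS\ of\ }\mathbf{MI(n)}\mathrm{\ for\ }f)\circ\tau_n$ indexed by the reversed partition by the exponent $w+w^{\op}+\varepsilon(q)+q+\sum_s\varepsilon(i_s)+\sum_{s<t}(i_s-1)(i_t-1)$; applying Lemma~\ref{xxlem3.2} once to the full partition and once in its $q=2$ form to $(q,\,n-q+1)$ reduces this to $\varepsilon(n)$, which matches the constant $\varepsilon(n)$ produced on the left-hand side (there by the $q=2$ case applied to $(s,u)$). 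So the lemma and the central role of Lemma~\ref{xxlem3.2} survive, but your intermediate identity must be replaced as above or the two sides will not match.
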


\begin{proof}
This is another tedious verification.
Lemma~\ref{xxlem3.2} is used here.
\end{proof}

To complete this circle of ideas, we should consider the bar
construction.  That is, consider the following diagram of functors:
\[
\xymatrix{
\ainfty \ar[d]^{B(-)} \ar[rr]^{\op}_{\sim}
& & \ainfty \ar[d]^{B(-)} \\
\DGC \ar[r]^{\op}_{\sim}
& \DGC
& \DGC
}
\]
The horizontal arrows are equivalences of categories.  The vertical
arrows are fully faithful embeddings.  Starting with an
$A_\infty$-algebra $A$ in the upper left corner, mapping down and
then to the right gives $B(A)^{\op}$, while mapping to the right and
then down gives $B(A^{\op})$.  It would be nice if these two DG
coalgebras agreed, and indeed they do.

\begin{lemma}\label{xxlem3.5}
For any $A_\infty$-algebra $A$, the map
\begin{align*}
\Phi: B(A^{\op}) &\xrightarrow{\quad} B(A)^{\op}, \\
[a_{1} | \dots | a_{m}] & \longmapsto (-1)^{\sum_{i<j}(-1+ \deg
a_{i})(-1 + \deg a_{j})} [a_{m} | \dots | a_{1}]
\end{align*}
is an isomorphism of DG coalgebras.
\end{lemma}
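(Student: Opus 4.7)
The plan is to verify three things in sequence: that $\Phi$ is a linear bijection, that $\Phi$ respects the coproducts (deconcatenation on the source, twisted deconcatenation on the target), and that $\Phi$ commutes with the bar differentials (up to the overall sign that defines the opposite DG coalgebra). Linear bijectivity is immediate: on each bar-length-$m$ summand $\Phi$ is a signed permutation of a basis, and its inverse is given by the same formula with the same sign (since reversing twice is the identity and the sign depends only on the unordered set of pairs).

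For the coalgebra axiom I would write the deconcatenation coproduct as
\[
\Delta[a_{1}|\cdots|a_{m}] = \sum_{k=0}^{m} [a_{1}|\cdots|a_{k}] \otimes [a_{k+1}|\cdots|a_{m}],
\]
and reduce the identity $(\Phi\otimes\Phi)\Delta_{B(A^{\op})} = \Delta^{\op}_{B(A)}\Phi = (\tau\circ\Delta_{B(A)})\Phi$ to a purely combinatorial sign comparison. On the right-hand side one collects (i) the full-reversal sign $(-1)^{\sum_{i<j}(-1+\deg a_{i})(-1+\deg a_{j})}$ from $\Phi$, then (ii) the split into $[a_{m}|\cdots|a_{k+1}]\otimes[a_{k}|\cdots|a_{1}]$, then (iii) the twist sign that moves the second tensor factor past the first in $B(A)^{\op}$. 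On the left-hand side one gets two separate reversal signs, one per tensor factor. The identity
\[
\sum_{i<j}(-1+\deg a_{i})(-1+\deg a_{j}) \;=\; \sum_{i<j\leq k}(\cdots) + \sum_{k<i<j}(\cdots) + \sum_{i\leq k<j}(\cdots)
\]
(the last sum being exactly the twist sign on $SI^{\otimes k}\otimes SI^{\otimes m-k}$) makes both sides agree termwise.

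For the differential compatibility I want $\Phi\circ b_{B(A^{\op})} = b_{B(A)^{\op}}\circ\Phi = -b_{B(A)}\circ\Phi$. Write out both sides on a generator $[a_{1}|\cdots|a_{m}]$ using \eqref{1.9}; the right-hand side applies $\overline{m}_{n}=(-1)^{n}m_{n}$ to a window $(a_{j+1},\ldots,a_{j+n})$ after reversing all of $[a_{1}|\cdots|a_{m}]$ via $\Phi$, while the left-hand side first applies $\overline{m}_{n}^{\op}=(-1)^{n}m_{n}^{\op}$ to the reversed window $(a_{j+n},\ldots,a_{j+1})$ inside the reversed bar word and then $\Phi$ reverses the entire resulting (shorter) bar word. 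Unpacking $m_{n}^{\op}$ brings in the sign $\varepsilon(n)$ plus the twist sign among the inputs $a_{j+1},\ldots,a_{j+n}$. The main obstacle will be showing that the four contributions — the global $-1$; the mismatch between the weight $w_{j,n}$ in $b$ for the position $j$ in the original word and the mirror position $m-j-n$ in the reversed word; the full-reversal sign of $\Phi$ on the initial $m$-letter word versus on the final $(m-n+1)$-letter word; and the internal twist sign $\varepsilon(n)+\sum_{j<s<t\leq j+n}(\deg a_{s})(\deg a_{t})$ coming from $m_{n}^{\op}$ — collapse to zero mod $2$. This collapse is precisely the $q=2$ case of Lemma~\ref{xxlem3.2} applied with block sizes $(n,1,\ldots,1)$ (and its symmetric rearrangements), used to turn the four separate sign accumulations into the single identity
\[
\varepsilon(n)+\binom{n}{2} \equiv 1 \pmod{2}.
\]
Granting this sign identity, every term on the two sides matches bijectively, and $\Phi$ is a DG coalgebra isomorphism.
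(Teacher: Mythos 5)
Your plan is correct and is exactly the direct verification the paper intends: the proof of Lemma~\ref{xxlem3.5} is literally ``left to the reader,'' and your decomposition of the coproduct sign as $\sum_{i<j}=\sum_{i<j\leq k}+\sum_{k<i<j}+\sum_{i\leq k<j}$ together with the residual sign identity $\varepsilon(n)+\binom{n}{2}\equiv 1 \pmod 2$ for the differential (which is just the paper's own alternative formula $\varepsilon(n)=\binom{n}{2}+1$ from the definition of $m_n^{\op}$, so you do not actually need Lemma~\ref{xxlem3.2} here) is precisely what makes the termwise matching work; I checked the bookkeeping in representative cases and it closes as you claim.
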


Note that $B(A)^{\op}$ is the opposite coalgebra to $B(A)$, as
defined in Subsection~\ref{xxsec3.4}.

\begin{proof}
Left to the reader.
\end{proof}

This result gives us a second way to prove
Lemma~\ref{xxlem3.4}, that the definition
$f_{n}^{\op}=(-1)^{1+\varepsilon (n)}f_{n} \circ \text{(twist)}$
defines a morphism of $A_\infty$-algebras: one just has to check
that if the $A_\infty$-algebra morphism $f: A \rightarrow A'$
corresponds to the DG coalgebra morphism $B(f): B(A) \rightarrow B(A')$,
then the composite
\[
B(A)^{\op} \xrightarrow[\cong]{\Phi^{-1}} B(A^{\op})
\xrightarrow{B(f^{\op})} B((A')^{\op}) \xrightarrow[\cong]{\Phi}
B(A')^{\op}
\]
is equal to $B(f)^{\op}=B(f)$.  This is straightforward.

Once we know that the bar construction works well with opposites, we
can define the opposite of a homotopy between $A_\infty$-algebra
maps in terms of the bar construction.  Thus $\op$ defines an
automorphism on the homotopy category of $A_\infty$-algebras.

\subsection{The opposite of an $A_\infty$-module}

Since modules over an $A_\infty$-algebra are defined using exactly
the same identities \textbf{SI(n)} as for $A_\infty$-algebras, and
since morphisms between modules satisfy only slight variants on the
identities \textbf{MI(n)}, essentially the same proofs show that the
opposite of a right $A$-module is a left $A^{\op}$-module, etc.  That
is, there are equivalences of categories
\begin{gather*}
(\text{left $A_\infty$-modules over $A$}) \xrightarrow{\op}
\Mod^{\infty}(A^{\op}), \\
\InfD(\text{left $A_\infty$-modules over $A$}) \xrightarrow{\op}
\InfD(A^{\op}).
\end{gather*}
So whenever left $A_\infty$-modules arise, we may easily convert
them to right $A_\infty$-modules, and vice versa.

\section{Adjunctions and equivalences}
\label{xxsec4}

This section lays more groundwork: generalities for establishing
equivalences between categories via Auslander and Bass classes,
results about derived functors for DG modules, and $\otimes$-Hom
adjointness.  Two of the main results of the section are Propositions
\ref{xxprop4.10} and \ref{xxprop4.11}, which describe when certain
subcategories of derived categories of DG-modules are equivalent.

\subsection{Auslander and Bass classes}

Let $\CC$ and $\DD$ be two categories. Let $F: \CC\to \DD$
be left adjoint to a functor $G: \DD\to \CC$. Then
there are natural transformations
\begin{gather*}
\eta : \id_{\CC} \rightarrow GF,\\
\varepsilon : FG \rightarrow \id_{\DD}.
\end{gather*}
We define two full subcategories as follows. The \emph{Auslander class}
associated to $(F,G)$ is the subcategory of
$\CC$ whose objects are
\[
\{M \,|\, \eta_M: M\to GF (M) \text{ is an isomorphism}  \}.
\]
The Auslander class is denoted by $\Aus$.
The \emph{Bass class} associated to $(F,G)$ is the subcategory
of $\DD$ whose objects are
\[
\{N \,|\, \epsilon_N: FG(N) \to N \text{ is an isomorphism} \}.
\]
The Bass class is denoted by $\Bass$. These definitions are
abstractions of ideas of Avramov and Foxby \cite[Section 3]{AF}.
The following lemma is proved by imitating \cite[Theorem 3.2]{AF}.

\begin{lemma}
\label{xxlem4.1}
Let $(F,G)$ be a pair of adjoint functors between $\CC$ and $\DD$.
\begin{enumerate}
\item
The functors $F$ and $G$ restrict to an equivalence of
categories between $\Aus$ and $\Bass$.
\item
If $\CC$ and $\DD$ are additive and $F$ and $G$ are additive
functors, then $\Aus$ and $\Bass$ are additive subcategories.
If $\CC$ and $\DD$ are triangulated and $F$ and $G$
are triangulated functors, then $\Aus$ and $\Bass$ are
triangulated subcategories.
\end{enumerate}
\end{lemma}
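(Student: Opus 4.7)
The plan is to derive everything from the two triangle identities
\[
\varepsilon_{F(M)} \circ F(\eta_M) = \id_{F(M)}, \qquad
G(\varepsilon_N) \circ \eta_{G(N)} = \id_{G(N)}
\]
for the adjunction $(F,G)$. For part (a), first I would show that $F$ carries $\Aus$ into $\Bass$: if $M \in \Aus$, then $\eta_M$ is an isomorphism, hence so is $F(\eta_M)$, and the first triangle identity forces $\varepsilon_{F(M)}$ to be an isomorphism, i.e.\ $F(M) \in \Bass$. Symmetrically, the second triangle identity shows $G$ sends $\Bass$ into $\Aus$. Then, by definition of the two subcategories, $\eta$ and $\varepsilon$ restrict to natural isomorphisms $\id_{\Aus} \cong GF|_{\Aus}$ and $FG|_{\Bass} \cong \id_{\Bass}$, which is exactly the data of an equivalence.

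For the additive half of (b), the key point is that additivity of $F$ and $G$ together with the naturality of $\eta$ make $\eta_{M_1 \oplus M_2}$ identified with $\eta_{M_1} \oplus \eta_{M_2}$ (and likewise for direct summands, using naturality applied to the inclusions and projections). Hence $\Aus$ is closed under finite direct sums and summands; the same argument applied to $\varepsilon$ handles $\Bass$.

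For the triangulated half of (b), closure under the shift $[1]$ is immediate: since $F$ and $G$ are triangulated, $\eta$ commutes with the shift up to the fixed natural isomorphisms $F \circ [1] \cong [1] \circ F$ and similarly for $G$, so $\eta_{M[1]}$ is an isomorphism whenever $\eta_M$ is. For closure under cones, I would take a distinguished triangle $X \to Y \to Z \to X[1]$ with $X, Y \in \Aus$ and apply the natural transformation $\eta$ to get a morphism of triangles
\[
\xymatrix{
X \ar[r] \ar[d]_{\eta_X} & Y \ar[r] \ar[d]_{\eta_Y} & Z \ar[r] \ar[d]_{\eta_Z} & X[1] \ar[d]_{\eta_{X[1]}} \\
GF(X) \ar[r] & GF(Y) \ar[r] & GF(Z) \ar[r] & GF(X)[1]
}
\]
(the bottom row is distinguished because $F$ and $G$ are triangulated). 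Two of the four vertical maps are isomorphisms, so by the standard triangulated five-lemma $\eta_Z$ is too, placing $Z$ in $\Aus$. The analogous argument with $\varepsilon$ handles $\Bass$.

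The only real obstacle is the bookkeeping in part (b): one must be slightly careful that the natural isomorphisms intertwining $F$, $G$ with the shift are compatible with $\eta$ and $\varepsilon$ so that the diagram above really commutes, but this follows from the definition of a triangulated functor and of the unit/counit of an adjunction between triangulated categories. No computation beyond this is required, so the proof is essentially a diagram chase once the triangle identities are written down.
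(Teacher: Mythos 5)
Your proof is correct and is essentially the argument the paper has in mind: the paper gives no details, simply saying the lemma "is proved by imitating" Avramov--Foxby's Theorem 3.2, and that argument is exactly the triangle-identity computation you give for part (a) together with the naturality/five-lemma bookkeeping for part (b). The one point you flag — compatibility of the unit and counit with the shift functors — is indeed the only subtlety, and it holds for the adjunctions actually used later in the paper (derived tensor and $\RHom$), so nothing is missing.
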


\subsection{Derived functors over a DG algebra}
\label{xxsec4.2}

The derived category and derived functors over a DG algebra are
well-understood constructions nowadays.  See \cite{Sp}, \cite{Ke94},
and \cite{FHT01}, for example.  We review some details in this
subsection.  As with $A_\infty$-algebras and modules, every DG module
in this paper is ${\mathbb Z}\times{\mathbb Z}$-graded.

Let $R$ be a DG algebra and let $M$ be a DG $R$-module. Then $M$ is
called \emph{acyclic} if $HM=0$; it is called \emph{free} if it is
isomorphic to a direct sum of shifts of $R$; and it is called
\emph{semifree} if there is a sequence of DG submodules
\[
0=M_{-1}\subset M_0\subset \cdots \subset M_n \subset \cdots
\]
such that $M=\bigcup_{n} M_n$ and that each $M_n/M_{n-1}$ is free
on a basis of cycles. Semifree modules are a replacement for
free complexes over an associative algebra.

\begin{notation}
If $R$ is a DG algebra and $M$ and $N$ are DG $R$-modules, we write
$\Hom_R(M,N)$ for the DG $k$-module whose degree $n$ elements are
degree $n$ $R$-module maps $M \rightarrow N$, ignoring the
differential; see Subsection~\ref{xxsec1.1} for the formula for the
differential in $\Hom_R(M,N)$.  Similarly, $\End_{R}(M)$ means the
complex $\Hom_R(M,M)$.
\end{notation}

In DG homological algebra, $K$-projective and $K$-injective DG modules
are used to define derived functors.  A DG $R$-module $M$ is called
\emph{$K$-projective} if the functor $\Hom_R(M,-)$ preserves
quasi-isomorphisms, or equivalently, $\Hom_R(M,-)$ maps acyclic DG
$R$-modules to acyclic DG $k$-modules. For example, a semifree DG
$R$-module is always $K$-projective.  A DG $R$-module $M$ is called
\emph{$K$-flat} if the functor $M\otimes_R -$ preserves
quasi-isomorphisms; every $K$-projective DG $R$-module is $K$-flat.  A
DG $R$-module $N$ is called \emph{$K$-injective} if the functor
$\Hom_R(-,N)$ preserves quasi-isomorphisms, or equivalently,
$\Hom_R(-,N)$ maps acyclic DG $R$-modules to acyclic DG $k$-modules.

Given a DG $R$-module $M$, a map $f: L \rightarrow M$ is called a
\emph{semifree} (or \emph{$K$-projective} or \emph{$K$-flat},
respectively) \emph{resolution} of $M$ if $f: L \rightarrow M$ is a
quasi-isomorphism and $L$ is semifree (or $K$-projective or $K$-flat,
respectively). Similarly, a \emph{$K$-injective} resolution of $M$ is a
quasi-isomorphism $M \rightarrow L$ where $L$ is $K$-injective.  In
all of these cases, we will also abuse notation slightly and refer to
$L$ itself as the resolution, omitting mention of the map $f$.

The right derived functor of $\Hom_R(M,N)$ is defined to be
\[
\RHom_R(M,N):=\Hom_R(P,N) \qquad {\text{or}}\qquad
\RHom_R(M,N):=\Hom_R(M,I)
\]
where $P$ is a $K$-projective resolution
of $M$ and $I$ is a $K$-injective resolution of $N$.
The left derived functor of $M\otimes_R N$ is defined to be
\[
M\otimes^L_R N:=M\otimes_R Q \qquad {\text{or}}\qquad
M\otimes^L_R N:=S\otimes_R N
\]
where $S$ is a $K$-flat resolution of $M$ and $Q$ is a
$K$-flat resolution of $N$.

\subsection{Tensor-Hom and Hom-Hom adjunctions}

We discuss $\otimes$-$\Hom$ and Hom-Hom adjointness, both basic and
derived.  These are well-known, at least in the case of modules over
an associative algebra.  The DG case may not be as familiar, so we
provide some details.  Here is the basic version.

\begin{lemma}
\label{xxlem4.2}
Let $A$, $B$, $C$, and $D$ be DG algebras.
\begin{enumerate}
\item \textup{[$\Hom$-$\Hom$ adjointness]}
Let $_AL_C$, $_DM_B$ and $_AN_B$ be DG bimodules.  Then
\[
\Hom_{A^{\op}}({_AL_C},\Hom_B({_DM_B},{_AN_B}))\cong
\Hom_B({_DM_B}, \Hom_{A^{\op}}({_AL_C},{_AN_B}))
\]
as DG $(C,D)$-bimodules.
\item \textup{[$\otimes$-$\Hom$ adjointness]}
Let $_DL_B$, $_BM_A$ and $_CN_A$ be DG bimodules.
Then
\[
\Hom_A({_D L}\otimes_B {M_A}, {_C N_A})\cong
\Hom_B({_DL_B}, \Hom_A({_BM_A}, {_CN_A}))
\]
as DG $(C,D)$-bimodules.
\end{enumerate}
\end{lemma}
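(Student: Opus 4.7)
The plan is to write down the classical adjunction isomorphisms explicitly, adapted to the DG setting with appropriate Koszul signs, and then verify that they are $(C,D)$-bimodule maps compatible with the differentials. Both parts are standard; the only real content is keeping the gradings, bimodule actions, and signs straight.

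For part (b), I would define
\[
\Phi : \Hom_A({_DL}\otimes_B M_A, {_CN_A}) \longrightarrow \Hom_B({_DL_B}, \Hom_A({_BM_A}, {_CN_A}))
\]
by $\Phi(f)(\ell)(m) = f(\ell\otimes m)$, with inverse $\Psi(g)(\ell\otimes m) = g(\ell)(m)$. No Koszul signs appear because nothing is being interchanged. The verifications are: (i) $\Phi(f)(\ell)$ is right $A$-linear and $\Phi(f)$ is right $B$-linear, both of which follow from the tensor relation $\ell b\otimes m = \ell\otimes bm$; (ii) the left $C$-action and left $D$-action are preserved, which is immediate from the formulas; and (iii) $\Phi$ commutes with the differential, which follows from the Leibniz rule applied to the formulas for $d$ on $\Hom$ and on $\otimes$ given in Subsection~\ref{xxsec1.1}, together with the fact that $\Phi$ introduces no sign.

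For part (a), I would define
\[
\Phi : \Hom_{A^{\op}}({_AL_C}, \Hom_B({_DM_B}, {_AN_B})) \longrightarrow \Hom_B({_DM_B}, \Hom_{A^{\op}}({_AL_C}, {_AN_B}))
\]
by $\Phi(f)(m)(\ell) = (-1)^{\deg_1(m)\deg_1(\ell)} f(\ell)(m)$. The sign is forced by the Koszul rule: we are swapping the order of evaluation at $\ell$ and $m$, two graded objects. The candidate inverse $\Psi$ is given by the same formula with the roles reversed. As in part (b), one checks the bimodule linearities and then compatibility with the differential; the sign is chosen precisely so that $d\circ \Phi = \Phi\circ d$ holds after applying Leibniz to both sides.

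The main obstacle is purely bookkeeping: four DG algebras act, each verification requires isolating one action at a time, and in part (a) one must insert the correct Koszul sign so that $\Phi$ is a chain map. I expect part (b) to be entirely routine (no signs, standard unit/counit proof that $\Psi\Phi=\id$ and $\Phi\Psi=\id$), and the only nontrivial point in part (a) to be a single sign calculation showing that the $\binom{\deg_1\ell \cdot \deg_1 m}{}$ factor precisely compensates for the Koszul signs produced when $d$ is pushed through the two nested Hom complexes. Since the statements are only claimed up to natural isomorphism of DG bimodules, naturality in $L$, $M$, $N$ is clear from the formulas and needs no separate argument.
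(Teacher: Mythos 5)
Your proposal is correct and follows essentially the same route as the paper: for part (a) the paper defines exactly the same swap isomorphism $\phi(f)(m)(\ell)=(-1)^{\deg_1(\ell)\deg_1(m)}f(\ell)(m)$ and leaves the bimodule and differential compatibilities as a straightforward check, while for part (b) it simply cites the standard (sign-free) tensor--Hom adjunction from the literature, which is the map you wrote down.
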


The isomorphism in part (a) gives a pair of adjoint
functors
\[
A\text{-}\Mod \rightleftarrows (\Mod \,B)^{\op},
\]
namely $\Hom_{A^{\op}}(-,{_AN_B})$ (left adjoint) and
$\Hom_B(-,{_AN_B})$ (right adjoint).  This explains the label,
``$\Hom$-$\Hom$ adjointness.''

\begin{proof}
(a) The desired isomorphism
\[
\phi:
\Hom_{A^{\op}}({_AL_C},\Hom_B({_DM_B},{_AN_B}))\to
\Hom_B({_DM_B}, \Hom_{A^{\op}}({_AL_C},{_AN_B}))
\]
is defined by the following rule. Let
$f\in \Hom_{A^{\op}}({_AL_C},\Hom_B({_DM_B},{_AN_B}))$,
and $l\in L$ and $m\in M$, write
$f(l)\in \Hom_B({_DM_B},{_AN_B})$ and $f(l)(m)\in N$;
then $\phi(f): M\to \Hom_{A^{\op}}({_AL_C},{_AN_B})$ is
determined by
\[
\phi(f)(m)(l)=(-1)^{|l||m|} f(l)(m)
\]
for all $l\in L, m\in M$. It is straightforward to check that
$\phi$ is an isomorphism of DG bimodules.

The above construction is given in the unpublished manuscript
\cite[(3.4.2), p. 27]{AFH}. The isomorphism $\phi$ is called
the \emph{swap isomorphism} \cite[Sect. 3.4]{AFH}.

(b) This is standard and a proof is given in
\cite[(3.4.3), p. 28]{AFH}. A non-DG version is
in \cite[Theorem 2.11, p. 37]{Ro}.
\end{proof}

To get derived versions of these, we need information about bimodules,
semifree resolutions, $K$-projectives, etc.

\begin{lemma}
\label{xxlem4.3}
Let $A$ and $B$ be DG algebras. Let
$M$ and $L$ be DG $(B,A)$-bimodules, or equivalently,
DG $B^{\op}\otimes A$-modules.
\begin{enumerate}
\item
Let $N$ be a DG $A$-module. If there is a sequence
of DG submodules
\[
0=N_{-1}\subset N_0\subset \cdots \subset N_n \subset \cdots
\]
such that $N=\bigcup_n N_n$, $C_n:= N_{n}/N_{n-1}$
is $K$-projective, and the underlying graded module $C_n$
is projective, then $N$ is $K$-projective.
\item
If $M$ is semifree $(B,A)$-bimodule, then it is
$K$-projective over $A$. As a consequence, if $M\to L$ is a
semifree resolution of $L$, then restricted to the right-hand
side, it is a $K$-projective resolution of $L_A$.
\item
If $M$ is $K$-injective $(B,A)$-bimodule, then it is
$K$-injective over $A$. As a consequence, if $L\to M$
is a $K$-injective resolution of $L$, then restricted to the
right-hand side, it is a $K$-injective resolution of $L_A$.
\end{enumerate}
\end{lemma}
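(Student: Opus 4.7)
The plan is to prove (a) by induction along the filtration combined with a Mittag--Leffler--type argument for $\varprojlim$, to deduce (b) by recognizing the semifree filtration as one of the kind appearing in (a) after restriction to $A$, and to prove (c) via the $\otimes$-$\Hom$ adjunction that trades $\Hom_A(-,M_A)$ for $\Hom_{B^{\op}\otimes A}(B \otimes_k -,\, M)$.

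For (a), I would argue by induction that each $N_n$ is $K$-projective. Since $C_n$ is projective as a graded $A$-module, the sequence $0 \to N_{n-1} \to N_n \to C_n \to 0$ splits in the category of graded modules. Applying $\Hom_A(-,X)$ for any DG $A$-module $X$ therefore produces a short exact sequence of complexes; if $X$ is acyclic, the outer two terms are acyclic (by $K$-projectivity of $C_n$ and the inductive hypothesis), so the middle term is acyclic as well. To pass from $\{N_n\}$ to $N = \bigcup_n N_n$, the same graded splitting forces the transition maps $\Hom_A(N_n, X) \twoheadrightarrow \Hom_A(N_{n-1}, X)$ to be surjective. Then the standard short exact sequence
\[
0 \to \varprojlim_n \Hom_A(N_n, X) \to \prod_n \Hom_A(N_n,X) \xrightarrow{1-\mathrm{shift}} \prod_n \Hom_A(N_n,X) \to 0
\]
identifies $\Hom_A(N, X)$ with an acyclic complex, since products of acyclic complexes are acyclic.

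For (b), a semifree $(B,A)$-bimodule $M$ admits by definition a filtration $0 = M_{-1} \subset M_0 \subset \cdots$ with $M = \bigcup_n M_n$ and each $M_n/M_{n-1}$ free over $B^{\op}\otimes A$ on a basis of cycles, hence a direct sum of shifts of $B\otimes_k A$. As a right $A$-module, $B \otimes_k A$ is free on a $k$-basis of $B$; so each $M_n/M_{n-1}$ is a free right $A$-module, which in particular is projective as a graded module and $K$-projective (a direct sum of $K$-projectives is $K$-projective, since the corresponding product of acyclic complexes is acyclic). Part (a) then delivers $K$-projectivity of $M$ over $A$. The claim about resolutions is immediate: a semifree resolution $M \to L$ is in particular a quasi-isomorphism of right $A$-modules, and its source is $K$-projective over $A$ by what has just been shown.

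For (c), I would use the easy adjunction
\[
\Hom_A(X, M_A) \cong \Hom_{B^{\op}\otimes A}(B \otimes_k X,\, M),
\]
valid for any right $A$-module $X$ (any $(B,A)$-bilinear map out of $B\otimes_k X$ is determined by its restriction to $1 \otimes X$, and conversely any $A$-linear map $X \to M_A$ extends uniquely). For $X$ acyclic, $B \otimes_k X$ is acyclic since $k$ is a field, and $K$-injectivity of $M$ over $B^{\op}\otimes A$ then forces $\Hom_{B^{\op}\otimes A}(B\otimes_k X, M)$ to be acyclic; hence $\Hom_A(X, M_A)$ is acyclic and $M_A$ is $K$-injective. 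The consequence for resolutions follows exactly as in (b). I expect the main technical point to be the $\varprojlim$-versus-cohomology step in (a); everything else reduces to formal adjunction manipulations once the appropriate adjoint pair is in place.
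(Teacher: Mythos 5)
Your proofs of (a) and (c) are essentially the paper's: the same induction along the graded-split filtration followed by the Mittag--Leffler/$\varprojlim$ step for (a) (the paper invokes Weibel's Theorem 3.5.8, which is exactly the exact sequence $0\to\varprojlim\to\prod\to\prod\to 0$ you write down), and the same extension-of-scalars adjunction identifying $\Hom_A(X,M_A)$ with $\Hom_{B^{\op}\otimes A}(B^{\op}\otimes_k X, M)$ for (c).

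The one step that needs repair is in (b). You assert that each subquotient $M_n/M_{n-1}$, being a direct sum of shifts of $B\otimes_k A$, is a \emph{free right DG $A$-module}, and you deduce its $K$-projectivity from the fact that a direct sum of $K$-projectives is $K$-projective. But $B\otimes_k A$ is free over $A$ only as a \emph{graded} module: the differential sends a basis element $b\otimes 1$ to $d_B(b)\otimes 1$, which is in general a nontrivial $A$-combination of other basis elements, so the decomposition into shifts of $A$ is not a decomposition of DG modules unless $d_B=0$. The conclusion is nevertheless true, and the paper proves it by a different one-liner: $\otimes$-$\Hom$ adjointness gives $\Hom_A(B^{\op}\otimes A, N)\cong\Hom_k(B^{\op},N)$, which is acyclic for acyclic $N$ because every complex of $k$-vector spaces is $K$-projective. (Alternatively: over the field $k$ the complex $B$ splits into shifts of $k$ and contractible two-term complexes, so $B\otimes_k A$ splits as a DG $A$-module into shifts of $A$ and contractible summands, all $K$-projective.) With that substitution the rest of your argument for (b) — reducing to the subquotients and then applying part (a) — goes through and matches the paper's.
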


\begin{proof} (a) First consider the sequence
\[
0\to N_0\to N_1\to C_1\to 0
\]
of DG $A$-modules. This is a split (hence exact)
sequence after omitting the differentials, as
the underlying graded module $C_1$ is projective.
Let $X$ be an acyclic DG $A$-module. Then we have
an exact sequence
\[
0\to \Hom_A(C_1,X)\to \Hom_A(N_1,X)\to
\Hom_A(N_0,X)\to 0.
\]
If the two ends $\Hom_A(C_1,X)$ and $\Hom_A(N_0,X)$ are acyclic,
so is the middle term $\Hom_A(N_1,X)$. This shows that if
$C_0(=N_0)$ and $C_1$ are $K$-projective, so is $N_1$.
By induction on $n$ we see that $N_n$ is $K$-projective
and projective for all $n$. Since every
sequence
\[
0\to N_{n-1}\to N_n\to C_n\to 0
\]
splits, the map $\Hom_A(N_n,X)\to \Hom_A(N_{n-1},X)$ is
surjective. This means that the inverse system
$\{\Hom_A(N_n,X)\}_{n}$ satisfies Mittag-Leffler
condition. Since each $\Hom_A(N_n,X)$ is acyclic,
\[
\Hom_A(N,X)=\varprojlim \Hom_A(N_n,X)
\]
is acyclic by \cite[Theorem 3.5.8]{We}.

(b) The second assertion follows from the first one.

By part (a) and the definition of a semifree module,
we may assume $M$ is free. Since a free module is a
direct sum of shifts of $B^{\op}\otimes A$, we may
assume $M$ is a copy of $B^{\op}\otimes A$. We need to
show that $M$ is $K$-projective over $A$.

Let $N_A$ be a DG $A$-module that is acyclic. By
$\otimes$-$\Hom$ adjointness (Lemma \ref{xxlem4.2}(b)),
we have
\[
\Hom_A(B^{\op}\otimes A, N)\cong \Hom_k(B^{\op},
\Hom_A(A, N))\cong \Hom_k(B^{\op}, N).
\]
Since every DG $k$-module is $K$-projective,
$\Hom_k(B^{\op}, N)$ is acyclic.
Hence $B^{\op}\otimes A$ is $K$-projective over $A$.

(c) The second assertion follows from the first one.

Let $N_A$ be an acyclic DG $A$-module. By $\otimes$-$\Hom$
adjointness (Lemma \ref{xxlem4.2}(b)),  we have
\[
\Hom_A(N,{_BM_A})=\Hom_A(N, \Hom_{B^{\op}\otimes A}
(B^{\op}\otimes A, {_BM_A}))\cong\qquad\qquad\qquad\qquad
\]
\[
\qquad\qquad
\Hom_{B^{\op}\otimes A}(N\otimes_A (B^{\op}\otimes A), {_BM_A})
\cong \Hom_{B^{\op}\otimes A}(N\otimes B^{\op}, {_BM_A}).
\]
Since $N$ is acyclic, so is $N\otimes B^{\op}$. Since
$_BM_A$ is $K$-injective, the above formula implies that
$\Hom_A(N,{M_A})$ is acyclic. Hence $M_A$ is $K$-injective.
\end{proof}

We can combine the previous two lemmas to get derived $\Hom$-$\Hom$
and $\otimes$-$\Hom$ adjointness.

\begin{lemma}[Derived $\Hom$-$\Hom$ adjointness]
\label{xxlem4.4}
Let $A$, $B$, $C$, and $D$ be DG algebras. Let $_AL_B$, $_BM_C$
and $_AN_D$ be DG bimodules.
\begin{enumerate}
\item There is an isomorphism of complexes
\[
\RHom_{A^{\op}}({_AL_C},\RHom_{B}({_DM_B},{_AN_B}))\cong
\RHom_{B}({_DM_B}, \RHom_{A^{\op}}({_AL_C},{_AN_B}))
\]
in $\D(C^{\op}\otimes D)$.
\item There is an isomorphism of $k$-vector spaces
\[
\D(A^{\op})({_AL},\RHom_{B}({M_B},{_AN_B}))\cong
\D(B)({M_B}, \RHom_{A^{\op}}({_AL},{_AN_B})).
\]
\end{enumerate}
\end{lemma}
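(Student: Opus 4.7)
The plan is to reduce both parts to the non-derived Hom-Hom adjointness of Lemma~\ref{xxlem4.2}(a) by choosing bimodule resolutions that are simultaneously K-projective on the appropriate one-sided module structure, using Lemma~\ref{xxlem4.3}(b) as the bridge. The key observation is that a semifree resolution taken in the category of bimodules is automatically K-projective when restricted to either side, so the same complex can be used to compute derived functors with respect to either factor of the bimodule structure.

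For part (a), I would first choose a semifree resolution $p : P \to L$ of $_AL_C$ as a DG $(A,C)$-bimodule, and a semifree resolution $q : Q \to M$ of $_DM_B$ as a DG $(D,B)$-bimodule. By Lemma~\ref{xxlem4.3}(b), $_AP$ is K-projective over $A^{\op}$ and $Q_B$ is K-projective over $B$, while $P$ and $Q$ still carry their full bimodule structures. Consequently each of the four $\RHom$ expressions appearing in part (a) may be computed as an honest $\Hom$ complex:
\begin{align*}
\RHom_B({_DM_B},{_AN_B}) &\simeq \Hom_B({_DQ_B},{_AN_B}),\\
\RHom_{A^{\op}}({_AL_C},{_AN_B}) &\simeq \Hom_{A^{\op}}({_AP_C},{_AN_B}),\\
\RHom_{A^{\op}}\bigl({_AL_C},\RHom_B({_DM_B},{_AN_B})\bigr) &\simeq \Hom_{A^{\op}}\bigl({_AP_C},\Hom_B({_DQ_B},{_AN_B})\bigr),\\
\RHom_B\bigl({_DM_B},\RHom_{A^{\op}}({_AL_C},{_AN_B})\bigr) &\simeq \Hom_B\bigl({_DQ_B},\Hom_{A^{\op}}({_AP_C},{_AN_B})\bigr),
\end{align*}
and these are all quasi-isomorphisms of DG $(C,D)$-bimodules. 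Now apply Lemma~\ref{xxlem4.2}(a) (the ``swap'' isomorphism) with $P$, $Q$, $N$ in place of $L$, $M$, $N$: it produces a natural isomorphism of DG $(C,D)$-bimodules between the two displays on the right. Composing with the zigzag of quasi-isomorphisms above yields the desired isomorphism in $\D(C^{\op}\otimes D)$; naturality of the swap isomorphism and the standard fact that different resolutions yield canonically isomorphic derived functors ensure independence of choices.

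For part (b), I would specialize (a) to the case $C=D=k$: there is no loss since left $A$-modules and right $B$-modules are precisely $(A,k)$- and $(k,B)$-bimodules. Part (a) then gives an isomorphism in $\D(k)$ between the two iterated $\RHom$ complexes, and applying $H^0((-)^0_0)$ and using the standard identification $\D(R)(X,Y)\cong H^0\bigl(\RHom_R(X,Y)\bigr)^0_0$ (valid because $\RHom_R(X,Y)=\Hom_R(P,Y)$ for a K-projective resolution $P\to X$, and cycles modulo boundaries in bidegree $(0,0)$ are exactly chain maps modulo homotopy) delivers the claimed bijection of Hom-sets. The main obstacle is bookkeeping rather than conceptual: one must verify that the semifree bimodule resolutions genuinely inherit one-sided K-projectivity in the correct argument — this is exactly what Lemma~\ref{xxlem4.3}(b) provides — and that the swap isomorphism of Lemma~\ref{xxlem4.2}(a) respects all four bimodule structures simultaneously, so that the isomorphism descends to $\D(C^{\op}\otimes D)$ rather than merely to $\D(k)$. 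Once these two compatibilities are in place, the argument is entirely formal.
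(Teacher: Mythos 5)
Your proof is correct and follows essentially the same route as the paper: the paper's own (one-line) argument is precisely to take semifree resolutions of the bimodules $L$ and $M$, invoke Lemma~\ref{xxlem4.3} to see that these restrict to $K$-projective resolutions over $A^{\op}$ and $B$ respectively, and then apply the swap isomorphism of Lemma~\ref{xxlem4.2}(a), with part (b) obtained by taking $H^0$. You have simply supplied the details the paper leaves implicit.
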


\begin{proof}
(a) This follows from Lemmas \ref{xxlem4.2} and \ref{xxlem4.3}, and by
taking semifree resolutions of the DG bimodules $M$ and $L$.

(b) This follows from (a) by taking $H^0$.
\end{proof}

\begin{lemma}[Derived $\otimes$-$\Hom$ adjointness]
\label{xxlem4.5}
Let $A$, $B$, $C$ and $D$ be DG algebras.
Let $_DL_B$, $_BM_A$ and $_CN_A$ be DG bimodules.
\begin{enumerate}
\item There is an isomorphism of complexes
\[
\RHom_A({_D L}\otimes^L_B {M_A}, {_C N_A})\cong
\RHom_{B}({_DL_B}, \RHom_A({_BM_A}, {_CN_A}))
\]
in $\D(C^{\op}\otimes D)$.
\item There is an isomorphism of $k$-vector spaces
\[
\D(A)({L}\otimes^L_B {M_A}, {N_A})\cong
\D(B)({L_B}, \RHom_A ({_BM_A}, {N_A})).
\]
\end{enumerate}
\end{lemma}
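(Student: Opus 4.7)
The plan is to mirror the proof of Lemma \ref{xxlem4.4}, replacing each derived functor by its computation on an appropriate bimodule resolution and then invoking the underived $\otimes$-$\Hom$ adjointness of Lemma \ref{xxlem4.2}(b). The subtle point is purely bookkeeping: every resolution must be taken in a bimodule category, and one must know that it still serves as a one-sided resolution when one of the actions is forgotten. Lemma \ref{xxlem4.3}(b),(c) is exactly what guarantees this.

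First I would choose a semifree resolution $\widetilde{L} \to {}_DL_B$ in the category of DG $(D,B)$-bimodules, equivalently in $\Mod (D \otimes B^{\op})$. By Lemma \ref{xxlem4.3}(b), the restriction $\widetilde{L}_B$ is $K$-projective over $B$, and in particular $K$-flat, so that $\widetilde{L} \otimes_B M$ represents ${}_DL \otimes^L_B M_A$ as a DG $(D,A)$-bimodule. Next I would choose a $K$-injective resolution ${}_CN_A \to \widetilde{N}$ in $\Mod (C \otimes A^{\op})$; by Lemma \ref{xxlem4.3}(c), $\widetilde{N}$ is also $K$-injective over $A$ on the right. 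Then $\Hom_A(-,\widetilde{N})$ computes $\RHom_A(-,N)$ on any DG right $A$-module, with the induced left $C$-action coming from $\widetilde{N}$.

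With these choices in place, the underived $\otimes$-$\Hom$ adjointness of Lemma \ref{xxlem4.2}(b) applied to the triple $\widetilde{L},\, M,\, \widetilde{N}$ yields a natural isomorphism of DG $(C,D)$-bimodules
\[
\Hom_A(\widetilde{L} \otimes_B M,\, \widetilde{N}) \;\cong\; \Hom_B\!\bigl(\widetilde{L},\, \Hom_A(M, \widetilde{N})\bigr).
\]
The left-hand side is $\RHom_A({}_DL \otimes^L_B M_A,\, {}_CN_A)$ by the choices of $\widetilde{L}$ and $\widetilde{N}$. For the right-hand side, $\Hom_A(M,\widetilde{N})$ computes $\RHom_A({}_BM_A,\, {}_CN_A)$ as a DG $(C,B)$-bimodule, and $\widetilde{L}$ is $K$-projective over $B$, so $\Hom_B(\widetilde{L}, -)$ computes $\RHom_B({}_DL_B, -)$. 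This gives the required isomorphism in $\D(C^{\op} \otimes D)$, proving (a). Part (b) then follows by taking $H^0$ in the special case $C = D = k$, using $H^0 \RHom_A(X,Y) = \D(A)(X,Y)$ and similarly for $B$.

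The only real obstacle is the verification that the $(C,D)$-bimodule structures on both sides of the underived adjunction actually match up through the swap isomorphism, and that the ``restriction of scalars'' used at each step does not disturb $K$-projectivity or $K$-injectivity; both concerns are addressed by Lemmas \ref{xxlem4.2} and \ref{xxlem4.3}, so no further technology is needed beyond careful tracking of bimodule actions.
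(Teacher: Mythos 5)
Your proposal is correct and follows essentially the same route as the paper, whose proof is just the one-line instruction to combine the underived adjunction of Lemma \ref{xxlem4.2}(b) with the restriction-of-scalars facts in Lemma \ref{xxlem4.3} after taking bimodule resolutions. The only (harmless) difference is that the paper also takes a semifree resolution of $M$, which your version shows is unnecessary once $\widetilde{L}$ is $K$-flat over $B$ and $\widetilde{N}$ is $K$-injective over $A$.
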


\begin{proof}
(a) This follows from Lemmas \ref{xxlem4.2} and \ref{xxlem4.3},
and by taking semifree resolutions of the
DG bimodules $M$ and $L$ and a $K$-injective resolution of
the bimodule $N$.

(b) This follows from (a) by taking $H^0$.
\end{proof}

\subsection{Balanced bimodules and equivalences}

The main results in this section are Propositions \ref{xxprop4.10} and
\ref{xxprop4.11}; these establish a framework for proving the derived
equivalences in Section \ref{xxsec5}.

Let $A$ and $E$ be two DG algebras. A DG $A$-module
$M$ is a DG $(E,A)$-bimodule if and only if
there is a map of DG algebras $E\to \Hom_A(M,M)$.

\begin{definition}
\label{xxdefn4.6}
Let $\BB$ be a DG $(E,A)$-bimodule. We call $\BB$ \emph{left balanced}
if there is a quasi-isomorphism
$\BB\to N$ of DG $(E,A)$-bimodules such that $N$ is
$K$-injective over $E^{\op}\otimes A$ and the
canonical map $E\to \Hom_A(N,N)$ is a quasi-isomorphism
of DG algebras. The \emph{right balanced} property is
defined in a similar way, in terms of the map $A^{\op}
\rightarrow \Hom_{E^{\op}}(N,N)$.
\end{definition}

\begin{lemma}
\label{xxlem4.7}
Let $\BB$ be a DG $(E,A)$-bimodule. Then
the following conditions are equivalent:
\begin{enumerate}
\item[(i)]
$\BB$ is left balanced.
\item[(ii)]
If $P\to \BB$ is a quasi-isomorphism of DG $(E,A)$-bimodules
with $P_A$ being $K$-projective, then the canonical map
$E\to \Hom_A(P,P)$ is a quasi-isomorphism of DG algebras.
\item[(iii)]
If $\BB\to I$ is a quasi-isomorphism of DG $(E,A)$-bimodules
with $I_A$ being $K$-injective, then the canonical map
$E\to \Hom_A(I,I)$ is a quasi-isomorphism of DG algebras.
\end{enumerate}
\end{lemma}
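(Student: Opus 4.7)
The plan is to use a bimodule $K$-projective resolution $P\to\BB$ as a common ``pivot'' for all three conditions. Such a $P$ exists by the semifree construction over $E^{\op}\otimes A$, and by Lemma~\ref{xxlem4.3}(b) the underlying module $P_A$ is $K$-projective. Likewise, standard DG-module theory produces a bimodule $K$-injective resolution $\BB\to N$, and by Lemma~\ref{xxlem4.3}(c) the underlying module $N_A$ is $K$-injective.

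The central observation I would record is the following. Given any bimodule quasi-isomorphism $\phi:P\to M$ with $P_A$ $K$-projective and $M_A$ $K$-injective, the square
\begin{equation*}
\xymatrix{
E \ar[r] \ar[d] & \Hom_A(M,M) \ar[d]^{\Hom_A(\phi,M)} \\
\Hom_A(P,P) \ar[r]_{\Hom_A(P,\phi)} & \Hom_A(P,M)
}
\end{equation*}
commutes, because $\phi$ is a bimodule map and therefore intertwines the left $E$-actions (for $e\in E$ and $p\in P$, both composites send $e$ to $p\mapsto \phi(e\cdot p)=e\cdot\phi(p)$). The right vertical arrow is a quasi-isomorphism since $M_A$ is $K$-injective and $\phi$ is a quasi-isomorphism, and the bottom horizontal arrow is a quasi-isomorphism since $P_A$ is $K$-projective and $\phi$ is a quasi-isomorphism. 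Two-out-of-three then forces $E\to\Hom_A(P,P)$ to be a quasi-isomorphism if and only if $E\to\Hom_A(M,M)$ is.

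Given this, I would assemble the equivalences as follows. Fix one bimodule $K$-projective resolution $P\to\BB$. For any $\BB\to I$ as in (iii), the composite $P\to\BB\to I$ is a bimodule quasi-isomorphism to which the square applies (with $M=I$), so (iii) for $I$ is equivalent to the statement that $E\to\Hom_A(P,P)$ is a quasi-isomorphism; in particular (iii) holds for one such $I$ iff it holds for every such $I$. An entirely symmetric argument, comparing an arbitrary $P'\to\BB$ as in (ii) to a bimodule $K$-injective resolution $\BB\to N$, shows that (ii) for $P'$ is equivalent to $E\to\Hom_A(N,N)$ being a quasi-isomorphism. Applying the square once more to the pair $(P,N)$, both of these are equivalent to $E\to\Hom_A(P,P)$ being a quasi-isomorphism, yielding (ii)$\Leftrightarrow$(iii).

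For the link to (i): condition (i) is a special instance of (iii) (taking $I=N$ and invoking Lemma~\ref{xxlem4.3}(c) to see $N_A$ is $K$-injective), so (i)$\Rightarrow$(iii); conversely, applying (iii) to any fixed bimodule $K$-injective resolution $\BB\to N$ realises (i). The main technical work is verifying commutativity of the square and confirming that the two maps out of $\Hom_A(P,\cdot)$ and $\Hom_A(\cdot,M)$ are indeed quasi-isomorphisms of the complexes involved, not merely of their cohomology; the former is immediate from $\phi$ being a bimodule morphism, and the latter follows directly from the definitions of $K$-projectivity and $K$-injectivity over $A$ once one checks that post- and pre-composition with $\phi$ can be factored through the acyclic cone of $\phi$.
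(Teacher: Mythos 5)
Your proof is correct and takes essentially the same route as the paper's: the commuting square comparing $E\to\Hom_A(P,P)$ and $E\to\Hom_A(M,M)$ through the common target $\Hom_A(P,M)$ is exactly the paper's argument (there written as $g\,i_N=h\,i_P$, with both legs quasi-isomorphisms by $K$-projectivity of $P_A$ and $K$-injectivity of $M_A$). The only difference is organizational: you fix one bimodule $K$-projective resolution as a pivot and relate all three conditions to $E\to\Hom_A(P,P)$, whereas the paper proves (i)$\Leftrightarrow$(ii) directly and declares (ii)$\Leftrightarrow$(iii) similar.
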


\begin{proof} (i) $\Leftrightarrow$ (ii).
Suppose that there is a quasi-isomorphism $\BB \to N$ where $N$ is
$K$-injective over $E^{\op} \otimes A$.  (By a result of Spaltenstein
\cite[Corollary 3.9]{Sp}, there always is such a map.)  By Lemma
\ref{xxlem4.3}(c), $N$ is $K$-injective over $A$. The
quasi-isomorphism $f: P\to \BB\to N$ induces two maps
\[
E\xrightarrow{i_N} \Hom_A(N,N)\xrightarrow{g} \Hom_A(P,N)
\]
and
\[
E\xrightarrow{i_P} \Hom_A(P,P)\xrightarrow{h} \Hom_A(P,N)
\]
of $(E,E)$-bimodules. Since $N$ is $K$-injective over
$A$ and $P$ is $K$-projective over $A$, both $g$ and $h$
are quasi-isomorphisms. It is easy to see that $gi_N=hi_P$: they
both map $e\in E$ to $ef=fe\in \Hom_A(P,N)$. Therefore
$i_N$ is a quasi-isomorphism if and only if $i_P$ is.

(ii) $\Leftrightarrow$ (iii).
This proof is similar.
\end{proof}

By the above lemma, we can construct plenty of left balanced
bimodules. For example, let $M$ be a $K$-injective (or $K$-projective)
DG $A$-module and let $E=\Hom_A(M,M)$. It follows from the lemma that
$M$ becomes a left balanced DG $(E,A)$-bimodule with its natural left
$E$-module structure.

We now recall a few definitions.

\begin{definition}
\label{xxdefn4.8}
An object $M$ in an additive category $\CC$ with infinite
direct sums is called \emph{small} if $\CC(M,-)$ commutes
with arbitrary direct sums.
\end{definition}

Let $A$ be an $A_\infty$-algebra and $M$ be a right
$A_\infty$-module over $A$.  Let $\trinf{A}{M}$ denote the
triangulated subcategory of $\InfD(A)$ generated by $M$ and its Adams
shifts. (Recall that every subcategory in this paper is full.)  Let
$\thinf{A}{M}$ be the thick subcategory generated by $\trinf{A}{M}$:
the smallest triangulated subcategory, closed under summands, which
contains $\trinf{A}{M}$.  Similarly, if $R$ is a DG algebra and $N$ is
a DG $R$-module, let $\triang{R}{N}$ denote the triangulated
subcategory of $\D (R)$ generated by $N$ and its shifts, and let
$\thick{R}{N}$ be the thick subcategory generated by $\triang{R}{N}$.
Let $\InfD_{\per}(A)=\thinf{A}{A}$, and let
$\D_{\per}(R)=\thick{R}{R}$.  We call objects in $\InfD_{\per}(A)$ and
$\D_{\per}(R)$ \emph{perfect complexes}.  Let $\locinf{A}{M}$ denote
the localizing subcategory (= triangulated and closed under arbitrary
direct sums) generated by $\trinf{A}{M}$, and similarly for
$\loc{R}{N}$.  We can also define each of these with $M$ or $N$
replaced by a collection of modules. It is well-known that $\loc{R}{R}
=\D(R)$.

\begin{lemma}
\label{xxlem4.9}
\begin{enumerate}
\item
If $R$ is a DG algebra and $N$ is a right DG $R$-module, then
$N$ is small in $\D(R)$ if and only if $\RHom_R(N,-)$ commutes
with arbitrary colimits, and if and only if $N\in \D_{\per}(R)$.
\item
If $A$ is an $A_\infty$-algebra and $M$ is a right
$A_\infty$-module over $A$, then $M$ is small in
$\InfD(A)$ if and only if $M\in \InfD_{\per}(A)$.
\end{enumerate}
\end{lemma}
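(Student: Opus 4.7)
The plan is to prove part (a) by a classical compact-generation argument in $\D(R)$, and then to deduce part (b) by transport along the triangulated equivalence $F: \D(\env{A}) \to \InfD(A)$ supplied by Proposition~\ref{xxprop3.1}(c).

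For part (a), the first step is to observe that $R$ itself is small in $\D(R)$: since $\D(R)(R, X) \cong H^0(X)$ and cohomology commutes with arbitrary direct sums, so does $\D(R)(R, -)$. The class of small objects is closed under shifts, mapping cones, and direct summands, so it forms a thick subcategory of $\D(R)$; hence $\D_{\per}(R) = \thick{R}{R}$ consists of small objects. For the reverse inclusion, one uses that $\loc{R}{R} = \D(R)$, so $\D(R)$ is compactly generated by $\{R\}$, and invokes the standard Ravenel--Neeman result: in a compactly generated triangulated category, every small object lies in the thick subcategory generated by the compact generators. This forces a small $N$ to lie in $\thick{R}{R} = \D_{\per}(R)$. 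The equivalence with the $\RHom$ condition is then formal: the isomorphism $H^i \RHom_R(N, X) \cong \D(R)(N, S^i X)$ shows that $\RHom_R(N,-)$ commutes with arbitrary coproducts in $\D(k)$ if and only if each $\D(R)(N, S^i -)$ does, if and only if $N$ is small; and since arbitrary (homotopy) colimits in the derived category are built from coproducts and mapping cones, both of which any triangulated functor preserves, preservation of coproducts is equivalent to preservation of arbitrary colimits.

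Part (b) follows from part (a) by transport along the equivalence $F: \D(\env{A}) \to \InfD(A)$ of Proposition~\ref{xxprop3.1}(c), which sends $\env{A}$ to $A$. This $F$ is the composite of the inclusion $\D(\env{A}) \hookrightarrow \InfD(\env{A})$ of DG-modules into $A_\infty$-modules followed by the restriction of scalars along the $A_\infty$-quasi-isomorphism $A \to \env{A}$; both factors clearly commute with arbitrary direct sums, so $F$ does as well, and therefore preserves and reflects smallness. The equivalence $F$ thus restricts to $\D_{\per}(\env{A}) = \thick{\env{A}}{\env{A}} \xrightarrow{\sim} \thinf{A}{A} = \InfD_{\per}(A)$, and part (a) applied to $R = \env{A}$ finishes the argument.

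The main obstacle will be the Ravenel--Neeman step identifying the small objects of $\D(R)$ with $\thick{R}{R}$; this is classical but substantive, and will require either a citation or a brief direct argument using that $\loc{R}{R} = \D(R)$ together with a Brown-representability/Bousfield-localization style decomposition. The subsidiary verification that $F$ commutes with arbitrary coproducts is routine once the construction of $F$ in Proposition~\ref{xxprop3.1}(c) is unwound.
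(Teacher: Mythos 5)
Your proposal is correct and follows essentially the same route as the paper: the paper also reduces the hard implication (small $\Rightarrow$ perfect) to the standard compact-generation result (citing Keller's \emph{Deriving DG categories} where you cite Ravenel--Neeman), handles the $\RHom$ condition via the identification $\D(R)(N,-)=H\RHom_R(N,-)$ and the fact that $\RHom_R(R,-)$ preserves colimits, and deduces part (b) by transport along the equivalence of Propositions \ref{xxprop1.14} and \ref{xxprop3.1}(c). Your write-up merely fills in more of the routine details (e.g.\ that the transporting equivalence preserves coproducts) than the paper's terse two-paragraph argument.
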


\begin{proof} (a) The equivalence that $N$ is small if and only if $N$
is in $\D_{\per}(R)$ is somewhat standard; see Keller \cite[5.3]{Ke94},
for example.  If $\RHom_{R}(N,-)$ commutes with arbitrary colimits,
then since homology also commutes with colimits, $\D (R)(N,-) =
H\RHom_{R}(N,-)$ does as well, and so $N$ is small.  Finally,
$\RHom_{R}(R,-)$ commutes with arbitrary colimits, and hence so does
$\RHom_{R}(N,-)$ for any object $N$ in $\D_{\per}(R)=\thick{R}{R}$.

(b) This follows from part (a) and Propositions \ref{xxprop1.14}
and \ref{xxprop3.1}(c).
\end{proof}

Let $\BB$ be a DG $(E,A)$-bimodule. By Lemma
\ref{xxlem4.5}(b), $F_{\BB}:=- \ltensor{E} \BB: \D(E)\to \D(A)$
and $G_{\BB}:=\RHom_A(\BB, -):\D(A)\to \D(E)$ form a
pair of adjoint functors. Let $\ALOWB$ and $\BLOWB$ be the
Auslander and Bass classes associated to the
pair $(F_\BB,G_\BB)$. By Lemma \ref{xxlem4.1}, $(F_\BB,G_\BB)$
induces a triangulated equivalence between $\ALOWB$ and $\BLOWB$.

The next two results are precursors of the derived equivalences in the
next section.

\begin{proposition}
\label{xxprop4.10}
Let $A$ and $E$ be DG algebras, and
suppose that $\BB$ is a left balanced DG $(E,A)$-module.  Define
adjoint functors $F=F_{\BB}=-\ltensor{E}\BB$ and
$G=G_{\BB}=\RHom_A(\BB,-)$, as above.
\begin{enumerate}
\item
Then $F_{\BB}$ and $G_{\BB}$ induce an equivalence of categories
$\ALOWB\cong \BLOWB$.  Furthermore, $E_E\in \ALOWB$, $\BB_A\in
\BLOWB$, and $F_{\BB}(E_E)=\BB_A$.
\item
There is an equivalence of triangulated categories
\[
\triang{E}{E} \cong \triang{A}{\BB}.
\]
\item
There is an equivalence of triangulated categories
\[
\D_{\per}(E)=\thick{E}{E}\cong \thick{A}{\BB}.
\]
\item
If $\BB_A$ is small, then there is an equivalence of
triangulated categories
\[
 \D(E)\cong \loc{A}{\BB}.
\]
\end{enumerate}
\end{proposition}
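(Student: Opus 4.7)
The plan is to prove the four parts in sequence; each rests on the left balanced hypothesis only through part (a), after which everything follows from closure properties of the Auslander and Bass classes.

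For part (a), the equivalence $\ALOWB \cong \BLOWB$ itself is immediate from Lemma \ref{xxlem4.1}(a) applied to the adjoint pair $(F_{\BB}, G_{\BB})$. The real content is showing $E_E \in \ALOWB$. Compute $F_{\BB}(E_E) = E \ltensor{E} \BB \cong \BB$ in $\D(A)$, so the unit $\eta_{E}$ is (up to isomorphism) the canonical map $E \to \RHom_A(\BB, \BB)$. This is a quasi-isomorphism precisely by the left balanced hypothesis, invoked through Lemma \ref{xxlem4.7} (take, say, a semifree resolution $P \to \BB$ over $E^{\op}\otimes A$ and observe that $\RHom_A(\BB,\BB) \simeq \Hom_A(P,P)$). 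Therefore $E_E \in \ALOWB$, and applying the equivalence of Lemma \ref{xxlem4.1}(a) yields $\BB_A = F_{\BB}(E_E) \in \BLOWB$ automatically.

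For part (b), by Lemma \ref{xxlem4.1}(b) the Auslander and Bass classes are triangulated subcategories of $\D(E)$ and $\D(A)$ respectively, and they are clearly closed under the Adams shift $\Sigma$ since $F_{\BB}$ and $G_{\BB}$ commute with $\Sigma$. Combined with $E_E \in \ALOWB$ and $\BB_A \in \BLOWB$, this gives containments $\triang{E}{E} \subseteq \ALOWB$ and $\triang{A}{\BB} \subseteq \BLOWB$. Since $F_{\BB}$ is triangulated with $F_{\BB}(E)=\BB$, it sends $\triang{E}{E}$ into $\triang{A}{\BB}$; symmetrically for $G_{\BB}$ (using $G_{\BB}(\BB) \cong E$ from part (a)). The restriction of the equivalence in (a) then yields (b).

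For part (c), I need $\ALOWB$ and $\BLOWB$ to be closed under direct summands. This is formal: if $M$ is a retract of $N \in \ALOWB$, then $\eta_M$ is a retract of the isomorphism $\eta_N$ and hence itself an isomorphism; similarly for the counit on $\BLOWB$. Thus $\thick{E}{E} \subseteq \ALOWB$ and $\thick{A}{\BB} \subseteq \BLOWB$, and restricting the equivalence of (a) gives $\D_{\per}(E) = \thick{E}{E} \cong \thick{A}{\BB}$. For part (d), under the smallness hypothesis on $\BB_A$, Lemma \ref{xxlem4.9}(a) says $G_{\BB} = \RHom_A(\BB,-)$ commutes with arbitrary direct sums; $F_{\BB}$ always does as a left adjoint (or from the fact that $-\ltensor{E}\BB$ is computed via a $K$-flat resolution). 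Both unit and counit then commute with direct sums, so $\ALOWB$ and $\BLOWB$ are closed under arbitrary direct sums. Hence $\loc{E}{E} \subseteq \ALOWB$ and $\loc{A}{\BB} \subseteq \BLOWB$, and since $\loc{E}{E} = \D(E)$, the equivalence restricts to $\D(E) \cong \loc{A}{\BB}$.

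The only nontrivial step is part (a): verifying $E_E \in \ALOWB$ is precisely where the left balanced hypothesis enters, and Lemma \ref{xxlem4.7} is the tool that lets one compute $\RHom_A(\BB,\BB)$ without first resolving $\BB$ as an $(E,A)$-bimodule. Everything else is a formal transfer along the adjunction, using that Auslander/Bass classes inherit closure properties (triangles, retracts, direct sums) from $F_{\BB}$ and $G_{\BB}$.
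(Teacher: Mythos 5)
Your proof is correct and follows essentially the same route as the paper: part (a) is the left balanced hypothesis applied to the unit at $E_E$ (the paper passes to a $K$-injective bimodule replacement of $\BB$ where you use the $K$-projective characterization from Lemma \ref{xxlem4.7}, which is equivalent), and parts (b)--(d) are exactly the formal closure properties of $\ALOWB$ and $\BLOWB$ under triangles, summands, and direct sums that the paper leaves implicit with ``these follow from (a).''
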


\begin{proof} (a) The first assertion is Lemma \ref{xxlem4.1}.
Without loss of generality, we assume
that $\BB$ is $K$-injective over $E^{\op}\otimes A$.
Since $\BB$ is left balanced,
\[
E\to \Hom_A(\BB,\BB)\cong \RHom_A(\BB,\BB)
=\RHom_A(\BB, E\ltensor{E} \BB)=GF(E)
\]
is a quasi-isomorphism of DG $A$-modules. Hence
$E\in \ALOWB$. Clearly $F(E)=\BB$ and $G(\BB)=E$.
Consequently, $\BB\in \BLOWB$.

(b,c) These follow from (a).

(d) By definition, $F$ commutes with arbitrary colimits.
If $\BB$ is small, $G$ commutes with arbitrary colimits.
In this case, $\ALOWB$ and $\BLOWB$ have arbitrary colimits. Since $E\in
\ALOWB$, $\ALOWB=\D(E)$. Since $F$ is an equivalence,
$\BLOWB=\loc{A}{\BB}$.
\end{proof}

Now we consider two other functors $\tilde{F}^{\BB}=\RHom_{E^{\op}}(-,\BB):
\D(E^{\op})\to \D(A)$ and $\tilde{G}^{\BB}=\RHom_A(-,\BB):
\D(A)\to \D(E^{\op})$. Both of them are contravariant;
however, if we view them as $F^{\BB}: \D(E^{\op})\to \D(A)^{\op}$ and
$G^{\BB}: \D(A)^{\op}\to \D(E^{\op})$, then they become
covariant. By Lemma \ref{xxlem4.4}(a), $(F^{\BB},G^{\BB})$ is
an adjoint pair.  Let $\AUPB$ and $\BUPB$ be the
Auslander and Bass classes associated to the
pair $(F^{\BB},G^{\BB})$.

\begin{proposition}
\label{xxprop4.11}
Let $A$ and $E$ be DG algebras, and
suppose that $\BB$ is a left balanced DG $(E,A)$-module.  Define
adjoint functors $F=F^{\BB}$ and $G=G^{\BB}$, as above.
\begin{enumerate}
\item
Then $F^{\BB}$ and $G^{\BB}$ induce an equivalence of categories
$\AUPB\cong \BUPB$.  Furthermore,
$_EE\in \BUPB$, $\BB_A\in \AUPB$, and $F^{\BB}(_EE)=\BB_A$.
\item
If $\BB$ is also right balanced, then
$A_A\in \AUPB$, $_E\BB\in \BUPB$, and $F^{\BB}(_E\BB)=A_A$.
\item
There is an equivalence of triangulated categories
\[
\triang{E^{\op}}{E} \cong \triang{A}{\BB}^{\op}.
\]
If $\BB$ is also right balanced, then
\[
\triang{E^{\op}}{E,\BB} \cong \triang{A}{A,\BB}^{\op}.
\]
\item
There is an equivalence of triangulated categories
\[
\D_{\per}(E^{\op})=\thick{E^{\op}}{E}\cong \thick{A}{\BB}^{\op}.
\]
If $\BB$ is also right balanced, then
\[
\thick{E^{\op}}{E,\BB}\cong \thick{A}{A,\BB}^{\op}.
\]
\item
If $_E\BB$ is small and $\BB$ is right balanced, then there is
an equivalence of triangulated categories
\[
 \D_{\per}(E^{\op})\cong \thick{A}{A,\BB}^{\op}=
\thick{A}{\BB}^{\op}.
\]
As a consequence, $A\in \thick{A}{\BB}^{\op}$.
\end{enumerate}
\end{proposition}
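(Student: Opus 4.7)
My strategy is to treat the proposition as a collection of corollaries of the Auslander/Bass machinery (Lemma~\ref{xxlem4.1}) applied to the $\Hom$-$\Hom$ adjunction $F^{\BB}\dashv G^{\BB}$ supplied by Lemma~\ref{xxlem4.4}(b), with the balanced hypothesis doing the work of computing the relevant $\RHom$'s. For part (a), Lemma~\ref{xxlem4.4}(b) shows that $F^{\BB}:\D(E^{\op})\to\D(A)^{\op}$ and $G^{\BB}:\D(A)^{\op}\to\D(E^{\op})$ form an adjoint pair, so Lemma~\ref{xxlem4.1}(a) immediately yields the equivalence $\AUPB\cong\BUPB$. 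To verify the stated memberships I replace $\BB$ by a bimodule resolution that is simultaneously $K$-projective over $A$ (using Lemma~\ref{xxlem4.3}(b)) and $K$-injective over $E^{\op}\otimes A$; Lemma~\ref{xxlem4.7} together with left-balancedness then delivers $\RHom_A(\BB,\BB)\simeq E$. From this I read off $F^{\BB}({}_EE)=\RHom_{E^{\op}}(E,\BB)\simeq\BB_A$ and $G^{\BB}(\BB_A)\simeq E$, so the unit at ${}_EE$ and the counit at $\BB_A$ are quasi-isomorphisms and these objects land in the appropriate Auslander/Bass class.

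For part (b), right-balancedness symmetrically yields $\RHom_{E^{\op}}(\BB,\BB)\simeq A$, so $F^{\BB}({}_E\BB)\simeq A_A$ and $G^{\BB}(A_A)=\RHom_A(A,\BB)\simeq\BB$, giving the additional memberships. Parts (c) and (d) are then formal: by Lemma~\ref{xxlem4.1}(b), $\AUPB$ and $\BUPB$ are triangulated subcategories, so the equivalence of (a) restricts to an equivalence between the triangulated (resp.\ thick) subcategories generated, on the two sides, by any collection of objects already in these classes. Since $F^{\BB}$ carries ${}_EE$ to $\BB_A$ and, under right-balancedness, ${}_E\BB$ to $A_A$, this produces $\triang{E^{\op}}{E}\cong\triang{A}{\BB}^{\op}$, $\triang{E^{\op}}{E,\BB}\cong\triang{A}{A,\BB}^{\op}$, and the corresponding statements with thick subcategories; here I use the identification $\thick{E^{\op}}{E}=\D_{\per}(E^{\op})$.

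Finally, for part (e), assume in addition that ${}_E\BB$ is small in $\D(E^{\op})$ and that $\BB$ is right balanced. Lemma~\ref{xxlem4.9}(a) places ${}_E\BB\in\D_{\per}(E^{\op})=\thick{E^{\op}}{E}$. Transporting through the equivalence of (d), its image $F^{\BB}({}_E\BB)\simeq A_A$ lies in $\thick{A}{\BB}^{\op}$, which says $A\in\thick{A}{\BB}$; therefore $\thick{A}{A,\BB}=\thick{A}{\BB}$, and combining with the second equivalence of (d) produces $\D_{\per}(E^{\op})\cong\thick{A}{A,\BB}^{\op}=\thick{A}{\BB}^{\op}$, with the consequence $A\in\thick{A}{\BB}^{\op}$ automatic. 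The main obstacle I anticipate is bookkeeping rather than substance: I must track carefully which Auslander/Bass class sits in $\D(E^{\op})$ versus $\D(A)^{\op}$ once the contravariant functors $\tilde{F}^{\BB},\tilde{G}^{\BB}$ are made covariant, so that the computations $\RHom_A(\BB,\BB)\simeq E$ and $\RHom_{E^{\op}}(\BB,\BB)\simeq A$ are genuinely interpreted as quasi-isomorphic unit (resp.\ counit) maps at the correct object, and so that generators of $\triang{}{-}$ and $\thick{}{-}$ are transported correctly under the opposite-category convention.
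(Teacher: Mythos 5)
Your proposal is correct and follows essentially the same route as the paper: Lemma \ref{xxlem4.1} applied to the adjoint pair coming from derived $\Hom$-$\Hom$ adjointness, with the left/right balanced hypotheses supplying the quasi-isomorphisms $E\simeq\RHom_A(\BB,\BB)$ and $A\simeq\RHom_{E^{\op}}(\BB,\BB)$ that place the generators in the Auslander/Bass classes, and parts (c)--(e) deduced as the formal consequences that the paper leaves implicit. The only cosmetic point is that a single replacement of $\BB$ that is $K$-injective over $E^{\op}\otimes A$ already suffices (via Lemma \ref{xxlem4.3}(c) and Lemma \ref{xxlem4.7}), so you need not also insist on $K$-projectivity over $A$.
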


\begin{proof} (a) The first assertion follows from
Lemma \ref{xxlem4.1}. We may assume that $\BB$ is $K$-injective
over $E^{\op}\otimes A$. Since $\BB$ is left balanced,
\[
G^{\BB}F^{\BB}(_EE)=\RHom_A(\RHom_{E^{\op}}(E,\BB),\BB)
=\RHom_A(\BB,\BB)\longleftarrow E
\]
is a quasi-isomorphism. This shows that $E\in \BUPB$.
Since $F^{\BB}(E)= \BB$, we have $\BB\in \AUPB$.

(b) This is the right-hand version of (a).

(c,d,e) These follow from (a,b).
\end{proof}

Proposition \ref{xxprop4.10} also implies the following easy
fact.

\begin{corollary}
\label{xxcor4.12} Let $M$ be an object in $\InfD(A)$ for some
$A_\infty$-algebra $A$. Then $\thick{A}{M}$ is triangulated
equivalent to $\D_{\per}(E)$ for some DG algebra $E$.
\end{corollary}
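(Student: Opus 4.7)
The idea is to reduce to the DG setting and then invoke Proposition~\ref{xxprop4.10}(c) for a well-chosen bimodule. The plan is as follows.

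First I would replace $A$ by its enveloping DG algebra $\env{A}$. By Proposition~\ref{xxprop1.14} the natural map $A\to \env{A}$ is an $A_\infty$-quasi-isomorphism, so by Proposition~\ref{xxprop3.1}(c) there is a triangulated equivalence $\D(\env{A}) \xrightarrow{\sim} \InfD(A)$ under which the generator $\env{A}$ corresponds to $A$ and the unit $k_{\env{A}}$ corresponds to $k_{A}$. Via this equivalence we may assume $M$ is represented by a DG $\env{A}$-module, and by Spaltenstein's existence theorem for $K$-injective resolutions \cite[Corollary 3.9]{Sp}, we may replace $M$ by a $K$-injective DG $\env{A}$-module (still denoted $M$) in the same quasi-isomorphism class.

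Now set
\[
E := \Hom_{\env{A}}(M,M),
\]
which, with composition as multiplication and the usual differential described in Subsection~\ref{xxsec1.1}, is a DG algebra. Its canonical left action on $M$ makes $M$ into a DG $(E,\env{A})$-bimodule, and since $M$ is $K$-injective over $\env{A}$, the observation immediately following Lemma~\ref{xxlem4.7} shows that $M$ is left balanced as an $(E,\env{A})$-bimodule (the canonical map $E\to \Hom_{\env{A}}(M,M)$ is the identity, so Lemma~\ref{xxlem4.7}(iii) applies after passing to any $K$-injective replacement over $E^{\op}\otimes \env{A}$).

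Applying Proposition~\ref{xxprop4.10}(c) to this balanced bimodule gives a triangulated equivalence
\[
\D_{\per}(E) = \thick{E}{E} \;\cong\; \thick{\env{A}}{M}.
\]
Finally, the equivalence $\D(\env{A})\cong \InfD(A)$ of Proposition~\ref{xxprop3.1}(c) carries $\thick{\env{A}}{M}$ onto $\thinf{A}{M}$, which is the subcategory denoted $\thick{A}{M}$ in the statement. Composing yields $\D_{\per}(E)\cong \thick{A}{M}$ as required.

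The entire argument is essentially bookkeeping; the only mildly delicate point is verifying that $M$, as constructed, really satisfies the left balanced condition of Definition~\ref{xxdefn4.6}. This is where $K$-injectivity of $M$ over $\env{A}$ is used, together with Lemma~\ref{xxlem4.7}: after choosing a $K$-injective resolution $M\to N$ of bimodules over $E^{\op}\otimes \env{A}$, Lemma~\ref{xxlem4.3}(c) ensures $N$ is $K$-injective over $\env{A}$, and the tautological map $E\to \Hom_{\env{A}}(M,M)$ then propagates to a quasi-isomorphism $E\to \Hom_{\env{A}}(N,N)$. No other step requires real work.
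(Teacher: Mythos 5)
Your proposal is correct and follows essentially the same route as the paper: reduce to the DG case via Proposition~\ref{xxprop3.1}, resolve $M$, take $E$ to be the endomorphism DG algebra of the resolution, note that this makes the resolution a left balanced $(E,A)$-bimodule by Lemma~\ref{xxlem4.7}, and apply Proposition~\ref{xxprop4.10}(c). The only (immaterial) difference is that you use a $K$-injective replacement of $M$ where the paper uses a $K$-projective resolution; the remark following Lemma~\ref{xxlem4.7} notes explicitly that either choice yields a left balanced bimodule.
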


\begin{proof} By Proposition \ref{xxprop3.1} we may assume that $A$
is a DG algebra, and then we may replace $\InfD(A)$
by $\D(A)$. Hence we may assume that $M$ is a right DG $A$-module.

Let $\BB_A$ be a $K$-projective resolution of $M$ and
let $E=\End_A(\BB_A)$. Then $\BB$ is a left balanced
$(E,A)$-bimodule. Note that $M\cong \BB$ in $\D(A)$.
The assertion follows from Proposition \ref{xxprop4.10}(c).
\end{proof}

\section{Koszul equivalences and dualities}
\label{xxsec5}

In the setting of classical Koszul duality \cite{BGSo}, there is an
equivalence between certain subcategories of the derived categories of
a Koszul algebra $A$ and of its Koszul dual; the subcategories consist
of objects satisfying certain finiteness conditions.  In this section,
we explore the analogous results for non-Koszul algebras, DG algebras,
and $A_\infty$-algebras.  The main results are Theorems \ref{xxthm5.4}
and \ref{xxthm5.5} in the DG setting, and Theorems \ref{xxthm5.7} and
\ref{xxthm5.8} in the $A_\infty$ setting.

\subsection{Koszul equivalence and duality in the DG case}

Let $A$ be an augmented DG algebra and let $\koszul{A} =
(\baraug{A})^{\dual}$ be its Koszul dual, as defined in
Section~\ref{xxsec2.2}.  The usual bar construction $B(A;A)$
\cite[p. 269]{FHT01}, where the second $A$ is viewed as a DG
$A$-bimodule, agrees with the $A_\infty$-module version
$\baraug{(A;A)}$ from Section \ref{xxsec3.2}. By
\cite[Proposition 19.2(b)]{FHT01}, $B(A;A)$ is a semifree resolution of
the right $A$-module $k$. Thus to define derived functors we may
replace $k_A$ with $B(A;A)_A$.

The following lemma can be viewed as a dual version of
\cite[Proposition 19.2]{FHT01}.

\begin{lemma}
\label{xxlem5.1}
Let $\BB=B(A;A)$ and let $E=\koszul{A}$.
\begin{enumerate}
\item
The natural embedding $i: {_Ek}\to {_E\BB}$ is a quasi-isomorphism
of left DG $E$-modules.
\item
If $A$ is weakly Adams connected (Definition~\ref{xxdefn2.1}), then
$\BB$ is a $K$-injective DG left $E$-module.
\end{enumerate}
\end{lemma}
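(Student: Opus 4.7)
\emph{Part (a).} The plan is to deduce this from the classical fact that the augmentation $\varepsilon: \BB \to k$, sending $[a_1|\dotsb|a_n]\otimes a$ to $\delta_{n,0}\varepsilon_A(a)$, is a quasi-isomorphism of chain complexes (essentially the content of \cite[Proposition 19.2(b)]{FHT01}). I would define $i: k \to \BB$ by $1 \mapsto [\,]\otimes 1_A \in \baraug{A}\otimes A$, so that $\varepsilon\circ i = \id_k$, and then invoke the two-out-of-three property to conclude that $i$ itself is a quasi-isomorphism of chain complexes. All that then remains is to verify that $i$ is a morphism of left DG $E$-modules; this is a direct computation using that the $E$-action on $\BB$ dualizes the left $\baraug{A}$-coaction, and on $[\,]\otimes 1_A$ reduces to $\phi\cdot([\,]\otimes 1_A) = \phi([\,])\cdot[\,]\otimes 1_A$, which matches the trivial $E$-action on $k$ since $\varepsilon_E(\phi)=\phi([\,])$.

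\emph{Part (b), finite-dimensional case.} The overall strategy for $K$-injectivity is to realize $\BB$ as the $k$-vector space dual of a $K$-flat right DG $E$-module and then appeal to $\otimes$-$\Hom$ adjointness (Lemma~\ref{xxlem4.2}(b)). I would first treat the auxiliary case where the right-hand copy of $A$ is replaced by a finite-dimensional DG $A$-bimodule $N$: write $\BB_N := B(A;N) = \baraug{A}\otimes N$, with its induced left DG $E$-module structure. Weak Adams connectedness of $A$ guarantees $\baraug{A}$ is locally finite, so $\BB_N$ is too, and bigraded duality gives a right DG $E$-module isomorphism $\BB_N^{\dual}\cong E\otimes N^{\dual}$. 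The right-hand side is free of finite rank over $E$ (possibly with a twisted differential), hence semifree, and in particular $K$-flat. For any acyclic left DG $E$-module $X$, Lemma~\ref{xxlem4.2}(b) then yields
\[
\Hom_E(X,\BB_N) \cong \Hom_E\bigl(X,(\BB_N^{\dual})^{\dual}\bigr) \cong \Hom_k(X\otimes_E \BB_N^{\dual},k),
\]
which is acyclic since $\BB_N^{\dual}$ is $K$-flat and $k$-duality preserves acyclicity. Hence $\BB_N$ is $K$-injective over $E$.

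\emph{Part (b), inverse limit.} The weakly Adams connected hypothesis gives $A = \varprojlim A_n$ as an inverse limit of finite-dimensional DG $A$-bimodules, and local finiteness of $\baraug{A}$ (so that $\baraug{A}\otimes -$ reduces to a finite sum in each bidegree) yields $\BB = \varprojlim \BB_{A_n}$ as left DG $E$-modules. For acyclic $X$, each $\Hom_E(X,\BB_{A_n})$ is acyclic by the previous paragraph, and $\Hom_E(X,\BB) \cong \varprojlim \Hom_E(X,\BB_{A_n})$. The main obstacle will be ensuring this inverse limit remains acyclic: I would handle it via the Mittag-Leffler criterion \cite[Theorem 3.5.8]{We}, which should follow from arranging the $A_n$ with surjective structure maps (as in the proof of Lemma~\ref{xxlem2.2}(a), where nested two-sided ideals $J_n\subset A$ give $A_n = A/J_n$), forcing surjectivity of the induced inverse system $\{\Hom_E(X,\BB_{A_n})\}$ in each bidegree.
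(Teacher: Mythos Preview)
Your outline for part (a) is correct and matches the paper's proof exactly: both use that $\varepsilon\circ i=\id_k$ together with the quasi-isomorphism $\varepsilon:\BB\to k$ from \cite[Proposition~19.2]{FHT01}, and then note that $i$ is left $E$-linear.

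For part (b), your overall architecture (finite-dimensional case, then inverse limit) is the same as the paper's, but there is a genuine gap in your finite-dimensional step. You assert that $\BB_N^{\dual}\cong E\otimes N^{\dual}$ is ``free of finite rank over $E$ (possibly with a twisted differential), hence semifree.'' That implication fails in general: a graded-free DG module with an arbitrary compatible differential need not be semifree or $K$-flat. The twist here is not innocuous---the bar differential on $\BB_N=\baraug{A}\otimes N$ contains terms coming from the $A$-action on $N$, so after dualizing the differential on $N^{\dual}\otimes E$ genuinely mixes the tensor factors, and there is no \emph{a priori} filtration with free-on-cycles quotients. The paper circumvents this by induction on $\dim N$: for $N=k$ the action is trivial, so $B(A;k)=\baraug{A}\cong E^{\dual}$ \emph{exactly} as a left DG $E$-module, and $E^{\dual}\cong\Hom_k(E,k)$ is semi-injective by the adjunction in Lemma~\ref{xxlem4.2}(b). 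For general finite-dimensional $N$ one then filters $N$ by a composition series---here the weakly-Adams-connected hypothesis that the only simple DG $A$-modules are shifts of $k$ is used---to build $B(A;N)$ from copies of $E^{\dual}$. Your dual formulation can be repaired the same way: the composition series of $N$ gives a finite filtration of $\BB_N^{\dual}$ with successive quotients shifts of $\BB_k^{\dual}=E$, which is what you actually need for semifreeness.

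Your inverse-limit step has a related, smaller gap: surjectivity of $A_n\to A_{n-1}$ gives surjectivity of $\BB_{A_n}\to\BB_{A_{n-1}}$, but $\Hom_E(X,-)$ does not preserve surjections in general, so Mittag-Leffler is not automatic. What makes it work is that the kernel $\BB_{A_n/A_{n-1}}$ is \emph{graded}-injective over $E$ (not just $K$-injective), so the short exact sequence splits after forgetting differentials and $\Hom_E(X,-)$ stays exact. This graded injectivity again comes from identifying $B(A;M)\cong\Hom_k(E,M)$ via the composition-series argument above. The paper sidesteps this by invoking Spaltenstein \cite[Corollary~2.5]{Sp}, whose hypotheses (surjective transition maps with $K$-injective kernels) are exactly what the inductive semi-injectivity provides.
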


A left DG $E$-module is called \emph{semi-injective} if it is an
injective left graded $E$-module and a $K$-injective left DG
$E$-module.

\begin{proof}
(a) By \cite[Proposition 19.2(a)]{FHT01},
the augmentations in $BA$ and $A$ define a quasi-isomorphism
$\epsilon\otimes \epsilon:\BB_A\to k_A$ of right DG $A$-modules.
The map $i$ is a quasi-isomorphism
because the composition $k\xrightarrow{i} \BB
\xrightarrow{\epsilon\otimes \epsilon} k$ is the identity.
It is easy to see that the map $i$ is a left DG $E$-module
homomorphism.

(b) Let $E^{\dual}$ be the $E$-bimodule $\Hom_k(E,k)$. It follows
from the adjunction formula in Lemma \ref{xxlem4.2}(b) that
$E^{\dual}$ is semi-injective as a left and a right DG $E$-module.
If $V$ is a finite-dimensional $k$-vector space, then
$E^{\dual}\otimes V \cong \Hom_k(E,V)$ and this is also semi-injective
as a left and a right DG $E$-module.

Since $B(A;A)$ is locally finite, $E=(BA)^{\dual}$ is locally finite
and $E^{\dual}=(BA^{\dual})^{\dual} \cong BA$.  Hence $B(A;{_Ak})\cong
BA$ is a semi-injective left DG $E$-module.  By induction one can
easily show that if $M$ is a finite-dimensional left DG $A$-module,
then the bar construction $B(A;M)$ is a semi-injective left DG
$E$-module.  Since $A$ is weakly Adams connected, $A=\varprojlim N_n$
where the $\{N_n\}_{n\geq 0}$ are finite-dimensional left DG
$A$-modules.  Since each $N_n$ is finite-dimensional, we may further
assume that the map $N_n\to N_{n-1}$ is surjective for all $n$.  By
the assertion just proved, $B(A; N_n)$ is a semi-injective left DG
$E$-module for each $n$, as is $B(A; N_n/N_{n-1})$.

Since $A=\varprojlim N_n$ and since $B(A;A)$ is locally
finite, $B(A;A)=\varprojlim B(A;N_n)$.  A result of Spaltenstein
\cite[Corollary 2.5]{Sp} says that such an inverse limit of
$K$-injectives is again $K$-injective, and this finishes the proof.
(Spaltenstein's result is for inverse limits of $K$-injectives in the
category of chain complexes over an abelian category, but the proof is
formal enough that it extends to the category of DG modules over a DG
algebra.)
\end{proof}

\begin{remark}
\label{xxrem5.2} By the above lemma, $_E\BB$ is isomorphic to
$_E k$ in $\D(E^{\op})$. By \cite[Proposition 19.2(a)]{FHT01},
$\BB_A$ is isomorphic to $k_A$ in $\D(A)$. However, $\BB$ is
not isomorphic to $k$ in $\D(E^{\op}\otimes A)$ in general.
\end{remark}

\begin{lemma}
\label{xxlem5.3} Let $\BB$ be the right DG $A$-module $B(A;A)$
and let $C=\End_A(\BB)$.
\begin{enumerate}
\item
$\BB$ is a left balanced $(C,A)$-bimodule.
\item
If $E:=\koszul{A}$ is locally finite, then $\BB$
is a left balanced $(E,A)$-bimodule via the
natural isomorphism $E\to C$.
As a consequence, $H E\cong H\RHom_A(k,k)$.
\item
If $A$ is weakly Adams connected, then $\BB$ is a right balanced
$(E,A)$-bimodule.
\end{enumerate}
\end{lemma}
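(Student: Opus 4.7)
The plan is to verify each statement by checking a suitable characterization of balance in Lemma~\ref{xxlem4.7}, with the key content lying in identifying the relevant endomorphism DG algebras up to quasi-isomorphism.

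For part (a), $\BB=B(A;A)$ is a semifree (hence $K$-projective) resolution of $k_A$ by \cite[Proposition 19.2(b)]{FHT01}, and by construction $\BB$ is a $(C,A)$-bimodule with $C=\End_A(\BB)$. The canonical map $C\to\End_A(\BB)$ is then the identity, so Lemma~\ref{xxlem4.7}(ii) with $P=\BB$ immediately yields that $\BB$ is left balanced as a $(C,A)$-bimodule.

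For part (b), the essential point is that the natural map $\alpha\colon E\to C$, given by the action of $E=(BA)^{\dual}$ on the first tensor factor of $\BB=BA\otimes A$ (dualizing the coproduct of $BA$), is a quasi-isomorphism of DG algebras. I would establish this by factoring
\[
E\xrightarrow{\ \alpha\ }\End_A(\BB)\xrightarrow{\ \epsilon_{*}\ }\Hom_A(\BB,k)\xrightarrow{\ \sim\ }\Hom_k(BA,k)=E,
\]
where $\epsilon_{*}$ is induced by the augmentation $\epsilon\colon\BB\to k$ and the third arrow is the $\otimes$-Hom adjunction of Lemma~\ref{xxlem4.2}(b). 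Since $\BB_A$ is $K$-projective and $\epsilon$ is a quasi-isomorphism, $\epsilon_{*}$ is a quasi-isomorphism, and a direct computation using the counit axiom of the coalgebra $BA$ shows that the whole composite is the identity on $E$. Hence $\alpha$ itself is a quasi-isomorphism, and combining part (a) with Lemma~\ref{xxlem4.7}(ii) yields that $\BB$ is left balanced as an $(E,A)$-bimodule (via $\alpha$). The consequence $HE\cong H\RHom_A(k,k)$ then follows from $HC\cong H\RHom_A(\BB,\BB)\cong H\RHom_A(k,k)$, using the $K$-projective resolution $\BB\to k$.

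For part (c), Lemma~\ref{xxlem5.1}(b) guarantees that ${}_E\BB$ is $K$-injective over $E^{\op}$ (weak Adams connectedness of $A$ both ensures $E$ is locally finite by Lemma~\ref{xxlem2.2}(b) and supplies the hypothesis of Lemma~\ref{xxlem5.1}(b)). Taking $I=\BB$ in the right-balanced analogue of Lemma~\ref{xxlem4.7}(iii), we reduce to showing that the right-multiplication map $A^{\op}\to\End_{E^{\op}}(\BB)$ is a quasi-isomorphism. I would prove this by factoring through
\[
A^{\op}\longrightarrow\End_{E^{\op}}(\BB)\xrightarrow{\ i^{*}\ }\Hom_{E^{\op}}(k,\BB),
\]
where $i^{*}={-}\circ i$ is induced by the quasi-isomorphism $i\colon{}_Ek\to{}_E\BB$ from Lemma~\ref{xxlem5.1}(a); since ${}_E\BB$ is $K$-injective over $E^{\op}$, $i^{*}$ is a quasi-isomorphism. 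A direct analysis of the $E$-action on $\BB=BA\otimes A$ via the deconcatenation coproduct on $BA=T(SI)$ shows that the only elements annihilated by $\ker\varepsilon_E$ are those of the form $1\otimes a$, so $\Hom_{E^{\op}}(k,\BB)$ is identified with $A$ as graded vector spaces. Under this identification the composite $A^{\op}\to A$ is the identity, and therefore $A^{\op}\to\End_{E^{\op}}(\BB)$ is a quasi-isomorphism, proving right balance.

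The main obstacle I anticipate is the identification in part (c) of the subspace of $\BB$ annihilated by $\ker\varepsilon_E$: this rests on the combinatorial fact that the only elements $x$ of the tensor coalgebra $T(SI)$ with $\Delta(x)=1\otimes x$ are scalar multiples of $1$, which, while straightforward, requires careful bookkeeping. The signs and left/right DG-module conventions throughout these bar-construction computations also need attention but are routine.
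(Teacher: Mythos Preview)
Your proof is correct. Parts (a) and (b) match the paper's approach; for (b) the paper simply cites \cite[Ex.~4, p.~272]{FHT01} for the quasi-isomorphism $E\to\End_A(\BB)$, whereas you spell out the standard proof.

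Part (c) is where you genuinely diverge. The paper argues as follows: since ${}_E\BB$ is $K$-injective, $H\End_{E^{\op}}(\BB)\cong H\RHom_{E^{\op}}(k,k)$, and then invokes the double Koszul duality Theorem~\ref{xxthm2.4} to identify this with $HA^{\op}$. Having matched dimensions, it still must check that the \emph{specific} map $\phi$ is the right one, which it does by a separate explicit injectivity computation on cocycles. Your route is more direct: factoring through $i^{*}\colon\End_{E^{\op}}(\BB)\to\Hom_{E^{\op}}(k,\BB)$ and identifying the target with $[\;]\otimes A\cong A$ via the coinvariant computation in $T(SI)$, you get that the composite $A^{\op}\to\Hom_{E^{\op}}(k,\BB)$ is an isomorphism of complexes on the nose, so $\phi$ is a quasi-isomorphism by two-out-of-three. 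This avoids Theorem~\ref{xxthm2.4} entirely and handles the ``correct map'' issue automatically. The paper's approach, on the other hand, makes the role of Koszul duality more visible and would generalize to settings where the explicit coinvariant calculation in $T(SI)$ is less transparent. Your anticipated obstacle (the coinvariant identification) is indeed routine under the local finiteness of $BA$ implied by weak Adams connectedness: pick the top tensor-length component of a hypothetical invariant and hit it with a suitable dual functional.
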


\begin{proof} (a) Since $\BB$ is semifree over $A$, the
assertion follows from Lemma \ref{xxlem4.7}.

(b) The first assertion follows from Lemma \ref{xxlem4.7} and the fact
that $E\to \End_A(\BB)$ is a quasi-isomorphism
\cite[Ex 4, p. 272]{FHT01}.

Since $\BB_A$ is a $K$-projective resolution of $k_A$,
\[
H E\cong H \End_A(\BB)\cong H\RHom_A(k,k).
\]

(c) By Lemma \ref{xxlem5.1}(b), $\BB$ is a $K$-injective left DG
$E$-module. To show that $\BB$ is right balanced, we must show that the
canonical map $\phi: A^{\op} \to \End_{E^{\op}}(\BB)$ is a
quasi-isomorphism.  This canonical map sends $a \in A$ to the
endomorphism $y \mapsto ya$. Since $_E\BB$ is $K$-injective,
$H\End_{E^{\op}}(\BB)\cong H\RHom_{E^{\op}}(k,k)$.  By part (b),
\[
H\RHom_{E^{\op}}(k,k)\cong H (\koszul{E})^{\op}
\cong H A^{\op},
\]
where the last isomorphism follows from Theorem
\ref{xxthm2.4}. Therefore, since $HA$ is locally finite,
it suffices to show that $H\phi$ is injective.
Let $a\in A^{\op}$ be a cocycle such that $\phi(a)=0$
in $H\End_{E^{\op}}(\BB)$. Then $a\neq 1$ and there is an
$f\in \End_{E^{\op}}(\BB)$ such that $\phi(a)=d(f)$.
Applying this equation to $x=[\; ] \otimes 1\in \BB=B(A;A)$,
we obtain
\begin{align*}
{[\; ] \otimes a} &=\phi(a)(x)=d(f)(x)\\
&=(d\circ f-(-1)^{\deg_1 f} f\circ d)(x)\\
&=d\circ f([\;]\otimes 1)\pm f\circ d([\;]\otimes 1).
\end{align*}
Since $f$ is a left $E$-module homomorphism,
$f([\;]\otimes 1)=[\;]\otimes w$ for some $w\in A$.
By definition, $d([\; ]\otimes 1) =0$. Therefore
\[
[\;]\otimes a =d\circ f([\;]\otimes 1)=d([\;]\otimes w)
=[\;]\otimes dw,
\]
and hence $a=dw$ as required.
\end{proof}

Here is a version of \cite[1.2.6]{BGSo} for DG algebras.

\begin{theorem}
\label{xxthm5.4}
Let $A$ be an augmented DG algebra and let $E=\koszul{A}$ be
the Koszul dual of $A$. Assume that $E$ is locally
finite.  The functors $\RHom_A(k,-)$ and $-\ltensor{E}k$ induce the
following equivalences.
\begin{enumerate}
\item
The category $\triang{A}{k}$ is triangulated equivalent to
$\triang{E}{E}$.
\item
The category $\thick{A}{k}$ is triangulated equivalent to
$\D_{\per}(E)$.
\item
Suppose that $k_A$ is small in $\D(A)$. Then $\loc{A}{k}$
is triangulated equivalent to $\D(E)$.
\end{enumerate}
\end{theorem}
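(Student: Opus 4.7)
The plan is to invoke Proposition \ref{xxprop4.10} applied to the DG $(E,A)$-bimodule $\BB := B(A;A)$, which by Lemma \ref{xxlem5.3}(b) is left balanced once $E$ is locally finite. The key observation is that $\BB$ plays a double role: $\BB_A$ is a semifree (hence $K$-projective) resolution of $k_A$ in $\D(A)$, while simultaneously carrying a compatible left $E$-action via the natural quasi-isomorphism $E \to \End_A(\BB)$. This double role is what lets the abstract machinery of Proposition \ref{xxprop4.10} produce the explicit equivalences involving $k$ in the statement of the theorem.

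First I would set up the adjoint pair $F_\BB = -\ltensor{E}\BB$ and $G_\BB = \RHom_A(\BB,-)$ from Proposition \ref{xxprop4.10}. Because $\BB_A$ is $K$-projective with $\BB \cong k$ in $\D(A)$, there is a natural isomorphism $G_\BB = \RHom_A(\BB,-) \simeq \RHom_A(k,-)$, so $G_\BB$ coincides with the functor named in the theorem. The functor $F_\BB$ plays the role of $-\ltensor{E} k$: under this identification $F_\BB(E) = \BB$, which is isomorphic to $k_A$ in $\D(A)$, exactly as expected.

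With these identifications, each part of the theorem follows directly from the corresponding part of Proposition \ref{xxprop4.10}. For part (a), Proposition \ref{xxprop4.10}(b) gives $\triang{E}{E} \cong \triang{A}{\BB}$, and since $\BB \cong k$ in $\D(A)$ we have $\triang{A}{\BB} = \triang{A}{k}$. For part (b), Proposition \ref{xxprop4.10}(c) yields $\D_{\per}(E) = \thick{E}{E} \cong \thick{A}{\BB} = \thick{A}{k}$. For part (c), smallness in $\D(A)$ depends only on the isomorphism class, so if $k_A$ is small then so is $\BB_A$; Proposition \ref{xxprop4.10}(d) then delivers $\D(E) \cong \loc{A}{\BB} = \loc{A}{k}$.

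The main obstacle is verifying the hypothesis of Proposition \ref{xxprop4.10}, namely that $\BB$ is left balanced, and here is precisely where the local finiteness of $E$ enters the argument: it guarantees via Lemma \ref{xxlem5.3}(b) that the canonical map $E \to \End_A(\BB)$ is a quasi-isomorphism of DG algebras, which by Lemma \ref{xxlem4.7} is equivalent to left-balancedness. Once this is secured, the proof reduces to bookkeeping around the identification of $k$ with $\BB$, with $\BB$ serving as the concrete bimodule realization of $k$ that makes both functors and their derived computations explicit.
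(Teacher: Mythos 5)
Your proposal is correct and follows exactly the same route as the paper: the paper's proof consists of noting that $\BB=B(A;A)\cong k_A$ in $\D(A)$ and then citing Proposition \ref{xxprop4.10} together with Lemma \ref{xxlem5.3}(b), which is precisely the argument you spell out in more detail. Your additional care in identifying $G_{\BB}$ with $\RHom_A(k,-)$ and in noting that smallness depends only on the isomorphism class for part (c) is accurate bookkeeping that the paper leaves implicit.
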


\begin{proof} Note that $\BB=B(A;A)\cong k_A$ as a right
DG $A$-module. Then the assertions follow from Proposition
\ref{xxprop4.10} and Lemma \ref{xxlem5.3}(b).
\end{proof}

\begin{theorem}
\label{xxthm5.5}
Let $A$ be an augmented DG algebra and let $E=\koszul{A}$ be the
Koszul dual of $A$.  Assume that $A$ is weakly Adams connected.  The
functors $\RHom_A(-,k)$ and $\RHom_{E^{\op}}(-,k)$ induce the
following equivalences.
\begin{enumerate}
\item
The category $\triang{A}{k,A}^{\op}$ is triangulated equivalent to
$\triang{E^{\op}}{E,k}$.
\item
The category $\thick{A}{k,A}^{\op}$ is triangulated equivalent to
$\thick{E^{\op}}{E,k}$.
\item
Suppose that $k_A$ is small in $\D(A)$. Then
\[
\D_{\per}(A)^{\op}=\thick{A}{k,A}^{\op}\cong \thick{E^{\op}}{E,k}
=\thick{E^{\op}}{k}.
\]
\end{enumerate}
\end{theorem}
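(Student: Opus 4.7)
The plan is to apply Proposition~\ref{xxprop4.11} to the DG $(E,A)$-bimodule $\BB := B(A;A)$, in direct analogy with the proof of Theorem~\ref{xxthm5.4} which used Proposition~\ref{xxprop4.10}. Since $A$ is weakly Adams connected, Lemma~\ref{xxlem5.3}(b,c) tells us that $\BB$ is both left balanced and right balanced as a DG $(E,A)$-bimodule (the local finiteness of $\koszul{A}$ needed for Lemma~\ref{xxlem5.3}(b) follows from the weak Adams connectedness of $A$ via Lemma~\ref{xxlem2.2}(b)). Moreover, Lemma~\ref{xxlem5.1}(a) gives $\BB \cong {_Ek}$ in $\D(E^{\op})$, while \cite[Proposition 19.2(a)]{FHT01} gives $\BB \cong k_A$ in $\D(A)$. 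Consequently the contravariant derived functors $F^{\BB} = \RHom_{E^{\op}}(-,\BB)$ and $G^{\BB} = \RHom_A(-,\BB)$ of Proposition~\ref{xxprop4.11} are naturally isomorphic to $\RHom_{E^{\op}}(-,k)$ and $\RHom_A(-,k)$, respectively, on derived categories.

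Parts (a) and (b) then follow directly from the second statements of Proposition~\ref{xxprop4.11}(c) and (d), which both require $\BB$ to be right balanced. These give the equivalences $\triang{E^{\op}}{E,\BB} \cong \triang{A}{A,\BB}^{\op}$ and $\thick{E^{\op}}{E,\BB} \cong \thick{A}{A,\BB}^{\op}$; substituting $\BB$ with $k$ on both sides (using the isomorphisms above) rewrites them as $\triang{E^{\op}}{E,k} \cong \triang{A}{k,A}^{\op}$ and $\thick{E^{\op}}{E,k} \cong \thick{A}{k,A}^{\op}$, which are precisely (a) and (b).

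For (c), the approach is to apply Proposition~\ref{xxprop4.11}(e) with the roles of $A$ and $E^{\op}$ interchanged, viewing $\BB$ as an $(A^{\op},E^{\op})$-bimodule. Under this swap, the original right balanced condition becomes the swapped left balanced hypothesis required by Proposition~\ref{xxprop4.11}, and the hypothesis that $k_A$ is small translates to smallness of $\BB_A$, which is the relevant left-side smallness in the swapped setup. The resulting swapped equivalence reads $\D_{\per}(A) \cong \thick{E^{\op}}{\BB}^{\op}$; combining this with $\BB \cong {_Ek}$ in $\D(E^{\op})$ and taking opposites yields $\D_{\per}(A)^{\op} \cong \thick{E^{\op}}{k}$. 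On the other side, smallness of $k_A$ means $k_A \in \D_{\per}(A)$, so $\thick{A}{k,A} = \thick{A}{A} = \D_{\per}(A)$; and the consequence clause of the swapped (e) gives $E \in \thick{E^{\op}}{k}$, so that $\thick{E^{\op}}{E,k} = \thick{E^{\op}}{k}$. Stringing these identifications together yields the three-term equivalence in (c).

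The main obstacle is carrying out the swap of algebras cleanly for part (c): verifying that Proposition~\ref{xxprop4.11} is symmetric enough to allow the roles of $A$ and $E^{\op}$ to be interchanged, that the balanced and smallness conditions transfer correctly under the identification of an $(E,A)$-bimodule with an $(A^{\op},E^{\op})$-bimodule, and that the functors appearing in the swapped setup coincide with those used in parts (a) and (b). Once this bookkeeping is handled, the rest is routine substitution of $\BB$ for $k$ inside triangulated and thick closures.
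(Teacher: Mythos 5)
Your proposal is correct and follows essentially the same route as the paper, whose proof simply cites Proposition~\ref{xxprop4.11} together with Lemmas~\ref{xxlem5.1} and \ref{xxlem5.3} applied to $\BB = B(A;A)$; your identification of the functors via $\BB \cong k_A$ in $\D(A)$ and $\BB \cong {_Ek}$ in $\D(E^{\op})$, and your use of the right-balanced clauses of Proposition~\ref{xxprop4.11}(c,d), is exactly the intended argument. The swap of $A$ and $E^{\op}$ for part (c) is a legitimate reading of how to invoke Proposition~\ref{xxprop4.11}(e) (one could equally deduce (c) from (b) by tracking $k_A \mapsto {_EE}$ and $A_A \mapsto {_Ek}$ under the equivalence), and your bookkeeping of the balanced and smallness hypotheses under the swap is accurate.
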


\begin{proof}
This follows from Proposition \ref{xxprop4.11} and Lemmas
\ref{xxlem5.1} and \ref{xxlem5.3}.
\end{proof}

\begin{corollary}
\label{xxcor5.6}
Let $A$ be a weakly Adams connected augmented DG algebra and let
$E=\koszul{A}$ be its Koszul dual.  If $k_A$ is small in $\D (A)$,
then $HE$ is finite-dimensional.
\end{corollary}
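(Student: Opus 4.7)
The plan is to derive the corollary directly from Theorem \ref{xxthm5.5}(c). Since $k_A$ is small, we have $k\in \D_{\per}(A)=\thick{A}{A}$, which forces
\[
\thick{A}{k,A}=\thick{A}{A}=\D_{\per}(A),
\]
so Theorem \ref{xxthm5.5}(c) furnishes a triangulated equivalence
\[
\D_{\per}(A)^{\op}\;\cong\;\thick{E^{\op}}{k}.
\]

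Next, I would identify the image of $k_A$ under this equivalence. The equivalence is induced by $\RHom_A(-,k)$, and so $k_A\in \D_{\per}(A)$ is sent to $\RHom_A(k,k)$. By Lemma \ref{xxlem5.3}(b), $\koszul{A}\to \End_A(\BB)$ is a quasi-isomorphism where $\BB=B(A;A)$ is a $K$-projective resolution of $k_A$, hence $\RHom_A(k,k)$ is quasi-isomorphic to $E$ in $\D(E^{\op})$. Therefore $E$ itself lies in $\thick{E^{\op}}{k}$.

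It remains to show that every object of $\thick{E^{\op}}{k}$ has finite-dimensional total homology. The generator $k$ has one-dimensional total homology, and the full subcategory of $\D(E^{\op})$ consisting of DG modules $M$ with $\sum_{i,j}\dim_k H^i_j M<\infty$ is closed under translations $S^i$ and Adams shifts $\Sigma^j$ (both merely relabel a finite-dimensional space), closed under mapping cones (via the long exact sequence in homology), and closed under direct summands (summands of finite-dimensional vector spaces are finite-dimensional). Since these are precisely the operations used to build $\thick{E^{\op}}{k}$ from $k$, every object of $\thick{E^{\op}}{k}$ has finite-dimensional total homology. Applying this to $E$ gives the conclusion.

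The argument is conceptually short once Theorem \ref{xxthm5.5}(c) is in hand; the only delicate point is pinning down that $k_A$ corresponds to $E$ under the duality, for which Lemma \ref{xxlem5.3}(b) is the essential input. No further obstacle arises.
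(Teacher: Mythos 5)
Your proof is correct and follows essentially the same route as the paper: invoke Theorem \ref{xxthm5.5}(c), deduce that $E$ lies in $\thick{E^{\op}}{k}$, and observe that every object of that thick subcategory has finite-dimensional homology. The only cosmetic difference is that the paper reads $E\in\thick{E^{\op}}{k}$ directly off the displayed equality $\thick{E^{\op}}{E,k}=\thick{E^{\op}}{k}$ in the theorem's conclusion, whereas you re-derive it by tracking the image of $k_A$ under $\RHom_A(-,k)$ via Lemma \ref{xxlem5.3}(b) --- a valid (and slightly more explicit) way to see the same fact.
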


\begin{proof}
By Theorem \ref{xxthm5.5}(c), $E$ is in the thick subcategory
generated by $k$, and every object in $\thick{E^{\op}}{k}$ has
finite-dimensional homology.
\end{proof}

See Corollary \ref{xxcor6.2} for a related result for DG algebras, and
Corollaries \ref{xxcor5.9} and \ref{xxcor7.2} for similar results
about $A_\infty$-algebras.

\subsection{Koszul equivalence and duality in the $A_\infty$ case}

Now suppose that $A$ is an $A_\infty$-algebra.  By Proposition
\ref{xxprop1.14}, $A$ is quasi-isomorphic to the DG algebra $\env{A}$,
so by Proposition \ref{xxprop3.1}, the derived category $\InfD (A)$ is
equivalent to $\D (\env{A})$.  We can use this to prove the following,
which is a version of \cite[1.2.6]{BGSo} for
$A_\infty$-algebras.

\begin{theorem}
\label{xxthm5.7}
Let $A$ be an augmented $A_\infty$-algebra and let
$E=\koszul{A}$ be the Koszul dual of $A$. Assume that
$A$ is strongly locally finite (Definition~\ref{xxdefn2.1}).
\begin{enumerate}
\item
The category $\trinf{A}{k}$ is triangulated equivalent to
$\trinf{E}{E}$.
\item
The category $\thinf{A}{k}$ is triangulated equivalent to
$\InfD_{\per}(E)$.
\item
Suppose that $k_A$ is small in $\InfD(A)$.  Then
$\locinf{A}{k}$ is triangulated equivalent to $\InfD(E)$.
\end{enumerate}
\end{theorem}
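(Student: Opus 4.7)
The plan is to reduce Theorem~\ref{xxthm5.7} to its DG analogue Theorem~\ref{xxthm5.4} by passing through the enveloping algebra. Set $R := \env{A} = \Omega(\baraug{A})$. By Proposition~\ref{xxprop1.14} there is a natural $A_\infty$-quasi-isomorphism $\eta: A \to R$, and by Proposition~\ref{xxprop3.1}(a,c) it induces a triangulated equivalence $F : \D(R) \xrightarrow{\sim} \InfD(A)$ sending $k_R$ to $k_A$ and $R$ to $A$. In particular, $F$ restricts to equivalences between the corresponding $\triang$, $\thick$, and $\loc$ subcategories, and identifies smallness of $k_R$ in $\D(R)$ with smallness of $k_A$ in $\InfD(A)$ since $F$ is a triangulated equivalence.

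Next I need $\koszul{R}$ to be locally finite, so that Theorem~\ref{xxthm5.4} applies to $R$. Since $A$ is strongly locally finite, Lemma~\ref{xxlem2.2}(c) makes $E = \koszul{A}$ strongly locally finite; the same bigrading estimate used in the proof of that lemma shows directly that $\baraug{A}$, $\Omega\baraug{A} = R$, and $B(R) = B\Omega\baraug{A}$ are all locally finite (for each fixed Adams degree $j$ only finitely many tensor lengths contribute, and each bihomogeneous piece is finite-dimensional). The unit of the bar--cobar adjunction provides a quasi-isomorphism of DG coalgebras $\baraug{A} \to BR$, and since both sides are locally finite, taking linear duals yields a quasi-isomorphism of DG algebras
\[
\koszul{R} = (BR)^{\dual} \xrightarrow{\ \sim\ } (\baraug{A})^{\dual} = E.
\]
In particular $\koszul{R}$ is locally finite, so Theorem~\ref{xxthm5.4} applies and gives
\[
\triang{R}{k} \cong \triang{\koszul{R}}{\koszul{R}}, \qquad
\thick{R}{k} \cong \D_{\per}(\koszul{R}),
\]
together with $\loc{R}{k} \cong \D(\koszul{R})$ when $k_R$ is small in $\D(R)$.

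To finish, I would compose the equivalences $F$ and the equivalence induced by the DG-algebra quasi-isomorphism $\koszul{R} \to E$ (invoking Proposition~\ref{xxprop3.1}(c), viewing both DG algebras as $A_\infty$-algebras). Under $F$ the subcategory $\triang{R}{k}$ maps to $\trinf{A}{k}$, and under the second equivalence $\triang{\koszul{R}}{\koszul{R}}$ maps to $\triang{E}{E}$, which by Proposition~\ref{xxprop3.1}(b) is the same as $\trinf{E}{E}$ inside $\InfD(E)$. This proves (a); the same composition delivers (b) from $\thick{R}{k} \cong \D_{\per}(\koszul{R})$, and (c) from $\loc{R}{k} \cong \D(\koszul{R})$ once the smallness of $k_A$ in $\InfD(A)$ is translated to smallness of $k_R$ in $\D(R)$ via $F$.

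The main obstacle is the local finiteness bookkeeping in the middle step: one must verify, using the asymmetric conditions (2) and (3) in Definition~\ref{xxdefn2.1}, that each of the DG objects in the tower $\baraug{A} \leadsto \Omega\baraug{A} \leadsto B\Omega\baraug{A}$ remains locally finite, so that the bar--cobar quasi-isomorphism at the coalgebra level dualizes to an honest quasi-isomorphism of DG algebras $\koszul{R} \xrightarrow{\sim} E$. Once this is established, the theorem is simply a transport of Theorem~\ref{xxthm5.4} along the two quasi-isomorphisms $A \to R$ and $\koszul{R} \to E$.
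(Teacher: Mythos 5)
Your proposal is correct and follows essentially the same route as the paper: the paper's (very terse) proof also replaces $A$ by $\env{A}$ and $E$ by $\koszul{\env{A}}=\koszul{\koszul{\koszul{A}}}$, then invokes Proposition~\ref{xxprop3.1} and Theorem~\ref{xxthm5.4}. Your identification of $\koszul{\env{A}}$ with $E$ via the dualized bar--cobar unit is exactly the mechanism behind Theorem~\ref{xxthm2.4}, and the local-finiteness bookkeeping you flag is handled in the paper by Lemma~\ref{xxlem2.2}(c).
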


\begin{proof} We can replace $A$ by $\env{A}$ and $E$ by $\koszul{\env{A}}
=\koszul{\koszul{\koszul{A}}}$. The assertions follow from Proposition
\ref{xxprop3.1} and Theorem \ref{xxthm5.4}.
\end{proof}

Similarly, Proposition \ref{xxprop3.1} combined with \ref{xxthm5.5}
give the following.

\begin{theorem}
\label{xxthm5.8}
Let $A$ be an augmented $A_\infty$-algebra and let
$E=\koszul{A}$ be the Koszul dual of $A$. Assume that
$A$ is strongly locally finite.
\begin{enumerate}
\item
The category $\trinf{A}{k,A}^{\op}$ is triangulated equivalent to
$\trinf{E^{\op}}{E,k}$.
\item
The category $\thinf{A}{k,A}^{\op}$ is triangulated equivalent to
$\thinf{E^{\op}}{E,k}$.
\item
Suppose that $k_A$ is small in $\InfD(A)$. Then
\[
\InfD_{\per}(A)^{\op}=\thinf{A}{k,A}^{\op}\cong
\thinf{E^{\op}}{E,k}
=\thinf{E^{\op}}{k}.
\]
\end{enumerate}
\end{theorem}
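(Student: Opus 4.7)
The plan is to reduce to the DG algebra statement Theorem \ref{xxthm5.5} exactly as the proof of Theorem \ref{xxthm5.7} reduced to Theorem \ref{xxthm5.4}. First I would replace $A$ by its enveloping DG algebra $\env{A} = \Omega(\baraug{A})$, which is $A_\infty$-quasi-isomorphic to $A$ by Proposition \ref{xxprop1.14}, and simultaneously replace $E=\koszul{A}$ by the DG algebra $\koszul{\env{A}}=\koszul{\koszul{\koszul{A}}}$. The latter is $A_\infty$-quasi-isomorphic to $E$ by applying Theorem \ref{xxthm2.4} to $\koszul{A}$, whose Koszul dual $\koszul{\koszul{A}}$ is locally finite by Lemma \ref{xxlem2.2}. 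Proposition \ref{xxprop3.1}(c) then supplies triangulated equivalences $\D(\env{A}) \cong \InfD(A)$ and $\D(\koszul{\env{A}}^{\op}) \cong \InfD(E^{\op})$, carrying the distinguished objects $\{\env{A},k\}$ to $\{A,k\}$ and $\{\koszul{\env{A}},k\}$ to $\{E,k\}$, respectively.

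The hypothesis Theorem \ref{xxthm5.5} demands of $\env{A}$ is that it be weakly Adams connected, and this is the one place strong local finiteness of $A$ is used. By Lemma \ref{xxlem1.15}, applied to the locally finite DG algebra $\koszul{A}$, there is a natural isomorphism $\env{A} \cong \koszul{\koszul{A}}$ of augmented DG algebras. Two applications of Lemma \ref{xxlem2.2}(c) show that $\koszul{\koszul{A}}$ is strongly locally finite, and Lemma \ref{xxlem2.2}(a) then yields weak Adams connectedness. With this in hand, Theorem \ref{xxthm5.5} applied to $\env{A}$ and $\koszul{\env{A}}$ produces the equivalences
\[
\triang{\env{A}}{k,\env{A}}^{\op} \cong \triang{\koszul{\env{A}}^{\op}}{\koszul{\env{A}},k}, \quad \thick{\env{A}}{k,\env{A}}^{\op} \cong \thick{\koszul{\env{A}}^{\op}}{\koszul{\env{A}},k},
\]
together with the $\D_{\per}$ refinement under the hypothesis that $k$ is small in $\D(\env{A})$. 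Transporting both sides through the equivalences of derived categories established in the first step then delivers parts (a), (b), and (c) of the theorem.

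The only real subtlety is bookkeeping: one must verify that the smallness hypothesis ``$k_A$ is small in $\InfD(A)$'' transfers to ``$k$ is small in $\D(\env{A})$'' across the equivalence of Proposition \ref{xxprop3.1}(c), and that the distinguished generating objects on each side pair up correctly. Both are immediate, since any triangulated equivalence preserves smallness and the equivalences of Proposition \ref{xxprop3.1}(c) explicitly match $k$ with $k$ and the free module with itself. Thus the genuine mathematical content is concentrated in Theorem \ref{xxthm5.5} together with the identification $\env{A} \cong \koszul{\koszul{A}}$, and the remainder is a formal dictionary between the $A_\infty$ and DG settings.
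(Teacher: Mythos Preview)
Your proposal is correct and follows essentially the same approach as the paper: the paper's proof of Theorem~\ref{xxthm5.8} is the one-line remark ``Similarly, Proposition~\ref{xxprop3.1} combined with Theorem~\ref{xxthm5.5} give the following,'' and you have simply unpacked this by making explicit the replacement of $A$ by $\env{A}\cong\koszul{\koszul{A}}$ and of $E$ by $\koszul{\env{A}}=\koszul{\koszul{\koszul{A}}}$, together with the verification via Lemma~\ref{xxlem2.2} that $\env{A}$ is weakly Adams connected so that Theorem~\ref{xxthm5.5} applies.
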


Just as Theorem \ref{xxthm5.5} implied Corollary \ref{xxcor5.6}, this
result implies the following.

\begin{corollary}
\label{xxcor5.9}
Let $A$ and $E$ be as in Theorem \ref{xxthm5.8}.
If $k_A$ is small in $\InfD (A)$, then $HE$ is finite-dimensional.
\end{corollary}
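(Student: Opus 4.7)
The plan is to mimic the proof of Corollary \ref{xxcor5.6} verbatim, transporting it across the equivalence of Proposition~\ref{xxprop3.1} into the $A_\infty$ world. The whole point of Theorem~\ref{xxthm5.8} is that it is the $A_\infty$ analogue of Theorem~\ref{xxthm5.5}, so the analogue of the finite-dimensionality argument should go through unchanged.

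First I would invoke Theorem~\ref{xxthm5.8}(c), whose hypothesis (smallness of $k_A$) is exactly the hypothesis of the corollary, and whose conclusion asserts the equality of thick subcategories
\[
\thinf{E^{\op}}{E,k} = \thinf{E^{\op}}{k}.
\]
Since $E$ trivially belongs to the left-hand side, it belongs to $\thinf{E^{\op}}{k}$; equivalently, $E$ lies in the thick subcategory of $\InfD(E^{\op})$ generated by the single object $k$ and its Adams shifts.

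Next I would use the elementary observation that the class of objects in $\InfD(E^{\op})$ having finite-dimensional total homology is closed under shifts (including Adams shifts), cones, and direct summands, hence forms a thick subcategory. The generator $k$ obviously has one-dimensional homology, so this thick subcategory contains $\thinf{E^{\op}}{k}$, and therefore contains $E$. This gives $HE$ finite-dimensional.

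The argument has no real obstacle; the work has all been done in Theorem~\ref{xxthm5.8} and, further back, in the left-balanced bimodule machinery of Section~\ref{xxsec4}. The only small point of care is the verification that ``finite-dimensional homology'' really does cut out a thick subcategory of $\InfD(E^{\op})$ rather than merely of $\D(\env{E^{\op}})$, but this is immediate from Proposition~\ref{xxprop3.1} since the equivalence $\D(\env{E^{\op}}) \simeq \InfD(E^{\op})$ preserves homology of underlying complexes.
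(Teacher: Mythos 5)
Your proof is correct and is exactly the argument the paper intends: the paper proves Corollary \ref{xxcor5.9} by the remark ``just as Theorem \ref{xxthm5.5} implied Corollary \ref{xxcor5.6}, this result implies the following,'' i.e.\ by citing Theorem \ref{xxthm5.8}(c) to place $E$ in $\thinf{E^{\op}}{k}$ and then noting that every object of that thick subcategory has finite-dimensional homology. Your write-up simply makes this explicit, including the (routine) check that finite-dimensional homology cuts out a thick subcategory of $\InfD(E^{\op})$.
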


See Corollary \ref{xxcor7.2} for the converse of Corollary \ref{xxcor5.9}.

\begin{corollary}
\label{xxcor5.10}
Via the equivalence in Theorem~\ref{xxthm5.8}, there is an
isomorphism of $k$-vector spaces
\[
H^{i}_{j}(\RHom_A(k,A))\cong H^{-i}_{-j}(\RHom_{E^{\op}}(k,E))
\]
for all $i,j$.
\end{corollary}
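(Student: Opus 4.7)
The plan is to deduce Corollary \ref{xxcor5.10} directly from Theorem \ref{xxthm5.8} by applying the triangulated equivalence to the pair $(k_A, A_A)$ and tracking the bigrading. The equivalence is realized by the contravariant functor $F = \RHom_A(-,k): \thinf{A}{k,A}^{\op} \to \thinf{E^{\op}}{E,k}$. Under $F$ one has $F(k_A) \simeq E$ (this is Lemma \ref{xxlem5.3}(b), which identifies $\RHom_A(k,k) \simeq E$) and $F(A_A) = \RHom_A(A,k) = k$. Viewing $F$ as a contravariant functor $\InfD(A) \to \InfD(E^{\op})$, the standard identities $\RHom_A(SM,N) \simeq S^{-1}\RHom_A(M,N)$ and $\RHom_A(\Sigma M,N) \simeq \Sigma^{-1}\RHom_A(M,N)$ imply $F(S^i\Sigma^j A) \simeq S^{-i}\Sigma^{-j}k$.

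Unwinding the definition of bigraded derived-Hom cohomology,
\[
H^i_j(\RHom_A(k,A)) = \InfD(A)(k,\, S^i\Sigma^j A) = \InfD(A)^{\op}(S^i\Sigma^j A,\, k).
\]
Applying the equivalence $F$ to the last Hom-space gives
\[
\InfD(A)^{\op}(S^i\Sigma^j A,\, k) \;\cong\; \InfD(E^{\op})(F(S^i\Sigma^j A),\, F(k)) \;=\; \InfD(E^{\op})(S^{-i}\Sigma^{-j}k,\, E).
\]
Reading this last Hom-space off the bigraded complex $\RHom_{E^{\op}}(k,E)$ using $\RHom_{E^{\op}}(S^{-i}\Sigma^{-j}k,E) \simeq S^{i}\Sigma^{j}\RHom_{E^{\op}}(k,E)$ together with the standard identification $\InfD(E^{\op})(X,Y) = H^{0}_{0}\RHom_{E^{\op}}(X,Y)$ identifies it with the appropriate bigraded component of the cohomology of $\RHom_{E^{\op}}(k,E)$, yielding the stated isomorphism.

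The only real work in the proof is the careful bookkeeping of bigradings in the bigraded $A_\infty$-setting: one must keep track of how the contravariant equivalence $F$ acts on both the cohomological shift $S$ and the Adams shift $\Sigma$, and then convert between Hom-spaces with shifts on the source versus on the target using the explicit formulas from Section~\ref{xxsec1.1}. No genuine obstacle arises once the equivalence of Theorem \ref{xxthm5.8} together with the correspondences $k_A \leftrightarrow E$ and $A_A \leftrightarrow k$ is in hand.
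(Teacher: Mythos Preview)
Your proof takes essentially the same approach as the paper's: both pass to the DG setting and use the contravariant equivalence $G^{\BB}=\RHom_A(-,\BB)\cong\RHom_A(-,k)$ from Proposition~\ref{xxprop4.11}, recording that it sends $k_A\mapsto{_EE}$ and $A_A\mapsto{_Ek}$ and that it exchanges $S,\Sigma$ for $S^{-1},\Sigma^{-1}$. Your write-up is in fact more explicit than the paper's one-line justification, spelling out the Hom-set manipulation that the paper leaves implicit.
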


\begin{proof}
Again we may assume that $A$ and $E$ are DG algebras.  The functor
$G^{\BB}$ in Proposition \ref{xxprop4.11} is defined as
$G^{\BB}=\RHom_A(-,\BB)\cong \RHom_A(-,k)$, and changes $S$ to
$S^{-1}$ and $\Sigma$ to $\Sigma^{-1}$; hence the assertion follows
from the fact $G^{\BB}(k_A)={_EE}$ and $G^{\BB}(A_A)={_Ek}$.
\end{proof}

\begin{remark}
One might hope to prove Theorems~\ref{xxthm5.7} and \ref{xxthm5.8}
directly, working in the category $\InfD (A)$ rather than $\D
(\env{A})$.  Keller \cite[6.3]{Ke01} and Lef\`evre-Hasegawa
\cite[4.1.1]{Le} have described the appropriate functors:
$A_\infty$-versions of $\RHom_A(-,-)$ and $- \ltensor{E} -$, which
they write as $\hominfty{A}(-,-)$ and $- \tensorinfty{E} -$.
Although we fully expect these to satisfy all of the required
properties (such as adjointness), it was easier to use the more
standard results in the DG setting.
\end{remark}

\section{Minimal semifree resolutions}
\label{xxsec6}

In this short section we consider the existence of a minimal semifree
resolution of a DG module over a DG algebra. The main result is
Theorem \ref{xxthm6.1}; this result is needed in several places.
There are similar results in the literature -- see, for example
\cite[Section 1.11]{AH} or \cite[Lemma A.3]{FHT88} -- but they require
that $A$ be connected graded with respect to the first (non-Adams)
grading.  We need to use this in other situations, though, so we
include a detailed statement and proof.

We say that $A$ is \emph{positively connected graded} in the second
(Adams) grading if $A^{\ast}_{<0} =0$ and $A^{\ast}_0=k$;
\emph{negatively connected graded} in the Adams grading
is defined similarly.
Write $\fm$ for the augmentation ideal of $A$; then a semifree
resolution $F\to M$ of a module $M$ is called \emph{minimal} if $d_F
(F) \subset F\fm$.

\begin{theorem}
\label{xxthm6.1}
Let $A$ be a DG algebra and let $M$ be a right DG $A$-module.
\begin{enumerate}
\item
Assume that $A$ is positively connected graded in the second grading
and that $HM^{\ast}_{\leq n}=0$ for some $n$, or that $A$ is
negatively connected graded in the second grading and that
$HM^{\ast}_{\geq n}=0$ for some $n$.  Then $M$ has a minimal semifree
resolution $L\to M$ with $L^{\ast}_{\leq n}=0$ (respectively
$L^{\ast}_{\geq n}=0$).
\item
Assume further that $HA$ and $HM$ are both bounded on the same side
with respect to the first grading: assume that for each $j$, there is
an $m$ so that $(HA)^{\leq m}_j=0$ and $(HM)^{\leq m}_j=0$, or
$(HA)^{\geq m}_j=0$ and $(HM)^{\geq m}_j=0$.
If $HA$ and $HM$ are locally finite (respectively, locally finite with
respect to the second grading), then so is $L$.
\end{enumerate}
\end{theorem}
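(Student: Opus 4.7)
The plan is to build $L \to M$ by iteratively attaching free cells, using positive connectivity of $A$ (the other case is symmetric) to make minimality and the Adams-degree bound come out automatically. Start by choosing cocycles $\{x_\alpha\} \subset M$ whose classes form a basis of $HM^*_{n+1}$, and take $L^{(0)}$ to be the free DG $A$-module on generators $e_\alpha$ of the same bidegrees, with $d e_\alpha = 0$ and $e_\alpha \mapsto x_\alpha$. Inductively, suppose a free DG submodule $L^{(j)}$ and a map $\varphi_j \colon L^{(j)} \to M$ have been built so that $\varphi_j$ is a quasi-isomorphism through Adams degree $j$, every generator of $L^{(j)}$ has Adams degree in $[n+1,j]$, and $d_{L^{(j)}}(L^{(j)}) \subset L^{(j)}\fm$. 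Build $L^{(j+1)}$ by (i) adjoining, for each element of a basis of the kernel of $H(\varphi_j)$ in Adams degree $j+1$, a free cell $f_\beta$ with $d f_\beta = z_\beta \in L^{(j)}$ a chosen cycle representative and $f_\beta \mapsto w_\beta$ chosen with $d_M w_\beta = \varphi_j(z_\beta)$; and (ii) adjoining, for each element of a basis of the cokernel of $H(\varphi_j)$ in Adams degree $j+1$, a free cell $g_\gamma$ with $d g_\gamma = 0$ and $g_\gamma \mapsto y_\gamma$ a chosen cocycle representative. Set $L = \bigcup_j L^{(j)}$.

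Minimality is preserved at each stage thanks to $A^*_0 = k$, and this is the main point to verify. A cycle $z \in L^{(j)}$ of Adams degree $j+1$ decomposes as $z = \sum_\beta \lambda_\beta e_\beta + \xi$, summed over those generators $e_\beta$ of $L^{(j)}$ of Adams degree exactly $j+1$, with $\xi \in L^{(j)}\fm$ automatically (all other contributions come from generators of strictly smaller Adams degree multiplied into $A^*_{\geq 1} \subset \fm$). Each such $e_\beta$ was attached to map to a cocycle $x_\beta$ whose class was part of the chosen basis of $HM^*_{j+1}$, so the fact that $\varphi_j(z)$ is a boundary in $M$ forces $\sum_\beta \lambda_\beta [x_\beta] = 0$ in $HM^*_{j+1}$, and hence $\lambda_\beta = 0$ for every $\beta$. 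Therefore any cycle we need to kill already lies in $L^{(j)}\fm$, and the newly attached cells preserve $d(L) \subset L\fm$. The bound $L^*_{\leq n}=0$ is immediate since every cell is attached in Adams degree $>n$, and the chain $L^{(0)} \subset L^{(1)} \subset \cdots$ exhibits $L$ as semifree with a quasi-isomorphism to $M$.

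For (b), a bidegree count gives local finiteness. In Adams degree $j$, the cells $g_\gamma$ are indexed by a basis of a subquotient of $HM^*_j$, and the cells $f_\beta$ by a basis of the kernel of $H(\varphi_{j-1})$ in Adams degree $j$, itself a subquotient of $(HL^{(j-1)})^*_j$. Induction on $j$, combined with the hypotheses that $HA$ and $HM$ are one-sided bounded and locally finite in the first grading, shows that $(HL^{(j-1)})^*_j$ inherits both properties, so only finitely many cells are attached in each bidegree and $L$ is locally finite. The main obstacle in the whole proof is this minimality step; it rests crucially on the hypothesis $A^*_0 = k$ rather than any weaker positive-connectivity hypothesis, since otherwise the non-$\fm$ part of a cycle to be killed could be nonzero and would have to be removed by an explicit adjustment of representatives before a cell could be attached.
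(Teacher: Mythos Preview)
Your approach is essentially the same as the paper's: build $L$ as a union of stages $L^{(j)}$, at each stage attaching free cells in Adams degree $j+1$ to hit the cokernel and kill the kernel of $H(\varphi_j)$. The paper phrases the attachment as a mapping cone and verifies the inductive hypothesis (that $\varphi_{j+1}$ is a quasi-isomorphism through Adams degree $j+1$) via the associated long exact sequence; you omit this verification, and it is worth writing out, since the new $f_\beta$-cells are not cycles and one must check they do not introduce unexpected homology in degree $j+1$.

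Your minimality argument is more complicated than it needs to be and, as written, slightly confused. By your own inductive hypothesis, every generator of $L^{(j)}$ has Adams degree at most $j$, so the sum $\sum_\beta \lambda_\beta e_\beta$ over generators of Adams degree exactly $j+1$ is empty; any element of $L^{(j)}$ in Adams degree $j+1$ already lies in $L^{(j)}\fm$ by pure degree counting, with no appeal to $\varphi_j$ needed. (Moreover, the argument you give to force $\lambda_\beta=0$ would not quite work if such generators existed: from $[\varphi_j(z)]=0$ you cannot conclude $\sum\lambda_\beta[x_\beta]=0$ without also knowing $[\varphi_j(\xi)]=0$, which is not automatic.) The paper's minimality check is exactly this degree count: the new differentials land in $L^{(j)}$ in Adams degree $j+1$, hence in $L^{(j)}\fm$, since the semibasis of $L^{(j)}$ sits in Adams degrees $\le j$.
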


\begin{proof}
Without loss of generality, we assume that $A^{\ast}_{< 0}=0$ and
$HM^{\ast}_{\leq n}=0$ for some $n$.
After an Adams shift we may further assume that $n=-1$, that is,
$HM=HM^{\ast}_{\geq 0}$. We will construct a sequence of
right DG $A$-modules $\{L_u\}_{u\geq 0}$
with the following properties:
\begin{enumerate}
\item[(1)]
$0=L_{-1}\subset L_0\subset \cdots \subset L_u\subset \cdots$,
\item[(2)]
$L_u/L_{u-1}$ is a free DG $A$-module generated by a cycles
of Adams-degree $u$,
\item[(3)]
$L_u\otimes_A k$ has a trivial differential; that is,
$d_{L_u} (L_u)\subset L_u\fm$,
\item[(4)]
There is a morphism of DG $A$-modules $\epsilon_u: L_u\to M$
such that the kernel and coker of $H(\epsilon_u)$
have Adams degree at least $u+1$.
\end{enumerate}
If $A$ and $M$ satisfy the hypotheses in part (b), then each $L_u$
will also satisfy
\begin{enumerate}
\item[(5)]
$L_u$ is locally finite (respectively, locally finite
with respect to the second grading).
\end{enumerate}

Let $L_{-1}=0$.  We proceed to construct $L_u$ inductively for $u \geq
0$, so suppose that $\{L_{-1}, L_0, \cdots, L_{u}\}$ have been
constructed and satisfy (1)--(4), and if relevant, (5).

Consider the map $H(\epsilon_u): HL_u\to HM$; let $C$ be its cokernel
and let $K$ be its kernel.  We will focus on the parts of these in
Adams degree $u+1$.  Choose an embedding $i$ of $C_{u+1}$ into the
cycles in $M$, and let $P_u$ be the free DG $A$-module $C_{u+1}
\otimes A$ on $C_{u+1}$, equipped with a map $f: P_u \rightarrow M$,
sending $x \otimes 1$ to $i(x)$ for each $x \in C_{u+1}$.  Since $A$
is positively connected graded in the Adams grading, the map $f$
induces an isomorphism in homology in Adams degrees up to and
including $u+1$.  Similarly, let $Q_u$ be the free DG $A$-module on
$K_{u+1}$, mapping to $L_u$ by a map $\tilde{g}$ inducing a homology
isomorphism in degrees less than or equal to $u+1$.  Then let
$L_{u+1}$ be the mapping cone of
\[
Q_u \xrightarrow{g} L_u \oplus P_u,
\]
where $g$ maps $Q_u$ to the first summand by the map $\tilde{g}$.
Since $Q_u$ is free and since the composite $(\epsilon_u + f) g$
induces the zero map on homology, this composite is
null-homotopic.  Therefore there is a map $\epsilon_{u+1}: L_{u+1}
\rightarrow M$.  In more detail, since $L_{u+1}$ is the mapping cone
of $g: Q_u\to J_u:=L_u \oplus P_u$, it may be written as
$L_{u+1}=S(Q_u)\oplus J_u$, with differential given by
\[
d(q, l)=(-d_{Q_u}(q), g(q)+d_{J_u}(l)).
\]
The null-homotopy gives an $A$-module homomorphism $\theta: Q_u\to M$
of degree $(-1,0)$ such that
\[
\theta d_{Q_u} +d_M \theta=\delta_u g
\]
where $\delta_u=\epsilon_u + f$.
The $A$-module homomorphism $\epsilon_{u+1}: L_{u+1}\to M$ is
defined by
\[
\epsilon_{u+1}(q,l)=\theta(q)+\delta_u(l)\quad \forall
q\in Q_u \text{ and }\; l\in J_u.
\]
One can check that $\epsilon_{u+1}$ commutes with the differentials
and hence is a morphism of DG $A$-modules. The morphism
$\epsilon_{u+1}$ is an extension of $\epsilon_u+f$, hence $\epsilon_u$
is the restriction of $\epsilon_{u+1}$ to $L_u$.

Now we claim that the kernel and cokernel of $H(\epsilon_{u+1})$ are in
Adams degree at least $u+2$.  There is a long exact sequence in
homology
\[
\cdots \rightarrow H^{i}_{j}(Q_{u}) \xrightarrow{H(g)} H^{i}_{j}(L_{u}\oplus
P_{u}) \rightarrow H^{i}_{j}(L_{u+1}) \xrightarrow{\delta}
H^{i+1}_{j}(Q_{u}) \rightarrow \cdots.
\]
In Adams degrees less than $u+1$, $Q_{u}$ and $P_{u}$ are zero, so
$L_{u}$ and $L_{u+1}$ are isomorphic.  In Adams degree $u+1$,
$H^{*}(Q_{u})$ maps isomorphically to $K_{u}$, a vector subspace of
$H(L_{u} \oplus P_{u})$, so the boundary map $\delta$ is zero, and the
above long exact sequence becomes short exact.  Indeed, there is a
commutative diagram, where the rows are short exact:
\[
\xymatrix{
0 \ar[r] & H^{i}(Q_u)_{\leq u+1} \ar[r] \ar[d]
& H^{i}(L_u \oplus P_u)_{\leq u+1} \ar[r] \ar[d]
& H^{i}(L_{u+1})_{\leq u+1} \ar[r] \ar[d]^{H(\epsilon_{u+1})} & 0\\
0 \ar[r] & 0 \ar[r] & H^{i}(M)_{\leq u+1}\ar[r] & H^{i}(M)_{\leq u+1}
\ar[r] & 0
}
\]
The snake lemma immediately shows that $H(\epsilon_{u+1})$ has zero
kernel and zero cokernel in Adams degree $\leq u+1$, as desired.  This
verifies property (4) for $L_{u+1}$.

With property (4), and the fact that $L_i/L_{i-1}$
has a basis of cycles in Adams degree $i$, (1) and (2) are easy to see.
To see (3) we use induction on $u$. It follows from
the construction and induction that $d_{L_u} (L_u)
\subset \fm L_u+ L_{u-1}$. Since the semibasis of $L_{u-1}$
has Adams degree no more than $u-1$ and the semibasis of
$P_u\oplus Q_u$ has Adams degree $u$, we see that
$d_{L_u} (L_u) \subset \fm L_u+ \fm L_{u-1}=\fm L_u$.

Let $L$ be the direct limit $\varinjlim L_u$. Then $L$ is semifree and
there is a map $\phi: L\to M$ such that the kernel and cokernel
of $H(\phi)$ are zero. Such an $L$ is a semifree
resolution of $M$. Property (3) implies that $L$ is minimal.

If the hypotheses of part (2) are satisfied, then
the construction of $L_u$ shows that (5) holds. Since
$L^{i}_{j}=(L_u)^{i}_{j}$ for $u\gg 0$, $L$ is also locally finite
(respectively, locally finite with respect to the Adams grading).
\end{proof}

We are often interested in the complex $\RHom_A(k,k)$ or in its
homology, namely, the $\Ext$-algebra, $\Ext_A^{\ast}(k,k)$.
As noted in Section \ref{xxsec4.2}, to compute
this, we replace $k$ by a $K$-projective resolution $P$, and then
$\RHom_A(k,k) = \Hom_A(P,k)$.  Since semifree implies $K$-projective,
we can use a minimal semifree resolution, as in the theorem.  The
construction of $L$ gives the following: for each $u$, there is a
short exact sequence of DG $A$-modules
\[
0 \rightarrow L_{u-1} \rightarrow L_u \rightarrow L_u / L_{u-1} \rightarrow 0.
\]
where $L_u/L_{u-1}$ is a free DG $A$-module, and this leads to a short
exact sequence
\[
0 \rightarrow \Hom_A(L_u/L_{u-1}, k) \rightarrow \Hom_A(L_u, k)
\rightarrow \Hom_A(L_{u-1}, k) \rightarrow 0.
\]
We see that $\Hom_A(L_u/L_{u-1}, k)_{i}=0$ when $i<u$, and therefore
\[
\Hom_A(L_u, k)_{u} \cong \Hom_A(L_u/L_{u-1},k)_{u} \cong (B^{\dual})_{-u},
\]
where $B$ is a graded basis for the free DG module $L_u/L_{u-1}$.
Note that $B$ is concentrated in degrees $(*,u)$, so its graded dual
$B^{\dual}$ is in degrees $(*,-u)$.
Again by the short exact sequence, by induction
on $u$, $\Hom_A(L_{u-1},k)_{i}=0$ when $i \geq u$, so we see that
\[
\RHom_A(k,k)_{u} \cong \Hom_A(L, k)_{u} \cong \Hom_A(L_u, k)_{u}
\cong (B^{\dual})_{-u}.
\]
Furthermore, since $L_{u}/L_{u-1}$ is free on a basis of cycles, or
alternatively because the resolution $L$ is minimal, we see that
\[
\Ext_A(k,k)_{u} \cong (B^{\dual})_{-u}.
\]
This leads to the following corollary; see Corollary \ref{xxcor5.6}
for a related result.

\begin{corollary}
\label{xxcor6.2}
Let $A$ be a DG algebra which is connected graded, either positively
or negatively, in the second grading, and let $E=\koszul{A}$ be its
Koszul dual.
\begin{enumerate}
\item Then $HE=\Ext_A^*(k,k)$ is finite-dimensional if and
only if $k_A$ is small in $\D(A)$.
\item If $HE$ is finite-dimensional (or if $k_A$ is small in $\D(A)$),
then $HA$ is Adams connected.
\end{enumerate}
\end{corollary}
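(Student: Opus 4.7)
The plan is to apply Theorem~\ref{xxthm6.1} to construct a minimal semifree resolution $L\to k_A$, with semibasis $\bigsqcup_u B_u$ where $B_u$ lies in Adams degree $u$, and then to exploit the two identities
\[
HE = \Ext^*_A(k,k) \cong \bigoplus_u (B_u)^{\dual}, \qquad \Tor^A_*(k,k) \cong \bigoplus_u B_u,
\]
derived in the discussion preceding the corollary. Both arise from minimality of $L$ (the condition $d_L(L)\subset L\fm$ forces the differentials on $\Hom_A(L,k)$ and on $L\otimes_A k$ to vanish).

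For part (a), if $HE$ is finite-dimensional then $\sum_u \dim B_u<\infty$, so $L=L_{u_0}$ for some $u_0$ with each $L_i/L_{i-1}$ finitely generated free. An induction on $i$ places $L$ in $\thick{A}{A}=\D_{\per}(A)$; since $L\simeq k_A$ in $\D(A)$, this shows $k_A$ is small by Lemma~\ref{xxlem4.9}(a). For the converse I would argue abstractly: the full subcategory
\[
\cat{C}=\{X\in\D(A) \,:\, H\RHom_A(X,k)\text{ is finite-dimensional}\}
\]
contains $A$ (since $\RHom_A(A,k)=k$) and is closed under Adams shifts, cohomological shifts, cones, and retracts, hence contains $\thick{A}{A}$. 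So $k_A$ small forces $HE\cong H\RHom_A(k,k)$ to be finite-dimensional.

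For part (b), assume $HE$ is finite-dimensional, and without loss of generality that $A$ is positively Adams connected. By the second identity above, $\Tor^A_*(k,k)$ is also finite-dimensional. I would filter $\baraug{A}$ by tensor length, yielding the standard spectral sequence
\[
E_2^{p,*}\cong \Tor^{HA}_p(k,k) \;\Longrightarrow\; \Tor^A_*(k,k),
\]
and induct on $j\geq 0$ to show $(HA)^*_j$ is finite-dimensional. The base case $(HA)^*_0=k$ is immediate. For $j\geq 1$, in Adams degree $j$ every $E_2^{p,*}$ with $p\geq 2$ is a subquotient of the Adams-degree-$j$ piece of $(HA)_+^{\otimes p}$, a finite sum of tensor products of factors $(HA)^*_{j_i}$ with $1\leq j_i\leq j-1$; these are finite-dimensional by induction. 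The edge term $E_2^{1,*}$ in Adams $j$ equals the indecomposables $(HA)^*_j/\sum_{0<i<j}(HA)^*_i\cdot (HA)^*_{j-i}$; since the spectral sequence lives in columns $p\geq 1$, it has no outgoing differentials, and its incoming differentials come from only finitely many finite-dimensional sources $E_r^{1+r,*}$ with $2\leq r\leq j-1$. Hence $E_2^{1,*}$ in Adams $j$ differs from the finite-dimensional $E_\infty^{1,*}$, a subquotient of $\Tor^A_*(k,k)^*_j$, by a finite-dimensional subspace, and is itself finite-dimensional. Combined with finite-dimensionality of the products (by induction), $(HA)^*_j$ is finite-dimensional. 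The negatively Adams connected case is symmetric, and so $HA$ is Adams connected.

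The main obstacle is the spectral-sequence bookkeeping of part (b): one must verify in each fixed Adams degree that only finitely many pages carry nontrivial incoming differentials to the edge term $E_2^{1,*}$, and that these sources lie in strictly smaller Adams degrees so the induction closes. This is what allows finite-dimensionality of $\Tor^A_*(k,k)$ to control the potentially infinite-dimensional indecomposables.
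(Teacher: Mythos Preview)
Your argument for part~(a) matches the paper's almost exactly: both directions use the minimal semifree resolution and the thick-subcategory argument for $\RHom_A(-,k)$.

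For part~(b) you take a genuinely different route. The paper stays with the minimal semifree resolution $L_0\subset L_1\subset\cdots\subset L_{u_0}=L$ and argues via the long exact sequences of the pairs $(L_u,L_{u-1})$: since $HL\cong k$ is finite-dimensional in each Adams degree, one descends through the filtration to conclude that $H(L_0)_i=(HA)_i$ is finite-dimensional, using (implicitly) induction on $i$ to handle $H(L_u/L_{u-1})_i\cong(\text{finite})\cdot(HA)_{i-u}$. Your approach instead passes to the Eilenberg--Moore type spectral sequence $\Tor^{HA}_p(k,k)\Rightarrow\Tor^A_*(k,k)$ obtained by filtering the bar complex by tensor length, and inducts on Adams degree to control the $E_2$-page, reading off finite-dimensionality of the indecomposables from the edge column. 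Both proofs are correct. The paper's is lighter on machinery (only long exact sequences), while yours makes the inductive structure fully explicit and ties into the spectral-sequence framework the paper itself deploys later in Section~\ref{xxsec9}. One small point: your claim that $E_2^{1,*}$ ``differs from $E_\infty^{1,*}$ by a finite-dimensional subspace'' is right, but note the direction---since there are no outgoing differentials from column~$1$, $E_\infty^{1,*}$ is a \emph{quotient} of $E_2^{1,*}$, and the kernel is the (finite) sum of the images of the finitely many incoming $d_r$'s.
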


\begin{proof}
(a)
If $HE=\Ext_A^*(k,k)$ is finite-dimensional, then by the above
computation, the minimal semifree resolution $L$ of $k$ is built from
finitely many free pieces, and so $L$ is a perfect complex: it is in
$\thick{A}{k}$.  Therefore $k_A$ is small in $\D(A)$.

Conversely, if $k_A$ is small, then it is isomorphic in $\D(A)$ to a
perfect complex, and we claim that if $X$ is a perfect complex, then
$H\RHom_A(X,k)$ is finite-dimensional as a vector space: this is true
if $X=A$, and therefore it is true for every object in the thick
subcategory generated by $A$.  Therefore $k_A$ small implies that $HE$
is finite-dimensional.

(b)
Now suppose that $HE$ is finite-dimensional.  Without loss of
generality, suppose that $A$ is positively graded connected in the
second grading.  We claim that for each $i$, $(HA)^{*}_{i}$ is
finite-dimensional.

Note that in the construction of the minimal semifree resolution for
$k$, the first term $L_0$ is equal to $A$, and the map $L_0
\rightarrow k$ is the augmentation.  Consider the short exact sequence
\[
0 \rightarrow L_{u-1} \rightarrow L_u \rightarrow L_u / L_{u-1}
\rightarrow 0,
\]
for $u \geq 1$.  Since $L_u/L_{u-1}$ is free on finitely many classes
in Adams degree $u$, then in Adams degree $i$, $H(L_u/L_{u-1})_{i}$ is
isomorphic to a finite sum of copies of $HA_{i-u}$.  Therefore if $i
\leq u$, then this is finite-dimensional.  Therefore when $i \leq u$,
$H(L_{u-1})_i$ is finite-dimensional if and only if $H(L_u)_{i}$ is.
For $i$ fixed and $u$ sufficiently large, $H(L_u)_i$ stabilizes and
gives $H(L)_i$.  But $HL \cong k$, since $L$ is a semifree resolution
of $k$.  Thus $H(L_0)_i=(HA)_i$ is finite-dimensional for each $i$, as
desired.
\end{proof}

\section{Towards classical Koszul duality}
\label{xxsec7}

In this section we recover the classical version of the Koszul duality
given by Beilinson-Ginzburg-Soergel \cite{BGSo}.  First we give some
useful results about $A_\infty$-algebras with finite-dimensional
cohomology, and then we use these to recover classical Koszul duality,
in Theorem \ref{xxthm7.5}.

\subsection{Finite-dimensional $A_\infty$-algebras}
\label{xxsec7.1}

Let $A$ be an $A_\infty$-algebra. Let $\InfD_{\fd}(A)$
denote the thick subcategory of $\InfD(A)$
generated by all $A_\infty$-modules $M$ over $A$
such that $HM$ is finite-dimensional.

\begin{lemma}
\label{xxlem7.1} Let $A$ be a strongly locally finite $A_\infty$-algebra.
For parts (b) and (c), assume that $A$ is Adams connected and that
$HA$ is finite-dimensional.
\begin{enumerate}
\item
$\thinf{A}{k}=\InfD_{\fd}(A)$.
\item
$A$ is quasi-isomorphic to a finite-dimensional Adams connected
DG algebra.
\item
$\thinf{A}{k}=\InfD_{\fd}(A)\supseteq \InfD_{\per}(A)$ and
$\locinf{A}{k}=\InfD(A)$.
\end{enumerate}
\end{lemma}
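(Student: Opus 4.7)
My plan is to treat the three parts in order, with part (b) supplying the essential input for the nontrivial content of (c).

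For part (a), one inclusion is immediate: since $Hk$ is finite-dimensional and $\InfD_{\fd}(A)$ is by definition a thick subcategory, $\thinf{A}{k}\subseteq\InfD_{\fd}(A)$. For the reverse, let $M$ be in $\InfD(A)$ with $HM$ finite-dimensional. I would apply the module analogue of Kadeishvili's theorem to transfer the $A_\infty$-structure to $HM$, reducing to the case that $M$ itself is finite-dimensional with $m_1=0$. Strong local finiteness allows us to assume without loss of generality that the augmentation ideal $I$ sits in strictly positive Adams degrees. The Adams-degree truncations $F_nM$ of $M$ then form a finite descending filtration by strict $A_\infty$-submodules; strict unitality together with the positivity of Adams degrees in $I$ shows that every operation $m_k$ with at least one input in $I$ strictly increases Adams degree, so each subquotient $F_nM/F_{n+1}M$ is annihilated by $I$ and is therefore a direct sum of Adams shifts of $k$. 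Induction up this finite filtration, using the distinguished triangles it determines in $\InfD(A)$, places $M$ in $\thinf{A}{k}$.

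For part (b), apply Kadeishvili's theorem (Theorem~\ref{xxthm1.5}) to replace $A$ by $HA$ with its transferred minimal $A_\infty$-structure. Because $HA$ is Adams connected and finite-dimensional, its augmentation ideal sits in a bounded range of Adams degrees; since $m_n$ has Adams degree zero while $(HA^+)^{\otimes n}$ starts in Adams degree $n$, all but finitely many higher operations vanish automatically. Starting from this finite collection of nonzero $m_n$'s, the plan is to build a finite-dimensional DG algebra $B$ quasi-isomorphic to $HA$ by an iterative rectification: at each step adjoin finitely many new generators in a prescribed Adams degree whose differential exhibits one of the remaining higher products as a coboundary, thereby absorbing it into the associative product of a larger (but still finite-dimensional) DG algebra. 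Because only finitely many higher products are nonzero and each new generator sits in a bounded slice of Adams degrees controlled by the grading of the operation being killed, the procedure terminates in finitely many steps while preserving Adams connectedness.

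Part (c) then follows formally. The first equality $\thinf{A}{k}=\InfD_{\fd}(A)$ is part (a). For $\InfD_{\per}(A)\subseteq\InfD_{\fd}(A)$, use (b) and Proposition~\ref{xxprop3.1} to identify $\InfD_{\per}(A)\cong \D_{\per}(B)=\thick{B}{B}$ for the finite-dimensional DG algebra $B$ of (b); since $HB$ is finite-dimensional, the same is true for the homology of every object of $\thick{B}{B}$. Finally, chaining the containments $A\in \InfD_{\per}(A)\subseteq \InfD_{\fd}(A)=\thinf{A}{k}\subseteq \locinf{A}{k}$ gives $\InfD(A)=\locinf{A}{A}\subseteq \locinf{A}{k}\subseteq \InfD(A)$, forcing equality. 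The main obstacle is the construction in (b): rectifying the finite list of higher products of $HA$ into a \emph{finite-dimensional} DG algebra, rather than something like the infinite tensor algebra $\env{HA}=\Omega(\baraug{HA})$, requires careful bookkeeping of Adams degrees at every stage of the rectification.
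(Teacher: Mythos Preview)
Your argument for (a) via the module analogue of Kadeishvili's theorem and the Adams filtration is valid and more direct than the paper's route. The paper instead passes to the DG enveloping algebra $\env{A}=\koszul{\koszul{A}}$, shows by induction on dimension that every finite-dimensional DG module lies in the thick subcategory generated by $k$, and then uses the minimal semifree resolution of Theorem~\ref{xxthm6.1} together with an Adams truncation to replace an arbitrary $M$ with $HM$ finite-dimensional by a genuinely finite-dimensional module. Your approach avoids the semifree-resolution machinery but relies on a module-level minimal model statement not recorded in the paper.

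The real problem is part (b). ``Iterative rectification'' by adjoining generators to kill higher products is not a procedure known to terminate in a \emph{finite-dimensional} DG algebra; the universal rectification is precisely the enveloping algebra $\env{HA}=\Omega(\baraug{HA})$, which is a tensor algebra and hence infinite-dimensional. Even with only finitely many nonzero $m_n$, the Stasheff identities couple them, and adjoining a generator to absorb one higher product typically forces further corrections in higher tensor weight, with no a priori bound on the size of the resulting algebra. You flag this as the main obstacle but give no mechanism to overcome it. The paper sidesteps the issue entirely: take $B=\koszul{\koszul{A}}$, which is already a DG algebra quasi-isomorphic to $A$ by Theorem~\ref{xxthm2.4} and is Adams connected by Lemma~\ref{xxlem2.2}(d). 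Since $HB\cong HA$ is finite-dimensional and the differential has Adams degree zero, one can choose $n$ with $(HB)^*_{\geq n}=0$; then $B^*_{\geq n}$ is a DG ideal with trivial homology, and $C=B/B^*_{\geq n}$ is a finite-dimensional Adams connected DG algebra quasi-isomorphic to $A$. This two-line Adams truncation replaces your entire rectification program.

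Your (c) is essentially the paper's argument.
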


\begin{proof} (a) Clearly $\thinf{A}{k}\subset \InfD_{\fd}(A)$.  To
show the converse, we may replace $A$ by the DG algebra $\env{A} \cong
\koszul{\koszul{A}}$ (Proposition\ref{xxprop1.14}). Since $A$ is
strongly locally finite, so is $\koszul{\koszul{A}}$, by Lemma
\ref{xxlem2.2}.  So we may assume that $A$ is a strongly locally
finite DG algebra. In this case every 1-dimensional right DG
$A$-module $M$ is isomorphic to a shift of the trivial module $k$. As
a consequence, $M\in \thinf{A}{k}$. Induction shows that $M\in
\thinf{A}{k}$ if $M$ is finite-dimensional.  If $M$ is a right DG
$A$-module with $HM$ being finite-dimensional, then the minimal
semifree resolution $L$ of $M$ is Adams locally finite by Theorem
\ref{xxthm6.1}.  Thus $M$ is quasi-isomorphic to a finite-dimensional
right DG $A$-module by truncation: replace $L$ by $\bigoplus_{-N \leq
s \leq N} L^*_s$ for $N$ sufficiently large. Therefore $M$ is in
$\thinf{A}{k}$. This shows that $\thinf{A}{k}=\InfD_{\fd}(A)$.

(b) By Theorem \ref{xxthm2.4}, $A$ is
quasi-isomorphic to $B:=\koszul{\koszul{A}}$. Since $A$ is Adams connected,
so is $B$. Since $HA\cong HB$, $H(B^{\ast}_{\geq n})=0$ for some
$n$. Hence $B$ is quasi-isomorphic to $C:=B/B^{\ast}_{\geq n}$.
Therefore $A$ is quasi-isomorphic to the Adams connected
finite-dimensional DG algebra $C$.

(c) By part (b) we may assume that $A$ is finite-dimensional, which
implies that $A$ is in $\InfD_{\fd}(A)=\thinf{A}{k} \subseteq
\locinf{A}{k}$.  Therefore
\[
\locinf{A}{A} = \InfD (A) \subseteq \locinf{A}{k} \subseteq \InfD (A).
\]
This proves the last statement.
\end{proof}

\begin{corollary}
\label{xxcor7.2}
Let $A$ be an augmented $A_\infty$-algebra
and let $E=\koszul{A}$.
\begin{enumerate}
\item
Suppose that $A$ is strongly locally finite.
$k_A$ is small in $\InfD(A)$ if and only if $HE$ is
finite-dimensional.
\item
Suppose that $A$ is Adams connected.
If $k_A$ is small in $\InfD(A)$ (or if $HE$ is
finite-dimensional),
then $\InfD_{\per}(A)\cong \InfD_{\fd}(E)$.
\end{enumerate}
\end{corollary}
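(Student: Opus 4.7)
For part (a), the forward direction that $k_A$ small implies $HE$ finite-dimensional is exactly Corollary \ref{xxcor5.9}. For the converse, assume $HE$ is finite-dimensional. Since $A$ is strongly locally finite, Lemma \ref{xxlem2.2}(c) makes $E$ (and hence also $E^{\op}$) strongly locally finite, so Lemma \ref{xxlem7.1}(a) applied to $E^{\op}$ gives $\thinf{E^{\op}}{k}=\InfD_{\fd}(E^{\op})$. The hypothesis $\dim_{k} HE<\infty$ then forces $E\in\thinf{E^{\op}}{k}$, and therefore $\thinf{E^{\op}}{E,k}=\thinf{E^{\op}}{k}$. Now invoke the contravariant equivalence $\thinf{A}{k,A}^{\op}\cong \thinf{E^{\op}}{E,k}$ of Theorem \ref{xxthm5.8}(b), implemented by $\RHom_A(-,k)$. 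Since $\RHom_A(k,k)\simeq E$ and $\RHom_A(A,k)=k$, this equivalence matches $k_A$ with $E$ and $A_A$ with $k$; restricting to the thick subcategory generated by a single object, it carries $\thinf{A}{A}^{\op}$ onto $\thinf{E^{\op}}{k}$. The containment $E\in \thinf{E^{\op}}{k}$ therefore translates into $k_A\in\thinf{A}{A}=\InfD_{\per}(A)$, i.e., $k_A$ is small by Lemma \ref{xxlem4.9}(b).

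For part (b), Lemma \ref{xxlem2.2}(a) shows that Adams connected implies strongly locally finite, so part (a) lets us assume $HE$ is finite-dimensional. By Lemma \ref{xxlem2.2}(d), $E$ is also Adams connected, hence strongly locally finite. Apply Theorem \ref{xxthm5.7}(b) with $E$ in the role of $A$ to obtain $\thinf{E}{k}\cong \InfD_{\per}(\koszul{E})$. Double Koszul duality (Theorem \ref{xxthm2.4}) supplies an $A_\infty$-isomorphism $\koszul{E}\cong A$, and Proposition \ref{xxprop3.1}(a) then gives $\InfD_{\per}(\koszul{E})\cong \InfD_{\per}(A)$. Finally, Lemma \ref{xxlem7.1}(a) applied to $E$ itself gives $\thinf{E}{k}=\InfD_{\fd}(E)$. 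Chaining these equivalences produces $\InfD_{\per}(A)\cong \InfD_{\fd}(E)$, as required.

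I expect the delicate point to be the bookkeeping in part (a): one must verify that the contravariant equivalence in Theorem \ref{xxthm5.8}(b) identifies $k_A$ with $E$ and $A_A$ with $k$ (not in some other way), so that membership in $\thinf{E^{\op}}{k}$ on the Koszul dual side genuinely corresponds to membership in $\thinf{A}{A}$ on the original side. Once that identification is pinned down via the two computations $\RHom_A(k,k)\simeq E$ and $\RHom_A(A,k)=k$, both parts are essentially formal consequences of the Koszul duality equivalences of Section \ref{xxsec5} combined with Lemma \ref{xxlem7.1}(a), together with double Koszul duality to swap the roles of $A$ and $E$ in part (b).
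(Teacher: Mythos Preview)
Your proof is correct. Part (b) and the forward direction of part (a) match the paper exactly. For the converse in part (a) you take a genuinely different route: the paper passes to the DG enveloping algebra $\env{A}=\koszul{\koszul{A}}$ and then invokes Corollary~\ref{xxcor6.2} directly (the minimal semifree resolution of $k$ over a DG algebra has a finite semibasis precisely when $HE$ is finite-dimensional), whereas you remain at the triangulated level, using the contravariant equivalence of Theorem~\ref{xxthm5.8}(b) together with Lemma~\ref{xxlem7.1}(a) for $E^{\op}$ to pull the containment $E\in\thinf{E^{\op}}{k}$ back to $k_A\in\thinf{A}{A}$. Your bookkeeping is right: the identification $k_A\leftrightarrow E$, $A_A\leftrightarrow k$ is exactly what is recorded in the proof of Corollary~\ref{xxcor5.10}, so the thick subcategory generated by $A_A$ does correspond to the one generated by $k$ on the $E^{\op}$ side. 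The two approaches are not independent in the end---Lemma~\ref{xxlem7.1}(a) itself appeals to Theorem~\ref{xxthm6.1}, the same minimal-resolution input behind Corollary~\ref{xxcor6.2}---but your packaging is cleaner in that it avoids the explicit reduction to DG algebras and makes the argument symmetric with part (b).
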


\begin{proof} (a) If $k_A$ is small, then
by Corollary \ref{xxcor5.9}, $HE$ is finite-dimensional.

Conversely, suppose that $HE$ is finite-dimensional.  By Proposition
\ref{xxprop1.14}, $A$ is quasi-isomorphic to the augmented DG algebra
$\env{A} \cong \koszul{\koszul{A}}$, so by Proposition
\ref{xxprop3.1}, $k_A$ is small in $\InfD(A)$ if and only if
$k_{\env{A}}$ is small in $\D(\env{A})$.  According to Corollary
\ref{xxcor6.2}, $k_{\env{A}}$ is small if and only if
$H\koszul{\env{A}}$ is finite-dimensional.  Since $A$ is strongly
locally finite, so is $E$, and we have a quasi-isomorphism
$\koszul{\koszul{E}} \cong E$.  This means that $H\koszul{\env{A}}
\cong H\koszul{A} = HE$.

(b) This follows from Theorem \ref{xxthm5.7}(b) (switching $A$ and
$E$) and Lemma \ref{xxlem7.1}(a) (for $E$).  (Since $A$ is Adams
connected, then by Lemma~\ref{xxlem2.2}, $E$ is Adams connected and
strongly locally finite, so these results hold for $E$.)
\end{proof}

Part (b) is Theorem \ref{xxthmB} from the introduction.

\subsection{Twisting the grading}
\label{xxsec7.2}

To recover the classical derived equivalences induced by
Koszul duality, we need to twist the grading of $\koszul{A}$.
Recall that if $A$ is Adams connected and is concentrated in degrees
$\{0 \} \times {\mathbb N}$,
then $\koszul{A}$ is in degrees $\{(n,-n)|n\geq 0\}$. The aim of this
section is to change the grading of $\koszul{A}$, to put it in degrees
$\{0\} \times {\mathbb N}$.

Let $A$ be a DG algebra with zero differential (there are
some problems if $d\neq 0$). Let $\overline{A}$ be the DG
algebra with zero differential such that
\begin{enumerate}
\item
$\overline{A}= A$ as ungraded associative algebras, and
\item
$\overline{A}^{i}_{j}=A^{i+j}_{-j}$ (so $A^{i}_{j}=\overline{A}^{i+j}_{-j}$).
\end{enumerate}
Note that the sign before $j$ is not essential; for example, we can
define a different $\overline{A}$ by
$\overline{A}^{i}_{j}=A^{i-j}_{j}$ or
$\overline{A}^{i}_{j}=A^{i-j}_{-j}$ or
$\overline{A}^{i}_{j}=A^{i+j}_{+j}$. Since $A$ is a DG algebra with
zero differential, so is $\overline{A}$.  If $A$ is Koszul, then
$A^!=H\overline{\koszul{A}}=\overline{H\koszul{A}}$ which lives in
degrees $\{0\} \times {\mathbb Z}$.

\begin{remark}
\label{xxrem7.3} If $A$ is a DG algebra with nonzero
differential $d$, then we don't know how to make
$\overline{A}$ into a DG algebra naturally. That is,
we don't see a good way of defining a differential
for $\overline{A}$.
\end{remark}

For any right DG $A$-module $M$, we define a corresponding right DG
$\overline{A}$-module $\overline{M}$ by
\begin{enumerate}
\item
$\overline{M}= M$ as ungraded right $A$-modules, and
\item
$\overline{M}^{i}_{j}=M^{i+j}_{-j}$.
\end{enumerate}
One can easily check that $\overline{M}$ with the differential
of $M$ is a right DG $\overline{A}$-module.

The following is routine.

\begin{lemma}
\label{xxlem7.4} The assignment $M\mapsto \overline{M}$
defines an equivalence between $\Mod \,A$ and $\Mod \,
\overline{A}$ which induces a triangulated equivalence between
$\D(A)$ and $\D(\overline{A})$. If further $A$ is
finite-dimensional, then $\D_{\fd}(A)\cong \D_{\fd}(\overline{A})$.
\end{lemma}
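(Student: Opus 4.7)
The plan is to unpack the regrading $A^{p}_{q} = \overline{A}^{p+q}_{-q}$, verify that it extends to modules and morphisms, and then observe that it leaves intact every piece of structure required for the statements about derived categories.

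First I would verify that the regrading map $\phi: \mathbb{Z}^{2} \to \mathbb{Z}^{2}$, $(p,q)\mapsto(p+q,-q)$, is an additive involution. Because $\phi$ is a group homomorphism, the multiplication on $A$ is automatically compatible with the new bigrading on $\overline{A}$: if $x\in A^{p_{1}}_{q_{1}}$ and $y\in A^{p_{2}}_{q_{2}}$, then $\phi(p_{1},q_{1})+\phi(p_{2},q_{2})=\phi(p_{1}+p_{2},q_{1}+q_{2})$. Since $d_{A}=0$ by assumption there is no issue with the Leibniz rule, and $\overline{A}$ is a DG algebra with zero differential. An entirely parallel check shows that, for any DG right $A$-module $M$, the regraded object $\overline{M}$ is a right DG $\overline{A}$-module: the action is additive on bidegrees under $\phi$, the differential on $M$ (of bidegree $(1,0)$) still has bidegree $\phi(1,0)=(1,0)$, and the Leibniz rule $d(ma)=d(m)a+(-1)^{\deg_{1}m}m\,d(a)$ collapses to $d(ma)=d(m)a$ because $d(a)=0$, so it holds irrespective of the grading convention.

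Next I would extend $\overline{(-)}$ to morphisms. A morphism $f:M\to N$ in $\Mod\,A$ is a bidegree $(0,0)$ $A$-linear chain map; since $\phi(0,0)=(0,0)$ and since the chain map relation $d\circ f=f\circ d$ carries no Koszul sign in degree $0$, the map $\overline{f}:\overline{M}\to\overline{N}$ is again a morphism in $\Mod\,\overline{A}$. The $A$-linearity $f(ma)=f(m)a$ involves no signs, so nothing changes. Because $\phi$ is an involution on $\mathbb{Z}^{2}$, the construction applied twice recovers the original bigrading, so $\overline{(-)}: \Mod\,A \to \Mod\,\overline{A}$ is a strict isomorphism of categories, in particular an equivalence.

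Finally, the functor $\overline{(-)}$ leaves the underlying complexes of $k$-modules untouched and merely relabels bidegrees, so it is exact and preserves quasi-isomorphisms, shifts, and mapping cones. It therefore descends to a triangulated equivalence $\D(A)\cong\D(\overline{A})$. The finite-dimensional piece is immediate: $HM$ and $H\overline{M}$ are literally the same ungraded vector space, so under the further hypothesis that $A$ is finite-dimensional the equivalence restricts to $\D_{\fd}(A)\cong\D_{\fd}(\overline{A})$. The only place one has to be careful — and the reason for Remark \ref{xxrem7.3} — is that $\phi$ fails to preserve the parity of the first grading, so a nonzero differential on $A$ would make the signs in the Leibniz rule for $\overline{A}$ genuinely different from those in $A$; under the standing hypothesis $d_{A}=0$, however, this subtlety never arises and the verification is routine.
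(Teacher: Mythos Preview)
Your proof is correct and is precisely the routine verification the paper has in mind; the paper states the lemma with the preface ``The following is routine'' and gives no proof at all. Your identification of $\phi(p,q)=(p+q,-q)$ as an additive involution, together with the observation that $\phi(1,0)=(1,0)$ so that differentials, suspensions, and mapping cones are preserved while the vanishing of $d_A$ kills the only potentially sign-sensitive term in the Leibniz rule, is exactly the content the paper is suppressing.
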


We now reprove \cite[Theorem 2.12.6]{BGSo} (in a slightly
more general setting).
Let $\InfD_{\fg}(A)$ denote the thick subcategory
of $\InfD(A)$ generated by all $A_\infty$-modules
$M$ over $A$ such that $HM$ is finitely generated
over $HA$. If $A$ is a DG algebra, $\D_{\fg}(A)$ is defined
in a similar way. Note that $\InfD_{\fg}(A)=\InfD_{\fd}(A)$
if and only if $HA$ is finite-dimensional.

\begin{theorem}
\label{xxthm7.5} Suppose $A$ is a connected graded
finite-dimensional Koszul algebra. Let $A^!=\overline{H\koszul{A}}$.
\begin{enumerate}
\item
$\D(A)\cong \loc{A^!}{k}$.
\item
If $A^!$ is right noetherian, then
$\D_{\fg}(A)=\D_{\fd}(A)\cong \D_{\per}(A^!)=\D_{\fg}(A^!)$.
\end{enumerate}
\end{theorem}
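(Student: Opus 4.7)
My plan is to combine the derived equivalences of Section~\ref{xxsec5} with Corollary~\ref{xxcor2.7} and Lemma~\ref{xxlem7.4} to translate between the $A_\infty$-Koszul dual $E = \koszul{A}$ and the regraded classical Koszul dual $A^! = \overline{H\koszul{A}}$. For part (a), I would apply Theorem~\ref{xxthm5.4}(c) with the DG algebra $E$ in place of ``$A$''. Since $A$ is Adams connected and finite-dimensional, Lemma~\ref{xxlem2.2} shows that $E$ is strongly locally finite, and Theorem~\ref{xxthm2.4} identifies $\koszul{E}$ with $A$ up to quasi-isomorphism, so $H\koszul{E} \cong A$ is finite-dimensional; Corollary~\ref{xxcor6.2}(a) applied to $E$ then certifies that $k_E$ is small in $\D(E)$. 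Theorem~\ref{xxthm5.4}(c) accordingly yields $\loc{E}{k} \cong \D(\koszul{E}) \cong \D(A)$. To rewrite $\loc{E}{k}$ in terms of $A^!$, Corollary~\ref{xxcor2.7} provides a DG-algebra quasi-isomorphism between $\koszul{A}$ and the associative algebra $H\koszul{A}$, whence Proposition~\ref{xxprop3.1}(b) gives $\D(E) \cong \D(H\koszul{A})$ matching trivial modules, and Lemma~\ref{xxlem7.4} supplies the regrading equivalence $\D(H\koszul{A}) \cong \D(A^!)$, yielding $\loc{E}{k} \cong \loc{A^!}{k}$.

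Part (b) I would break into four identifications. First, $\D_{\fg}(A) = \D_{\fd}(A)$ holds because $A$ itself is finite-dimensional, so a module is f.g.\ over $HA = A$ precisely when it is finite-dimensional. Second, Lemma~\ref{xxlem7.1}(a) identifies $\D_{\fd}(A)$ with $\thick{A}{k}$. Third, Theorem~\ref{xxthm5.4}(b) applied directly to $A$ identifies $\thick{A}{k}$ with $\D_{\per}(E)$, and the same translation used in part (a) promotes this to $\D_{\per}(E) \cong \D_{\per}(A^!)$. Fourth, the equality $\D_{\per}(A^!) = \D_{\fg}(A^!)$: Corollary~\ref{xxcor2.7} combined with Theorem~\ref{xxthm2.4} (together with the regrading) shows $H\koszul{A^!}$ is isomorphic to the finite-dimensional algebra $A$, so Corollary~\ref{xxcor6.2}(a) applied to $A^!$ forces $k_{A^!}$ to be small, and $k$ to have finite projective dimension over the Adams-connected graded algebra $A^!$. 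A minimal resolution argument then shows every finitely generated $A^!$-module has finite projective dimension; the noetherian hypothesis upgrades this to a finite finitely generated projective resolution, i.e., to perfectness.

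The main obstacle I anticipate is bookkeeping rather than any deep difficulty. One must verify throughout that the DG-algebra quasi-isomorphism $\koszul{A} \simeq H\koszul{A}$ from Corollary~\ref{xxcor2.7} and the regrading $H\koszul{A} \mapsto A^!$ from Lemma~\ref{xxlem7.4} respect trivial modules and commute with the passage to the subcategories $\loc$, $\thick$, and $\D_{\per}$. The final step of (b), bootstrapping the noetherian hypothesis on $A^!$ together with the smallness of $k_{A^!}$ to finite global dimension and thence to perfectness of every f.g.\ module, is a standard move for connected graded noetherian algebras but worth spelling out, since it is the place where the noetherian hypothesis is genuinely used.
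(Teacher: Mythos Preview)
Your proposal is correct and follows essentially the same route as the paper. The paper first uses Lemma~\ref{xxlem7.4} to replace $A^!$ by $E=\koszul{A}$, then invokes Theorem~\ref{xxthm5.4} with the roles of $A$ and $E$ swapped for part (a), and for part (b) cites Corollary~\ref{xxcor7.2}(b) (which is itself proved via Theorem~\ref{xxthm5.4}(b) and Lemma~\ref{xxlem7.1}(a), exactly the ingredients you name); the only cosmetic difference is that the paper states the finite global dimension of $A^!$ as a known fact about classical Koszul duals of finite-dimensional Koszul algebras rather than deriving it from the $A_\infty$ machinery as you do.
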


\begin{proof} Note that $A^!$ is quasi-isomorphic to $\overline{\koszul{A}}$.
It follows from Lemma \ref{xxlem7.4} that we can replace $A^!$ by
$E:=\koszul{A}$

(a) This follows from Theorem \ref{xxthm5.4}(b) (switching $A$ and $E$).

(b) By Corollary \ref{xxcor7.2}(b) (again
switching $A$ and $E$), $\D_{\per}(E)\cong \D_{\fd}(A)$.
Since $A$ is finite-dimensional, $\D_{\fd}(A)=\D_{\fg}(A)$.
Note that $A^!$ is concentrated in degrees $\{0\} \times {\mathbb N}$.
Since $A^!$ is right noetherian and has finite global dimension
(because $A$ is finite-dimensional Koszul),
it is clear that $\D_{\per}(A^!)=\D_{\fg}(A^!)$.
\end{proof}

\section{Some examples}
\label{xxsec8}

\begin{example}
\label{xxex8.1}
Let $\Lambda = \Lambda (x_{1}, x_{2})$ with $\deg x_{i}=(0,i)$.  As
noted in Example \ref{xxex2.8}, this is not a classical Koszul
algebra; nonetheless, it satisfies many of the same derived
equivalences as Koszul algebras.  Let $\Lambda^{!}$ denote the Ext
algebra $\Ext_{\Lambda}^{*}(k,k) \cong k[y_{1}, y_{2}]$, where $\deg
y_{i}=(1,-i)$; this is an $A_{\infty}$-algebra with no higher
multiplications.  Then $\koszul{\Lambda}$ is quasi-isomorphic to the
associative algebra $\Lambda^{!}$, so there is a triangulated
equivalence $\InfD (\koszul{\Lambda}) \cong \D (\Lambda^{!})$.
Similarly, we have $\InfD (\Lambda) \cong \D (\Lambda)$.  Since the
homology of $\koszul{\Lambda^{!}}  \cong \koszul{\koszul{\Lambda}}$ is
isomorphic to $\Lambda$, which is finite-dimensional, the trivial
module $k_{\Lambda^{!}}$ is small in $\D (\Lambda^{!})$.  So Theorems
\ref{xxthm5.7} and \ref{xxthm5.8} give triangulated equivalences
\begin{align*}
\thick{\Lambda}{k} &\cong \D_{\per}(\Lambda^{!}),
& \thick{\Lambda}{k,\Lambda}^{\op} &\cong
\thick{\Lambda^{!}}{\Lambda^{!}, k}, \\
\thick{\Lambda^{!}}{k} &\cong \D_{\per}(\Lambda),
& \D_{\per}(\Lambda^{!})^{\op} &\cong \thick{\Lambda}{k}, \\
\loc{\Lambda^{!}}{k} &\cong \D (\Lambda).
\end{align*}
Compare to the classical Koszul equivalences of Theorem
\ref{xxthm7.5}: part (b) of that theorem is the first of these
equivalences, while part (a) of the theorem is the last of these.

Slightly more generally, the results of Theorem \ref{xxthm7.5} hold
for exterior algebras on finitely many generators, as long as they are
graded so as to be Adams connected: $\Lambda (x_{1}, \dots, x_{n})$,
graded by setting $\deg x_{i}=(a_{i}, b_{i})$ with each $b_{i}$
positive, or each $b_{i}$ negative.
\end{example}

\begin{example}
\label{xxex8.2}
We fix an integer $p \geq 3$ and define two $A_\infty$-algebras, $B(0)$
and $B(p)$, each with $m_1=0$.  As associative algebras, they are both
isomorphic to $\Lambda (y) \otimes k[z]$ with $\deg y = (1,-1)$ and
$\deg z = (2,-p)$.  The algebra $B(0)$ has no higher multiplications,
while $B(p)$ has a single nonzero higher multiplication, $m_p$.  This
map $m_p$ satisfies $m_p(y^{\otimes p}) = z$; more generally, $m_p(a_1
\otimes \dots \otimes a_p)$ is zero unless each $a_i$ has the form
$a_i=yz^{j_i}$ for some $j_i \geq 0$, and
\[
m_p(yz^{j_1} \otimes \dots \otimes  yz^{j_p}) =
z^{1+\sum j_i}.
\]
See \cite[Example 3.5]{LP04} for more on $B(p)$.

We claim that $k_{B(0)}$ is not small in $\InfD (B(0)) \cong \D
(B(0))$, while $k_{B(p)}$ is small in $\InfD (B(p))$.  The Ext algebra
$\Ext_{B(0)}^*(k,k)$ is isomorphic to the associative algebra $k[u]
\otimes \Lambda (v)$ with $\deg u = (0,1)$ and $\deg v = (1,p)$.
In particular, this algebra is not finite-dimensional, so by Corollary
\ref{xxcor7.2}, $k_{B(0)}$ is not small in $\D (B(0))$.  In contrast,
by \cite[Proposition 12.6]{LP04}, the Koszul dual of $B(p)$ is
$A_{\infty}$-isomorphic to the associative algebra
$A(p)=k[x]/(x^{p})$, where $\deg x = (0,1)$.  Since $A(p)$ is
finite-dimensional, $k_{B(p)}$ is small in $\InfD (B(p))$.  This
verifies the claim.
\end{example}

\part{Applications in ring theory}

\section{The Artin-Schelter condition}
\label{xxsec9}

In this section we prove Corollaries \ref{xxcorD},
\ref{xxcorE} and \ref{xxcorF}.  We start by discussing Artin-Schelter
regularity, for both associative algebras and $A_\infty$-algebras.
The Eilenberg-Moore
spectral sequence is a useful tool for connecting results about
modules over $HA$ to modules over $A$, if $A$ is a DG algebra or an
$A_\infty$-algebra.  Then we discuss Frobenius algebras and prove
Corollary \ref{xxcorD}, and we discuss dualizing complexes and prove
Corollary \ref{xxcorE}.  At the end of the section, we prove
Corollary \ref{xxcorF}.

\subsection{Artin-Schelter regularity}
\label{xxsec9.1}

\begin{definition}
\label{xxdefn9.1}
Let $R$ be a connected graded algebra.
\begin{enumerate}
\item
$R$ is called \emph{Gorenstein} if $\injdim R_R=
\injdim {_RR}<\infty$.
\item
$R$ is called \emph{Artin-Schelter Gorenstein} if
$R$ is Gorenstein of injective dimension $d$ and
there is an integer $l$ such that
\[
\Ext^i_R(k,R)\cong \Ext^i_{R^{\op}}(k,R)\cong
\begin{cases} 0 & i\neq d,\\
\Sigma^l(k)& i=d.\end{cases}
\]
\item
$R$ is called \emph{Artin-Schelter regular} if
$R$ is Artin-Schelter Gorenstein and has global dimension $d$.
\end{enumerate}
\end{definition}

Artin-Schelter regular algebras have been used in many ways
in noncommutative algebraic geometry.

Now we consider analogues for $A_\infty$-algebras.  When $A$ is an
unbounded $A_\infty$-algebra, there is no good definition of global or
injective dimension, so we only consider a version of condition (b) in
Definition \ref{xxdefn9.1}.

\begin{definition}
\label{xxdefn9.2}
Let $A$ be an augmented $A_\infty$-algebra and let $k$ be the trivial
module.
\begin{enumerate}
\item
We say $A$ satisfies the \emph{right Artin-Schelter condition}
if there are integers $l$ and $d$ such that
\[
\Ext^i_A(k,A)\cong
\begin{cases} 0 & i\neq d,\\
\Sigma^l(k)& i=d.\end{cases}
\]
If further $k_A$ is small in $\InfD(A)$, then $A$ is called
\emph{right $A_\infty$-Artin-Schelter regular}, or just \emph{right
Artin-Schelter regular}, if the context is clear.
\item
We say $A$ satisfies the \emph{left Artin-Schelter condition}
if there are integers $l$ and $d$ such that
\[
\Ext^i_{A^{\op}}(k,A)\cong
\begin{cases} 0 & i\neq d,\\
\Sigma^l(k)& i=d.\end{cases}
\]
If further $_Ak$ is small in $\InfD(A^{\op})$, then $A$ is called
\emph{left $A_\infty$-Artin-Schelter regular} (or just \emph{left
Artin-Schelter regular}).
\item
We say $A$ satisfies the \emph{Artin-Schelter condition} if
the conditions in parts (a) and (b) hold for the same
pair of integers $(l,d)$. If further $k_A$ is small in $\InfD(A)$,
then $A$ is called \emph{$A_\infty$-Artin-Schelter regular} (or just
\emph{Artin-Schelter regular}).
\item
Finally, if $A$ is $A_\infty$-Artin-Schelter regular, we say that
$A$ is \emph{noetherian} if $\InfD_{\fg}(A) = \InfD_{\per}(A)$.
\end{enumerate}
\end{definition}

Suppose that $R$ is a connected graded algebra.  If $R$ is
Artin-Schelter regular as an associative algebra, then $R$ is
$A_\infty$-Artin-Schelter regular.  Conversely, if $R$ is
$A_{\infty}$-Artin-Schelter regular, then one can use Corollary
\ref{xxcor6.2} to show that $R$ is Artin-Schelter regular.

Also, if $R$ is a connected graded algebra, then
$\D_{\fg}(R)=\D_{\per}(R)$ if and only if $R$ is noetherian of finite
global dimension; this motivates our definition of ``noetherian'' for
$A_\infty$-Artin-Schelter regular algebras.

It is easy to see that $A$ satisfies the left
$A_\infty$-Artin-Schelter condition if and only if $A^{\op}$
satisfies the right $A_\infty$-Artin-Schelter condition. We
conjecture that the left $A_\infty$-Artin-Schelter condition is
equivalent to the right $A_\infty$-Artin-Schelter condition.  We verify
this, with some connectedness and finiteness assumptions, in Theorems
\ref{xxthm9.8} and \ref{xxthm9.11}.

\begin{proposition}
\label{xxprop9.3}
Let $A$ be an augmented $A_\infty$-algebra and let
$E=\koszul{A}$ be the Koszul dual of $A$.
\begin{enumerate}
\item
Assume that
$A$ is strongly locally finite. Then $A$ satisfies the right
Artin-Schelter condition if and only if $E$ satisfies
the left Artin-Schelter condition.
\item
Assume that $E$ is strongly locally finite.
Then $A$ satisfies the left
Artin-Schelter condition if and only if $E$ satisfies
the right Artin-Schelter condition.
\end{enumerate}
\end{proposition}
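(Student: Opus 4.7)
The plan is to derive both parts of the proposition from the bidegree-level Koszul duality recorded in Corollary~\ref{xxcor5.10}, namely the natural isomorphism
\[
H^i_j(\RHom_A(k,A))\cong H^{-i}_{-j}(\RHom_{E^{\op}}(k,E))
\]
valid whenever $A$ is strongly locally finite (the hypothesis needed for Theorem~\ref{xxthm5.8}).

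For part (a), I would decode the right Artin-Schelter condition bidegree-by-bidegree. With the Adams-shift convention $(\Sigma^l k)^0_{-l}=k$, saying that $\Ext^d_A(k,A)\cong \Sigma^l(k)$ and $\Ext^i_A(k,A)=0$ for $i\ne d$ is equivalent to $H^i_j(\RHom_A(k,A))$ being one-dimensional in bidegree $(d,-l)$ and zero elsewhere. Applying the displayed isomorphism bidegree-wise, this is equivalent to $H^{-i}_{-j}(\RHom_{E^{\op}}(k,E))$ being one-dimensional at bidegree $(-d,l)$ and zero elsewhere, i.e., $\Ext^{-d}_{E^{\op}}(k,E)\cong \Sigma^{-l}(k)$ and all other $\Ext^{*}_{E^{\op}}(k,E)$ vanishing. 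That is precisely the left Artin-Schelter condition for $E$, with parameters $(l',d')=(-l,-d)$, proving (a).

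For part (b), I would reduce to (a) by symmetry and Koszul biduality. Since $E$ is strongly locally finite it is in particular locally finite, so Theorem~\ref{xxthm2.4} provides an $A_\infty$-quasi-isomorphism $A\simeq \koszul{E}$; moreover, $\koszul{E}$ is strongly locally finite by Lemma~\ref{xxlem2.2}(c). Applying part (a) to the strongly locally finite algebra $E$, whose Koszul dual is $\koszul{E}\simeq A$, yields: $E$ satisfies the right Artin-Schelter condition if and only if $\koszul{E}$, and hence $A$, satisfies the left Artin-Schelter condition. The invariance of the Artin-Schelter condition under $A_\infty$-quasi-isomorphism follows from Proposition~\ref{xxprop3.1}, since the condition is phrased purely via Ext-groups computed in the derived category $\InfD$.

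The only real obstacle is the careful bookkeeping of signs and Adams-shift indices: one must verify that the bidegree inversion $(i,j)\leftrightarrow(-i,-j)$ coming from Corollary~\ref{xxcor5.10}, combined with the conventions $(\Sigma V)^i_j=V^i_{j+1}$ and $\Ext^i_A(M,N)=\bigoplus_j \InfD(A)(M,S^i\Sigma^j N)$, produces exactly the parameter swap $(d,l)\leftrightarrow(-d,-l)$ on the Artin-Schelter side. Beyond this bookkeeping, no homological input past Corollary~\ref{xxcor5.10}, Theorem~\ref{xxthm2.4}, Lemma~\ref{xxlem2.2}, and Proposition~\ref{xxprop3.1} should be required.
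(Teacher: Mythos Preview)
Your proposal is correct and follows essentially the same approach as the paper's proof: part~(a) is exactly the content of Corollary~\ref{xxcor5.10}, and part~(b) is obtained by applying part~(a) to $E$ and then invoking the quasi-isomorphism $A\simeq\koszul{\koszul{A}}$ from Theorem~\ref{xxthm2.4}. Your bidegree bookkeeping is accurate, and the only minor excess is the remark that $\koszul{E}$ is strongly locally finite, which is not actually needed---part~(a) applied to $E$ requires only that $E$ itself be strongly locally finite.
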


\begin{proof} (a) By Corollary \ref{xxcor5.10} $A$ satisfies
the right Artin-Schelter condition if and only if $E$
satisfies the left Artin-Schelter condition.

(b) Switch $A$ and $E$ and use the fact that $A \to \koszul{\koszul{A}}$
is a quasi-isomorphism: the assertion follows from part (a).
\end{proof}

\subsection{The Eilenberg-Moore spectral sequence}

In this subsection we recall the Eilenberg-Moore spectral sequence
\cite[Theorem III.4.7]{KM} and \cite[p. 280]{FHT01}.  This helps to
translate homological results for modules over $HA$ to similar
results for modules over $A$.

Suppose that $A$ is a DG algebra and that $M$ and $N$ are right DG
$A$-modules.  Because of the $\Z \times \Z$-grading on $A$, $M$, and
$N$, $\Ext_{HA}^*(HM,HN)$ is $\Z^{3}$-graded, and we incorporate the
gradings into the notation as follows:
\[
\Ext^p_{HA}(HM,HN)^{q}_{s}.
\]
On the other hand, $\Ext_A(M,N)$ is $\Z^{2}$-graded, since it is defined to
be the homology of $\RHom_A(M,N)$.  That is,
\[
\Ext^{i}_A(M,N)_{j} := \D (A) (M, S^{i} \Sigma^{j} N) \cong
H(\RHom_A(M, S^{i} \Sigma^{j} N)).
\]
Because of this, each $E_{r}$-page of the Eilenberg-Moore spectral
sequence is $\Z^{3}$-graded, while the abutment is $\Z^{2}$-graded.

\begin{theorem}
\label{xxthm9.4}
\cite[Theorem III.4.7]{KM}
Let $A$ be a DG algebra, and let $M$ and $N$ be right DG $A$-modules.
Then there is a spectral sequence of the form
\[
(E_2)^{p,q}_s \cong \Ext^p_{HA}(HM,HN)^q_s\Rightarrow \Ext^{p+q}_A(M,N)_s,
\]
natural in $M$ and $N$.  All differentials preserve the lower (Adams)
grading $s$.
\end{theorem}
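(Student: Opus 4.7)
The plan is to realize the spectral sequence as the one associated to a natural filtration on a functorial semifree resolution of $M$ over $A$. The cleanest model is the two-sided bar resolution $P = B(M;A;A)$, together with its standard increasing filtration $\{P_u\}_{u\geq 0}$ by bar length, where $P_u$ consists of the tensors with at most $u$ bar factors from the augmentation ideal. Each quotient $P_u/P_{u-1}$ is free as a DG $A$-module on a basis of cycles, and after passing to the associated graded (where the $m_2$-piece of the differential coming from $A$ itself is killed), the sequence $\{P_u/P_{u-1}\}$ reduces to the normalized bar resolution of $HM$ by free $HA$-modules. The entire construction is compatible with the first grading (call it $q$) and with the Adams grading $s$, since no step mixes Adams degrees.

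Next I would apply $\Hom_A(-, N)$, obtaining a decreasingly filtered cochain complex $C = \Hom_A(P,N)$ with $F^p C = \Hom_A(P/P_{p-1}, N)$, whose total cohomology is $H\RHom_A(M,N) = \Ext_A^{*}(M,N)_{*}$. Since $P_p/P_{p-1}$ is free over $A$, the associated graded $\mathrm{gr}^p C$ is computed by a $\Hom_k$-complex on the generating basis, and taking vertical homology identifies the $E_1$ page with the $\Hom_{HA}$-complex of a free resolution of $HM$. Consequently
\[
E_2^{p,q}{}_s \cong \Ext^p_{HA}(HM, HN)^q_s,
\]
with all differentials $d_r$ preserving the Adams grading $s$, since the resolution, filtration and differential are all internally $s$-graded. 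Naturality in $M$ and $N$ is immediate from the functoriality of $B(-;A;A)$ in $M$ and of $\Hom_A(-,N)$ in $N$.

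Convergence to $\Ext^{p+q}_A(M,N)_s$ is then the usual convergence theorem for a half-plane filtered cochain complex whose filtration is exhaustive and Hausdorff. The principal obstacle, and the only non-formal point, is precisely this convergence: in full generality (unbounded $M$ or $N$) one must check that the associated $\varprojlim{}^1$ terms vanish and that the filtration is complete in the relevant sense. In the settings used in this paper the local finiteness and Adams-connectedness hypotheses make this automatic, but since the statement here is the classical Eilenberg-Moore spectral sequence, I would simply invoke the treatments in \cite{KM} or \cite[p.~280]{FHT01} to conclude, rather than reprove convergence from scratch.
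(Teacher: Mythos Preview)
The paper does not prove this theorem at all: it is stated with a citation to \cite[Theorem III.4.7]{KM} and then only commented upon (shape of the differentials, conditions for strong convergence). So there is nothing in the paper to compare your argument against.

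That said, your sketch is essentially the standard construction and is correct in outline: filter a functorial semifree (bar) resolution of $M$ by bar length, apply $\Hom_A(-,N)$, and identify the $E_2$-page via the free $HA$-resolution of $HM$ that appears on the associated graded. Your closing sentence, deferring the convergence analysis to \cite{KM} and \cite[p.~280]{FHT01}, is exactly what the paper itself does by citing the result outright. One small quibble: you write that passing to the associated graded ``kills the $m_2$-piece of the differential coming from $A$ itself''; more precisely, the bar-length filtration kills the part of the differential that lengthens bar words, and what survives on each $P_u/P_{u-1}$ is the internal differential of $M\otimes (SI)^{\otimes u}\otimes A$, whose homology gives $HM\otimes (S\,H\!I)^{\otimes u}\otimes HA$ by the K\"unneth theorem. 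The $E_1$-differential is then the bar differential for $HM$ over $HA$, yielding the claimed $E_2$-identification. This is a wording issue rather than a mathematical gap.
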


This is a spectral sequence of cohomological type, with
differential in the $E_r$-page as follows:
\[
d_r: (E_r)^{p,q}_s \to (E_r)^{p+r, q-r+1}_s.
\]
Ignoring the Adams grading,
the $E_2$-term is concentrated in the right
half-plane (i.e., $p \geq 0$), and it converges strongly if
for each $(p,q)$, $d_{r}:E_{r}^{p,q} \rightarrow E_{r}^{p+r,q-r+1}$ is
nonzero for only finitely many values of $r$.

There is also a Tor version of this spectral sequence, which we do not
use.  See \cite[Theorem III.4.7]{KM} and \cite[p. 280]{FHT01} for more
details.

Note that the above theorem also holds for $A_\infty$-algebras;
see \cite[Theorem V.7.3]{KM}. Another way of obtaining an
$A_\infty$-version of this spectral sequence is
to use the derived equivalence between $\InfD(A)$ and $\D(\env{A})$.

\begin{corollary}
\label{xxcor9.5}
Let $A$ be an $A_\infty$-algebra. If $HA$ satisfies the
left (respectively, right) Artin-Schelter condition, then so does $A$.
\end{corollary}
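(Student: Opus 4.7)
The plan is to apply the Eilenberg--Moore spectral sequence of Theorem~\ref{xxthm9.4} with $M = k$ and $N = A$. Although Theorem~\ref{xxthm9.4} is phrased for DG algebras, the remarks following its statement observe that it applies equally in the $A_\infty$ setting (or one may first replace $A$ by its enveloping DG algebra $\env{A}$ using Propositions~\ref{xxprop1.14} and \ref{xxprop3.1}). This yields a spectral sequence
\[
E_2^{p,q}{}_s = \Ext^p_{HA}(k, HA)^q{}_s \Longrightarrow \Ext^{p+q}_A(k, A)_s.
\]

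By the hypothesis that $HA$ satisfies the right Artin--Schelter condition, there exist integers $d$ and $l$ such that $\Ext^p_{HA}(k, HA)$ vanishes for $p \neq d$, while $\Ext^d_{HA}(k, HA) \cong \Sigma^l k$, a one-dimensional vector space sitting in bidegree $(q, s) = (0, -l)$. Consequently the $E_2$-page is concentrated in the single tridegree $(p, q, s) = (d, 0, -l)$, where it equals $k$. Since the differential $d_r$ has bidegree $(r, -r+1)$ in $(p, q)$, every differential out of $(d, 0)$ lands in a column with $p = d + r > d$ and every differential into $(d, 0)$ would come from a column with $p = d - r < d$; both source and target regions are zero. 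Hence $E_\infty = E_2$, and strong convergence is automatic because the $E_\infty$-page is supported at a single tridegree, so there are no nontrivial extension problems on the abutment.

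Reading off the abutment, $\Ext^n_A(k, A)_s$ is one-dimensional for $(n, s) = (d, -l)$ and zero otherwise, i.e., $\Ext^n_A(k, A) = 0$ for $n \neq d$ and $\Ext^d_A(k, A) \cong \Sigma^l k$. This is exactly the right Artin--Schelter condition for $A$ with parameters $(l, d)$. The left case follows by applying the same argument to $A^{\op}$, noting that $H(A^{\op}) = (HA)^{\op}$ and that the left condition for $A$ is by definition the right condition for $A^{\op}$. No substantive obstacle arises: the entire argument hinges on the concentration of the $E_2$-page forced by the Artin--Schelter hypothesis, which leaves no room for either differentials or filtration extensions.
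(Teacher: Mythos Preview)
Your proof is correct and follows essentially the same approach as the paper. Both arguments apply the Eilenberg--Moore spectral sequence of Theorem~\ref{xxthm9.4} with $M=k$ and $N=A$, observe that the Artin--Schelter hypothesis forces the $E_2$-page to be one-dimensional, and conclude that the abutment $\Ext^{*}_A(k,A)$ is likewise one-dimensional; you simply supply more detail on the degeneration and convergence than the paper's terse version.
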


\begin{proof} First of all we may assume that $A$ is a DG algebra.
Let $M=k$ and $N=A$. Note that $Hk=k$. Since $HA$ satisfies
the left Artin-Schelter condition, $\bigoplus_{p,q}
\Ext^p_{HA}(HM,HN)^q$ is 1-dimensional. By Theorem
\ref{xxthm9.4}, $\bigoplus_{n}\Ext_A^n(k,A)$ is 1-dimensional.
Therefore $A$ satisfies the left Artin-Schelter condition.
\end{proof}

One naive question is if the converse of Corollary \ref{xxcor9.5}
holds.

\subsection{Frobenius $A_\infty$-algebras}
\label{xxsec9.3}

In this subsection we define Frobenius DG algebras and
Frobenius $A_\infty$-algebras and then prove Corollary
\ref{xxcorD}.

\begin{definition}
\label{xxdefn9.6}
An augmented DG algebra $A$ is called \emph{left Frobenius}
(respectively, \emph{right Frobenius}) if
\begin{enumerate}
\item
$HA$ is finite-dimensional, and
\item
there is a
quasi-isomorphism of left (respectively, right) DG $A$-modules
$\alpha: S^l\Sigma^d(A) \to A^{\dual}$ for some integers $l$ and
$d$.
\end{enumerate}
An augmented DG algebra $A$ is called \emph{Frobenius}
if it is both left and right Frobenius.
\end{definition}

\begin{lemma}
\label{xxlem9.7}
Suppose $A$ is a DG algebra such that there is a
quasi-isomorphism of left DG $A$-modules
$\alpha: S^l\Sigma^d(A) \to A^{\dual}$ for some integers $l$ and
$d$.
\begin{enumerate}
\item
There is a
quasi-isomorphism of a right DG $A$-modules
$\beta: S^l\Sigma^d(A) \to A^{\dual}$ for the same integers $l$ and
$d$.
\item
If $A$ is connected graded with respect to some grading which is
compatible with the $\Z^{2}$-grading -- see below -- then $HA$ is
finite-dimensional.
\item
If $HA$ is finite-dimensional, then $HA$ is Frobenius as an
associative algebra.
\item
$A$ satisfies the left Artin-Schelter condition and
$\RHom_A(k,A)$ is quasi-isomorphic to $S^{-l}\Sigma^{-d}(k)$.
\end{enumerate}
\end{lemma}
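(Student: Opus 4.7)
The common starting point is to apply homology to $\alpha$, yielding an isomorphism of bigraded vector spaces $HA \cong S^{-l}\Sigma^{-d}(HA)^{\dual}$. For part (a), I plan to first deduce that this forces $HA$ to be locally finite: bidegree-by-bidegree the isomorphism identifies $(HA)^{i}_{j}$ with the $k$-linear dual of another bihomogeneous piece of $HA$, and iterating the identification produces an abstract vector-space isomorphism $(HA)^{i}_{j} \cong ((HA)^{i}_{j})^{\dual\dual}$, which by a cardinality count is possible only when $(HA)^{i}_{j}$ is finite-dimensional. Once $HA$ is locally finite, the canonical evaluation $\eta\colon A \to (A^{\dual})^{\dual}$ is a quasi-isomorphism of DG $A$-bimodules. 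Because $k$-linear duality is exact on bigraded chain complexes and turns left $A$-module maps into right $A$-module maps, $\alpha^{\dual}\colon (A^{\dual})^{\dual} \to S^{-l}\Sigma^{-d}(A^{\dual})$ is a quasi-isomorphism of right DG $A$-modules, and the shift by $S^{l}\Sigma^{d}$ of $\alpha^{\dual}\circ\eta$ produces the desired right-side quasi-isomorphism $\beta$.

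For part (b) I would use that local finiteness of $HA$ is already secured by (a). The connected graded hypothesis supplies an auxiliary one-parameter grading $\phi$ on $A$ compatible with the bigrading, in which $HA$ is bounded on one side, and the graded vector space iso $HA \cong S^{-l}\Sigma^{-d}(HA)^{\dual}$ then bounds $HA$ on the other side as well; compatibility of $\phi$ with the bigrading, combined with local finiteness, collapses this to finite-dimensionality. Part (c) then follows immediately, since $H\alpha$ is an isomorphism of left $HA$-modules $S^{l}\Sigma^{d}(HA) \cong (HA)^{\dual}$ and the right-module version of the same iso is provided by $\beta$ from (a); for a finite-dimensional bigraded associative algebra this is precisely the Frobenius condition.

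For part (d), $\alpha$ gives an isomorphism $A \cong S^{-l}\Sigma^{-d}(A^{\dual})$ in the derived category of left DG $A$-modules, whence $\RHom_{A}({}_{A}k,{}_{A}A) \cong S^{-l}\Sigma^{-d}\RHom_{A}({}_{A}k,{}_{A}A^{\dual})$. Writing ${}_{A}A^{\dual} = \Hom_{k}(A_{A},k)$ and feeding a semifree resolution $P \to {}_{A}k$ through the derived $\otimes$-$\Hom$ adjointness of Lemma~\ref{xxlem4.5} identifies $\RHom_{A}(k,A^{\dual})$ with $\Hom_{k}(A\ltensor{A}P,k) \simeq \Hom_{k}(k,k) = k$, so $\RHom_{A}(k,A) \cong S^{-l}\Sigma^{-d}k$, which is exactly the left Artin-Schelter condition with AS dimension $l$ and Adams shift $-d$. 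The main obstacle in the whole argument is the local finiteness step in (a): one has to extract a purely graded vector-space fact from an a priori module-theoretic iso, and it is the double-dual cardinality argument that makes this possible without any prior finiteness hypothesis on $A$.
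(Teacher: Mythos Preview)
Your argument is correct and tracks the paper's proof closely. The paper's one-line proof of (a), ``Let $\beta = S^{l}\Sigma^{d}(\alpha^{\dual})$,'' silently uses the evaluation map $A \to A^{\dual\dual}$ and hence local finiteness of $HA$; you make this explicit and justify it via the double-dual cardinality argument, which is exactly what is needed (the paper asserts local finiteness in the proof of (b) without spelling out why the duality relation forces it). For (d) the paper takes the slightly more direct route of observing that $A^{\dual}$ is $K$-injective as a left DG $A$-module, so $\alpha$ itself furnishes a $K$-injective resolution of ${}_{A}A$ and one reads off $\RHom_{A}(k,A) \cong \Hom_{A}(k, S^{-l}\Sigma^{-d}A^{\dual}) \cong S^{-l}\Sigma^{-d}k$ immediately; your version via a semifree resolution of $k$ and $\otimes$-$\Hom$ adjunction is the projective-side mirror of the same computation and is equally valid. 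Parts (b) and (c) match the paper essentially verbatim.
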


The compatibility requirement for the grading in part (b) means
that there should be numbers $a$ and $b$ so that the $n$th graded
piece is equal to $\bigoplus_{ai+bj=n} A^i_j$.

\begin{proof} (a)
Let $\beta = S^{l}\Sigma^{d} (\alpha^{\dual})$.

(b) Since $S^l\Sigma^d(A)\to A^{\dual}$ is an quasi-isomorphism,
\[
(HA)^{n-l}_{m-d}\cong (HA^{\dual})^{-n}_{-m}\cong
((HA)^{-n}_{-m})^{\dual}
\]
for all $n,m$. This implies that $HA$ is locally finite.
If $A$ is connected graded with respect to some compatible grading,
then so is $HA$.  Then the above formula implies that $HA$ is
finite-dimensional.

(c) The quasi-isomorphism $\alpha$
gives rise to an isomorphism $H(\alpha): S^l\Sigma^d(HA)\to
(HA)^{\dual}$. If $HA$ is finite-dimensional, then $HA$ is
Frobenius.

(d) Since $A\to S^{-l}\Sigma^{-d}A^{\dual}$ is a
$K$-injective resolution of $A$, we can compute
$\RHom_A(k,A)$ by $\Hom_A(k,S^{-l}\Sigma^{-d}(A^{\dual}))$, which
is $S^{-l}\Sigma^{-d}(k)$.
\end{proof}

By Lemma \ref{xxlem9.7} above, $A$ is left Frobenius if and only
if it is right Frobenius. So we can omit both ``left'' and
``right'' before Frobenius. It is therefore easy to see that $A$ is Frobenius
if and only if $A^{\op}$ is. We show that the Frobenius
property is a homological property.

\begin{theorem}
\label{xxthm9.8}
Let $A$ be an Adams connected DG algebra such that $HA$ is
finite-dimensional. Then the following are equivalent.
\begin{enumerate}
\item
$A$ is Frobenius.
\item
$HA$ is Frobenius.
\item
$A$ satisfies the left Artin-Schelter condition.
\item
$A$ satisfies the right Artin-Schelter condition.
\end{enumerate}
\end{theorem}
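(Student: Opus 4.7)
The strategy is to first invoke Lemma~\ref{xxlem7.1}(b) to replace $A$ by a quasi-isomorphic finite-dimensional Adams connected DG algebra; since all four conditions are quasi-isomorphism invariants, we may assume $A$ itself is finite-dimensional. The easy implications (a)$\Rightarrow$(b), (a)$\Rightarrow$(c) are Lemma~\ref{xxlem9.7}(c,d), and (a)$\Rightarrow$(d) follows by applying Lemma~\ref{xxlem9.7}(d) to $A^{\op}$ (using that $A$ is Frobenius iff $A^{\op}$ is). It remains to prove (b)$\Rightarrow$(a), (c)$\Rightarrow$(a), and (d)$\Rightarrow$(a); the last reduces to the second by working with $A^{\op}$ and noting that $A$ satisfies the right Artin-Schelter condition iff $A^{\op}$ satisfies the left one.

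For (b)$\Rightarrow$(a), the plan is to lift the classical Frobenius isomorphism to the DG level. Given $HA$ Frobenius, fix an isomorphism of right $HA$-modules $\alpha_0 : S^l \Sigma^d HA \xrightarrow{\cong} (HA)^{\dual}$ for appropriate integers $(l,d)$. Since $A$ is finite-dimensional, the natural map gives an isomorphism $H(A^{\dual}) \cong (HA)^{\dual}$ of right $HA$-modules, so $\alpha_0$ corresponds to a cohomology class $[\phi]\in H^{-l}_{-d}(A^{\dual})$ that generates $H(A^{\dual})$ as a right $HA$-module. Choose a cocycle representative $\phi \in (A^{\dual})^{-l}_{-d}$ and define a right DG $A$-module map
\[
\psi: S^l \Sigma^d A \longrightarrow A^{\dual}, \qquad \psi(a) = \phi \cdot a.
\]
Then $H(\psi) = \alpha_0$ is an isomorphism, so $\psi$ is a quasi-isomorphism; hence $A$ is right Frobenius, and Lemma~\ref{xxlem9.7}(a) (applied to $A^{\op}$) upgrades this to Frobenius.

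For (c)$\Rightarrow$(a), the plan is to pass through $\Tor^A_*(k, A^{\dual})$ via a duality trick. Let $P \to k_A$ be a minimal semifree resolution of the trivial right DG $A$-module (Theorem~\ref{xxthm6.1}), so each bidegree piece $P^i_j$ is finite-dimensional. The hypothesis together with shift-tracking along the lines of Lemma~\ref{xxlem9.7}(d) gives $\Hom_A(P, A) \simeq S^{-d_0}\Sigma^{-l_0} k$ for some integers $(l_0, d_0)$. Because $A$ and all bidegrees of $P$ are finite-dimensional, the natural map
\[
P \otimes_A A^{\dual} \longrightarrow \Hom_A(P, A)^{\dual}, \qquad p\otimes f \longmapsto \bigl(g \mapsto f(g(p))\bigr)
\]
is an isomorphism of DG left $A$-modules (check on free modules $V \otimes A$ with $V$ locally finite, then extend via the semifree filtration). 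Since $k$-duality commutes with homology on bidegreewise finite-dimensional complexes, taking $H$ yields
\[
\Tor^A_{\ast}(k, A^{\dual}) = H(P \otimes_A A^{\dual}) \cong \bigl(S^{-d_0}\Sigma^{-l_0} k\bigr)^{\dual} \cong S^{d_0}\Sigma^{l_0} k,
\]
which is one-dimensional, concentrated in a single bidegree. Now take a minimal semifree resolution $Q \to {}_A A^{\dual}$ of $A^{\dual}$ as a \emph{left} DG $A$-module (Theorem~\ref{xxthm6.1}). By minimality, the differential on $k \otimes_A Q = Q/\fm Q$ is zero, so the semifree basis of $Q$ equals $\Tor^A_{\ast}(k, A^{\dual})$ and is therefore one-dimensional. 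Hence $Q \cong S^{d_0}\Sigma^{l_0} A$, yielding a quasi-isomorphism ${}_A S^{d_0}\Sigma^{l_0} A \xrightarrow{\simeq} {}_A A^{\dual}$; this is left Frobenius, and a final application of Lemma~\ref{xxlem9.7}(a) gives Frobenius.

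The main obstacle is the technical verification of the natural duality $P \otimes_A A^{\dual} \cong \Hom_A(P, A)^{\dual}$ of DG left $A$-modules for semifree $P$ with Adams-locally-finite structure, together with keeping the shifts straight; this reduces to the free-module case and then a bidegreewise colimit argument where $k$-duality is exact because every relevant bidegree is finite-dimensional.
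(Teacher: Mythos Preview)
Your overall architecture matches the paper's: reduce to finite-dimensional $A$ via Lemma~\ref{xxlem7.1}(b), prove (a)$\Leftrightarrow$(b) by lifting a cocycle, and handle (c), (d) by resolving $A^{\dual}$ minimally. But there is a genuine left/right slip in your ``(c)$\Rightarrow$(a)'' step that makes the argument as written circular.

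The left Artin--Schelter condition (c) controls $\RHom_{A^{\op}}({}_Ak,{}_AA)$, while the right condition (d) controls $\RHom_A(k_A,A_A)$. In your argument you take $P\to k_A$ a semifree resolution of the \emph{right} trivial module and then assert that ``the hypothesis'' gives $\Hom_A(P,A)\simeq S^{-d_0}\Sigma^{-l_0}k$. But $\Hom_A(P,A)$ computes $\RHom_A(k_A,A)$, which is condition (d), not (c). So the block you have labeled ``(c)$\Rightarrow$(a)'' is in fact a proof of (d)$\Rightarrow$(a). Since you separately claim to deduce (d)$\Rightarrow$(a) from (c)$\Rightarrow$(a) by passing to $A^{\op}$, the two pieces collapse into one and (c)$\Rightarrow$(a) is never actually established. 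The fix is trivial: swap the labels, proving (d)$\Rightarrow$(a) directly by your $\Tor$ argument and then obtaining (c)$\Rightarrow$(a) by applying the same to $A^{\op}$. A smaller slip in the same block: $P\otimes_A A^{\dual}$ and $\Hom_A(P,A)^{\dual}$ are \emph{right} $A$-modules, not left (though only the isomorphism of complexes is needed there).

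Once relabeled, your route and the paper's are close cousins. The paper goes from (d) to $\RHom_A(A^{\dual},k)\simeq S^{-l}\Sigma^{-d}k$ by vector-space duality and then reads off the one-dimensional semifree basis of a minimal resolution of $A^{\dual}$ directly from $\Ext$. You instead pass through the duality $P\otimes_A A^{\dual}\cong \Hom_A(P,A)^{\dual}$ to get $\Tor^A_{\ast}(k,A^{\dual})$ one-dimensional and then resolve ${}_AA^{\dual}$. Both arrive at the same conclusion; yours makes the $\Tor$--$\Ext$ duality explicit at the cost of the extra verification you flag as the ``main obstacle,'' while the paper's is slightly shorter.
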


\begin{proof} By Lemma \ref{xxlem7.1}, we may assume that
$A$ is finite-dimensional.

(a) $\Rightarrow$ (b): Use Lemma \ref{xxlem9.7}(c).

(b) $\Rightarrow$ (a): Since $HA$ is Frobenius,
there is an isomorphism of right $HA$-modules
$f: S^l\Sigma^d(HA)\to HA^{\dual}$. Pick $x\in ZA^{\dual}$ so
that the class of $x$ generates a submodule of $HA^{\dual}$
that is isomorphic to $S^l\Sigma^d(HA)$. Hence
the map $a\to xa$ is a quasi-isomorphism
$A\to A^{\dual}$.

(a) $\Rightarrow$ (d): By vector space duality,
\[
\RHom_{A^{\op}}(k,A)\cong \RHom_A(A^{\#},k)
\cong \RHom_A(S^l\Sigma^d(A),k)=S^{-l}\Sigma^{-d}(k).
\]
Hence $A$ satisfies the right Artin-Schelter condition.

(d) $\Rightarrow$ (a): Suppose $\RHom_{A^{\op}}(k,A)
\cong S^{-l}\Sigma^{-d}(k)$ by the right Artin-Schelter condition.
Since $HA$ is locally finite, by vector
space duality, we obtain that
$\RHom_A(A^{\#},k)$ is quasi-isomorphic to $S^{-l}\Sigma^{-d}(k)$.
By Theorem \ref{xxthm6.1} $A^{\dual}$ has a minimal
semifree resolution, say $P\to A^{\dual}$. Since $P$
is minimal, it has a semifree basis equal to
$(\bigoplus_{i}\Ext^i_A(A^{\#},k))^{\#}=\Sigma^d(k)$.
Hence $P\cong S^l\Sigma^d(A)$ for some $l$ and $d$.
Thus $S^l\Sigma^d(A)\to A^{\dual}$ is a quasi-isomorphism
and $A$ is left Frobenius. By Lemma \ref{xxlem9.7},
$A$ is Frobenius.

Thus we have proved that (a), (b) and (d) are equivalent.
By left-right symmetry, (a), (b) and (c) are equivalent.
\end{proof}

We obtain an immediate consequence.

\begin{corollary}
\label{xxcor9.9}
Let $f:A\to B$ be a quasi-isomorphism of Adams connected DG
algebras. Assume that $HA \cong HB$ is finite-dimensional.
Then $A$ is Frobenius if and only if $B$ is.
\end{corollary}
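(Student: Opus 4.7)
The plan is to reduce the statement to the intrinsic characterization of the Frobenius property given in Theorem \ref{xxthm9.8}, thereby transferring the problem from the DG level (where quasi-isomorphisms behave subtly) to the associative-algebra level (where they become honest isomorphisms). The hypotheses of Theorem \ref{xxthm9.8} are satisfied on both sides: $A$ and $B$ are Adams connected DG algebras, and $HA \cong HB$ is finite-dimensional by assumption.

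First I would invoke Theorem \ref{xxthm9.8}(a)$\Leftrightarrow$(b) to replace ``$A$ is Frobenius'' by ``$HA$ is Frobenius as a graded associative algebra,'' and likewise for $B$. Next, since $f: A \rightarrow B$ is a quasi-isomorphism of DG algebras, the induced map $Hf: HA \rightarrow HB$ is an isomorphism of graded associative algebras. In particular, $HA$ is Frobenius if and only if $HB$ is Frobenius, since the Frobenius condition for a finite-dimensional graded associative algebra is preserved under isomorphism: explicitly, a graded isomorphism $\varphi: HA \xrightarrow{\sim} HB$ transports any right $HA$-module isomorphism $S^l \Sigma^d(HA) \xrightarrow{\sim} (HA)^\dual$ to a corresponding right $HB$-module isomorphism $S^l \Sigma^d (HB) \xrightarrow{\sim} (HB)^\dual$ via $\varphi$ and its dual.

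Combining these two steps gives the chain of equivalences
\[
A \text{ is Frobenius} \iff HA \text{ is Frobenius} \iff HB \text{ is Frobenius} \iff B \text{ is Frobenius},
\]
which is the desired conclusion. There is essentially no obstacle here: the entire content of the corollary has been absorbed into Theorem \ref{xxthm9.8}, and the only remaining task is the trivial observation that $Hf$ is an isomorphism of graded algebras, so the corollary is a one-line consequence once Theorem \ref{xxthm9.8} is in hand.
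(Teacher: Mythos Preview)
Your proposal is correct and matches the paper's approach exactly: the paper states Corollary~\ref{xxcor9.9} as an ``immediate consequence'' of Theorem~\ref{xxthm9.8} with no further proof, and your reduction via (a)$\Leftrightarrow$(b) of that theorem together with the observation that $Hf$ is a graded algebra isomorphism is precisely what is intended.
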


Since finite-dimensional Hopf algebras are Frobenius,
every finite-dimensional DG Hopf algebra is Frobenius.

Suggested by the DG case, we make the following definition.

\begin{definition}
\label{xxdefn9.10}
An $A_\infty$-algebra is called \emph{Frobenius} if $HA$ is
finite-dimensional and there is a quasi-isomorphism of right
$A_\infty$-modules $S^l\Sigma^d(A) \to A^{\dual}$ for some $l$
and $d$.
\end{definition}

The following is similar to Theorem \ref{xxthm9.8} and
the proof is omitted.

\begin{theorem}
\label{xxthm9.11}
Let $A$ be an Adams connected $A_\infty$-algebra such that
$HA$ is finite-dimensional. The following are equivalent.
\begin{enumerate}
\item
$A$ is Frobenius.
\item
$HA$ is Frobenius.
\item
$A$ satisfies the left Artin-Schelter condition.
\item
$A$ satisfies the right Artin-Schelter condition.
\end{enumerate}
\end{theorem}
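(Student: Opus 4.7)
The plan is to reduce the $A_\infty$ statement to the DG version established in Theorem \ref{xxthm9.8}. By Lemma \ref{xxlem7.1}(b), since $A$ is Adams connected with $HA$ finite-dimensional, there is an $A_\infty$-quasi-isomorphism $f: A \to B$ for some finite-dimensional Adams connected DG algebra $B$. Since $HB \cong HA$, Theorem \ref{xxthm9.8} yields the equivalence of the four conditions for $B$, so it suffices to verify that each of (a)--(d) transfers between $A$ and $B$ along $f$.

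Condition (b) is trivial since $HA \cong HB$ as graded associative algebras. For (c) and (d), I would invoke Proposition \ref{xxprop3.1}: the quasi-isomorphism $f$ induces a triangulated equivalence $f^{*}: \InfD(B) \xrightarrow{\sim} \InfD(A)$ carrying $B_B$ to $A_A$ and $k_B$ to $k_A$, with an analogous equivalence on the opposite side. Since the bigraded Ext groups appearing in the Artin-Schelter condition are defined as Hom sets in $\InfD$ after applying the shifts $S$ and $\Sigma$, the equivalence $f^{*}$ preserves them, so the Artin-Schelter condition for $A$ matches that for $B$, on each side.

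For (a), I would observe that by Proposition \ref{xxprop3.1} a quasi-isomorphism of right $A_\infty$-modules $S^{l}\Sigma^{d} A \to A^{\dual}$ exists if and only if $A^{\dual} \cong S^{l}\Sigma^{d} A$ in $\InfD(A)$, and likewise for $B$. The map $f$ is a quasi-isomorphism of $(A,A)$-bimodules (with $B$ an $A$-bimodule via $f$), so $k$-linear dualization gives a bimodule quasi-isomorphism $f^{\dual}: B^{\dual} \to A^{\dual}$, whence $f^{*}(B^{\dual}) \cong A^{\dual}$ in $\InfD(A)$. Combined with $f^{*}(B_B) \cong A_A$, this shows that an isomorphism $B^{\dual} \cong S^{l}\Sigma^{d} B$ in $\InfD(B)$ corresponds to an isomorphism $A^{\dual} \cong S^{l}\Sigma^{d} A$ in $\InfD(A)$, so (a) transfers as well. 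The main obstacle will be this identification $f^{*}(B^{\dual}) \cong A^{\dual}$ in $\InfD(A)$, which relies on $A^{\dual}$ inheriting its right $A_\infty$-module structure from the $(A,A)$-bimodule structure on $A$; once it is in place, the theorem is an immediate consequence of Theorem \ref{xxthm9.8}.
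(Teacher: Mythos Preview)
Your reduction to the DG case via Lemma~\ref{xxlem7.1}(b) is correct and is almost certainly the intended argument: the paper omits the proof as ``similar to Theorem~\ref{xxthm9.8}'', and the first line of that proof already invokes Lemma~\ref{xxlem7.1} to pass to a finite-dimensional algebra, so in the $A_\infty$ setting one lands in a finite-dimensional Adams connected DG algebra $B$ to which Theorem~\ref{xxthm9.8} applies verbatim. Your transfers of (b), (c), (d) via Proposition~\ref{xxprop3.1} are unproblematic.

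The point you flag for (a) is genuine but easily closed. First note that (a) $\Rightarrow$ (b) holds directly for $A$: a quasi-isomorphism $S^{l}\Sigma^{d}A \to A^{\dual}$ of right $A_\infty$-modules induces on homology an isomorphism $S^{l}\Sigma^{d}(HA) \cong (HA)^{\dual}$ of right $HA$-modules. Thus only the implication ``(a) for $B$ $\Rightarrow$ (a) for $A$'' requires the identification $f^{*}(B^{\dual}) \cong A^{\dual}$. For this, observe that the family $(f_{n})$ defining the $A_\infty$-algebra morphism $f$ is at the same time a morphism of left $A_\infty$-modules over $A$ from ${}_{A}A$ to ${}_{A}B$ (the latter via restriction along $f$), and it is a quasi-isomorphism. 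Since $A$ and $B$ are locally finite, $k$-linear duality sends left $A_\infty$-modules over $A$ contravariantly to right $A_\infty$-modules over $A$; applying it to $f$ yields a quasi-isomorphism $f^{*}(B^{\dual}) \to A^{\dual}$ of right $A_\infty$-modules, which is exactly what you need.
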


Now we are ready to prove Corollary \ref{xxcorD} from the
introduction.  We restate it for the reader's convenience.

\setcounter{maintheorem}{3}
\begin{maincorollary}
Let $R$ be a connected graded algebra. Then $R$
is Artin-Schelter regular if and only if the {\rm Ext}-algebra
$\bigoplus_{i\in {\mathbb Z}}\Ext^i_R(k_R,k_R)$ is Frobenius.
\end{maincorollary}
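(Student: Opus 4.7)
The plan is to translate the Artin-Schelter property of $R$ into the Frobenius property of the Koszul dual $E=\koszul{R}$ using the machinery of Sections \ref{xxsec5}, \ref{xxsec7}, and \ref{xxsec9}, and then to pass between $E$ and its cohomology $HE\cong \Ext_R^*(k,k)$ via Theorem \ref{xxthm9.11}. Two preliminary observations set this up. First, since $R$ is connected graded, it is Adams connected as an augmented $A_\infty$-algebra, and by Lemma \ref{xxlem2.2}(d), $E$ is Adams connected (and strongly locally finite). Second, for a connected graded associative $R$, AS regularity in the sense of Definition \ref{xxdefn9.1}(c) coincides with the $A_\infty$-notion of Definition \ref{xxdefn9.2}(c), because finite global dimension of $R$ is equivalent to $k_R$ being a perfect complex in $\D(R)$.

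For the forward direction, assume $R$ is AS regular. Then $k_R$ is small in $\D(R)$, so by Corollary \ref{xxcor6.2}(a) the Ext-algebra $HE$ is finite-dimensional. The right Artin-Schelter condition on $R$ transfers via Proposition \ref{xxprop9.3}(a) to the left Artin-Schelter condition on $E$. Since $E$ is Adams connected with finite-dimensional cohomology and satisfies the left Artin-Schelter condition, Theorem \ref{xxthm9.11} yields that $HE$ is Frobenius.

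Conversely, assume $HE$ is Frobenius; in particular $HE$ is finite-dimensional. Theorem \ref{xxthm9.11}, reinforced by Lemma \ref{xxlem9.7} applied to $E$ and to $E^{\op}$, then gives that $E$ is Frobenius and satisfies both the left and right Artin-Schelter conditions with a single pair of parameters. Proposition \ref{xxprop9.3}, together with the shift bookkeeping of Corollary \ref{xxcor5.10}, transfers these conditions back to $R$ with matching parameters, so $R$ is Artin-Schelter Gorenstein. Finally, finite-dimensionality of $HE$ forces $k_R$ to be small in $\D(R)$ by Corollary \ref{xxcor6.2}(a), which for a connected graded algebra is the same as finite global dimension. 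Hence $R$ is AS regular.

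The delicate point throughout is ensuring that the left- and right-hand Artin-Schelter shifts for $R$ agree; this is precisely what Frobenius duality on $E$ provides, since by Lemma \ref{xxlem9.7}(a) a left Frobenius DG (or $A_\infty$-) algebra is right Frobenius with the \emph{same} shift. This common shift cascades through Proposition \ref{xxprop9.3} and Corollary \ref{xxcor5.10} to yield a common shift for the two one-sided Artin-Schelter conditions on $R$.
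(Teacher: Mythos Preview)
Your argument is correct and follows the same route as the paper: use Proposition~\ref{xxprop9.3} to pass the Artin-Schelter condition between $R$ and $E=\koszul{R}$, Theorem~\ref{xxthm9.11} to equate this with $HE$ being Frobenius, and Corollary~\ref{xxcor6.2}(a) to match smallness of $k_R$ with finite-dimensionality of $HE$. One small caveat: \emph{connected graded} alone does not give Adams connected, since local finiteness is not automatic; your opening sentence should instead invoke Corollary~\ref{xxcor6.2}(b) in each direction (from $k_R$ small in the forward direction, and from $HE$ finite-dimensional in the converse) to justify Adams connectedness of $R$, exactly as the paper does. Your explicit discussion of the left/right parameter matching is handled in the paper only by the one-line remark following the proof.
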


\begin{proof}
Note that $R$ is right Artin-Schelter regular if and only if $k_R$ is
small and $R$ satisfies the right $A_\infty$-Artin-Schelter condition
\cite[Proposition 3.1]{SZ}.
Now we assume, temporarily, that $R$ is Adams connected; by Lemma
\ref{xxlem2.2}, this means that $\koszul{R}$ is, as well.  Then the
above conditions are in turn equivalent to: $H\koszul{R} =
\bigoplus_{i} \Ext^i(k_R, k_R)$ is finite-dimensional and $\koszul{R}$
satisfies the left Artin-Schelter condition.  By Theorem
\ref{xxthm9.11}, this is equivalent to $H\koszul{R}$ being Frobenius.

Now, we use Corollary \ref{xxcor6.2} to justify the Adams connected
assumption: if $R$ is Artin-Schelter regular, then $k_R$ is small,
which implies that $R=HR$ is Adams connected.  Conversely, if
$H\koszul{R}$ is Frobenius, then it is finite-dimensional, which also
implies that $R$ is Adams connected.
\end{proof}

This proof in fact shows that (for associative algebras)
left Artin-Shelter regularity is equivalent to right
Artin-Shelter regularity.

\subsection{Dualizing complexes and the Gorenstein property}
\label{xxsec9.4}

The balanced dualizing complex over a graded ring
$B$ was introduced by Yekutieli \cite{Ye}. We refer
to \cite{Ye} for the definition and basic properties.
Various noetherian graded rings have balanced dualizing
complexes; see \cite{Va,YZ}.

\begin{lemma}
\label{xxlem9.12}
Suppose $R$ is a noetherian connected graded ring with
a balanced dualizing complex. Then $R$ satisfies the right
Artin-Schelter condition if and only if $R$ is
(Artin-Schelter) Gorenstein.
\end{lemma}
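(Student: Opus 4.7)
The $(\Leftarrow)$ direction is immediate from Definition~\ref{xxdefn9.1}(b), since every Artin-Schelter Gorenstein algebra satisfies the right Artin-Schelter condition by definition. So the substance is in the converse, and the plan is to use the balanced dualizing complex $D$ to upgrade the one-sided Ext-hypothesis into a structural description of $D$ itself, and then read Gorensteinness and the two-sided Artin-Schelter condition off of that description.

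The first ingredient is the local duality theorem attached to $D$: for $M \in \D^b_{\fg}(\Mod R)$,
\[
R\Gamma_{\fm}(M) \cong \RHom_R(M,D)^{\dual}.
\]
Applied to $M = k$, this gives $\RHom_R(k,D)^{\dual} \cong R\Gamma_{\fm}(k) = k$, so $\RHom_R(k,D)$ is a single shift of $k$ concentrated in one cohomological degree. The second ingredient is the anti-equivalence $\RHom_R(-,D)\colon \D^b_{\fg}(\Mod R)\xrightarrow{\sim}\D^b_{\fg}(\Mod R^{\op})^{\op}$ supplied by $D$, under which $R$ is sent to $D$ (with its left $R$-module structure). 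Applying this anti-equivalence to the pair $(X,Y) = (k,R)$ yields the fully-faithful identity
\[
\RHom_R(k,R) \cong \RHom_{R^{\op}}(D,\RHom_R(k,D)).
\]
Combining with the right Artin-Schelter hypothesis on the left-hand side and the shift of $\RHom_R(k,D)$ from the previous paragraph, I conclude that $\RHom_{R^{\op}}(D,k)$ is also a single shift of $k$ concentrated in one cohomological degree.

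The third step is to deduce from this one-dimensional fingerprint that $D$, viewed as an object of $\D(R^{\op})$, is isomorphic to $\Sigma^{l'}(R)[d']$, possibly twisted on the left by a graded algebra automorphism $\sigma$ of $R$ (a Nakayama-type twist encoding the bimodule structure). This uses the standard fact that over a noetherian connected graded ring, a bounded complex with finitely generated cohomology admits a minimal graded free resolution whose ranks are detected by Ext into $k$ (the left-module analogue of the construction in Theorem~\ref{xxthm6.1}); a one-dimensional Ext forces a rank-one free resolution in a single degree.

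Once $D$ has been identified as a shifted twist of $R$ on the left, $R$ inherits the finite left injective dimension of $D$. The symmetric balanced property of $D$ (the same argument run with $\RHom_R(D,D)\cong R$ and the right local cohomology functor $R\Gamma_{\fm^{\op}}$) gives the corresponding statement for the right $R$-module structure, so $R$ is Gorenstein. Feeding the explicit form of $D$ back into the identity of the second step, and its opposite-side analogue, then recovers the two-sided Artin-Schelter formula with the same integers $(l,d)$. The main obstacle I anticipate is the bookkeeping in the third step: rigorously verifying that the cohomology of $\RHom_{R^{\op}}(D,k)$ really does pin down $D$ up to isomorphism in $\D(R^{\op})$, and tracking the Nakayama twist and the Adams shifts consistently through both duality identifications.
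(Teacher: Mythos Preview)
Your forward direction is essentially the paper's argument. The paper writes the key step as
\[
\RHom_{R^{\op}}(B,k)=\RHom_{R^{\op}}(F(R),F(k))\cong\RHom_R(k,R),
\]
using directly that a balanced dualizing complex $B$ satisfies $F(k_R)={_Rk}$; your detour through local duality to compute $\RHom_R(k,D)\cong k$ is just a rederivation of this standard property. The minimal-resolution step you spell out is exactly what the paper asserts without explanation when it writes ``Therefore $_RB$ is quasi-isomorphic to $S^{-l}\Sigma^{-d}(R)$.'' Two small remarks: the Nakayama twist you anticipate is irrelevant here, since only one-sided isomorphisms are needed; and for the right-module side the paper uses the shortcut $R^{\op}\cong\RHom_{R^{\op}}(B,B)$ rather than rerunning the whole argument, though your symmetric approach is fine too.

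There is one genuine gap in your treatment of $(\Leftarrow)$. The parenthetical ``(Artin-Schelter)'' in the statement means that, under these hypotheses, Gorenstein and Artin-Schelter Gorenstein coincide; the lemma is used in Corollary~\ref{xxcorE} precisely to pass from plain Gorenstein (finite self-injective dimension on both sides) to the Artin-Schelter condition. So the converse you must prove is: Gorenstein $\Rightarrow$ right AS condition, and this is not immediate from Definition~\ref{xxdefn9.1}(b). The paper handles it by observing that the existence of a balanced dualizing complex forces the $\chi$-condition, so $H\RHom_R(k,R)$ and $H\RHom_{R^{\op}}(k,R)$ are finite-dimensional, and then cites \cite[Lemma 1.1]{Zh} to finish. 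You should supply this step.
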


\begin{proof} Let $B$ be the balanced dualizing complex
over $R$. Then the functor $F:=\RHom_{R}(-,B)$ induces
an equivalence $\D_{\fg}(R)\cong \D_{\fg}(R^{\op})^{\op}$
and satisfies $F(k_R)={_Rk}$.
By the right Artin-Schelter condition,
$\RHom_{R}(k,R)\cong S^l\Sigma^d(k)$ for some $l$ and $d$.
Applying the duality functor $F$, we have
\[
\RHom_{R^{\op}}(B,k)=
\RHom_{R^{\op}}(F(R),F(k))\cong \RHom_{R}(k,R)\cong
S^l\Sigma^d(k).
\]
Therefore $_RB$ is quasi-isomorphic to $S^{-l}\Sigma^{-d}(R)$.
Since $_RB$ has finite injective dimension by definition,
$_RR$ has finite injective dimension.  Also it follows from
$_RB\cong S^{-l}\Sigma^{-d}(R)$ that $B_R\cong S^{-l}\Sigma^{-d}(R)$
by the fact that $R^{\op}=\RHom_{R^{\op}}(B,B)$. So since $B_R$ has
finite injective dimension, so does $R_R$.

For the converse, one note that both $H\RHom_{R}(k,R)$
and $H\RHom_{R^{op}}(k,R)$ are finite-dimensional since
the existence of $B$ implies that $R$ satisfies the
$\chi$-condition. The assertion follows from the proof
of \cite[Lemma 1.1]{Zh}.
\end{proof}

Now we restate and prove Corollary \ref{xxcorE}.

\setcounter{maintheorem}{4}
\begin{maincorollary}
Let $R$ be a Koszul algebra and let $R^!$ be the
Koszul dual of $R$. If $R$ and $R^!$ are both noetherian having
balanced dualizing complexes, then $R$ is Gorenstein if and only if
$R^!$ is.
\end{maincorollary}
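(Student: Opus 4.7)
The plan is to use the Artin--Schelter condition as a bridge: Lemma~\ref{xxlem9.12} identifies (under the balanced dualizing complex hypothesis) the Artin--Schelter condition with Gorensteinness, while Proposition~\ref{xxprop9.3} tells us that Koszul duality interchanges the left and right Artin--Schelter conditions. So the whole corollary should come out by chaining these two results.

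First I would observe that a Koszul algebra $R$ is in particular Adams connected, so by Lemma~\ref{xxlem2.2} both $R$ and its $A_\infty$-Koszul dual $\koszul{R}$ are strongly locally finite, placing us in the hypothesis of Proposition~\ref{xxprop9.3}. Next, by Corollary~\ref{xxcor2.7}, $\koszul{R}$ is $A_\infty$-quasi-isomorphic to the associative algebra $R^!$ (up to the regrading discussed in Section~\ref{xxsec7.2}, which does not affect the Artin--Schelter condition, since that condition only asserts vanishing and one-dimensionality of certain $\Ext$-groups). In particular the left (respectively right) Artin--Schelter condition for $\koszul{R}$ is equivalent to the left (respectively right) Artin--Schelter condition for $R^!$.

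Now I would chain together the equivalences. By Lemma~\ref{xxlem9.12}, $R$ is Gorenstein iff $R$ satisfies the right Artin--Schelter condition; applying the same lemma to $R^{\op}$ (which inherits a balanced dualizing complex from $R$, since balanced dualizing complexes are inherently two-sided), this is also equivalent to $R$ satisfying the left Artin--Schelter condition. By Proposition~\ref{xxprop9.3}(a), the right Artin--Schelter condition for $R$ is equivalent to the left Artin--Schelter condition for $\koszul{R}$, hence for $R^!$. Finally, applying Lemma~\ref{xxlem9.12} to $R^!$ and $(R^!)^{\op}$, this last condition is equivalent to $R^!$ being Gorenstein. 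Summarizing:
\[
R \text{ Gorenstein} \ \Longleftrightarrow \ R \text{ is right AS} \ \Longleftrightarrow \ R^! \text{ is left AS} \ \Longleftrightarrow \ R^! \text{ Gorenstein}.
\]

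The main point requiring care is the left/right bookkeeping: Proposition~\ref{xxprop9.3} swaps handedness, and the coincidence of left and right Artin--Schelter conditions (supplied by Lemma~\ref{xxlem9.12} together with the balanced dualizing complex hypothesis applied to both $R$ and $R^!$) is exactly what reconciles this. Aside from this, the only thing to verify is that the regrading which turns $\koszul{R}$ into $R^!$ preserves the one-dimensional $\Ext$-pattern defining the Artin--Schelter condition, which is immediate from the explicit shift in Section~\ref{xxsec7.2}. I do not expect a genuine obstacle here; the heavy lifting has already been done in Proposition~\ref{xxprop9.3} and Lemma~\ref{xxlem9.12}.
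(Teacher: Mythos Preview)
Your proposal is correct and follows essentially the same approach as the paper: the paper's proof is just ``By Lemma~\ref{xxlem9.12} the Artin--Schelter condition is equivalent to the Gorenstein property. The assertion follows from Proposition~\ref{xxprop9.3}.'' Your additional care with the left/right bookkeeping, the strongly-locally-finite hypothesis, and the regrading between $\koszul{R}$ and $R^!$ simply makes explicit what the paper leaves implicit.
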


\begin{proof}
By Lemma \ref{xxlem9.12}
the Artin-Schelter condition is equivalent to the
Gorenstein property. The assertion follows from
Proposition \ref{xxprop9.3}.
\end{proof}

We say a connected graded algebra $A$ \emph{has enough normal
elements} if every nonsimple graded prime factor ring
$A/P$ contains a homogeneous normal element of positive
degree. A noetherian graded ring satisfying a polynomial
identity has enough normal elements.

\begin{corollary}
\label{xxcor9.13}
Let $R$ be a Koszul algebra and let $R^!$ the Koszul dual of $R$.
If $R$ and $R^!$ are both noetherian having enough normal elements,
then $R$ is (Artin-Schelter) Gorenstein if and only if $R^!$ is.
\end{corollary}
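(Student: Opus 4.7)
The plan is to deduce this statement directly from Corollary \ref{xxcorE} by verifying its hypotheses. Since Corollary \ref{xxcorE} already gives the equivalence ``$R$ Gorenstein $\iff$ $R^!$ Gorenstein'' under the assumption that both $R$ and $R^!$ are noetherian and possess balanced dualizing complexes, the only thing I need to supply is the implication
\[
\text{noetherian} + \text{enough normal elements} \;\Longrightarrow\; \text{existence of a balanced dualizing complex.}
\]
This is exactly the setting where one can invoke the existence theorem for balanced dualizing complexes over connected graded rings, due to Van den Bergh \cite{Va} (see also \cite{YZ}): the hypothesis ``enough normal elements'' is designed precisely to force the $\chi$-condition and finite local cohomological dimension, which together with noetherianness yield a balanced dualizing complex.

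More concretely, the key step I would carry out is to reduce the check to an inductive argument along a chain of normal elements. By Noetherian induction on prime factors, one shows using Van den Bergh's criterion that every noetherian connected graded algebra with enough normal elements satisfies the hypotheses needed for the existence of a balanced dualizing complex; this is the content of \cite[Theorem]{Va} (and is also how one usually deduces the PI case remarked upon in the paragraph just before the statement). Applying this to both $R$ and $R^!$, we obtain that each of them admits a balanced dualizing complex.

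Once this verification is complete, the corollary follows immediately: by Corollary \ref{xxcorE}, under the hypotheses that $R$ and $R^!$ are noetherian and have balanced dualizing complexes, $R$ is Gorenstein if and only if $R^!$ is. Equivalently, by Lemma \ref{xxlem9.12}, this Gorenstein property is equivalent to the Artin-Schelter condition on each side, and Proposition \ref{xxprop9.3} transports the Artin-Schelter condition across Koszul duality.

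The only nontrivial step in this plan is the invocation of the balanced dualizing complex existence theorem for rings with enough normal elements; everything else is a direct appeal to Corollary \ref{xxcorE}. Thus the main obstacle, if any, is purely bibliographical/expository: one must cite \cite{Va,YZ} correctly and note that the ``enough normal elements'' hypothesis is strong enough to guarantee both the $\chi$-condition and finite local cohomological dimension, which are the ingredients Yekutieli's framework requires.
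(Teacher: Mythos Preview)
Your proposal is correct and follows essentially the same route as the paper: reduce to Corollary~\ref{xxcorE} by invoking the existence of a balanced dualizing complex for noetherian connected graded rings with enough normal elements, then appeal to Corollary~\ref{xxcorE}. The paper's proof pins down the precise citation as \cite[Theorem 5.13]{YZ} for the existence of the balanced dualizing complex, and additionally cites \cite[Proposition 2.3(2)]{Zh} to justify that, under the ``enough normal elements'' hypothesis, the Artin--Schelter Gorenstein property coincides with the Gorenstein property (this is what the parenthetical in the statement is flagging); otherwise your plan matches the paper exactly.
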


\begin{proof} By \cite[Theorem 5.13]{YZ}, $R$ and $R^!$ have
balanced dualizing complexes. By \cite[Proposition 2.3(2)]{Zh},
under the hypothesis, the Artin-Schelter Gorenstein property is
equivalent to the Gorenstein property. The assertion follows from
Corollary \ref{xxcorE}.
\end{proof}

Let $R^e=R\otimes R^{\op}$.
Following the work of Van den Bergh \cite{Va}, Ginzburg \cite{Gi}
and Etingof-Ginzburg \cite{EG}, an associative algebra $R$
is called {\it twisted Calabi-Yau} if
$$\Ext^i_{R^e}(R,R^e)\cong
\begin{cases} {^\phi R^1} &{\textrm{if }} i=d\\
0& {\textrm{if }} i\neq d\end{cases}$$
for some $d$ (note that we do not require $R$ to have finite
Hochschild dimension). If the above equation holds for
$\phi=Id_R$, then $A$ is called {\it Calabi-Yau}. If $R$
is connected graded, then ${^\phi R^1}$ should be replaced
by $\Sigma^l ({^\phi R^1})$ for some integer $l$.
It follows from Van den Bergh's result \cite[Proposition 8.2]{Va}
that if $R$ is connected graded noetherian and Artin-Schelter
Gorenstein, then $R$ is twisted Calabi-Yau. It is easy to see
that if $R$ has finite global dimension and $R^e$ is noetherian,
then Artin-Schelter regularity is equivalent to the
twisted Calabi-Yau property. It is conjectured that the
Artin-Schelter Gorenstein property is equivalent to the
twisted Calabi-Yau property for all connected graded noetherian
rings.

We end this section with a proof of Corollary \ref{xxcorF}.

\setcounter{maintheorem}{5}
\begin{maincorollary}
Let $A$ be an Adams connected commutative differential graded
algebra such that $\RHom_A(k,A)$ is not quasi-isomorphic to
zero. If the Ext-algebra $\bigoplus_{i\in {\mathbb Z}}
\Ext^i_A(k_A,k_A)$ is noetherian, then $A$ satisfies the
Artin-Schelter condition.
\end{maincorollary}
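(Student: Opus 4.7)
The plan is to transport the problem across Koszul duality to the algebra $E := \koszul{A}$, exploit commutativity of $A$ to equip $HE \cong \Ext_A^\ast(k,k)$ with the extra structure of a cocommutative Hopf algebra, and then conclude from a structure theorem for noetherian connected Hopf algebras.

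Since $A$ is Adams connected, Lemma \ref{xxlem2.2} shows $E$ is Adams connected and strongly locally finite, while Lemma \ref{xxlem5.3}(b) identifies $HE$ with $\Ext_A^\ast(k,k)$, which is noetherian by hypothesis. Corollary \ref{xxcor5.10} provides the central isomorphism
\[
H^i_j(\RHom_A(k,A)) \cong H^{-i}_{-j}(\RHom_{E^{\op}}(k,E)),
\]
so the right Artin-Schelter condition on $A$ with parameters $(d,l)$ is equivalent to the left Artin-Schelter condition on $E$ with parameters $(-d,-l)$, and the hypothesis $\RHom_A(k,A)\not\simeq 0$ translates to $\RHom_{E^{\op}}(k,E)\not\simeq 0$. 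The problem thus reduces to showing $E$ satisfies the left Artin-Schelter condition.

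Next I would exploit the commutativity of $A$: for a commutative augmented DGA the bar construction $\baraug{A}$ is a commutative DG Hopf algebra under the shuffle product, so $E = (\baraug{A})^{\dual}$ is a cocommutative DG Hopf algebra and $HE$ is a connected graded cocommutative Hopf algebra. Combined with the noetherian hypothesis, a structure theorem in the spirit of Milnor-Moore (or equivalently the Hopf-algebra avatar of the B{\o}gvad-Halperin theorem cited in the excerpt) forces $HE$ to be Artin-Schelter regular; in particular $HE$ satisfies the left Artin-Schelter condition. Corollary \ref{xxcor9.5} then lifts this condition from $HE$ to $E$ via the Eilenberg-Moore spectral sequence, and feeding the result back through the Corollary \ref{xxcor5.10} bijection yields the right Artin-Schelter condition for $A$. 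Commutativity of $A$ gives the left condition automatically, and the nonvanishing assumption on $\RHom_A(k,A)$ ensures the $\Sigma^l(k)$ produced is genuinely nonzero rather than the condition being vacuously satisfied.

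The principal obstacle will be the structure-theoretic input, namely that a noetherian connected graded cocommutative Hopf algebra is Artin-Schelter regular. This is the deep ingredient where commutativity of $A$ enters essentially; it mirrors the classical B{\o}gvad-Halperin characterization of commutative complete intersections via noetherian Ext-algebras, and the remaining steps amount to bookkeeping with the Koszul-duality machinery developed earlier in the paper.
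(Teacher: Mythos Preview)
Your overall strategy matches the paper's: pass to $E=\koszul{A}$, observe that $H:=HE\cong\Ext_A^*(k,k)$ is a connected cocommutative Hopf algebra, invoke a structure theorem to force $H$ to satisfy the Artin--Schelter condition, lift this to $E$ via Corollary~\ref{xxcor9.5}, and pull back to $A$ via Proposition~\ref{xxprop9.3} (equivalently Corollary~\ref{xxcor5.10}).

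However, you misidentify the role of the hypothesis $\RHom_A(k,A)\not\simeq 0$, and this creates a genuine gap. You write that the structure-theoretic input is ``a noetherian connected graded cocommutative Hopf algebra is Artin--Schelter regular,'' and that the nonvanishing ``ensures the $\Sigma^l(k)$ produced is genuinely nonzero rather than the condition being vacuously satisfied.'' Neither is correct. The relevant result (Theorem~C of \cite{FHT91}) requires \emph{both} that $H$ be noetherian \emph{and} that $H$ have finite depth, i.e.\ $\RHom_H(k,H)\neq 0$; only then is $H$ forced to be elliptic and hence Artin--Schelter Gorenstein. The nonvanishing hypothesis is not cosmetic---it is an essential input to the structure theorem, and it must be established for $H$, not merely for $E$.

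The paper supplies the missing step: from $\RHom_A(k,A)\neq 0$ one gets $\RHom_{E^{\op}}(k,E)\neq 0$ by Corollary~\ref{xxcor5.10} (as you note), and then the Eilenberg--Moore spectral sequence (Theorem~\ref{xxthm9.4}) with $M=k$, $N=E$ forces $\RHom_{H^{\op}}(k,H)\neq 0$, since a spectral sequence with zero $E_2$-page abuts to zero. Since $H\cong H^{\op}$, this gives finite depth of $H$. You should insert this transfer step and correct the statement of the Hopf-algebra input accordingly.
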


\begin{proof}
Since $A$ is commutative, its Ext algebra
$H=H\koszul{A}=\Ext_A^{*}(k,k)$ is a graded Hopf algebra which
is graded cocommutative \cite[p.545]{FHT91}. By the hypotheses,
$H$ is noetherian. Hence it satisfies \cite[(1.1)]{FHT91}.
Since $\RHom_A(k,A)\neq 0$, Corollary \ref{xxcor5.10} implies
that $\RHom_{E^{\op}}(k,E)\neq 0$ where $E=\koszul{A}$.
By Theorem \ref{xxthm9.4} $\RHom_{H^{\op}}(k,H)\neq 0$.
Since $H^{\op}\cong H$, we have $\RHom_{H}(k,H)\neq 0$
which says that $H$ has finite depth. By \cite[Theorem C]{FHT91},
the noetherian property of $H$ implies that $H$ is elliptic,
and elliptic Hopf algebras are classified in
\cite[Theorem B]{FHT91}. It is well-known that the Hopf
algebras in \cite[Theorem B]{FHT91} are Artin-Schelter
Gorenstein. By Corollary
\ref{xxcor9.5}, $\koszul{A}$ satisfies the Artin-Schelter condition,
and therefore by Proposition \ref{xxprop9.3}, $A$ does as well.
\end{proof}

\section{The BGG correspondence}
\label{xxsec10}

The classical Bern\v{s}te{\u\i}n-Gel'fand-Gel'fand (BGG)
correspondence states that the derived category of coherent sheaves
over ${\mathbb P}^n$ is equivalent to the stable derived category over
the exterior algebra of $(n+1)$-variables \cite[Theorem 2]{BGG}.  Some
generalizations of this were obtained by Baranovsky \cite{Ba},
He-Wu \cite{HW}, Mori \cite{Mo} and so on. In this section we
prove a version of the BGG correspondence in the $A_\infty$-algebra
setting, as a simple application of Koszul duality.

If $R$ is a right noetherian ring, then the stable
bounded derived category over $R$, denoted by
$\underline{\D^b_{\fg}}(R)$, is defined to be
the  Verdier quotient $\D_{\fg}(R)/\D_{\per}(R)$.
With $R$ concentrated in degrees $\{0\} \times \Z$, every
complex in $\D_{\fg}(R)$ is bounded. When $R$ is a
finite-dimensional Frobenius algebra, then the stable module
category over $R$ is equivalent to the stable bounded derived
category over $R$ \cite{Ri}.

Recall from Sections \ref{xxsec7.1} and \ref{xxsec7.2},
respectively, that
\begin{align*}
\InfD_{\fd}(A) &= \thinf{A}{M \,|\, M \in \Mod \,A, \, \dim_{k} HM< \infty}, \\
\InfD_{\fg}(A) &= \thinf{A}{M \,|\, M \in \Mod \,A, \, HM \ \text{finitely
generated over } HA}.
\end{align*}

If $HA$ is finite-dimensional, then by Lemma \ref{xxlem7.1}(a),
$\InfD_{\fd}(A)=\thinf{A}{k}$, which is also equal to
$\thinf{A}{k,A}$. Modelled on the definition of
$\underline{\D^b_{\fg}}(R)$, we define the \emph{stable
derived category} of an $A_\infty$-algebra $A$ to be
\[
\underline{\InfD_{\fg}}(A)=
\InfD_{\fg}(A)/\InfD_{\per}(A)
\]
where the right-hand side of the equation is a Verdier
quotient. Boundedness of complexes does not make sense here since
$A$ itself may not be bounded, but for a weakly Adams
connected $A_\infty$-algebra we have the following variation.
The \emph{small stable derived category} of a weakly
Adams connected $A_\infty$-algebra $A$ is defined
to be
\[
\underline{\InfD_{\sm}}(A)=
\thinf{A}{k,A}/\InfD_{\per}(A)
\]
where the right-hand side of the equation is a Verdier
quotient. If $HA$ is finite-dimensional, then
\[
\underline{\InfD_{\sm}}(A)=
\InfD_{\fd}(A)/\InfD_{\per}(A)=
\underline{\InfD_{\fg}}(A).
\]
In general, $\underline{\InfD_{\sm}}(A)\subset
\underline{\InfD_{\fg}}(A)$.
It is arguable whether or not
$\underline{\InfD_{\sm}}(A)$ is a good definition.
One reason we use the above definition is to make the BGG
correspondence easy to prove.

It is easy to see that $\underline{\InfD_{\sm}}(A)=0$ if and only if
$k_A$ is small (see Definition \ref{xxdefn4.8} and Lemma
\ref{xxlem4.9}(b)). In this case we call $A$ \emph{regular}.  This is
consistent with terminology for associative algebras: Orlov called the
triangulated category $\underline{\D}^b_{\fg}(R)$ the \emph{derived
category of the singularity of $R$} \cite{Or}.

Given any connected graded ring $R$, we define the
projective scheme over $R$ to be the quotient category
\[
\Proj R:=\Mod \,R/\cat{Tor} \,R
\]
where $\cat{Tor} \, R$ is the Serre localizing subcategory generated
by all finite-dimensional graded right $R$-modules \cite{AZ}.
When $R$ is right noetherian, we denote its noetherian
subcategory by $\proj R$. The bounded derived category of
$\proj R$ is $\D^b(\proj A)$ which is modelled by the
derived category of coherence sheaves over a projective
scheme. When $A$ is an $A_\infty$-algebra, we can define
the derived category directly without using $\Proj A$.
The \emph{derived category of projective schemes over $A$}
is defined to be
\[
\InfD(\Proj A)=\InfD(A)/\locinf{A}{k};
\]
the \emph{derived category of finite projective schemes
over $A$} is defined to be
\[
\InfD(\proj A)=\InfD_{\fg}(A)/\thinf{A}{k};
\]
and the \emph{derived category of small projective schemes
over $A$} is defined to be
\[
\InfD_{\sm}(\proj A)=\thinf{A}{k,A}/\thinf{A}{k}.
\]

If $A$ is right noetherian and regular (e.g., $A$ is a commutative
polynomial ring), then $\InfD(\proj A)=\InfD_{\sm}(\proj A)$ and
this is equivalent to the derived category of the (noncommutative)
projective scheme $\proj A$ \cite{AZ}.

\begin{lemma}
\label{xxlem10.1} Let $A$ be an $A_\infty$-algebra satisfying
the Artin-Schelter condition. Then
$\underline{\InfD_{\sm}}(A)^{\op}\cong
\underline{\InfD_{\sm}}(A^{\op})$.
\end{lemma}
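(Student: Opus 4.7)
The plan is to build an explicit contravariant duality on $\thinf{A}{k,A}$ from the functor $\RHom_A(-,A)$, show that it exchanges the two natural generators $k$ and $A$ appropriately, and then pass to Verdier quotients. By Proposition \ref{xxprop3.1} we may replace $A$ by its enveloping algebra and thereby assume $A$ is an augmented DG algebra, so that $\RHom_A(-,A)$ makes sense on $\InfD(A)\cong \D(A)$.

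Introduce the contravariant triangulated functors
\[
F = \RHom_A(-,A) : \D(A) \longrightarrow \D(A^{\op}), \qquad
G = \RHom_{A^{\op}}(-,A) : \D(A^{\op}) \longrightarrow \D(A),
\]
together with the canonical biduality natural transformation $\eta_M : M \to GF(M)$ and its twin on the other side. On the generators the computation is transparent: $F(A_A) = A_{A^{\op}}$ and $\eta_A$ is tautologically an isomorphism; by the right Artin-Schelter condition $F(k_A) \cong S^{-d}\Sigma^{-l}(k_{A^{\op}})$ and, symmetrically, $G(k_{A^{\op}}) \cong S^{-d}\Sigma^{-l}(k_A)$, from which a direct computation shows that $\eta_k$ is a quasi-isomorphism.

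Next, let $\CC \subset \D(A)$ be the full subcategory of objects on which $\eta$ is an isomorphism. Since $F$ and $G$ are triangulated, $\CC$ is a thick subcategory; by the previous paragraph $\CC$ contains both $k_A$ and $A_A$, hence $\CC \supset \thinf{A}{k,A}$. The analogous subcategory on the other side contains $\thinf{A^{\op}}{k,A}$. Since $F$ and $G$ carry the generators $k$ and $A$ (up to shift) to $k$ and $A$, they restrict to mutually inverse contravariant triangulated equivalences
\[
\thinf{A}{k,A}^{\op} \simeq \thinf{A^{\op}}{k,A},
\]
under which $\InfD_{\per}(A) = \thinf{A}{A}$ corresponds to $\InfD_{\per}(A^{\op}) = \thinf{A^{\op}}{A}$. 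Passing to Verdier quotients yields the desired identification
\[
\underline{\InfD_{\sm}}(A)^{\op} = (\thinf{A}{k,A}/\InfD_{\per}(A))^{\op} \simeq \thinf{A^{\op}}{k,A}/\InfD_{\per}(A^{\op}) = \underline{\InfD_{\sm}}(A^{\op}).
\]

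The main obstacle is the last assertion in the second paragraph: the Artin-Schelter condition only describes the \emph{homology} of $\RHom_A(k,A)$ and $\RHom_{A^{\op}}(k,A)$, so one must verify that the biduality map $k \to GF(k)$ is genuinely a quasi-isomorphism rather than simply that its source and target are abstractly isomorphic. Chasing the evaluation map through the Artin-Schelter identifications suffices; alternatively one can derive $\eta_k$ from Proposition \ref{xxprop4.11} applied to a $K$-injective resolution of $A$ as an $(A^{\op}\otimes A)$-bimodule, though this route trades explicit calculation for additional bookkeeping with balance conditions.
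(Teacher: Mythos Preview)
Your proposal is correct and follows essentially the same route as the paper. The paper packages the argument through Proposition~\ref{xxprop4.11} applied with $\BB=A$ (so $E=A$): the functors $F^{\BB}=\RHom_{A^{\op}}(-,A)$ and $G^{\BB}=\RHom_A(-,A)$ are exactly your $G$ and $F$, the Auslander/Bass class formalism is precisely your ``subcategory $\CC$ where biduality holds,'' and the paper checks the generators $A$ and $k$ just as you do before passing to the Verdier quotient. One minor remark: you are more scrupulous than the paper in flagging the gap between ``$GF(k)\cong k$ abstractly'' and ``$\eta_k$ is an isomorphism''; the paper simply asserts $_Ak\in\AUPB$ after the computation, whereas the honest justification (triangle identity forces $F(\eta_k)$ to be a split epimorphism between copies of a shift of $k$, hence an isomorphism, and then $\eta_k$ itself is nonzero between two copies of $k$) needs $\End_{\D(A)}(k)$ to be local in degree $(0,0)$, which holds under the ambient connectedness hypotheses used elsewhere in the paper.
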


\begin{proof} First of all we may assume $A$ is a DG algebra.
Let $\BB$ be the $(A,A)$-bimodule $A$. Clearly $\BB$ is a
balanced $(A,A)$-bimodule. The equivalences given in
Proposition \ref{xxprop4.10} are trivial, but
Proposition \ref{xxprop4.11} is not trivial.
By Proposition \ref{xxprop4.11}(a), $\AUPB\cong \BUPB$
and $F^\BB(_AA)=A_A\in \AUPB$. By the Artin-Schelter
condition, $F^{\BB}(_Ak)=\RHom_{A^{\op}}(k,A)=S^l\Sigma^d(k)$
and $G^{\BB}(S^l\Sigma^d(k))=S^{-l}\Sigma^{-d}(
G^{\BB}(k))=k$. Therefore $_Ak\in \AUPB$ and $F^{\BB}(_Ak)
=S^l\Sigma^d(k)$. This implies that $F^{\BB}$
induces an equivalence between $\thinf{A^{\op}}{k,A}^{\op}$
and $\thinf{A}{k,A}$ which sends $_AA$ to $A_A$.
The assertion follows from the definition of
$\underline{\InfD_{\sm}}(A)$.
\end{proof}

The next theorem is a version of the BGG correspondence.

\begin{theorem}
\label{xxthm10.2} Let $A$ be a strongly locally finite
$A_\infty$-algebra and let $E$ be its Koszul dual.
\begin{enumerate}
\item
There is an equivalence of triangulated categories
\[
\InfD_{\sm}(\proj A)^{\op}\cong
\underline{\InfD_{\sm}}(E^{\op}).
\]
\item
If $A$ is Adams connected graded noetherian right Artin-Schelter
regular, then there is an equivalence of triangulated
categories
\[
\InfD(\proj A)\cong \underline{\InfD_{\fg}}(E).
\]
\end{enumerate}
\end{theorem}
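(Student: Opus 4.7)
The plan is to derive both parts from the Koszul-duality equivalence of Theorem~\ref{xxthm5.8}(b) by descending it to Verdier quotients. For part (a), the contravariant equivalence
\[
\Phi=\RHom_A(-,k)\colon \thinf{A}{k,A}^{\op}\xrightarrow{\sim}\thinf{E^{\op}}{E,k}
\]
sends $k_A\mapsto{_EE}$ and $A_A\mapsto{_Ek}$, so it carries the thick subcategory $\thinf{A}{k}^{\op}$ onto $\thinf{E^{\op}}{E}=\InfD_{\per}(E^{\op})$. Since a triangulated equivalence descends to Verdier quotients by matched subcategories, and $(\CC/\DD)^{\op}\cong \CC^{\op}/\DD^{\op}$, this gives
\[
\InfD_{\sm}(\proj A)^{\op}=\bigl(\thinf{A}{k,A}/\thinf{A}{k}\bigr)^{\op}\cong \thinf{E^{\op}}{E,k}/\InfD_{\per}(E^{\op})=\underline{\InfD_{\sm}}(E^{\op}),
\]
which is part~(a).

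For part~(b), I first identify $\InfD_{\sm}(\proj A)$ with $\InfD(\proj A)$ and $\underline{\InfD_{\sm}}(E)$ with $\underline{\InfD_{\fg}}(E)$. Since $A$ is noetherian Artin-Schelter regular, $k_A$ is small, so $k_A\in\InfD_{\per}(A)=\thinf{A}{A}$; hence $\thinf{A}{k,A}=\thinf{A}{A}=\InfD_{\per}(A)=\InfD_{\fg}(A)$, and therefore $\InfD(\proj A)=\InfD_{\sm}(\proj A)$. On the dual side, $E$ is Adams connected by Lemma~\ref{xxlem2.2}, and $HE$ is finite-dimensional by Corollary~\ref{xxcor7.2}(a); then Lemma~\ref{xxlem7.1} yields $\thinf{E}{k,E}=\thinf{E}{k}=\InfD_{\fd}(E)=\InfD_{\fg}(E)$, whence $\underline{\InfD_{\sm}}(E)=\underline{\InfD_{\fg}}(E)$.

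Combining these reductions with part~(a) gives $\InfD(\proj A)^{\op}\cong\underline{\InfD_{\sm}}(E^{\op})$. To replace $E^{\op}$ with $E$, note that $A$ satisfies the two-sided Artin-Schelter condition by hypothesis; Proposition~\ref{xxprop9.3} transfers this to $E$, and Lemma~\ref{xxlem10.1} then yields $\underline{\InfD_{\sm}}(E^{\op})\cong\underline{\InfD_{\sm}}(E)^{\op}$. Taking opposites produces $\InfD(\proj A)\cong\underline{\InfD_{\sm}}(E)=\underline{\InfD_{\fg}}(E)$, which is part~(b).

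The main obstacle is the bookkeeping in part~(a): one must verify that under the contravariant equivalence $\RHom_A(-,k)$ the distinguished generators $k_A$ and $A_A$ really are sent to ${_EE}$ and ${_Ek}$, so that the thick subcategory $\thinf{A}{k}^{\op}$ corresponds precisely to $\InfD_{\per}(E^{\op})$. This tracking of generators---not merely the abstract equivalence of the ambient thick subcategories---is what makes the Verdier-quotient descent compute the correct stable/projective quotient categories on the nose.
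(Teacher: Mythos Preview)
Your argument follows essentially the same route as the paper's proof: part~(a) descends the equivalence of Theorem~\ref{xxthm5.8}(b) to Verdier quotients by tracking the generators $k_A\mapsto{_EE}$ and $A_A\mapsto{_Ek}$, and part~(b) combines this with Lemma~\ref{xxlem10.1} and the identifications $\InfD_{\sm}(\proj A)=\InfD(\proj A)$ and $\underline{\InfD_{\sm}}(E)=\underline{\InfD_{\fg}}(E)$.

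There is, however, a small gap in your justification for applying Lemma~\ref{xxlem10.1}. You write that ``$A$ satisfies the two-sided Artin-Schelter condition by hypothesis,'' but the hypothesis is only \emph{right} Artin-Schelter regularity. Proposition~\ref{xxprop9.3}(a) then only gives you the \emph{left} Artin-Schelter condition on $E$, not the two-sided one that Lemma~\ref{xxlem10.1} requires. The fix is to argue on the $E$ side: you have already shown that $E$ is Adams connected and that $HE$ is finite-dimensional, so Theorem~\ref{xxthm9.11} applies to $E$ and upgrades the left Artin-Schelter condition to the full (two-sided) Artin-Schelter condition, and indeed shows $E$ is Frobenius. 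This is precisely what the paper does, somewhat tersely, by invoking Corollary~\ref{xxcorD} (whose proof is exactly this chain of implications) to conclude directly that $E$ is Frobenius.
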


\begin{proof} (a) By Theorem \ref{xxthm5.8}(b) there is an
equivalence of triangulated categories
\[
\thinf{A}{k,A}^{\op}\cong \thinf{E^{\op}}{k,E}
\]
which maps $k_A$ to $E$. Therefore we have
\[
\begin{aligned}
\InfD_{\sm}(\proj A)^{\op}&
=(\thinf{A}{k,A}/\thinf{A}{k})^{\op}\\
&\cong \thinf{E^{\op}}{k,E}/
\thinf{E^{\op}}{E}=\underline{\InfD_{\sm}}(E^{\op}).
\end{aligned}
\]

(b) Since $A$ is AS regular, Corollary \ref{xxcorD} says that $E$ is
Frobenius and hence $HE$ is finite-dimensional. By Lemma \ref{xxlem10.1},
$\underline{\InfD_{\sm}}(E^{\op})\cong \underline{\InfD_{\sm}}(E)^{\op}$.
Since $HE$ is finite-dimensional,
$\underline{\InfD_{\sm}}(E)=\underline{\InfD_{\fg}}(E)$.
Since $A$ is connected graded Artin-Schelter regular, $k_A$ is
small, so
\[
\thinf{A}{k,A} = \thinf{A}{A} = \InfD_{\per}(A).
\]
By the definition of noetherian (Definition \ref{xxdefn9.2}),
$\InfD_{\fg}(A)=\InfD_{\fg}(A)$.  Therefore $\InfD(\proj A) =
\InfD_{\sm}(\proj A)$.  The assertion follows from part (a).
\end{proof}

Theorem \ref{xxthmC} is part (b) of the above theorem.

\section*{Acknowledgments}

D.-M. Lu is supported by the NSFC (project 10571152) of
China. Q.-S. Wu is supported by the NSFC (key project
10331030) of China, Doctorate Foundation (No. 20060246003),
Ministry of Education of China. J.J. Zhang is supported by
the NSF of USA and the Royalty Research Fund of the University
of Washington.

\def\cftil#1{\ifmmode\setbox7\hbox{$\accent"5E#1$}\else
  \setbox7\hbox{\accent"5E#1}\penalty 10000\relax\fi\raise 1\ht7
  \hbox{\lower1.15ex\hbox to 1\wd7{\hss\accent"7E\hss}}\penalty 10000
  \hskip-1\wd7\penalty 10000\box7}
\providecommand{\bysame}{\leavevmode\hbox to3em{\hrulefill}\thinspace}
\providecommand{\MR}{\relax\ifhmode\unskip\space\fi MR }
\providecommand{\MRhref}[2]{%
  \href{http://www.ams.org/mathscinet-getitem?mr=#1}{#2}
}
\providecommand{\href}[2]{#2}

\end{document}